\newtheorem{theorem}{Theorem}[section]
\newtheorem{theo}{Theorem}
\newtheorem{proposition}[theorem]{Proposition}
\newtheorem{lemma}[theorem]{Lemma}
\newtheorem{corollary}[theorem]{Corollary}
\newtheorem{remark}{Remark}[section]
\theoremstyle{definition}
\newtheorem{definition}[theorem]{Definition}
\newcommand{\R}{\varmathbb R}
\newcommand{\C}{\varmathbb C}
\newcommand{\Hs}{\mathcal H}
\newcommand{\W}{\mathcal W}
\newcommand{\E}{\mathcal E}
\newcommand{\reff}[1]{(\ref{#1})}
\newcommand{\arct}{\mbox{Arctan }}
\renewcommand{\Re}{\mbox{Re}}
\renewcommand{\Im}{\mbox{Im}}
\newcommand{\Frac}[2]
{\ensuremath
	{
	\scalebox{1.2}{\textsf{$\frac{#1}{#2}$}}
	}
}
\newcommand{\Exp}[1]
{\ensuremath
	{
	\scalebox{1.2}{\textsf{$e^{#1}$}}
	}
}
\title{Large data low regularity scattering results for the wave equation on the Euclidian space}
\author{Anne-Sophie de Suzzoni\footnote{D\'epartement de Math\'ematiques, Universit\'e de Cergy-Pontoise, Site de Saint Martin, 2, av Adolphe Chauvin, 95302 Cergy-Pontoise Cedex, FRANCE, e-mail : \texttt{anne-sophie.de-suzzoni@u-cergy.fr}}}
\begin{document}

\maketitle

\begin{abstract}We will consider the resolution of the $3D$ non linear wave equation under the assumption of spherical symmetry on the euclidian space. For this purpose, we will build a non trivial measure on distributions such that there exists a set of full measurement onto which the flow is globally defined. We will then discuss different properties of the solutions. \end{abstract}

\tableofcontents

\bigskip

\section{Introduction}

The main point of this paper is to show the existence of globally defined solution for the non linear wave equation with localized large initial data displaying low regularity.

\smallskip

Indeed, the lowest regularity one can obtain using what one would call deterministic tools is, for this equation, and under the assumption of spherical data, $H^s\times H^{s-1}$ with $s> \Frac{7}{10}$, see \cite{these}, which means one must be able to differentiate $s$ times one's initial data and still be in $L^2$ to get a globally well-posed problem. With the tools used here, it will be possible to get globally defined solution with initial data which is spatially localized but not in $L^2$.

\smallskip

In order to gain a release on regularity, that is to say, to get global solutions with low-regularity initial data, we will use a probabilistic point of view. Our initial data will be a random variable which, as afore-mentioned, is not in $L^2$, but which satisfies certain properties almost surely. These properties will enable us to extend local well-posedness to global well-posedness. Though, we will have to reduce our problem to both a problem on a compact manifold and on finite dimension before extending it to the non linear wave equation on the Euclidian space.

\smallskip

Nicolas Burq and Nikolay Tzvetkov have built in \cite{wavun,wavde} a measure on a cartesian product of Sobolev spaces $H^\sigma \times H^{\sigma -1}$ with $\sigma<\Frac{1}{2}$ and showed the existence of a set of full measurement such that the problem was globally well-posed for any initial data in this set. However, the support of the space variable $x\in \R^3$ was a compact (the unit ball of $\R^3$), allowing the Laplace-Beltrami operator with Dirichlet boundary conditions (and so the wave equation) to have a discreet spectrum and eigen functions satisfying nice properties regarding their $L^p$-norms. 

\smallskip

Our strategy will be to use a space-time compactification, called the Penrose transform, in order to change the non linear wave equation on the space $\R^3$ into a non linear wave equation on the sphere $S^3$. Indeed, the eigen functions of the Laplace-Beltrami operator on the sphere have properties very similar to the ones on the unit ball of $\R^3$. Thus, we shall use the same tools and methods to study the dynamics of the equation on the sphere. Then, we will return to the equation on $\R^3$ and study its large time dynamics, and the order of regularity of the initial data for which the problem is well-posed.

\smallskip

Considering the reduced problem, we will build a measure on the complex Sobolev space $H^\sigma$, let us call it $\rho$, such that the non linear wave equation is well-posed for all $T\in [-\pi,\pi]$ $\rho$ almost surely, and show this well-posedness. The inverse of the Penrose transform will provide a measure on a space of radial functions of $\R^3$ very similar to $L^2\times H^{-2}$ (where the Sobolev spaces are real and not complex) such that the non linear wave equation is globally well-posed almost surely.

\smallskip

We will also determine the spaces which the initial data belongs to and investigate on the regularity of the solution. As we will see, it is almost surely (with regards to the above mentioned measure) not in $L^2$. Nonetheless, it remains localized initial data and keeps a certain regularity, only not an $L^2$ or an $H^s$ one. Furthermore, we do not need to assume that their existing norms are taken small. Hence, they are said `large' initial data.

\smallskip

Finally, there is a scattering result on the solutions that will be stated. Indeed, when $t\rightarrow \pm \infty$, the global solutions behave as a free evolution (that is to say a linear one) with a suitable initial data.

\medskip

Let us now give in more minute details the reached results. The equation studied here is : 

\begin{equation}\label{nlwe} \left \lbrace{\begin{tabular}{ll}
$\partial_t^2 f - \Delta_{\R^3} f + |f|^\alpha f = 0$ & $(t,x) \in \R\times \R^3$ \\
$f|_{t=0} = f_0$ & $\partial_t f|_{t=0} = f_1$ 
\end{tabular} } \right.
\end{equation}
where $f$ is real and $\alpha \in [2,3[$. The function $f$ is also radial,  namely when using the spherical coordinates $(r,\omega)\in \R^+\times S^2$, $x=r\omega$, $f$ depends only on $r$. We will transform \reff{nlwe} into

\begin{equation}\label{nlhe} \left \lbrace{\begin{tabular}{lll}
$i\partial_T u + \sqrt{1-\Delta_{S^3}} (u) + (1-\Delta_{S^3})^{-1/2}(\cos T +\cos R)^{\alpha-2}|\Re u|^\alpha \Re u = 0$ \\
$(T,R) \in \lbrace (T,R) \in [-\pi,\pi]\times [0,\pi] \; |\;\cos T + \cos R > 0 \rbrace$ \\
$u|_{T=0} = u_0$
\end{tabular} } \right.
\end{equation}
thanks to the Penrose transform :

\begin{equation}\label{penr} \left \lbrace{ \begin{tabular}{ll}
$T= \arct (t+r) + \arct (t-r)$  & $\in ]-\pi,\pi[$ \\
$R = \arct (t+r) - \arct (t-r) $ & $\in [0,\pi[$ \end{tabular}  } \right. \end{equation}

In \reff{nlhe}, $\Delta_{S^3}$ is the Laplace-Beltrami operator on $S^3$. The sphere $S^3$ is parametrized by $(\omega \sin R, \\ \cos R)$ with $R\in [0,\pi]$, and $\omega \in S^2$. The vector $\omega$ is the spherical coordinate on which $f$ did not depend and which was not modified by the Penrose tranform. The function $u_0$ is entirely defined by $f_0$ and $f_1$ (and the reverse is true).

\medskip

The measure we will introduce is defined thanks to the zonal eigen functions of $1-\Delta_{S^3}$, noted $(e_n)_n$, where $\Delta_{S^3}$ is the Laplace-Beltrami operator on the sphere $S^3$. We set $\Omega,P$ a probability space and $(g_n)_n$ a sequence of independant complex gaussian variables of law $\mathcal N(0,1)$. The map

$$\varphi : \left \lbrace{ \begin{tabular}{ll}
$\Omega \rightarrow \mathcal D(S^3) $ \\
$\omega \mapsto \sum_n \Frac{\sqrt 2 g_n(\omega)}{n}e_n$
\end{tabular} } \right. $$
defines an image-measure $\mu$ of $P$ on the zonal distributions of $S^3$. The term `zonal' means that the distribution depends only on the distance to a pole of the sphere, data entirely given by the angle $R\in [0,\pi]$ into he parametrization $(\omega \sin R, \cos R)$.

\smallskip

Moreover, the integral

$$\frac{1}{\alpha+2}\int_{0}^\pi (1+\cos R)^{\alpha-2}|\Re u|^{\alpha+2}\sin^2 RdR$$
is $\mu$ almost surely finite, which allow us to define a non-zero measure :

$$d\rho(u) = \exp \left( -\frac{1}{\alpha+2}\int_{0}^\pi (1+\cos R)^{\alpha-2}|\Re u|^{\alpha+2}\sin^2 RdR \right) d\mu(u)\; .$$

The problem \reff{nlhe} is $\rho$ almost surely well-posed for all $T\in [-\pi,\pi]$.

\smallskip

By using the inverse of \reff{penr}, we get a measure $\eta$ on the pairs of radial distributions of $\R^3$ thanks to which we can reach global well-posedness on \reff{nlwe}.

\begin{theo} 




Regarding the problem \reff{nlwe} on $\R^3$, we get that : \begin{itemize}

\item the non linear wave equation \reff{nlwe} has $\eta$ almost surely a global strong solution in $L^2_{-1}$ , with initial data in $L^2_{-1} \times H^{-2}_{-6}$ where $L^2_{m}$ (resp. $H^{-2}_m$) is the set of radial distributions $f$ such that $\left(\frac{1+r^2}{2}\right)^m f$ belongs to $L^2$ (resp. $H^{-2}$),
\item the initial data $f_0,f_1$ is $\eta$ almost surely in $L^p\times W^{-1,p}$ for all $p\in ]2,6[$,
\item the solution $f$ can be writen $f(t) = L(t)(f_0,f_1) + g(t)$, where $L(t)$ is the flow of the linear wave equation  and $g(t)$  is such that for all $p\in ]2\alpha, 6[$ and almost all $t\in \R$, $\left(\frac{1+r^2}{2}\right)^{1/2-2/p} g(t)$ belongs to $L^p$ and $L(t) (f_0,f_1)$ is $\eta$-almost surely localized.
\item $f_0$ is $\eta$ almost surely not in $L^p$ for $p$ outside $]2,6[$, in particular, it is a.s. not in $L^2$,
\item nonetheless, $f_0$ is a.s. localized, ie $f_0(r)$ a.s. converges towards $0$ when $r\rightarrow \infty$, and even $f_0(r) = O(\frac{1}{r^{1+\nu}})$ a.s. for all $0<\nu<\frac{1}{2}$.

\end{itemize}
\end{theo}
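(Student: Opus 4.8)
The plan is to obtain every assertion by transporting, through the inverse Penrose transform \reff{penr}, the corresponding statement for the reduced problem \reff{nlhe} on $S^3$, whose $\rho$-almost sure well-posedness on the whole interval $T\in[-\pi,\pi]$ is already available. The crucial point is that \reff{penr} maps the \emph{compact} time interval $T\in\,]-\pi,\pi[$ diffeomorphically onto all of $t\in\R$, so that a solution defined for every admissible $T$ pulls back to a genuinely global-in-time solution on $\R^3$; the measure $\eta$ is the image of $\rho$ under the inverse transform taken at $t=0$. All the weights in the statement are dictated by the conformal factor $\cos T+\cos R$ relating the two equations: at $t=0$ one has $T=0$, $R=2\arct r$, hence $r=\tan(R/2)$ and $\cos T+\cos R=1+\cos R=\Frac{2}{1+r^2}$.

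First I would settle the first bullet. Writing $f_0=(\cos T+\cos R)\,u_0|_{t=0}$ and using $r=\tan(R/2)$, $dr=\tfrac12(1+r^2)\,dR$, $\sin R=\Frac{2r}{1+r^2}$, a change of variables turns $\int_0^\infty|f_0|^2r^2\,dr$ into $\int_0^\pi\bigl(\tfrac{1+r^2}{2}\bigr)|u_0|^2\sin^2R\,dR$, so that $\bigl(\tfrac{1+r^2}{2}\bigr)^{-1}f_0$ is in $L^2$ exactly when $u_0\in L^2(S^3)$; since the $\rho$-almost sure regularity places $u_0$ (and $u(T)$ along the flow) in $H^\sigma(S^3)\subset L^2(S^3)$, this gives $f(t)\in L^2_{-1}$ for all $t$. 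The same computation applied to $f_1=\partial_tf|_{t=0}$, which the transform expresses through $u_0$ and $\partial_Tu_0$, accounts for the loss of two derivatives and the heavier weight and yields $f_1\in H^{-2}_{-6}$; that the transported object is a \emph{strong} solution of \reff{nlwe} follows from the bicontinuity of the transform in these topologies together with the fact that it conjugates \reff{nlhe} into \reff{nlwe}.

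Next I would treat the $L^p$ statements, where the real work lies. Using the explicit zonal eigenfunctions $e_n(R)=\sqrt{2/\pi}\,\Frac{\sin nR}{\sin R}$ of $1-\Delta_{S^3}$ (so that $\sqrt{1-\Delta_{S^3}}\,e_n=n\,e_n$), the datum $u_0=\sum_n\Frac{\sqrt2\,g_n}{n}\,e_n$ is a Gaussian series whose coefficients $\tfrac1n e_n$ neither decay nor blow up at the poles $R=0,\pi$, where $e_n$ itself grows like $n$. For the positive direction I would bound $\mathbb E\bigl[\|f_0\|_{L^p(\R^3)}^p\bigr]$ by passing to the variable $R$, using the Gaussian (Khinchin) equivalence $\mathbb E|u_0(R)|^p\sim\bigl(\sum_n\tfrac1{n^2}|e_n(R)|^2\bigr)^{p/2}$ together with the identity $\sum_n\tfrac1{n^2}|e_n(R)|^2=\Frac{R(\pi-R)}{\pi\sin^2R}$, and estimating the resulting integral against the conformal weight: convergence at the pole $R=\pi$ forces $p>2$ and convergence at $R=0$ forces $p<6$, whence $f_0\in L^p$ almost surely for $p\in\,]2,6[$, and likewise $f_1\in W^{-1,p}$. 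The non-membership for $p\notin\,]2,6[$ is the delicate point: since $\{\|f_0\|_{L^p}<\infty\}$ is a tail event it has probability $0$ or $1$, and I would show it is $0$ by a second-moment/anti-concentration estimate exhibiting divergence of the same integral at the relevant pole, the exponents $2$ and $6$ being exactly borderline.

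Finally, for the localization and the decay $f_0(r)=O(r^{-1-\nu})$ I would read off the behaviour of $u_0$ near the antipodal pole $R=\pi$, which corresponds to $r\to\infty$ through $\pi-R\sim2/r$: there the variance of $u_0(R)$ grows like $(\pi-R)^{-1}\sim r$, so that the almost sure local size of the Gaussian series, transferred by $f_0\sim\bigl(\tfrac{1+r^2}{2}\bigr)^{-1}u_0$, produces decay $r^{-3/2}$ up to an arbitrarily small loss, hence $f_0(r)=O(r^{-1-\nu})$ for every $0<\nu<\tfrac12$ and in particular $f_0(r)\to0$. For the scattering decomposition I would use Duhamel's formula for \reff{nlhe}: the linear part $e^{iT\sqrt{1-\Delta_{S^3}}}u_0$ pulls back to the free wave flow $L(t)(f_0,f_1)$, while the integral term defines $g(t)$, whose improved weighted integrability for $p\in\,]2\alpha,6[$ comes from the smoothing of $(1-\Delta_{S^3})^{-1/2}$ and from the factor $(\cos T+\cos R)^{\alpha-2}$, which moreover vanishes as $T\to\pm\pi$, i.e.\ as $t\to\pm\infty$, exhibiting the solution as an asymptotically free evolution. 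The main obstacle throughout is this sharp two-sided $L^p$ analysis at the poles: establishing both membership on $]2,6[$ and almost sure failure outside, with matching endpoints, demands precise control of the Gaussian series exactly where the eigenfunctions concentrate.
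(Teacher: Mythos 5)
Your overall architecture is the paper's: transport everything through the inverse Penrose transform, read the weights off the conformal factor $\Omega=\cos T+\cos R$, and analyse the Gaussian series at the two poles $R=0$ ($r=0$) and $R=\pi$ ($r=\infty$). Two of your choices are genuine, workable alternatives to the paper's route: the closed-form variance $\sum_n \sin^2(nR)/n^2=\frac{R(\pi-R)}{2}$ gives a unified moment computation (the paper proves membership via Khinchin plus separate $\|f_n\|_{L^p}$ estimates, and non-membership via the pointwise-variance lower bound $|f_n(\tan R)|\geq n$ on $[0,\frac{\pi}{4n}]$); and your zero-one-law-plus-second-moment (Paley--Zygmund) argument can replace the paper's appeal to Fernique's theorem, at least for $p>3/2$, where each $f_n\in L^p$ makes $\{\|f_0\|_{L^p}<\infty\}$ a genuine tail event --- the endpoints $p=2,6$ fall in this range, and interpolation with the a.s. membership $f_0\in L^4$ then covers $p\leq 3/2$, so this route closes.

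However, two steps would fail as written. First, ``likewise $f_1\in W^{-1,p}$'' is not a likewise: the series $f_1=\sum_n l_n\,\sin(2n\arct r)/(r(1+r^2))$ has pointwise variance $\sum_n \sin^2(2n\arct r)/(r(1+r^2))^2=\infty$ for a.e.\ $r$, so $f_1$ is a.s.\ not a function and the variance/moment computation you run for $f_0$ has no analogue. The paper must first integrate once in $R$: it builds antiderivative series $\psi_n^j=\frac{1}{n}(c_n^j-\cos 2nR)$ with the constant $c_n^j$ ($1$ or $(-1)^n$) chosen so that the series vanishes at the relevant pole, glues them with cutoffs $\chi_1,\chi_2$, proves $\Phi^j\in L^p$ by the pole analysis, and only then identifies $f_1$ with a derivative of this object plus lower-order terms, landing in $W^{-1,p}$ via Sobolev embedding. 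Second, your decay argument infers the almost sure uniform statement $f_0(r)=O(r^{-1-\nu})$ as $r\to\infty$ from the pointwise variance $\sim(\pi-R)^{-1}$; that gives the right exponent but only for each fixed $r$. One needs an a.s.\ modulus of continuity of $F_0(R)=\sum_n\frac{h_n}{n}(-1)^n\sin(2nR)$ at $R=0$: the paper obtains $F_0(h)=O(h^\nu)$ for all $\nu<\frac12$ by fractional integration ($F_0=(F_{-\alpha})_\alpha$, convolution against the kernel $\Psi_\alpha$ with a H\"older estimate in $L^r_\omega L^\infty$); a Kolmogorov-chaining argument from $E|F_0(R)-F_0(R')|^2\lesssim|R-R'|$ would serve instead, but some such uniformity mechanism must be supplied. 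Two smaller points: the weighted bound on $g$ does not come from the smoothing of $(1-\Delta_{S^3})^{-1/2}$ or from the factor $\Omega^{\alpha-2}$, but purely from the change-of-variables identity $\|\psi\|_{L^p_{t,r}}=\|\Omega^{1-4/p}\phi\|_{L^p_{T,R}}$ combined with $\left(\frac{1+r^2}{2}\right)^{1/2}\leq\sqrt2\,\Omega^{-1}$, applied to $v=u-S(T)u_0\in X^s_{2\pi}$; and the third bullet also asserts that $L(t)(f_0,f_1)$ is localized for \emph{every} $t$, which your sketch never addresses --- the paper handles it by splitting the linear flow into two random series $L_1(Y)$, $L_2(Z)$ in the characteristic-type variables and bounding them a.s.\ in $L^\infty$ with the same fractional-integration machinery.
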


Then, we have some scatterring properties. Let us define by $L(t)$ the flow of the free evolution, namely the flow of $(\partial_t^2 -\Delta_{\R^3}) f = 0$.

\begin{theo} Let $\max (\frac{3}{2}\alpha,4)<q<\Frac{9}{2}$. For any solution $f$ of \reff{nlwe}, there exists $f_\infty$ such that $f(t) - L(t)f_\infty$ is in $L^q$ and its norm converges toward $0$ when $t\rightarrow \infty$.
\end{theo}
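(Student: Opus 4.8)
The plan is to exploit the very space-time compactification that produced \reff{nlhe}. Under the Penrose transform \reff{penr} the limit $t\to+\infty$ (with $r$ fixed) corresponds to $\arct(t+r),\arct(t-r)\to\frac{\pi}{2}$, hence to $T\to\pi$, the endpoint of the existence interval of $u$. So the large-time asymptotics of \reff{nlwe} are read off from the behaviour of $u$ near the boundary $T=\pi$. Recall that the linear part of \reff{nlhe} is exactly the free half-wave group $e^{iT\sqrt{1-\Delta_{S^3}}}$, which is, after inverse Penrose transform, conjugate to the free wave flow $L(t)$ on $\R^3$ (this is how the linear part of \reff{nlhe} was obtained). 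It therefore suffices to produce an asymptotic state $u_\infty$ on $S^3$ such that $u(T)-e^{iT\sqrt{1-\Delta_{S^3}}}u_\infty\to 0$ as $T\to\pi$, and then to set $f_\infty$ equal to the inverse Penrose transform of $u_\infty$ regarded as Cauchy data for the free equation (the single complex function $u_\infty$ encoding a pair of data via its real and imaginary parts).

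First I would write the Duhamel formula for the almost surely global solution $u$ of \reff{nlhe} in the form
$$ e^{-iT\sqrt{1-\Delta_{S^3}}}u(T) = u_0 - i\int_0^T e^{-is\sqrt{1-\Delta_{S^3}}}(1-\Delta_{S^3})^{-1/2}(\cos s+\cos R)^{\alpha-2}|\Re u|^\alpha\Re u\, ds. $$
Since $\alpha\in[2,3[$ the weight $(\cos s+\cos R)^{\alpha-2}$ remains bounded (it even vanishes on the light cone $\cos s+\cos R=0$), so no boundary singularity appears. Using the a priori bounds furnished by the almost sure global well-posedness of \reff{nlhe}, together with the smoothing factor $(1-\Delta_{S^3})^{-1/2}$, I would show that the nonlinear integrand is absolutely integrable in $s$ up to $s=\pi$. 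This yields a well-defined limit
$$ u_\infty := u_0 - i\int_0^\pi e^{-is\sqrt{1-\Delta_{S^3}}}(1-\Delta_{S^3})^{-1/2}(\cos s+\cos R)^{\alpha-2}|\Re u|^\alpha\Re u\, ds, $$
together with the tail estimate $u(T)-e^{iT\sqrt{1-\Delta_{S^3}}}u_\infty = i\int_T^\pi(\cdots)\,ds$, whose norm tends to $0$ as $T\to\pi$.

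It then remains to transfer this convergence back to $\R^3$. With $f_\infty$ defined as above, $L(t)f_\infty$ corresponds to $e^{iT\sqrt{1-\Delta_{S^3}}}u_\infty$, so that $f(t)-L(t)f_\infty$ corresponds fibre by fibre to the Duhamel tail $i\int_T^\pi(\cdots)\,ds$. Writing the change of variables \reff{penr} explicitly, the $L^q(\R^3)$ norm of $f(t)-L(t)f_\infty$ equals a weighted $L^q$ norm on $S^3$ of this tail, the weight being a power of the conformal factor $(\cos T+\cos R)$ multiplied by the Jacobian converting $r^2\,dr$ into $dR$. The admissible range $\max(\frac{3}{2}\alpha,4)<q<\Frac{9}{2}$ is precisely what makes this transfer work: the lower bound $q>\frac{3}{2}\alpha$ is what the nonlinearity $|\Re u|^\alpha\Re u$ requires in order to be controlled in the $L^q$-based norm by the available bounds on $\Re u$, while the lower bound $q>4$ and the upper bound $q<\Frac{9}{2}$ are needed to keep the conformal weight and the Jacobian of \reff{penr} integrable near the poles and near the light cone, so that the transferred tail norm is finite and decays.

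I expect the main obstacle to be this quantitative transfer of norms: carefully tracking the powers of the conformal factor and the Jacobian of \reff{penr}, and checking that the exponents balance so that the remainder norm is simultaneously finite and decaying, which is exactly what pins down the range of $q$. Closely tied to it is the nonlinearity estimate in the Duhamel bound, where the interplay of the bounded weight $(\cos s+\cos R)^{\alpha-2}$, the smoothing $(1-\Delta_{S^3})^{-1/2}$, and the $L^q$-type control on $\Re u$ must be arranged to produce an $s$-integrable bound all the way up to the boundary $s=\pi$; verifying this is where the global estimates established for \reff{nlhe} are used, and where the lower bound $q>\frac{3}{2}\alpha$ originates.
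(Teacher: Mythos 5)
Your skeleton coincides with the paper's proof: the same asymptotic state $u_\infty = u_0 - i\int_0^\pi S(-\tau)H^{-1}F(\tau,u)\,d\tau$, the same tail representation $u(T)-S(T)u_\infty = i\int_T^\pi S(T-\tau)H^{-1}F(\tau,u)\,d\tau$, and the same transfer of the $L^q$ norm through the Penrose change of variables. The genuine gap is the mechanism you invoke for the decay. You argue that $t\to\infty$ corresponds to $T\to\pi$ and that the decay of $\|f(t)-L(t)f_\infty\|_{L^q}$ follows from the shrinking of the Duhamel tail as $T\to\pi$. But $T\to\pi$ holds only at fixed $r$, while the $L^q$ norm integrates over all $r>0$: on a fixed time slice $t$, the image curve $r\mapsto (T(t,r),R(t,r))$ sweeps essentially the whole range $T\in(0,\pi)$, hugging the null boundary $\{T=\pi-R\}$ --- this is precisely the paper's lemma that $t=\infty$ corresponds to $\{T=\pi-R\}$, not to $\{T=\pi\}$. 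So the tail is not uniformly small over the slice, and your transfer, as written, does not produce decay. In the paper the tail is only shown to be \emph{bounded}, uniformly in $T$: by the dual Strichartz/Christ--Kiselev estimate, its $H^s$ norm is controlled by $\|F\|_{Y^{1-s}_\pi}\leq C\bigl(\|S(T)u_0\|^{\alpha+1}_{L^p}+\|u-S(T)u_0\|^{\alpha+1}_{X^s_\pi}\bigr)$, finite for $u_0\in\Sigma$. The decay comes instead from the Jacobian of the change of variables at fixed $t$: since $r^2\,dr = \frac{2\sin^2R}{(1+t^2+r^2)\,\Omega^4}\,dR$, the factor $\frac{1}{1+t^2+r^2}\leq t^{-2}$ yields $\|f(t)-L(t)f_\infty\|_{L^q}\leq C\,t^{-2/q}$, the leftover weight $\Omega^{q-4}$ being bounded exactly because $q>4$, and $\|\cdot\|_{L^q_R}\lesssim \|\cdot\|_{H^s}$ by Sobolev with $\frac{3}{q}=\frac{3}{2}-s$. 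This quantitative factor, not the vanishing of the tail, is the whole point; without it (or without an extra splitting into $\{T>\pi-\delta\}$, where the tail is small, and $\{T\leq\pi-\delta\}$, where $\Omega=\frac{\sin T}{t}\lesssim \frac{1}{t}$), your argument stalls.

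Relatedly, your accounting of the range of $q$ is partly off. You are right that $q>4$ keeps the conformal weight $\Omega^{q-4}$ from blowing up as $\Omega\to 0$ near the light cone, and right in spirit that $q>\frac{3}{2}\alpha$ comes from the nonlinearity: with $q=\frac{3}{4}p$ it is $p>2\alpha$, which makes the H\"older step $\|F\|_{L^{x'}}\lesssim\|u\|^{\alpha+1}_{L^{(\alpha+1)x'}}$ work with $(\alpha+1)x'<p$. But $q<\Frac{9}{2}$ has nothing to do with integrability of the Jacobian or of the weights: it is $p<6$, the ceiling of the probabilistic theory --- the growth of $\|e_n\|_{L^p}$ ruins the random-series bounds for $p\geq 6$, so the a priori bound $\|S(T)u_0\|_{L^p}\lesssim\sqrt{i}$ on $\Sigma$, which your Duhamel estimate needs, is only available for $p<6$ --- combined with the Sobolev pairing $q=\frac{3}{4}p$. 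Finally, note that ``for any solution $f$'' must be read as the almost-sure solutions with data in $\Pi$, since the uniform $\Sigma$-bounds above are exactly what closes the estimate.
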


\paragraph{Plan of the paper}

\smallskip

In part two, we will introduce some pre-requisite useful for the sequel : first, Sobolev's embedding theorem and Strichartz inequalities, then the Penrose transform (the one to change the problem on $\R^3$ into a problem on $S^3$), and finally, general results about gaussian measures on distribution spaces.

\smallskip

The third part is dedicated to the construction of the measure on the zonal distributions of $S^3$ and the existence of a set of full measure on these distributions into which the transformed problem is well-defined for all $T\in [-\pi,\pi]$. We will get from these results the immediate existence of a measure on the radial distributions of $\R^3$ and a set $\Pi$ of full measure satisfying the same properties provided that the reverse Penrose transform is well explicited.

\smallskip

In part four, the spaces into which $\Pi$ is almost surely included and almost surely disjoint from are studied, using theorems given by A.Ayache and N.Tzvetkov in \cite{outilsci}.

\smallskip

Finally, in part five, we focus on scattering properties. Since the Penrose transform is an application both on time and space, the time $t=\infty$ doesn't correspond to $T=\infty$ (indeed, $T\in [-\pi,\pi]$). We will then have to study the dynamics of $f$ the solution of \reff{nlwe} in a completely different way from the dynamics of the corresponding solution of the reduced problem \reff{nlhe}.

%
%
%
%
%
%
%
%
%
%
%
%
%
%
%
%
%
%
%
%
%
%

\section{Preliminaries}

\subsection{Sobolev spaces and Strichartz inequalities}

First, let us consider the Sobolev inequalities on a compact boundary-free manifold. 

\begin{theorem}[Sobolev embedding theorem] Let $M$ be a compact boundary-free manifold of dimension $n$. Let $s\in \R$ and $p\in [2, \infty[$ such that $\Frac{1}{2} = \Frac{1}{p} + \Frac{s}{n}$. The space $H^s$ is continuously embedded into $L^p$. That is to say, there exists a constant $C(p,s)$ such that, for all $f\in H^s$,

$$||f||_{L^p} \leq C ||f||_{H^s}.$$ 

For $p=\infty$, we have that $H^s$ is continuously embedded into $L^\infty$ if $s> \Frac{n}{2}$.

\smallskip

More generally, if $\Frac{1}{q} = \Frac{1}{p}+\Frac{s}{n}$ and $p<\infty$, then there exists $C$ such that

$$||f||_{L^p} \leq C ||(1-\Delta)^{s/2} f||_{L^q}$$
\end{theorem}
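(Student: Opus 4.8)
The plan is to reduce every assertion to a single mapping property of the Bessel potential, namely that $(1-\Delta)^{-s/2}$ is bounded from $L^q(M)$ to $L^p(M)$ whenever $\frac{1}{q} = \frac{1}{p} + \frac{s}{n}$ with $0<s<n$ and $1<q\le p<\infty$. Indeed, writing $g=(1-\Delta)^{s/2}f$, the general inequality $\|f\|_{L^p}\le C\|(1-\Delta)^{s/2}f\|_{L^q}$ is exactly this mapping property applied to $g$; the case $q=2$ gives $H^s\hookrightarrow L^p$ under $\frac{1}{2}=\frac{1}{p}+\frac{s}{n}$; and the $L^\infty$ endpoint will be handled separately by Cauchy--Schwarz.

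First I would represent the Bessel potential through the heat semigroup by the subordination formula
\begin{equation*}
(1-\Delta)^{-s/2}=\frac{1}{\Gamma(s/2)}\int_0^\infty t^{s/2-1}e^{-t}\,e^{t\Delta}\,dt,
\end{equation*}
which is legitimate because on the compact manifold $M$ the operator $-\Delta$ is self-adjoint with spectrum in $[0,\infty)$, so the identity holds on each eigenspace and hence on $L^2$. Next I would invoke the Gaussian upper bound $p_t(x,y)\le C\,t^{-n/2}\exp(-d(x,y)^2/(ct))$ for the heat kernel on $0<t\le 1$, together with the uniform bound $p_t(x,y)\le C$ for $t\ge 1$ coming from the finiteness of the volume. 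Inserting these into the subordination formula and carrying out the $t$-integration (splitting at $t=1$) yields, for $0<s<n$, the pointwise estimate on the Schwartz kernel $K_s$ of $(1-\Delta)^{-s/2}$,
\begin{equation*}
|K_s(x,y)|\le C\,d(x,y)^{\,s-n}\ \ \text{for }d(x,y)\le 1,\qquad |K_s(x,y)|\le C\ \ \text{for }d(x,y)\ge 1.
\end{equation*}

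With the kernel bound in hand the two remaining steps are short. For the $L^\infty$ endpoint, if $s>n/2$ then $d(x,\cdot)^{s-n}\in L^2(M)$ near the diagonal (since $2(n-s)<n$), so $K_s(x,\cdot)\in L^2(M)$ uniformly in $x$, and Cauchy--Schwarz gives $|f(x)|=|\langle K_s(x,\cdot),g\rangle|\le\|K_s(x,\cdot)\|_{L^2}\|g\|_{L^2}$, that is $H^s\hookrightarrow L^\infty$. For the main range the kernel is dominated by the Riesz-type kernel $d(x,y)^{-(n-s)}$, and I would conclude by the Hardy--Littlewood--Sobolev inequality on $M$: because $M$ has finite volume and bounded geometry, the operator with kernel $d(x,y)^{-(n-s)}$ maps $L^q$ into $L^p$ precisely when $\frac{1}{q}-\frac{1}{p}=\frac{s}{n}$, the bounded far-diagonal part being harmless on a finite-measure space.

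The main obstacle is exactly this last step at the sharp exponent: the kernel $d(x,y)^{s-n}$ is borderline non-integrable, so one cannot obtain the $L^q\to L^p$ bound from a naive ``kernel in $L^1$'' (Schur or Young) test; the same phenomenon appears on the semigroup side as the logarithmic divergence at $t=0$ after inserting $p_t\sim t^{-s/2}$ into the subordination integral. The genuine content is therefore the Hardy--Littlewood--Sobolev estimate, which I would prove by the standard weak-type argument: split the kernel at a radius depending on the maximal function of $g$, bound the near part by H\"older and the far part by the distribution function to get a weak-$(q,p)$ estimate, and upgrade to the strong bound by Marcinkiewicz interpolation between two nearby admissible pairs. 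An equivalent route, worth mentioning, is to transport the problem to $\R^n$ via a finite atlas and a subordinate partition of unity and invoke the classical Euclidean embedding; the cost there is verifying that multiplication by the cut-offs and pullback by the charts act boundedly on $H^s$ for fractional $s$, which needs a short commutator argument.
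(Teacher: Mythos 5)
The paper does not actually prove this theorem: it states it and refers the reader to \cite{outilssob}, so there is no internal argument to compare yours against; your proposal must stand on its own, and it does. The route you sketch is correct and standard. The subordination identity is legitimate because $1-\Delta\geq 1$ is self-adjoint on the compact manifold; the Gaussian upper bound for $0<t\leq 1$ together with boundedness of the kernel for $t\geq 1$ (the factor $e^{-t}$ even makes that part exponentially small) does yield $|K_s(x,y)|\leq C\, d(x,y)^{s-n}$ near the diagonal for $0<s<n$ and $|K_s(x,y)|\leq C$ off it; and you correctly locate the genuine content in the Hardy--Littlewood--Sobolev bound at the sharp exponent, where a Schur/Young test fails --- your remark that the failure reappears on the semigroup side as a logarithmic divergence $\int_0^1 t^{-1}dt$ after inserting the smoothing estimate is exactly right. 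Hedberg's maximal-function splitting plus Marcinkiewicz interpolation between two straddling admissible pairs, or alternatively the chart-and-partition-of-unity transport to $\R^n$ with the commutator verification you flag, both close the argument. Two points to tighten. First, your $L^\infty$ endpoint as written only covers $n/2<s<n$, since the bound $d(x,y)^{s-n}$ was derived under $s<n$; for $s\geq n$ either observe that the same subordination computation gives a bounded kernel (logarithmically singular exactly at $s=n$), or simply reduce via the trivial embedding $H^s\subseteq H^{s'}$ with $n/2<s'<n$. Second, your hypothesis $q>1$ is not merely convenient: the strong-type bound genuinely fails at $q=1$ (the Riesz potential is only of weak type $(1,\frac{n}{n-s})$, and the failure is local, hence present on $M$ too), so the paper's ``more generally'' clause must implicitly be read with $q>1$; this costs nothing here, since every application in the paper (the Strichartz-related use with $\frac{1}{q}=\frac{1}{2}-\frac{1}{p}$, and the $s=1$ use in the regularity analysis of $f_1$) has $q>1$.
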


One can find the proof in \cite{outilssob}.

\begin{remark}In fact, the first inequality is also true on $\R^N$. We will use this theorem on the sphere $S^3$ and the first inequality on the Euclidian space $\R^3$. \end{remark}

Let us now add a variable of time $t$, and build the operator $S(t) = \Exp{-it\sqrt{-\Delta}}$, where $\Delta$ is the Laplace Beltrami operator on $M$, we obtain a Strichartz inequality, see \cite{kapit}. To reach this goal, we need to introduce the space $X^s_T$.

\begin{definition}
A couple of real numbers $(p,q)$, $2<p\leq \infty$ is said admissible (on dimension $3$) if 

$$\frac{1}{p}+\frac{1}{q} = \frac{1}{2}\; .$$

For all admissible couple, and all time $T$ we define :

$$X^s_T = C^0([-T, T], H^s(M)) \cap L^p([-T,T],L^q(M)) \; , \; p=\frac{2}{s} \; .$$

Remark that the definition of $X^s_T$ implies that we choose $s$ in $[0,1[$.

\end{definition}

We then have the following property : 

\begin{proposition}[Strichartz inequality]\label{besoin} Let $(p,q)$ an admissible couple and $0<T\leq \pi$. There exists $C$ independant from $T$ such that for all $f\in H^s$, we have : 

$$||S(t)f||_{X^s_T} \leq C ||f||_{H^{s}}.$$

\end{proposition}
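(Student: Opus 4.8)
The plan is to treat the two constituents of the $X^s_T$ norm separately, the $C^0([-T,T],H^s)$ part and the $L^p([-T,T],L^q)$ part with $p=\frac{2}{s}$, and to note that all the genuine analytic content sits in the second. For the first piece I would use that $S(t)=e^{-it\sqrt{-\Delta}}$ is a function of $-\Delta$ and hence commutes with $(1-\Delta)^{s/2}$; since $S(t)$ is unitary on $L^2$, this yields $\|S(t)f\|_{H^s}=\|f\|_{H^s}$ for every $t$. Therefore $\sup_{|t|\le T}\|S(t)f\|_{H^s}=\|f\|_{H^s}$, and the strong continuity of $t\mapsto S(t)f$ in $H^s$ follows from the spectral representation of $S(t)$ applied to the $L^2$ datum $(1-\Delta)^{s/2}f$. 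The $C^0H^s$ contribution is thus bounded by $\|f\|_{H^s}$ with constant $1$, uniformly in $T$.

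The heart of the matter is the space-time bound $\|S(t)f\|_{L^p([-1,1],L^q)}\le C\|f\|_{H^s}$ on a unit time interval, which is Kapitanski's Strichartz estimate on the compact manifold $M$, see \cite{kapit}. I would recall its mechanism rather than reprove it: decompose $f$ into dyadic frequency blocks through a Littlewood--Paley partition adapted to $-\Delta$; a block at frequency $\sim 2^j$ obeys the Euclidean dispersive estimate on the time scale $|t|\lesssim 2^{-j}$, reached by using finite speed of propagation to localise in a coordinate chart and invoking stationary phase. A $TT^*$ argument together with the Keel--Tao machinery then gives the frequency-localised Strichartz inequality on each block; summing the $\sim 2^j$ such time-subintervals inside $[-1,1]$ is what produces the derivative loss, and this loss is exactly absorbed by the scaling relation $p=\frac{2}{s}$ with $\frac{1}{p}+\frac{1}{q}=\frac{1}{2}$, so that the dyadic pieces add up to $\|f\|_{H^s}$.

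It then remains to upgrade the unit-interval bound to an estimate on $[-T,T]$ whose constant does not depend on $T\in\,]0,\pi]$. Here I would exploit that $T$ is bounded: cover $[-T,T]$ by $N$ intervals of length $1$, where $N\le \lceil 2\pi\rceil+1$ is a universal constant precisely because $T\le\pi$. On a typical piece $I_k=[t_k,t_k+1]$ write $S(t)=S(t-t_k)S(t_k)$; applying the unit-interval estimate to the datum $S(t_k)f$ and using $\|S(t_k)f\|_{H^s}=\|f\|_{H^s}$ from the first step gives $\|S(t)f\|_{L^p(I_k,L^q)}\le C\|f\|_{H^s}$ on each piece. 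Summing the $p$-th powers over the $N$ pieces and taking the $p$-th root yields the constant $C N^{1/p}$, still universal since $N$ is, and adding the two contributions gives $\|S(t)f\|_{X^s_T}\le C\|f\|_{H^s}$ with $C$ independent of $T$.

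The main obstacle is contained entirely in the unit-interval estimate: on a compact manifold there is no global-in-time dispersion, so one cannot simply transplant the $\R^3$ Strichartz inequality, and the frequency-localised dispersive bound must be obtained locally in time via finite propagation speed, with a careful accounting of the derivative loss encoded in $p=\frac{2}{s}$. Once that local estimate is granted from \cite{kapit}, the passage to arbitrary $T\le\pi$ is elementary, exactly because the number of unit subintervals needed stays bounded by a universal constant.
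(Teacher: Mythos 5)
Your proposal is correct, and it is worth noting that the paper itself contains no proof of this proposition at all: both at the statement and again in the proof of the proposition collecting its consequences, the paper simply refers to \cite{kapit}. For the core unit-interval estimate you make the same move, quoting Kapitanski, so in substance your route coincides with the paper's; what you add --- and what the paper leaves entirely implicit --- is the routine completion: the $C^0([-T,T],H^s)$ half of the $X^s_T$ norm via commutation of $S(t)$ with $(1-\Delta)^{s/2}$ and unitarity on $L^2$, and the uniformity of the constant for $0<T\leq\pi$ by covering $[-T,T]$ with a universally bounded number of unit intervals, using the group law $S(t)=S(t-t_k)S(t_k)$, the $H^s$ isometry, and summation of $p$-th powers. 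Both steps are sound.

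One caveat, which is cosmetic only because you defer to \cite{kapit} rather than reprove the estimate: your description of the mechanism is the Schr\"odinger one (due to Burq, G\'erard and Tzvetkov), not the wave one. For the half-wave group $e^{-it\sqrt{-\Delta}}$ the symbol is homogeneous of degree one, so the parametrix --- and with it the dispersive estimate for a frequency block at scale $2^j$ --- is valid on a time interval of length \emph{independent} of $j$; no summation over $\sim 2^j$ frequency-dependent subintervals occurs, and that is exactly why the index $s=\frac{2}{p}$ on the sharp line $\frac{1}{p}+\frac{1}{q}=\frac{1}{2}$ is the lossless Euclidean scale-invariant exponent. Had the dispersive bound only held for $|t|\lesssim 2^{-j}$, summing the $2^j$ subintervals would cost an extra factor $2^{j/p}$ and force $s=\frac{3}{p}$; the relation $p=\frac{2}{s}$ does not absorb such a loss. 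If you retain a sketch of the cited proof, replace the $2^{-j}$ time-scale story by the fixed-time-scale parametrix; as written, the heuristic would not reproduce the stated exponents, though your overall proof is unaffected since the unit-interval estimate is taken from the reference.
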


\subsection{The Penrose transform}

We will use the Penrose transform to change the problem on $\R\times \R^3$ into a problem on a bounded set included in $]-\pi,\pi[\times S^3$

\begin{definition} For all $t\in \R$ and $r\in \R^+$, we define : 

\begin{equation} \left \lbrace{ \begin{tabular}{ll}
$T= \arct (t+r) + \arct (t-r)$  & $\in ]-\pi,\pi[$ \\
$R = \arct (t+r) - \arct (t-r) $ & $\in [0,\pi[$ \end{tabular}  } \right. \end{equation}

The transform $\R\times \R^+ \times S^2 \rightarrow ]-\pi,\pi[\times [0,\pi[ \times S^2$, $(t,r,\omega) \mapsto (T,R,\omega)$ sends $\R^4$ in a bounded set of $\R^4$.
\end{definition}

\begin{lemma} The inverse of this transformation on the set $\lbrace (T,R) \in ]-\pi,\pi[\times[0,\pi[\; | \;\Omega(T,R) = \cos T + \cos R > 0 \rbrace $ is given by :

$$t = \frac{\sin T}{\Omega} \; \mbox , \; r=\frac{\sin R}{\Omega}\; .$$
\end{lemma}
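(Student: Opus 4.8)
The plan is to invert the two defining relations of \reff{penr} directly. I introduce the auxiliary angles $a = \arct(t+r)$ and $b = \arct(t-r)$, so that the transform becomes the linear system $T = a+b$, $R = a-b$, which is immediately solved by $a = \Frac{T+R}{2}$ and $b = \Frac{T-R}{2}$. Since $\arct$ is a bijection from $\R$ onto $]-\Frac{\pi}{2},\Frac{\pi}{2}[$, recovering $t\pm r$ amounts to applying the tangent, provided $a$ and $b$ genuinely lie in this interval; granting this for the moment, I obtain
$$ t+r = \tan\Big(\frac{T+R}{2}\Big), \qquad t-r = \tan\Big(\frac{T-R}{2}\Big), $$
and hence $t$ and $r$ as the half-sum and half-difference of the two tangents.

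The remaining computation is purely trigonometric. Writing $A = \Frac{T+R}{2}$ and $B = \Frac{T-R}{2}$, I use the identities $\tan A \pm \tan B = \Frac{\sin(A\pm B)}{\cos A\cos B}$, so that the numerators collapse to $\sin(A+B) = \sin T$ and $\sin(A-B) = \sin R$. The common denominator is rewritten by the product-to-sum formula as $\cos A \cos B = \Frac{1}{2}\big(\cos(A-B)+\cos(A+B)\big) = \Frac{1}{2}(\cos R + \cos T) = \Frac{1}{2}\Omega$. Once the factors of $\Frac{1}{2}$ cancel, one is left with exactly $t = \Frac{\sin T}{\Omega}$ and $r = \Frac{\sin R}{\Omega}$, as claimed.

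The only point requiring attention — rather than genuine difficulty — is the hypothesis $\Omega > 0$, which is precisely what places $A$ and $B$ in $]-\Frac{\pi}{2},\Frac{\pi}{2}[$, the domain on which $\tan$ inverts $\arct$. Indeed $\Omega = 2\cos A\cos B$, so $\Omega > 0$ forces $\cos A$ and $\cos B$ to share the same sign; the ranges $T\in ]-\pi,\pi[$ and $R\in [0,\pi[$ exclude the case where both are negative (it would force $A-B>\pi$, i.e.\ $R>\pi$), leaving $\cos A,\cos B>0$, that is $A,B\in ]-\Frac{\pi}{2},\Frac{\pi}{2}[$. Conversely, on the image of the forward transform one has $A = a$ and $B = b$, which are arctangent values and thus automatically in this interval, whence $\Omega = 2\cos a\cos b>0$; so the set $\{\Omega>0\}$ is exactly the image. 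I would close by remarking that one could instead substitute the proposed formulas back into \reff{penr} and check that $T$ and $R$ are recovered, but the forward derivation above is cleaner and simultaneously clarifies the origin of the constraint $\Omega>0$.
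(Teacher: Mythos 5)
Your proof is correct. Note that the paper itself offers no proof of this lemma at all: it is stated bare, as a classical fact about the Penrose transform (with the surrounding references to the Penrose-transform literature), so there is no argument to compare yours against. Your derivation is the natural one and is complete: the reduction to the linear system $T=a+b$, $R=a-b$, the identities $\tan A \pm \tan B = \sin(A\pm B)/(\cos A\cos B)$ and $\cos A\cos B = \tfrac{1}{2}(\cos R+\cos T) = \tfrac{1}{2}\Omega$, and in particular the sign analysis showing that $\Omega>0$ together with $T\in\left]-\pi,\pi\right[$, $R\in[0,\pi[$ forces $\cos A,\cos B>0$ (both negative would give $R=A-B>\pi$), which is exactly what legitimizes inverting $\arct$ by $\tan$. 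You even go slightly beyond the statement by identifying $\{\Omega>0\}$ as precisely the image of the forward transform; for that last claim the one step worth making explicit is that the candidate preimage satisfies $r=\sin R/\Omega \geq 0$ (immediate since $\sin R\geq 0$ on $[0,\pi[$ and $\Omega>0$), so that $(t,r)$ indeed lies in the domain $\R\times\R^+$ before you apply the forward transform to recover $(T,R)$.
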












What's more, it turns the d'Alembertian on $\R_t\times \R^3_x$ into a d'Alembertian on $]-\pi,\pi[\times S^3$, see \cite{penrose,bonjam}.

\begin{proposition} Let $f$ be a distribution on $\R_t\times \R^3_x$. We set $v(T,R,\omega)= f(t,r,\omega)\Omega^{-1}$. We have :

\begin{equation} \Omega^3(\partial_T^2 -\partial_R^2 -\frac{2}{\tan R} \partial_R + 1)v = (\partial_t^2-\partial_r^2 - \frac{2}{r}\partial_r) f .\end{equation}

\end{proposition}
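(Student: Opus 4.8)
The plan is to strip both the flat and the spherical wave operators down to their one-dimensional null forms, relate these via the change of variables furnished by the Penrose transform, and then reinstate the conformal weight $\Omega$. Both operators admit a clean conjugation. On the flat side one has the classical radial identity
$$\frac1r(\partial_t^2-\partial_r^2)(r\,\cdot\,) = \partial_t^2-\partial_r^2-\frac2r\partial_r,$$
obtained by expanding $\partial_r^2(rf)=2\partial_r f + r\partial_r^2 f$. On the sphere a direct computation of $(\partial_T^2-\partial_R^2)(\sin R\,\cdot\,)$ gives
$$\frac1{\sin R}(\partial_T^2-\partial_R^2)(\sin R\,\cdot\,) = \partial_T^2 - \partial_R^2 - \frac{2}{\tan R}\partial_R + 1,$$
where, since $\partial_R^2(\sin R)=-\sin R$, the conjugation automatically generates the $+1$ mass term. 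Thus each of the two operators in the statement is conjugate to a bare $1+1$ d'Alembertian, $\partial_t^2-\partial_r^2$ and $\partial_T^2-\partial_R^2$ respectively.

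Next I would pass to null (characteristic) coordinates. Setting $p=t+r$, $q=t-r$ one has $\partial_t^2-\partial_r^2=4\partial_p\partial_q$, and the defining relations \reff{penr} read precisely $T+R=2\arct p$ and $T-R=2\arct q$, so that $T+R$ is a function of $p$ alone and $T-R$ of $q$ alone. This is the crucial structural fact: it guarantees that the change of variables is diagonal in the null directions and produces no cross terms. Computing the one-variable Jacobians gives $\partial_{(T+R)}=\frac{1+p^2}2\partial_p$ and $\partial_{(T-R)}=\frac{1+q^2}2\partial_q$, whence $\partial_T^2-\partial_R^2=\frac{(1+p^2)(1+q^2)}4(\partial_t^2-\partial_r^2)$. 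Using the sum-to-product identity $\cos T+\cos R=2\cos(\arct p)\cos(\arct q)=\frac{2}{\sqrt{(1+p^2)(1+q^2)}}$, that is $\Omega^2=\frac{4}{(1+p^2)(1+q^2)}$, this becomes the key relation
$$\partial_T^2-\partial_R^2=\frac1{\Omega^2}(\partial_t^2-\partial_r^2).$$

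It then remains to assemble the pieces. Applying the spherical conjugation to $v$ and then the key relation to $g=\sin R\cdot v$, the left-hand operator equals $\frac{1}{\sin R}\frac{1}{\Omega^2}(\partial_t^2-\partial_r^2)(\sin R\cdot v)$. Here I would use the inverse transform $r=\sin R/\Omega$, so that $\sin R=r\Omega$, together with the definition $v=f\Omega^{-1}$, to obtain $\sin R\cdot v=rf$; substituting and invoking the flat radial identity turns $\frac1r(\partial_t^2-\partial_r^2)(rf)$ into $(\partial_t^2-\partial_r^2-\frac2r\partial_r)f$, while collecting the weights $\frac1{\sin R}\frac1{\Omega^2}=\frac{1}{r\Omega^3}$ leaves exactly $\Omega^{-3}$ in front. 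Multiplying through by $\Omega^3$ yields the claimed identity. The only delicate points are bookkeeping ones: checking that the two null coordinate systems decouple as above, so the mixed second derivative transforms without error terms, and tracking the accumulated powers of $\Omega$ and $\sin R$. The computation $\Omega=2\cos(\arct p)\cos(\arct q)$, which ties the conformal factor to the Jacobian of the transform, is the heart of the argument.
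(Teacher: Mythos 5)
Your proof is correct. There is, however, no internal proof to compare it against: the paper states this proposition without argument, citing only its references for the conformal covariance of the d'Alembertian, so your computation supplies the verification the paper omits, and it is the standard one. All three ingredients check out: the two radial conjugations $\frac1r(\partial_t^2-\partial_r^2)(r\,\cdot\,)=\partial_t^2-\partial_r^2-\frac2r\partial_r$ and $\frac1{\sin R}(\partial_T^2-\partial_R^2)(\sin R\,\cdot\,)=\partial_T^2-\partial_R^2-\frac{2}{\tan R}\partial_R+1$ (the mass term coming from $\partial_R^2\sin R=-\sin R$); the decoupling $T+R=2\arct(t+r)$, $T-R=2\arct(t-r)$, which is indeed the structural reason the change of variables produces $\partial_T^2-\partial_R^2=\Omega^{-2}(\partial_t^2-\partial_r^2)$ with no cross terms, since each null coordinate is a function of one flat null coordinate alone and the one-variable Jacobians $\frac{1+p^2}{2}$, $\frac{1+q^2}{2}$ commute with the opposite null derivative; and the identities $\Omega=2\cos(\arct p)\cos(\arct q)=2\bigl((1+p^2)(1+q^2)\bigr)^{-1/2}$ and $\sin R=r\,\Omega$, both of which the paper records separately (the explicit formula for $\Omega$ in $(t,r)$ and the inversion lemma $r=\sin R/\Omega$). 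The final bookkeeping $\sin R\cdot v=r\Omega\cdot f\Omega^{-1}=rf$ and $\frac{1}{\sin R}\frac{1}{\Omega^2}=\frac{1}{r\Omega^3}$ then closes the identity exactly. Two small points of hygiene, consistent with the paper's own level of rigor: the substitution $\sin R=r\Omega$ uses $\Omega>0$, which holds on the image of the transform as the paper notes, and the conjugations divide by $r$ and $\sin R$, so for a distribution $f$ the identity should be read as established pointwise away from $r=0$ (equivalently $R=0,\pi$) and extended by the usual density argument.
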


The Laplacian onto the sphere $S^2$ doesn't depend on the considered variables $t,r,T,R$, so we get :

$$\begin{tabular}{lll}
$(\partial_t^2 - \Delta_{\R^3} )f$ & $ = $ & $(\partial_t^2-\partial_r^2 - \Frac{2}{r}\partial_r - \Frac{1}{r^2}\Delta_{S^2}) f $ \\
 & $=$ & $ \Omega^3(\partial_T^2 -\partial_R^2 -\Frac{2}{\tan R} \partial_R + 1)v - \Frac{\Omega^3}{\sin^2 R} \Delta_{S^2} v$ \\
 & $ = $ & $ \Omega^3 (\partial_T^2 + 1- \Delta_{S^3}) v $ \end{tabular}$$

\smallskip

Let now see how \reff{nlwe} is changed. We have : 

$$(\partial_t^2 - \Delta_{\R^3})f + |f|^\alpha f = \Omega^3 (\partial_T^2 + 1-\Delta_{S^3}) v + \Omega^{\alpha+1}|v|^\alpha v$$
and

$$v|_{T=0} = \frac{f_0(\tan(R/2))}{(1+\cos R)} \; \mbox , \;  \partial_T v |_{T=0} = \frac{f_1(\tan (R/2))}{(1+\cos R)^2} $$

The Penrose transform sends $\R^4$ on a set where $\Omega$ is always positive. Indeed, in terms of $r$ and $t$, $\Omega $ is : 

$$\Omega = \frac{2}{\sqrt{(1+(t+r)^2)(1+(t-r)^2)}} \; .$$

Thus, the original problem \reff{nlwe} is transformed by the Penrose transform into : 

\begin{equation}\label{true} \left \lbrace{ \begin{tabular}{ll}
$(\partial_T^2 +1 -\Delta_{S^3}) v + \Omega^{\alpha-2} |v|^\alpha v = 0$ & $T,R \in [-\pi,\pi]\times [0,\pi]\cap \Omega^{-1}(]0,2])$ \\
$v|_{T=0} = v_0 = \Frac{f_0(\tan (R/2))}{1+\cos R}$ & $\partial_T v|_{T=0} = v_1 = \Frac{f_1(\tan(R/2))}{(1+\cos R)^2}$
\end{tabular} } \right. \; .
\end{equation}

What we will do is replace the domain of definition of the problem by $[-\pi,\pi]\times [0,\pi[$, $\Omega = \cos T +\cos R$ by $\widetilde \Omega$ defined  as : 

$$\widetilde \Omega  = \left \lbrace{\begin{tabular}{ll}
$\cos T +\cos R$ & \mbox{when} $\cos T +\cos R > 0$ \\
$0$ & \mbox{otherwise} \end{tabular}} \right. \; .$$

The problem is now given by : 

\begin{equation}\label{pinit} \left \lbrace{
\begin{tabular}{ll}
$(\partial_T^2 + 1-\Delta_{S^3} )v +\widetilde \Omega^{\alpha - 2} |v|^\alpha v = 0$ & $T,R \in [-\pi,\pi]\times [0,\pi[$ \\
$v|_{T=0} = v_0 = \Frac{f_0(\tan (R/2))}{1+\cos R}$ & $\partial_T v|_{T=0} = v_1 = \Frac{f_1(\tan(R/2))}{(1+\cos R)^2}$
\end{tabular} } \right.
\end{equation}

If $v$ is a solution of \reff{pinit} then it follows that its restriction to the  domain where $\Omega$ is strictly positive is a solution of \reff{true}. 

\smallskip

But one can see that there is a singularity in the definition of the initial data at $R= \pi$. Indeed, it corresponds to the singularity $r=\infty$ before taking the transformed problem. Dividing by $1+\cos R$ corresponds to multiplying by $\Frac{1+r^2}{2}$. Hence, the original initial data $f_0, f_1$ is ``more regular" than the tranfsormed one. 

\subsection{Stochastic tools}

Let us define the stochastic objects that we will need.

\begin{definition} Let $\Omega, P$ be a probabilistic space. In what follows, $(g_n)_n$ is a sequence of independant random variables of complex normal distribution. 
\end{definition}















\bigskip

We have the following lemma, see \cite{wavun}.

\begin{lemma}\label{poumpoumpoum} Let $1\leq q < \infty$. There exists a constant $C$ independant from $q$ such that for all sequence $(c_n)_n$, we have,

$$||\sum g_n c_n ||_{L^q_\omega} \leq C \sqrt{q \sum |c_n|^2}\; .$$

\end{lemma}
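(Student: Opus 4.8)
The plan is to reduce the estimate to a single Gaussian and then compute its moments explicitly. First I would dispose of the trivial case: if $\sum_n |c_n|^2 = \infty$ the right-hand side is infinite and there is nothing to prove, and by monotone convergence (or Fatou, applied to the finite partial sums) it suffices to treat a finite sum with $\sum_n |c_n|^2 < \infty$. The key structural fact is that the complex normal law is stable and rotationally invariant: since each $g_n c_n$ is a centered complex Gaussian of variance $|c_n|^2$ (multiplication by the complex scalar $c_n$ merely rescales and rotates the rotationally invariant law of $g_n$), and the $g_n$ are independent, the linear combination $S = \sum_n g_n c_n$ is again a centered complex Gaussian with variance $\sigma^2 = \sum_n |c_n|^2$. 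Consequently $S$ has the same law as $\sigma g$ for a single standard complex Gaussian $g$, so that $\|S\|_{L^q_\omega} = \sigma \,(\mathbb{E}|g|^q)^{1/q}$ and the whole problem collapses to bounding $(\mathbb{E}|g|^q)^{1/q}$ by $C\sqrt q$.

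Next I would carry out the moment computation. With the normalization $\mathbb{E}|g|^2 = 1$, the variable $|g|^2$ is exponentially distributed with density $e^{-u}$ on $[0,\infty)$, whence
$$\mathbb{E}|g|^q = \mathbb{E}\big((|g|^2)^{q/2}\big) = \int_0^\infty u^{q/2} e^{-u}\, du = \Gamma\!\left(\frac{q}{2}+1\right).$$
It then remains to check that $\Gamma(q/2+1)^{1/q} \leq C\sqrt q$ with $C$ independent of $q$. I would get this from an elementary Laplace-type bound on the Gamma integral: writing $s = q/2$ and factoring $u^s e^{-u} = (u^s e^{-u/2})\,e^{-u/2}$, the supremum of $u^s e^{-u/2}$ is attained at $u = 2s$ and equals $(2s/e)^s$, so that
$$\Gamma(s+1) = \int_0^\infty u^s e^{-u}\, du \leq (2s/e)^s \int_0^\infty e^{-u/2}\, du = 2\,(2s/e)^s.$$
Taking the $1/q = 1/(2s)$ power yields $\Gamma(q/2+1)^{1/q} \leq 2^{1/q}(2s/e)^{1/2} = 2^{1/q}\sqrt{q/e} \leq 2\sqrt q$, which is the claimed bound.

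The only point requiring genuine care is precisely the uniformity of $C$ in $q$, which is exactly what the Gamma-function estimate above secures; the large-$q$ asymptotics $\Gamma(q/2+1)^{1/q}\sim \sqrt{q/(2e)}$ confirm that the growth rate $\sqrt q$ is sharp, so no cruder bound would do. The reduction step also deserves attention, since it relies on the rotational invariance of the complex Gaussian (for real Gaussians one instead gets a real Gaussian of variance $\sum_n c_n^2$, while for complex coefficients it is the modulus $|c_n|$ that enters). If one preferred to avoid invoking stability of the Gaussian family, an alternative is to compute the even integer moments $\mathbb{E}|S|^{2m}=m!\,\sigma^{2m}$ directly by Wick's formula, take the $1/(2m)$-th root using $(m!)^{1/(2m)}\le C\sqrt m$, and interpolate to general $q$ via the monotonicity of the $L^q_\omega$ norms. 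I expect the first approach to be the cleanest, with the Gamma-function tail estimate being the main, though entirely standard, technical ingredient.
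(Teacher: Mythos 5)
Your proof is correct. Note that the paper itself does not prove this lemma: it is quoted from \cite{wavun} (Burq--Tzvetkov), where the standard argument estimates the even integer moments $\mathbb{E}|S|^{2m}$ by expanding the power and exploiting independence (a Wick-type computation), then passes to general $q$ by monotonicity of the $L^q_\omega$ norms --- which is precisely the alternative route you sketch at the end. Your primary argument is genuinely different and arguably cleaner: the $2$-stability and rotation invariance of the complex Gaussian collapse the whole sum to a single Gaussian of variance $\sigma^2=\sum|c_n|^2$, giving the exact identity $\|S\|_{L^q_\omega}=\sigma\,\Gamma(q/2+1)^{1/q}$, after which the Laplace-type bound $\Gamma(s+1)\le 2(2s/e)^s$ yields the uniform constant; your computations (the exponential law of $|g|^2$, the supremum at $u=2s$, the final $2^{1/q}\sqrt{q/e}\le 2\sqrt q$) all check out. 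What the stability route buys is sharpness and transparency --- you see immediately that $\sqrt q$ is the exact growth rate; what the moment-expansion route buys is generality, since it survives when the $g_n$ are replaced by non-Gaussian variables with sub-Gaussian moment growth (Bernoulli, etc.), a flexibility that matters in the random-data literature the paper draws on. Two small points deserve one extra line each: the reduction to finite sums should note that for $\sum|c_n|^2<\infty$ the series converges a.s.\ (martingale convergence), so that the left-hand side is well defined before Fatou is applied along the partial sums; and the normalization of the complex Gaussian ($\mathbb{E}|g|^2=1$ versus $\mathbb{E}|g|^2=2$) only shifts the absolute constant $C$, so your assumption is harmless.
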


\begin{remark} The same inequality is valid when $g_n$ are replaced by real centered gaussian variables and $c_n$ is replaced by a sequence of real numbers. \end{remark}

\section{Existence of a measure and a set of full measure where the flow of the transformed problem is defined for all $T\in[-\pi,\pi]$}

We are about to prove the existence of a finite measure on the complex Sobolev space $H^\sigma$, for any $\sigma<\Frac{1}{2}$ that admits a set of full measure $\Sigma$ such that for all $u\in \Sigma$, there is a solution of \reff{pinit} for all $T\in [-\pi,\pi]$ with initial data $v_0$ and $v_1$ depending on $u$.

\smallskip

We are first going to consider the Sobolev spaces $H^\sigma$ on the sphere $S^3$ and the eigen functions of $1-\Delta_{S^3}$ on this sphere. Remind that the radial disbributions on $\R^3$ are transformed into what are called zonal distributions on $S^3$, that is to say, distributions that only depend on the angle $R$, or on the distance to a pole of $S^3$. The integration unit becomes then $4\pi \sin^2 R dR$.

\subsection{Norms of the Laplace Beltrami operator's eigen functions}

\begin{proposition} The functions $e_n (R) = \Frac{1}{\sqrt 2 \pi}\Frac{\sin n R}{\sin R}$, $n\geq 1$ form a diagonalization basis of $\Delta_{S^3} -1= \partial_R^2 + \Frac{2}{\tan R} \partial_R -1$ on $L^2(\sin^2 R dR)$ with eigenvalues $-n^2$. \end{proposition}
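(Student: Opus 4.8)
The plan is to split the statement into two essentially independent verifications: first, that each $e_n$ solves the eigenvalue equation $(\Delta_{S^3}-1)e_n = -n^2 e_n$, and second, that the family $(e_n)_{n\geq 1}$ is a complete orthonormal system for the zonal $L^2$ inner product, whose weight is $4\pi\sin^2 R\,dR$.

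For the eigenvalue part I would conjugate the radial operator $L = \partial_R^2 + \frac{2}{\tan R}\partial_R$ by multiplication by $\sin R$. Writing a zonal function as $u = w/\sin R$ and differentiating twice, the first-order terms in $w$ produced by $\partial_R^2 u$ and by $\frac{2}{\tan R}\partial_R u$ cancel, and after using $1-\cos^2 R = \sin^2 R$ the zeroth-order contribution collapses, leaving the clean identity
$$L\!\left(\frac{w}{\sin R}\right) = \frac{w''+w}{\sin R}.$$
Taking $w = \sin nR$ gives $w'' = -n^2 w$, hence $Lu_n = (1-n^2)u_n$ with $u_n = \sin nR/\sin R$, so that $(L-1)e_n = -n^2 e_n$, which is exactly the claim. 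I would also record here that $e_n$ is genuinely smooth on all of $[0,\pi]$, poles $R=0,\pi$ included: indeed $\sin nR/\sin R = U_{n-1}(\cos R)$ is a (Chebyshev) polynomial in $\cos R$, so the apparent singularities of $1/\sin R$ and of $\frac{1}{\tan R}$ are removable and $e_n$ truly lies in the domain of the operator.

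For orthonormality, the weight $\sin^2 R$ in the measure cancels the factor $1/\sin^2 R$ in $e_n e_m$, reducing the computation to the elementary integral $\int_0^\pi \sin nR\,\sin mR\,dR = \frac{\pi}{2}\delta_{nm}$; combined with the normalizing constant $1/(\sqrt 2\,\pi)$ and the factor $4\pi$ coming from integration over the $S^2$ variable $\omega$, this gives $\langle e_n,e_m\rangle = \delta_{nm}$.

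Completeness is the delicate point and the only place where I expect real difficulty. The efficient route is to observe that the map $f\mapsto f\sin R$ is a unitary isomorphism from $L^2([0,\pi],\sin^2 R\,dR)$ onto $L^2([0,\pi],dR)$, under which $e_n$ is sent to a constant multiple of $\sin nR$. Since $(\sin nR)_{n\geq 1}$ is the classical Fourier sine basis of $L^2([0,\pi],dR)$, its preimage $(e_n)_n$ is then a Hilbert basis of the zonal space. The subtlety hidden in this reduction is the behaviour at the endpoints $R=0,\pi$, that is, checking that conjugation by $\sin R$ really is a bijection of the two Hilbert spaces and that no part of the spectrum is lost to the singularity of $L$ at the poles. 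This is precisely what the polynomiality of $U_{n-1}(\cos R)$ secures, since it exhibits the $e_n$ as the honest smooth eigenfunctions of the (essentially self-adjoint) restriction of $1-\Delta_{S^3}$ to zonal functions, with no boundary condition to impose beyond regularity at the poles.
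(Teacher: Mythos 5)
Your proposal is correct, and in fact it supplies more than the paper does: for this proposition the paper gives no argument at all, deferring entirely to the citation \cite{fourier}. Your two computations are the standard route that such a reference would contain. The conjugation identity
$$\Bigl(\partial_R^2 + \frac{2}{\tan R}\,\partial_R\Bigr)\Bigl(\frac{w}{\sin R}\Bigr) = \frac{w'' + w}{\sin R}$$
checks out (it is the spherical analogue of $\Delta_{\R^3}(w/r) = w''/r$, which is also the mechanism behind the paper's Penrose-transform computation $\Omega^3(\partial_T^2-\partial_R^2-\frac{2}{\tan R}\partial_R+1)v = \Box f$), and it immediately gives $(\Delta_{S^3}-1)e_n = -n^2 e_n$; your observation that $\sin nR/\sin R = U_{n-1}(\cos R)$ correctly disposes of the apparent singularities at the poles. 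For completeness, the multiplication map $f\mapsto f\sin R$ is indeed a surjective isometry from $L^2([0,\pi],\sin^2 R\,dR)$ onto $L^2([0,\pi],dR)$ carrying $e_n$ to a multiple of $\sin nR$, under which $1-\Delta_{S^3}$ becomes the Dirichlet operator $-\partial_R^2$, so completeness reduces to the classical Fourier sine basis; this is airtight as stated, and your closing remark about essential self-adjointness is dispensable, since once completeness of the $(e_n)$ is proved directly no spectral-theoretic input is needed. One small point worth being aware of: the paper is internally inconsistent about normalization --- with the bare measure $\sin^2 R\,dR$ named in the proposition the orthonormalizing constant would be $\sqrt{2/\pi}$, whereas the stated constant $\frac{1}{\sqrt 2\,\pi}$ is the right one for the zonal measure $4\pi\sin^2 R\,dR$ introduced just before; you handled this correctly by including the $4\pi$ from the $S^2$ integration, so $\langle e_n,e_m\rangle = \delta_{nm}$ as you computed.
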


The proof of this proposition can be found in \cite{fourier}.






\begin{proposition} Let $1\leq p \leq \infty$. There exists $C_p$ such that for all $n$ :

\begin{equation}\label{majoration} ||e_n||_{L^p} \leq \left \lbrace{ \begin{tabular}{lll}
$C_p$ & \mbox{if} $p<3$ \mbox , \\
$C_p \log n$ & \mbox{if} $p=3$ \\
$C_p n^{1-3/p}$ & \mbox{otherwise} \end{tabular} } \right. \end{equation}

\end{proposition}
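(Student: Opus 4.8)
The plan is to reduce the whole statement to an elementary one–dimensional integral and then split the range of integration at the natural scale $R=1/n$. Since the $L^p$ norm is taken against the zonal measure $\sin^2 R\,dR$ on $[0,\pi]$, we have, up to the fixed multiplicative constant $(\sqrt2\,\pi)^{-p}$,
$$||e_n||_{L^p}^p \sim \int_0^\pi \frac{|\sin nR|^p}{\sin^{p-2}R}\,dR =: I_n,$$
so the entire question is the growth of $I_n$ in $n$. For $1\leq p\leq 2$ the integrand is bounded (no singularity) and $I_n=O(1)$ trivially, so I would assume $p>2$ from here on. Then the integrand is singular only at the two poles $R=0$ and $R=\pi$, and the substitution $R\mapsto\pi-R$, which fixes both $\sin R$ and $|\sin nR|$ (for integer $n$), shows that the contributions of $[0,\pi/2]$ and $[\pi/2,\pi]$ are equal; hence it suffices to estimate $\int_0^{\pi/2}$.

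Next I would use Jordan's inequality $\sin R\geq \tfrac{2}{\pi}R$ on $[0,\pi/2]$ to replace the denominator, giving $\sin^{2-p}R\leq C_p\,R^{2-p}$ for $p>2$, and then split at $R=1/n$. On $[0,1/n]$ I use $|\sin nR|\leq nR$, so that $\int_0^{1/n}(nR)^p R^{2-p}\,dR = n^p\int_0^{1/n}R^2\,dR = \tfrac13\,n^{p-3}$. On $[1/n,\pi/2]$ I use the crude bound $|\sin nR|\leq 1$, leaving the elementary integral $\int_{1/n}^{\pi/2}R^{2-p}\,dR$. This last integral is precisely where the three regimes originate: for $p<3$ the exponent $2-p>-1$ keeps it convergent as the cutoff tends to $0$, hence $O(1)$; for $p>3$ it is dominated by the lower endpoint and equals $O(n^{p-3})$; and for $p=3$ it is $\int_{1/n}^{\pi/2}dR/R=\log n+O(1)$. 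Collecting the two pieces yields $I_n=O(1)$ for $p<3$, $I_n=O(\log n)$ for $p=3$, and $I_n=O(n^{p-3})$ for $p>3$.

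Taking $p$-th roots then gives exactly the claimed bounds: $||e_n||_{L^p}\leq C_p$ for $p<3$; $||e_n||_{L^3}\leq C_3(\log n)^{1/3}\leq C_3\log n$ for $n\geq 2$; and $||e_n||_{L^p}\leq C_p\,(n^{p-3})^{1/p}=C_p\,n^{1-3/p}$ for $p>3$. The endpoint $p=\infty$ I would treat directly: $|e_n(R)|=(\sqrt2\,\pi)^{-1}|\sin nR|/\sin R$ attains its supremum $\sim n$ as $R\to 0$, so $||e_n||_{L^\infty}\leq C_\infty\,n$, consistent with the formula in the limit $p\to\infty$.

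The computation is routine; the only genuine conceptual point is recognizing that the estimate is governed by concentration of $e_n$ at the two poles over a window of width $1/n$, where $e_n\approx n$, and that the borderline exponent $p=3$ produces the logarithm precisely because $2-p$ hits the critical value $-1$ in $\int R^{2-p}\,dR$. I would also remark that the finitely many small values of $n$ are absorbed into the constants $C_p$ (the $p=3$ bound being understood for $n\geq 2$, since $e_1$ is constant), and that intermediate values of $p$ could alternatively be reached by interpolation, though the direct split above already covers all $p\in[1,\infty]$ with constants uniform in $n$.
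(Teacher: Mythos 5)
Your proof is correct and follows essentially the same route as the paper's: the symmetry $R\mapsto\pi-R$, the comparison $\frac{2R}{\pi}\leq\sin R\leq R$ on $]0,\pi/2]$, and the split at $R=1/n$ with $|\sin nR|\leq nR$ near the pole and $|\sin nR|\leq 1$ away from it. Your explicit treatment of $p=\infty$ (via $|\sin nR|\leq n|\sin R|$) and your remark that the $p=3$ bound requires $n\geq 2$ are small refinements the paper leaves implicit, but they do not change the argument.
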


\begin{proof}

\smallskip

We have to sum $|\sin nR|^p |\sin R|^{2-p}$ on $]0,\pi[$. With a variable change $R\mapsto \pi-R$ on $[\frac{\pi}{2},\pi[$, the integral on $]0,\pi[$ is twice the one on $]0,\pi/2]$. What's more, on $]0,\pi/2]$,  $\Frac{2 R}{\pi}\leq \sin R \leq R$.

\smallskip

For $p<3$, we do $|\sin nR| \leq 1$ and sum $R^{2-p}$ on $[0,\pi/2]$. 

\smallskip

For $p\geq 3$, we have to consider separately the integral on $]0,\frac{1}{n}]$ on which we do $\frac{\sin nR}{\sin R} \leq n$, and the one on $[\frac{1}{n},\pi/2]$ on which we use $|\sin nR|\leq 1$ and $\sin^{2-p} R\leq (\frac{2 R}{\pi})^{2-p}$.

\smallskip

\end{proof}

\bigskip

Now that the $e_n$ are introduced, we can give some definitions.

\begin{definition}We set : \begin{itemize}
\item $E_N$ the finite dimensional complex vector space linearly spanned by $\lbrace e_n\; |\; n=1,\hdots ,N\rbrace$,
\item $\Pi_N$ the projection of $H^\sigma$ on $E_N$,
\item $\chi_S$ a positive function compactly supported, with support included in $[-1,1]$ equal to $1$ on $[-1/2,1/2]$,
\item $S_N$ the operator from $H^\sigma$ to $E_N$ defined by $\chi_S(-\Frac{\Delta}{N^2})$ that is to say the operator that sends $\sum c_n e_n$ on $\sum \chi_S(\Frac{n^2}{N^2})c_ne_n$. \end{itemize}
\end{definition}

\begin{theorem} Let $1\leq p \leq \infty$, $S_N$ is continuous from $L^p$ to $L^p$ and the supremum on $N$ of the norms of  $S_N$ as operators is finite. In other terms, there exists $C$ (independant from $N$) such that for all $N$ and all $f\in L^p$, $||S_N f||_{L^p}\leq C ||f||_{L^p}$. What is more, for all $f\in L^p$, the sequence $S_N f$ converges in norm $L^p$ toward $f$.
\end{theorem}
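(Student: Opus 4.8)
The plan is to prove the theorem about the operator $S_N = \chi_S(-\Delta/N^2)$: namely its uniform $L^p$-boundedness and the convergence $S_N f \to f$ in $L^p$. This is a statement about a smooth Fourier multiplier (in the spectral variable of $1-\Delta_{S^3}$) truncated at scale $N$, restricted to the zonal setting where the eigenfunctions are the explicit $e_n(R) = \frac{1}{\sqrt 2 \pi}\frac{\sin nR}{\sin R}$.

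First I would write the kernel of $S_N$ explicitly. Since $S_N$ sends $\sum c_n e_n$ to $\sum \chi_S(n^2/N^2) c_n e_n$, its action on a zonal function $f$ is given by integration against the kernel $K_N(R,R') = \sum_n \chi_S(n^2/N^2) e_n(R) e_n(R')$ with respect to the measure $\sin^2 R'\, dR'$. Using the product formula for $\frac{\sin nR}{\sin R}\cdot \frac{\sin nR'}{\sin R'}$ and summing in $n$ against the smooth cutoff $\chi_S$, I expect $K_N$ to be expressible (up to harmless terms) as a superposition of sharply-peaked kernels of the form $N \, \Psi(N(R-R'))$ and $N\,\Psi(N(R+R'))$, where $\Psi$ is a Schwartz function on $\R$ coming from the Fourier transform of $\chi_S(\xi^2)$. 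The key analytic input is that because $\chi_S$ is smooth and compactly supported, $\Psi$ decays faster than any polynomial, so $\int N|\Psi(N(R-R'))|\, dR' \leq C$ uniformly in $N$ and $R$. This gives the uniform bound $\sup_R \int |K_N(R,R')|\sin^2 R'\, dR' \leq C$ and symmetrically in $R'$; by the Schur test this yields uniform $L^p\to L^p$ boundedness for all $1\leq p\leq\infty$, with $p=1$ and $p=\infty$ handled directly by the $L^1$ kernel bound and interpolation filling in between.

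The hard part will be controlling the kernel near the diagonal and near the endpoints $R=0,\pi$, where the weight $\sin^{-1}R$ in each $e_n$ is singular. I would need to check that the singular factors $\frac{1}{\sin R \sin R'}$ combine with the measure $\sin^2 R'\, dR'$ so that no divergence survives; the natural approach is to change variables so the zonal problem on $S^3$ becomes a problem on the interval with the substitution $g(R) = R\, \sin^{-1}R \cdot(\text{something})$, or equivalently to exploit that $\sin R\, e_n(R) = \frac{1}{\sqrt 2\pi}\sin nR$ is a genuine Fourier sine mode, reducing $S_N$ to a classical de la Vallée-Poussin–type smooth Fourier multiplier on $L^p$ of the interval. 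Once the problem is transported to the flat Fourier side, uniform boundedness of smooth compactly-supported multipliers is standard.

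For the convergence $S_N f \to f$ in $L^p$ (for $p<\infty$), I would use the uniform bound just established together with a density argument. On the dense subspace of finite linear combinations $\sum_{n\leq M} c_n e_n$, convergence is immediate since $\chi_S(n^2/N^2) = 1$ for all $n\leq M$ once $N$ is large enough (as $\chi_S \equiv 1$ near $0$), so $S_N$ acts as the identity on such elements for large $N$. Given $\varepsilon>0$ and $f\in L^p$, I would pick $h$ in this dense class with $\|f-h\|_{L^p}<\varepsilon$, then estimate $\|S_N f - f\|_{L^p} \leq \|S_N(f-h)\|_{L^p} + \|S_N h - h\|_{L^p} + \|h-f\|_{L^p} \leq (C+1)\varepsilon + \|S_N h - h\|_{L^p}$, and let $N\to\infty$ to kill the middle term. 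I expect the principal obstacle throughout to be the rigorous control of the kernel near the endpoints, where the explicit sine structure of the $e_n$ must be used carefully rather than appealing to generic multiplier theory on manifolds.
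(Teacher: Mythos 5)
The paper itself contains no proof of this theorem --- it is imported from the literature (``The proof of this theorem can be found in \cite{outilsstrichartz}'') --- so your sketch should be measured against the standard argument there, which it essentially reconstructs. Your main route is the right one: write the kernel $K_N(R,R')=\sum_n \chi_S(n^2/N^2)\,e_n(R)e_n(R')$, convert $\sin nR\,\sin nR'$ into $\frac12\left[\cos n(R-R')-\cos n(R+R')\right]$, apply Poisson summation to the smooth compactly supported multiplier to produce $N\Phi(N(R-R'))$ and $N\Phi(N(R+R'))$ with $\Phi=\widehat{\chi_S(\cdot^2)}$ Schwartz, then run a Schur test against the measure $\sin^2R'\,dR'$ and interpolate between the $L^1$ and $L^\infty$ bounds (the kernel being symmetric). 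Your density argument for the convergence is also correct, using that $\chi_S\equiv 1$ on $[-1/2,1/2]$ makes $S_N$ the identity on each $E_M$ for $N$ large; and you are right to restrict convergence to $p<\infty$ --- in fact the theorem as stated is too strong at $p=\infty$, since each $S_Nf$ is continuous and a uniform limit of continuous functions is continuous. For $2<p<\infty$ you should add one line on density: smooth zonal functions have rapidly decaying spectral coefficients and $\|e_n\|_{L^\infty}\lesssim n$, so their spectral series converge uniformly, placing finite sums densely in every $L^p$, $p<\infty$.

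One of your fallback routes, as literally phrased, would fail: transporting by $g=\sin R\cdot f$ turns $\|f\|_{L^p(\sin^2R\,dR)}$ into the weighted norm $\|g\|_{L^p(\sin^{2-p}R\,dR)}$, and the weight $R^{2-p}$ is an $A_p$ weight only for $3/2<p<3$ (for $p\geq 3$ it is not even locally integrable at the endpoints), so ``classical de la Vall\'ee-Poussin multiplier theory on the interval'' cannot be invoked across the full range $1\leq p\leq\infty$. The direct kernel route does close, but only through the cancellation you allude to without executing: $\Phi$ is even, so $\Phi(N(R-R'))-\Phi(N(R+R'))$ vanishes at $R=0$ (and, via the periodized image at $R+R'-2\pi$, at the other pole $R=\pi$), and the mean-value bound $\left|\Phi(N(R-R'))-\Phi(N(R+R'))\right|\lesssim NR\,\langle NR'\rangle^{-M}$ for $R\leq 1/N$ is exactly what makes $\frac{1}{\sin R}\int_0^\pi N\left|\cdots\right|\sin R'\,dR'$ uniformly bounded near the poles; in the complementary regime $\sin R\geq 1/N$ the ratio $\sin R'/\sin R\leq 1+N|R-R'|$ is absorbed by the rapid decay of $\Phi$. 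With that estimate written out, your Schur bound and hence the theorem follow; so the verdict is a correct strategy whose key endpoint cancellation is identified but left to be carried out, and whose weighted-transference shortcut should be deleted or downgraded to a heuristic.
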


The proof of this theorem can be found in \cite{outilsstrichartz}.

\subsection{``Hamiltonian" problem}

Let us now replace the notation $\widetilde \Omega$ by merely $\Omega$.

\smallskip

The first step consists in changing the equation obtained after applying the Penrose transform, that is

\begin{equation} \label{peq} \left \lbrace{\begin{tabular}{ll}
$\partial_T^2 v + H^2 v + \Omega^{\alpha -2 }|v|^\alpha v = 0$ & \\
$v|_{T=0} = v_0$ \mbox{,} & $\partial_T v|_{T=0} = v_1$\end{tabular}}\right. \end{equation}
into a ``Hamiltonian" form. The operator $H$ is the strictly positive square root of $1-\Delta_{S^3}$ where, again, $\Delta_{S^3}$ is the Laplace-Beltrami operator restricted to zonal functions.

\smallskip

For this purpose, we write $u = v - i\partial_T H^{-1} v$. Let us remark that since we assume that $f$, and so $v$, are real distributions, we have $v= \Re u$. What is more, $H$ is strictly positive, which makes $H^s v$ a real distribution for all $s\in \R$. Also, $\partial_T$ and $H$ commute. The equation \reff{peq} is equivalent to : 

\begin{equation}\label{hamil} \left \lbrace{ \begin{tabular}{ll}
$i\partial_T u + H u + H^{-1}(\Omega^{\alpha-2}|\Re u|^\alpha \Re u) =0$ \\
$u|_{T=0} = u_0  = v_0 + iH^{-1} v_1$ \end{tabular} } \right. \end{equation}

Let us prove this fact.

\smallskip

Indeed, 

\smallskip

\begin{tabular}{lll}
$i\partial_T u $ & $=$ & $i\partial_T v +\partial_T^2 H^{-1} v$ \\
$Hu$ &  $=$ & $Hv -i\partial_T v $ \\
$i\partial_T u +Hu$ & $=$ &  $\partial_T^2 H^{-1} v + H v = H^{-1}(\partial_T^2 v + H^2 v)=-H^{-1}(\Omega^{\alpha-2}|v|^\alpha v).$
\end{tabular}

\smallskip

Therefore, \reff{peq} reduces to \reff{hamil}.

\smallskip

So now, we are provided with an almost-Hamiltonian formulation on $u$ of the equation \reff{peq}. The energy defined as :  

\begin{equation}\label{energie} \E(T,u) = \frac{1}{2}||Hu||_{L^2}^2 + \frac{1}{\alpha+2}\int \Omega^{\alpha-2}|\Re u|^{\alpha +2} \end{equation}
is ``formally" decreasing under the flow for $T\in [0,\pi]$, increasing for $T\in [-\pi,0]$. Indeed, by differentiating  $\E(T,u(T,.))$, we get, as a formal computation, that: 

$$d_T \E = \int  \Re(\partial_T \overline u (H^2 u +\Omega^{\alpha-2}|\Re u|^\alpha \Re u))\sin^2 R dR -\sin T \frac{\alpha-2}{\alpha +2} \int \Omega^{\alpha-1}|\Re u|^{\alpha+2}\sin^2 R dR $$ 

$$=\Re\left(\int \partial_T \overline u (iH\partial_T u)\sin^2 R dR \right)- \sin T \frac{\alpha-2}{\alpha +2} \int \Omega^{\alpha-1}|\Re u|^{\alpha+2}\sin^2 R dR$$

$$=\Re \left(i \int |H^{1/2} \partial_T u|^2 \sin^2 R dR\right)-\sin T\frac{\alpha-2}{\alpha +2} \int \Omega^{\alpha-1}|\Re u|^{\alpha+2}\sin^2 R dR$$
 
$$=-\sin T\frac{\alpha-2}{\alpha +2} \int \Omega^{\alpha-1}|\Re u|^{\alpha+2}\sin^2 R dR  \; .$$

Hence, we can see that since $\frac{\alpha-2}{\alpha +2} \int \Omega^{\alpha-1}|\Re u|^{\alpha+2}\sin^2 R dR \geq 0$, the energy reaches its maximum at $T=0$.

\subsection{``Hamiltonian" equations and approximation}
 
In order to get a well posed problem, we will approach it with ODEs, and so restrict ourselves to finite dimensions.

\smallskip

This is where we will use the definitions of $E_N$, $\Pi_N$, and $S_N$.

\bigskip

By replacing the energy $\E$ with

$$\E_N(T,u) = \frac{1}{2} ||Hu||_{L^2}^2 + \frac{1}{\alpha+2}\int \Omega^{\alpha-2}|S_N \Re u|^{\alpha+2}$$
the corresponding equation on $u$ becomes : 

$$i\partial_T u + Hu + H^{-1}S_N(\Omega^{\alpha-2}(|S_N (\Re u))|^\alpha S_N(\Re u)) =0 $$

We then consider the following equation : 

\begin{equation}\label{cauchy} \left\lbrace{\begin{tabular}{ll}
$i\partial_T u + Hu + H^{-1}S_N(\Omega^{\alpha-2}(|S_N (\Re u))|^\alpha S_N(\Re u)) =0 $ \\
$u|_{T=T_0} =  u_0\in E_N$ \end{tabular} } \right. \end{equation}

\begin{proposition} The equation \reff{cauchy} admits global strong solution in $E_N$.



\end{proposition}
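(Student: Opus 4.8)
**The plan is to establish global existence for the finite-dimensional ODE system by combining local existence (Cauchy–Lipschitz) with an a priori bound coming from the conserved/controlled energy.**

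First I would observe that \reff{cauchy} is, after restricting to $E_N$, genuinely a finite-dimensional system of ODEs: since $u|_{T=T_0}\in E_N$ and the right-hand side is composed of $H$ (which preserves $E_N$, being a function of $\Delta_{S^3}$), the projection-type operators $S_N$ (which map into $E_N$ by definition), and pointwise nonlinear operations, the whole vector field maps $E_N$ into $E_N$. Writing $u(T)=\sum_{n=1}^N c_n(T) e_n$ turns \reff{cauchy} into a system of ODEs for the coefficients $(c_n)_{n\le N}\in\C^N$. The vector field is smooth in $u$ because $|S_N\Re u|^\alpha S_N\Re u$ is a $C^1$ function of the real and imaginary parts of the coefficients (here $\alpha\ge 2$ guarantees enough smoothness), so it is locally Lipschitz. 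The Cauchy–Lipschitz (Picard–Lindelöf) theorem then yields a unique maximal strong solution on some interval $]T_-,T_+[$ containing $T_0$.

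The heart of the argument is the a priori bound that upgrades this local solution to a global one. The modified energy $\E_N(T,u)=\frac12\|Hu\|_{L^2}^2+\frac{1}{\alpha+2}\int\Omega^{\alpha-2}|S_N\Re u|^{\alpha+2}$ is precisely the Hamiltonian adapted to \reff{cauchy}, and I would redo the formal differentiation carried out above for $\E$ to obtain rigorously (this time the computation is legitimate since everything lives in finite dimensions and is smooth) that
$$
d_T\E_N=-\sin T\,\frac{\alpha-2}{\alpha+2}\int\Omega^{\alpha-1}|S_N\Re u|^{\alpha+2}\sin^2 R\,dR.
$$
Because the integrand is nonnegative and $\Omega\ge 0$, the sign of $d_T\E_N$ is controlled: $\E_N$ is nonincreasing for $T\ge 0$ and nondecreasing for $T\le 0$, so $\E_N(T,u(T))\le \E_N(0,u(0))$ for $T$ near $0$; more robustly, since both summands of $\E_N$ are nonnegative and the sole term $\frac{\alpha-2}{\alpha+2}\int\Omega^{\alpha-1}|S_N\Re u|^{\alpha+2}$ driving the variation is itself dominated by $\E_N$ (after bounding $\Omega\le 2$), a Gronwall-type estimate bounds $\E_N(T,u(T))$ by $\E_N(T_0,u_0)$ times a factor depending only on $|T-T_0|\le 2\pi$. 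In particular $\|Hu(T)\|_{L^2}^2\le 2\E_N(T,u(T))$ stays finite on any bounded time interval.

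Finally I would convert this energy bound into a bound on the ODE unknown itself: on the finite-dimensional space $E_N$ the norm $\|Hu\|_{L^2}$ is equivalent to the Euclidean norm of the coefficient vector $(c_n)_{n\le N}$ (since $H$ acts as multiplication by $\sqrt{1+n^2}\in[1,\sqrt{1+N^2}]$ on $e_n$), so controlling $\|Hu(T)\|_{L^2}$ bounds $|u(T)|$ uniformly on any compact subinterval of the maximal interval of existence. The standard continuation criterion for ODEs then forbids blow-up in finite time, so $T_\pm=\pm\infty$ and the solution is global. \textbf{The main obstacle is making the energy differentiation rigorous and extracting from it a bound uniform on the whole relevant time window}, in particular handling the fact that $\E_N$ only decreases for $T\ge0$ and increases for $T\le0$ — one must argue that the growth for $T\le 0$ is at worst controlled by Gronwall rather than hoping for outright monotonicity, and check that the constants in the coefficient-norm equivalence, though $N$-dependent, do not interfere with global existence for each fixed $N$.
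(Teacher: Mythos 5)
Your proof has the same skeleton as the paper's: Cauchy--Lipschitz for local existence of the coefficient ODE on $E_N$, the modified energy $\E_N$ (whose derivative along the flow carries the sign of $-\sin T$, so that it is maximal at $T=0$) to control $\|Hu\|_{L^2}$, equivalence of norms in finite dimension, and the standard continuation criterion. You are in fact more explicit than the paper on the one genuinely delicate point, namely that monotonicity alone only bounds the energy along trajectories controlled at $T=0$: for data prescribed at $T_0\neq 0$, the evolution toward $T=0$ is an energy-growth phase that requires a separate bound, and you propose a Gronwall inequality for it. (A harmless slip: $H$ acts on $e_n$ as multiplication by $n$, since $1-\Delta_{S^3}$ has eigenvalues $n^2$, not $\sqrt{1+n^2}$; the norm-equivalence argument is unaffected.)

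The gap is in that Gronwall step. The differential identity you quote is copied from the paper, but it is misprinted there: differentiating the weight $\Omega^{\alpha-2}$ in $T$ lowers the exponent by one, so the driving term is $-\sin T\,\frac{\alpha-2}{\alpha+2}\int \Omega^{\alpha-3}|S_N\Re u|^{\alpha+2}\sin^2R\,dR$, with $\Omega^{\alpha-3}$ rather than $\Omega^{\alpha-1}$. The sign conclusion (maximum of $\E_N$ at $T=0$) is unchanged, but your domination step reverses: from $\Omega\le 2$ one gets $\Omega^{\alpha-3}\ge \Omega^{\alpha-2}/2$, and near the zero set of $\Omega$ the ratio $\Omega^{\alpha-3}/\Omega^{\alpha-2}=\Omega^{-1}$ is unbounded, so for $2<\alpha<3$ the driving term is \emph{not} dominated by $C\,\E_N$ and the linear inequality $|d_T\E_N|\le C|\sin T|\,\E_N$ is unjustified (for $\alpha=2$ the term vanishes identically and the energy is conserved, so there is no issue). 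The crude finite-dimensional substitute $\|S_N\Re u\|_{L^\infty}\le C_N\E_N^{1/2}$ only yields a superlinear differential inequality, which permits blow-up of the bound in finite time, so it does not close the argument either. What survives without further work is exactly what the paper later uses: for $T_0=0$, monotonicity gives $\E_N(T,u(T))\le\E_N(0,u_0)$ in both time directions, hence a global flow $\Psi_N(0,\cdot)$ on $[-\pi,\pi]$; and from a general $T_0$ the flow is global in the direction of increasing $|T|$, the decreasing-energy direction. Every occurrence of $\Psi_N(t_0,t)$ downstream (in particular in Proposition \reff{decrease} and the gluing $\Psi_N(0,t)u=\Psi_N(k\tau,k\tau+t_1)(\Psi_N(0,k\tau)u)$) is along a trajectory issued from time $0$, so those applications are safe; but to justify the proposition for arbitrary $T_0$, as both you and the paper state it, one needs a genuinely new estimate for the growth phase, not the Gronwall bound as you wrote it.
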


\begin{proof} Given the structure of the non linearity in dimension $N$, the local well-posedness is obtained by applying Cauchy-Lipschitz theorem. In order to extend the local result to a global one, we have to consider both the equivalence of norms in finite dimension, and the fact that the energy $\E_N$, which controls the $L^2$ norm of $Hu$ and so the $L^2$ norm of $u$, reaches its maximum at $T=0$ (same computation as for $\E$, but it is not only formal in finite dimension). We will then note $\Psi_N(T_0,T)$ the flow of \reff{cauchy} for all $T_0,T \in [-\pi,\pi]^2$.
\end{proof}

\smallskip

The local well-posedness in low regularity spaces will be proved in the next subsection. Also, we will see that this property is uniform in $N$, that is, we get time of existence and controls on $L^p$ norms independant from $N$ thanks to the uniformity of the norms of $S_N$.

\smallskip

The point of using approximated equations is to allow us to approach the global flow of the wave equation. Moreover, the measure on $E_N$ that it will induce converges towards the measure that we will consider on $H^\sigma$.
 
\subsection{Local well-posedness}
 
We will now prove the local well-posedness of the approached and general form of the pseudo-hamiltonian equations and give some inequalities on the norms of their solutions.

\begin{definition}We set 

$$Y^s_T = L^1([-T,T],H^{-s})+L^{p'}([-T,T],L^{q'})$$
where $p'$ and $q'$ are the respective conjugate numbers of $p=\Frac{2}{s}$ and $q$ such that $\Frac{1}{p}+\Frac{1}{q}=\Frac{1}{2}$.
\end{definition}

\begin{remark}The set $Y^s_T$ is not the dual of the above-mentioned $X^s_T$ but since for all $1\leq p\leq \infty$, $||f||_{L^p}=\sup \lbrace \int f\overline g  \; \; | \; ||g||_{L^{p'}}\leq 1\rbrace$, we get that if $F$ maps continuously $H^{\sigma}$ to $X_T^s$, then its adjoint is continuous from $Y^s_T$ to the dual of $H^\sigma$, namely, $H^{-\sigma}$.\end{remark}

The following proposition regroups some consequences of the Sctrichartz inequality.

\begin{proposition} For all $0<s<s_1<1$ there exists $C>0$ depending only on $s$ such that for all $T\in ]0, \pi]$, $f\in H^s$, $g\in Y_T^{1-s}$ and $h\in Y_T^{1-s_1}$, we have :

\begin{itemize}

\item $ ||S(t) f||_{X_T^s} \leq C ||f||_{H^s}$,
\item $ ||\int_{0}^tS(t-t')H^{-1} g(t')dt'||_{X_T^s} \leq C||g||_{Y_T^{1-s}}$,
\item $||(1-S_N) \int_{0}^t H^{-1} S(t-t')h(t')dt'||_{X^s_T}\leq  CN^{s-s_1}||h||_{Y_T^{1-s_1}}$ .
\end{itemize}

\end{proposition}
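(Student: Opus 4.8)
The three inequalities will be proved in order, each building on the previous one. The first bullet is nothing but the homogeneous Strichartz estimate of Proposition \ref{besoin} applied with the admissible couple $(p,q)=(2/s,2/(1-s))$, so there is nothing to add there; it is recorded only to fix notation. The whole work lies in the two inhomogeneous (Duhamel) bounds, and the plan is to deduce them from the homogeneous estimate by a $TT^\ast$/duality argument, followed by the Christ--Kiselev lemma to pass from the full integral $\int_{-T}^T$ to the retarded one $\int_0^t$.

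For the second bullet I would first treat the non-retarded operator $g\mapsto \int_{-T}^T S(t-t')H^{-1}g(t')\,dt'$. Since $S$ is a unitary group commuting with $H$, the group law factors this as $S(t)\,w$ with $w=\int_{-T}^T S(-t')H^{-1}g(t')\,dt'$, so by the homogeneous estimate it suffices to prove the adjoint bound $\|w\|_{H^s}\le C\|g\|_{Y_T^{1-s}}$. This I would obtain by duality: pairing with $\phi\in H^{-s}$ and moving $S(-t')^\ast=S(t')$ and $H^{-1}$ onto $\phi$ gives $\langle w,\phi\rangle=\int_{-T}^T\langle g(t'),H^{-1}S(t')\phi\rangle\,dt'$, which by the $Y_T^{1-s}$--$X_T^{1-s}$ duality recorded in the Remark is controlled by $\|g\|_{Y_T^{1-s}}\,\|H^{-1}S(\cdot)\phi\|_{X_T^{1-s}}$. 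Applying Proposition \ref{besoin} at regularity $1-s$ (legitimate since $0<1-s<1$) together with the identity $\|H^{-1}\phi\|_{H^{1-s}}=\|\phi\|_{H^{-s}}$ closes this estimate. The $C^0H^s$ component of the retarded integral then follows directly from the unitarity of $S$ on $H^s$, by running the same duality over the subinterval $[0,t]$, whereas its $L^{2/s}_tL^{2/(1-s)}_x$ component is produced from the non-retarded bound via Christ--Kiselev.

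The third bullet is the same scheme with a frequency-localisation gain inserted. The new ingredient is a smoothed homogeneous estimate $\|(1-S_N)S(t)\psi\|_{X_T^s}\le CN^{s-s_1}\|\psi\|_{H^{s_1}}$, which I would prove by noting that, since $\chi_S\equiv 1$ on $[-1/2,1/2]$, the multiplier of $1-S_N$ is supported on $\{n>N/\sqrt2\}$; on that set $\langle n\rangle^{s}\le CN^{s-s_1}\langle n\rangle^{s_1}$ because $s-s_1<0$, whence $\|(1-S_N)\psi\|_{H^s}\le CN^{s-s_1}\|\psi\|_{H^{s_1}}$, and Proposition \ref{besoin} at level $s$ finishes it. As $1-S_N$, $H^{-1}$ and $S(t)$ are all functions of $H$ and hence commute, the non-retarded operator again factors through $S(t)$; combining the smoothed estimate with the adjoint bound $\|\int_{-T}^T S(-t')H^{-1}h(t')\,dt'\|_{H^{s_1}}\le C\|h\|_{Y_T^{1-s_1}}$ (the second-bullet computation carried out at level $s_1$) produces the $N^{s-s_1}$ factor, and Christ--Kiselev once more upgrades $\int_{-T}^T$ to $\int_0^t$.

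The step I expect to be the main obstacle is the passage from the non-retarded to the retarded estimate, that is the Christ--Kiselev application. One must split $Y_T^{1-s}=L^1_tH^{-(1-s)}+L^{2/(1+s)}_tL^{2/(2-s)}_x$ and verify, for each summand, that the output time-exponent $p=2/s$ is strictly larger than the input time-exponents $1$ and $2/(1+s)$ (respectively $2/(1+s_1)$ for the third bullet); these strict inequalities, which are exactly the hypotheses that make the lemma applicable, hold thanks to $s<s_1<1$. The other delicate point is purely bookkeeping: one must keep track throughout the duality of the two different admissible pairs attached to $X_T^s$ and to $X_T^{s_1}$, since the output and the test-function spaces live at different regularities.
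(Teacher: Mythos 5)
Your proof is correct and follows essentially the same route as the paper's: the homogeneous Strichartz estimate, the adjoint bound via the $Y^{1-s}_T$--$X^{1-s}_T$ duality (the paper's composition $S\circ H^{-1}\circ S^{*}$, which you merely unfold as an explicit pairing), the multiplier bound $\|(1-S_N)\psi\|_{H^s}\leq C N^{s-s_1}\|\psi\|_{H^{s_1}}$ coming from the support of $1-\chi_S$, and the Christ--Kiselev lemma to pass from the full integral to the retarded one. Your additional care with the $C^0H^s$ component and with the exponent conditions needed for Christ--Kiselev only makes explicit what the paper leaves implicit.
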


\begin{proof} 

\smallskip

The first inequality is the Strichartz inequality already mentioned in \reff{besoin}. One can find its proof in \cite{kapit}. It is equivalent to the continuity of $S : f\mapsto (t\mapsto S(t)f ) $ from $H^s $ to $X^s_T$ with a constant independant from $T$ (as long as $T$ is taken in a compact of $\R^+$).

\smallskip

We deduce from that that its adjoint : $ S^* g \mapsto \int_{-T}^T S(-t') g(t') dt'$ is continuous from $Y^{1-s}_T$ to $H^{s-1}$ with a constant independant from $T$. Hence, as $H^{-1}$ is continuous from $H^{s-1}$ to $H^{s}$, and $S$ from $H^s$ to $X^s_T$, we get that $S\circ H^{-1} \circ S^*$ is continuous from $Y^{1-s}_T$ to $X^s_T$. Then, we get from M. Christ and A. Kiselev lemma (see \cite{christkiselev}) the continuity of 

$$g\mapsto \int_{0}^tS(t-t')H^{-1} g(t')dt'$$
from $Y^{1-s}_T$ to $X^s_T$ with a constant independant from $T$. That is to say : 

$$||\int_{0}^tS(t-t')H^{-1} g(t')dt'||_{X_T^s} \leq C ||g||_{Y^{1-s}_T} \; .$$

At last, $(1-S_N)$ is continuous from $H^{s_1}$ to $H^s$ and its norm is less than $CN^{s-s_1}$. Indeed,

$$||(1-S_N) \sum c_n e_n ||_{H^s} = \sqrt{\sum (1-\chi_S(\frac{n^2}{N^2}))n^{2s}|c_n|^2} $$

$$\leq C N^{s-s_1} \sqrt{\sum n^{2s_1}|c_n|^2} = CN^{s-s_1}||\sum c_n e_n ||_{H^{s_1}}$$ 

We get then that $S\circ (1-S_N)\circ H^{-1} \circ S^*$ is continuous from $Y^{1-s_1}_T$ to $X_T^s$ and its norm is less than $CN^{s-s_1}$, which leads to the third inequality, using, once again M. Christ and A. Kiselev lemma.

\end{proof}

\begin{proposition} Let $(p,q)$ be such that $\Frac{1}{p}+\Frac{3}{q} = \Frac{3}{2}-s$ and $p', q'$ their respective conjugate numbers, ($p\geq \Frac{2}{s}$). Again, these definitions imply that $s$ belongs to $[0,1[$. Then,

$$||f||_{L^p_t,L^q_x}\leq C ||f||_{X^s_T}\; \mbox{and} \; ||f||_{Y^s_T}\leq C ||f||_{L^{p'}_t,L^{q'}_x} \; .$$
\end{proposition}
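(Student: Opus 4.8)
The plan is to prove both inequalities by interpolation and duality, using the endpoint Strichartz estimate already established in Proposition~\ref{besoin} together with the Sobolev embedding theorem. The two assertions are dual to each other, so once the first is established the second follows by the duality relation recorded in the remark preceding the statement. Hence I would concentrate the argument on the first inequality
$$\|f\|_{L^p_t L^q_x}\leq C\|f\|_{X^s_T}\,,$$
for the range $\tfrac{1}{p}+\tfrac{3}{q}=\tfrac{3}{2}-s$ with $p\geq \tfrac{2}{s}$, and deduce the second by adjoining.

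\textbf{First inequality.} The idea is to combine the control that $X^s_T$ gives at the two ``ends'' of the admissible range. By definition $X^s_T=C^0([-T,T],H^s)\cap L^{p_0}([-T,T],L^{q_0})$ with $p_0=\tfrac{2}{s}$ and $\tfrac{1}{p_0}+\tfrac{1}{q_0}=\tfrac12$, so one already controls $\|f\|_{L^\infty_t H^s_x}$ and $\|f\|_{L^{p_0}_t L^{q_0}_x}$. First I would use the Sobolev embedding theorem (Theorem in the Preliminaries) applied at fixed time: for a spatial exponent $\tilde q$ with $\tfrac12=\tfrac{1}{\tilde q}+\tfrac{s}{3}$ one has $\|f(t)\|_{L^{\tilde q}_x}\leq C\|f(t)\|_{H^s_x}$, and taking the $L^\infty$ norm in time yields $\|f\|_{L^\infty_t L^{\tilde q}_x}\leq C\|f\|_{X^s_T}$. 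This is exactly the endpoint $(p,q)=(\infty,\tilde q)$ of the claimed family, since $\tfrac{1}{\infty}+\tfrac{3}{\tilde q}=\tfrac{3s}{?}$ matches $\tfrac32-s$ by the Sobolev relation. The other endpoint $(p,q)=(p_0,q_0)$ is built into the definition of $X^s_T$, and one checks directly that it satisfies $\tfrac{1}{p_0}+\tfrac{3}{q_0}=\tfrac32-s$. Then I would interpolate between these two space-time Lebesgue bounds: for $\theta\in[0,1]$ set $\tfrac1p=\tfrac{\theta}{p_0}$ and $\tfrac1q=\tfrac{1-\theta}{\tilde q}+\tfrac{\theta}{q_0}$, and a standard Hölder/interpolation argument (e.g.\ the interpolation of the mixed-norm spaces $L^{p}_t L^{q}_x$) gives
$$\|f\|_{L^p_t L^q_x}\leq \|f\|_{L^\infty_t L^{\tilde q}_x}^{1-\theta}\,\|f\|_{L^{p_0}_t L^{q_0}_x}^{\theta}\leq C\|f\|_{X^s_T}\,,$$
and I would verify that as $\theta$ ranges over $[0,1]$ the resulting $(p,q)$ sweeps out exactly the scaling line $\tfrac1p+\tfrac3q=\tfrac32-s$ with $p\geq \tfrac{2}{s}$. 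The constant is independent of $T$ because both endpoint constants are (the Strichartz constant by Proposition~\ref{besoin}, the Sobolev one trivially).

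\textbf{Second inequality.} For the estimate on $Y^s_T$, I would argue by duality. Recall $Y^s_T=L^1_t H^{-s}+L^{p'}_t L^{q'}_x$, so it suffices to bound the $L^{p'}_t L^{q'}_x$ piece by $\|f\|_{L^{p'}_t L^{q'}_x}$, which is immediate, and to control the pairing defining the norm. Concretely, using the characterization $\|f\|_{Y^s_T}=\sup\{\langle f,g\rangle : \|g\|_{X^s_T}\leq 1\}$ (the pre-dual pairing indicated in the remark on $Y^s_T$), the bound on $\|g\|_{L^p_t L^q_x}$ just proved, and Hölder in the dual exponents $(p',q')$, one gets
$$|\langle f,g\rangle|\leq \|f\|_{L^{p'}_t L^{q'}_x}\,\|g\|_{L^p_t L^q_x}\leq C\,\|f\|_{L^{p'}_t L^{q'}_x}\,\|g\|_{X^s_T}\,,$$
whence $\|f\|_{Y^s_T}\leq C\|f\|_{L^{p'}_t L^{q'}_x}$ after taking the supremum over $g$.

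\textbf{The main obstacle} I expect is purely bookkeeping rather than conceptual: checking that the admissibility conditions line up exactly, i.e.\ that the scaling identity $\tfrac1p+\tfrac3q=\tfrac32-s$ is preserved under the interpolation and that the two endpoints $(\infty,\tilde q)$ and $(p_0,q_0)$ really lie on this line, together with tracking the constraint $p\geq \tfrac2s$. One must be careful with the Sobolev exponent $\tilde q$ (defined by $\tfrac12=\tfrac1{\tilde q}+\tfrac s3$) and with the fact that interpolation of mixed-norm Lebesgue spaces requires both the time and space exponents to interpolate consistently; a slightly cleaner route, if the direct interpolation of $L^p_t L^q_x$ spaces feels delicate, is to interpolate first in space at fixed time (getting $\|f(t)\|_{L^q_x}\leq \|f(t)\|_{L^{\tilde q}_x}^{1-\theta}\|f(t)\|_{L^{q_0}_x}^\theta$ pointwise in $t$ by Hölder) and then apply Hölder in the time variable, which reduces everything to elementary exponent arithmetic. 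No deep estimate beyond the already-granted Strichartz and Sobolev inequalities is needed.
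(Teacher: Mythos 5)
Your proof is correct and follows essentially the same route as the paper: the paper establishes the first inequality by exactly your interpolation between the endpoint $(p,q)=(\frac{2}{s},\frac{2}{1-s})$ built into the definition of $\|\cdot\|_{X^s_T}$ and the Sobolev endpoint $(\infty,\frac{6}{3-2s})$, via H\"older in space at fixed time followed by H\"older in time (your ``cleaner route''), with the same exponent arithmetic $p\theta=\frac{2}{s}$. The paper's displayed proof stops there and leaves the second inequality implicit, and your duality argument for it is precisely what the paper's remark on $Y^s_T$ gestures at; note only that no Strichartz input (Proposition \ref{besoin}) is actually needed anywhere in this proposition --- the $L^{2/s}_t L^{2/(1-s)}_x$ control is part of the $X^s_T$ norm itself, which is also why the constant is trivially independent of $T$.
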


\begin{proof}
We get the inequalities for the extremal couples $p=\Frac{2}{s},q=\Frac{2}{1-s}$ by definition of the norm $X^s_T$ and $p=\infty, \Frac{1}{q} = \Frac{1}{2}-\Frac{s}{3}$ thanks to Sobolev embedding theorem. Regarding the other couples, we deduce the result from Hölder inequalities, indeed, if $\Frac{6}{3-2s} \leq q \leq \Frac{2}{1-s}$ then $\Frac{1}{q}$ can be written as $\Frac{1}{q} = \theta \Frac{1-s}{2}+ (1-\theta)\Frac{3-2s}{6}$ with $\theta \in [0,1]$, hence :

$$||f||_{L^q_x} \leq C ||f||_{L^{\frac{2}{1-s}}}^\theta ||f||_{L^{\frac{6}{3-2s}}}^{1-\theta} \; .$$

We deduce that :

$$||f||_{L^p_t,L^q_x} \leq C ||\; ||f||_{L^{\frac{2}{1-s}}}^\theta \;||_{L^p_t} ||\;||f||_{L^{\frac{6}{3-2s}}}^{1-\theta}\;||_{L^\infty_t} = C||f||_{L^{p\theta}_t,L^{\frac{2}{1-s}}}^\theta ||f||_{L^\infty_t,L^{\frac{6}{3-2s}}}^{1-\theta} \; .$$

Since $\Frac{1}{p} = \Frac{3-2s}{2}-\Frac{3}{p}=\theta (\Frac{3-2s}{2}-\Frac{3-3s}{2}) = \theta \Frac{s}{2}$ and so $p\theta = \Frac{2}{s}$, we get : 

$$||f||_{L^p_t,L^q_x} \leq C ||f||_{X^s_T}^\theta ||f||_{X^s_T}^{1-\theta} = C ||f||_{X^s_T}\; .$$

\end{proof}

\begin{proposition} Let $s\in]0,1[$ and $p$ defined as $s=\Frac{3}{2}-\Frac{4}{p}$. (Let us note that $p\in ]\Frac{8}{3},8[$.) There exists $C$ such that for all $T\in [0,\pi]$, we have : 

$$||S(t)f||_{L^p([-T,T]\times S^3)} \leq C ||f||_{H^s}$$

\end{proposition}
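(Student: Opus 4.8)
The plan is to obtain the bound by chaining two estimates already available: the Strichartz inequality of Proposition~\ref{besoin}, which gives $\|S(t)f\|_{X^s_T}\le C\|f\|_{H^s}$, and the embedding of the preceding proposition, which controls space--time Lebesgue norms by the $X^s_T$ norm. The key observation is that the diagonal norm $L^p([-T,T]\times S^3)=L^p_tL^p_x$ is exactly the case $q=p$ of that proposition: imposing $q=p$ in the relation $1/p+3/q=3/2-s$ forces $s=3/2-4/p$, which is our hypothesis, while the requirement $s\in\,]0,1[$ is precisely $p\in\,]8/3,8[$. Hence, as soon as $\|g\|_{L^p_{t,x}}\le C\|g\|_{X^s_T}$ is known for $q=p$, we conclude $\|S(t)f\|_{L^p_{t,x}}\le C\|S(t)f\|_{X^s_T}\le C\|f\|_{H^s}$.

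First I would dispose of the range $p\in[4,8[$. There the diagonal pair $(p,p)$ satisfies $p\ge 2/s$ (equivalently $q=p\le 2/(1-s)$), so it lies inside the admissible window of the preceding proposition and the estimate is immediate. The borderline $p=4$ is nothing but the definition of $X^s_T$, since then $s=1/2$ and $2/s=2/(1-s)=4$, so $L^4_tL^4_x$ is one of the two norms defining $X^{1/2}_T$.

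The substantive case is $p\in\,]8/3,4[$, where the time exponent $p$ falls below the threshold $2/s$, so the diagonal point lies outside the region reached by the H\"older interpolation of the preceding proposition and a separate argument is needed; note that on $\R^3$ the analogous diagonal estimate fails for $p<4$, so any proof here must use the compactness of $S^3$ and the boundedness of the time interval. For this I would exploit the explicit zonal structure. Writing $g=S(t)f=\sum_n c_n e^{-int}e_n$ and using $\sqrt2\,\pi\,\sin R\,e_n(R)=\sin (nR)$, the function $\sin R\cdot g$ splits as a superposition of a left- and a right-moving one-dimensional wave, $F(R-t)$ and $\widetilde F(R+t)$, whose Fourier coefficients are the $c_n$, so that $\|F\|_{H^s(S^1)}\approx\|f\|_{H^s}$; the space--time norm then reduces to weighted one-dimensional integrals of the type $\int_{-T}^{T}\!\int_0^\pi |F(R-t)|^p\,\sin^{2-p}R\,dR\,dt$ (and similarly for the right-mover).

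The hard part will be estimating this weighted integral with the sharp exponent. The weight $\sin^{2-p}R$ is integrable exactly when $p<3$ and becomes singular at the poles $R=0,\pi$ for $p\ge3$, the very dichotomy appearing in the eigenfunction bounds \reff{majoration}. I would split the $R$--integration at the scale $R\sim 1/n$, as in the proof of \reff{majoration}, and sum the frequency contributions by orthogonality (a $TT^\ast$ argument). The delicate point, and the main obstacle, is the competition between the contribution concentrated at the poles, which scales like the claimed exponent $3/2-4/p$, and the contribution coming from the interior diagonal $R=t$, which scales like the Sobolev exponent $1/2-1/p$; since these two scalings coincide only at $p=3$, controlling the whole integral by $\|f\|_{H^{3/2-4/p}}$ uniformly over $]8/3,4[$ is exactly the step that must be carried out with care.
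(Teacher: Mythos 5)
Your argument is complete and correct on $p\in[4,8[$, and there it is in substance the paper's own proof: the paper does not cite the preceding embedding proposition but rederives its diagonal case by hand, setting $s'=s-\frac{2}{p}=\frac{3}{2}-\frac{6}{p}$, applying the Sobolev inequality $\|S(t)f\|_{L^p_x}\leq C\|(1-\Delta)^{s'/2}S(t)f\|_{L^q_x}$ with $\frac{1}{q}=\frac{1}{p}+\frac{s'}{3}=\frac{1}{2}-\frac{1}{p}$, and then invoking the Strichartz estimate for the admissible pair $(p,q)$ through $X^{2/p}_T$, using $\|(1-\Delta)^{s'/2}f\|_{H^{2/p}}=\|f\|_{H^s}$. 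That is exactly the interpolation content of your chaining at $q=p$, and the two routes carry the same restriction: your condition $p\geq 2/s$ and the paper's implicit condition $s'\geq 0$ are each equivalent to $p\geq 4$.

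On $]\frac{8}{3},4[$ your proposal is a program, not a proof: the weighted one-dimensional estimate you isolate is left open, and this is a genuine gap. Moreover, below $p=3$ it cannot be closed, because your own scaling analysis of the diagonal is decisive: the proposition as stated is \emph{false} for $p\in]\frac{8}{3},3[$. Take $f=\sum_{n=N}^{2N}e_n$, so that $\|f\|_{H^s}\sim N^{s+1/2}=N^{2-4/p}$. In your traveling-wave decomposition, $\sqrt{2}\,\pi\sin R\;S(t)f$ contains the Dirichlet kernel $\sum_{n=N}^{2N}e^{in(R-t)}$, of modulus $\gtrsim N$ on $|R-t|\leq c/N$, while for $t$ in a fixed window, say $t\in[\frac{1}{2},1]$, the opposite mover is $O(1)$ there by a geometric-sum bound; hence $|S(t)f|\gtrsim N$ on a space-time tube of measure $\sim 1/N$ along which $\sin R$ is bounded below, giving $\|S(t)f\|_{L^p([-1,1]\times S^3)}\gtrsim N^{1-1/p}$. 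Since $1-\frac{1}{p}>2-\frac{4}{p}$ exactly when $p<3$, the claimed inequality fails as $N\rightarrow\infty$: this is precisely your interior-diagonal exponent $\frac{1}{2}-\frac{1}{p}$ (after the $\ell^2$ normalization $N^{1/2}$) beating the pole exponent $\frac{3}{2}-\frac{4}{p}$.

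You should also know that the paper's written proof does not cover $p<4$ either: there $s'<0$ and $q>p$, so the Sobolev step $\|f\|_{L^p}\leq C\|(1-\Delta)^{s'/2}f\|_{L^q}$ becomes a reverse Sobolev embedding, asserting that a positive power of $(1-\Delta)$ improves integrability; for $\frac{8}{3}<p<3$ it already fails on the $e_n$ themselves, since $\|e_n\|_{L^p}\sim 1$ while $n^{s'}\|e_n\|_{L^q}\sim n^{1-3/p}\rightarrow 0$. So the parenthetical range $]\frac{8}{3},8[$ should really be read as $p\in[4,8[$ --- which is all the paper ever uses, since every later application takes $p\in]2\alpha,6[\subseteq[4,6[$ because $\alpha\geq 2$. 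On that range your $p\geq 4$ argument is a correct proof of everything actually needed; at best your one-dimensional reduction might rescue $[3,4[$, and nothing can rescue $]\frac{8}{3},3[$.
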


\begin{proof}
Let $s'=\Frac{3}{2}-\Frac{6}{p}= s -\Frac{2}{p}$ and $q$ such that $\Frac{1}{q} = \Frac{1}{p}+\Frac{s'}{3} = \Frac{1}{2}-\Frac{1}{p}$. By Sobolev embedding theorem,

$$||S(t)f||_{L^p_x} \leq C ||(1-\Delta)^{s'/2} S(t)f||_{L^q_x}\; \mbox{and so}\; ||S(t) f||_{L^p([-T,T]\times S^3)}\leq C ||S(t)(1-\Delta)^{s'/2}f ||_{L^p_t,L^q_x} \; .$$

Since $\Frac{1}{p}+\Frac{1}{q}=\Frac{1}{2}$, thanks to Strichartz inequality \reff{besoin},

$$||S(t) f||_{L^p([-T,T]\times S^3)} \leq C ||S(t) (1-\Delta)^{s'/2}f||_{X^{2/p}}\leq C ||(1-\Delta)^{s'/2}f||_{H^{2/p}}\; .$$

As $||(1-\Delta)^{s'/2}f||_{H^{2/p}} = ||(1-\Delta)^{s'/2 + 1/p} f||_{L^2} = ||(1-\Delta)^{s/2}f||_{L^2}=||f||_{H^s}$, we get the result.
\end{proof}

We will deduce from this properties the local well-posedness of the equations. In order to do so, we write $F(t,u) = \Omega^{\alpha-2}(t,.)|\Re u|^{\alpha}\Re u$, and we decompose the solutions with initial data $u|_{T=t_0}=u_0$ by writing them as $u(t+t_0)= S(t)u_0 +v(t)\; , \; v_{t=0} = 0$.

\begin{proposition} Let $p\in ]2\alpha,6[$ and $s$ defined as precedently as $s=\Frac{3}{2}-\Frac{4}{p}$. We choose an initial data $u_0$ such that $||S(t) u_0 ||_{L^p([-\pi,\pi]\times S^3)}\leq A$, with $A$ a finite positive constant. The following problems :

$$ \left \lbrace{ \begin{tabular}{ll}
$i\partial_t u - Hu -H^{-1}F(t,u) = 0$ \\
$u|_{t=t_0}=u_0 $\end{tabular} }\right.$$
and

$$ \left \lbrace{ \begin{tabular}{ll}
$i\partial_t u - Hu -S_N H^{-1}F(t,S_N u) = 0$ \\
$u|_{t=t_0}=S_N u_0 $\end{tabular}} \right.$$
have unique local solutions $u$ and $u_N$ on $[t_0-\tau,t_0+\tau]$ with $\tau = c(1+A)^{-\gamma}$ where $c$  and $\gamma$ are constant depending only on $s$ (and in particular, are independant from $A$). The functions $u$ and $u_N$, $N\geq 1$ can be written as $u(t_0+t)=S(t)u_0+v(t)$ and $u_N(t_0+t) = S(t)S_N u_0 + v_N(t)$ with $v$ and $v_N$ in $X^s_\tau $. Furthermore, there exists $C$ such that $||v||_{X^s_\tau},||v_N||_{X^s_\tau}\leq CA$. We deduce immediately from the proposition \reff{besoin} and the periodicity of $S$ that 

$$\sup_{t'\in[-\tau,\tau]} ||S(t)u(t_0+t')||_{L^p([-\pi,\pi]\times S^3)}\leq CA$$
and that if $u_0\in H^\sigma$, for any $\sigma < s$ (and so for any $\sigma < \frac{1}{2}$) then 

$$||u||_{H^\sigma}\leq ||u_0||_{H^\sigma}+CA \; .$$
\end{proposition}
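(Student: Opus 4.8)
The plan is to solve the Duhamel formulation of both equations by a contraction argument performed only on the correction term, so that no $H^s$-regularity of $u_0$ is ever needed. Writing the free group as $S(t)=e^{-itH}$ and $w=S(t)u_0+v$, a solution of the asserted form $u(t_0+t)=S(t)u_0+v(t)$ is exactly a fixed point of
\[
\Phi(v)(t)=-i\int_0^t S(t-t')\,H^{-1}F\bigl(t_0+t',\,S(t')u_0+v(t')\bigr)\,dt',
\]
while $u_N(t_0+t)=S(t)S_Nu_0+v_N(t)$ is a fixed point of the analogous map with $S_N$ inserted in front of $H^{-1}F$ and of its argument. Since $S_N$ is bounded on every $L^p$ uniformly in $N$ (the theorem on $S_N$), every constant produced below is independent of $N$, so I carry out the two cases in parallel. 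By the inhomogeneous Strichartz estimate proved above, $\|\Phi(v)\|_{X^s_\tau}\le C\|F(t_0+\cdot,w)\|_{Y^{1-s}_\tau}$, which reduces everything to a nonlinear estimate in $Y^{1-s}_\tau$.

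To estimate $F(t,w)=\Omega^{\alpha-2}|\Re w|^\alpha\Re w$ in $Y^{1-s}_\tau$ I use the embedding $L^{a'}_tL^{b'}_x\hookrightarrow Y^{1-s}_\tau$ (the proposition relating $Y$ to $L^{p'}_tL^{q'}_x$, with $s$ replaced by $1-s$, for an admissible pair $(a,b)$), together with Hölder in space (the sphere $S^3$ having finite measure) and in time on $[-\tau,\tau]$, the latter producing a factor $\tau^\theta$ with $\theta>0$. Splitting $w=S(t)u_0+v$, the correction is controlled through $X^s_\tau\hookrightarrow L^p([-\tau,\tau]\times S^3)$ (valid since $\frac1p+\frac3p=\frac32-s$ for the choice $s=\frac32-\frac4p$), giving $\|v\|_{L^p}\le C\|v\|_{X^s_\tau}$, while the free part is controlled by the hypothesis, $\|S(\cdot)u_0\|_{L^p([-\tau,\tau]\times S^3)}\le\|S(\cdot)u_0\|_{L^p([-\pi,\pi]\times S^3)}\le A$ (and by $CA$ in the approximated case). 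The target shape of the estimate is $\|F(t_0+\cdot,w)\|_{Y^{1-s}_\tau}\le C\tau^{\theta}\bigl(A+\|v\|_{X^s_\tau}\bigr)^{\alpha+1}$.

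The delicate point — and the step I expect to be the main obstacle — is precisely this nonlinear estimate, and within it the purely free contribution $F(t,S(t)u_0)$, which must be bounded using only the space-time $L^p$-information $A$. It is here that the subcriticality hypothesis $p>2\alpha$ (equivalently, $s$ strictly above the scaling-critical exponent $s_c=\frac32-\frac2\alpha$) is indispensable, since it is what makes a \emph{strictly} positive power $\tau^\theta$ available. For $\alpha>2$ a crude bound $|F|\le C|w|^{\alpha+1}$ that discards the weight is too lossy near the whole subcritical threshold, and one must exploit the decay of $\Omega^{\alpha-2}=(\cos T+\cos R)^{\alpha-2}$ as $R\to\pi$ — exactly the region where the zonal eigenfunctions $e_n$ grow like $n$ — to recover the missing integrability; this is the same mechanism that makes the weighted integral defining $\rho$ finite. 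The weight must therefore be kept, not thrown away, in the Hölder bookkeeping.

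Granting the nonlinear estimate, the rest is routine. The difference estimate is identical but with $F(w_1)-F(w_2)$ in place of $F(w)$; using $|F(t,w_1)-F(t,w_2)|\le C(|w_1|^\alpha+|w_2|^\alpha)|w_1-w_2|$ and $w_1-w_2=v_1-v_2$ one obtains $\|\Phi(v_1)-\Phi(v_2)\|_{X^s_\tau}\le C\tau^\theta(A+R)^\alpha\|v_1-v_2\|_{X^s_\tau}$ on the ball $\{\|v\|_{X^s_\tau}\le R\}$ with $R=CA$. Choosing $\tau=c(1+A)^{-\gamma}$ with $c,\gamma$ depending only on $s$ makes $\Phi$ a contraction of this ball into itself, and the Banach fixed point theorem yields the unique $v$ (resp. $v_N$) with $\|v\|_{X^s_\tau},\|v_N\|_{X^s_\tau}\le CA$. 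Finally, for the supremum bound write $S(t)u(t_0+t')=S(t+t')u_0+S(t)v(t')$: the first term has the same $L^p([-\pi,\pi]\times S^3)$ norm as $S(\cdot)u_0$ by the $2\pi$-periodicity of $e^{-itH}$, hence is $\le A$, and the second is $\le C\|v(t')\|_{H^s}\le C\|v\|_{X^s_\tau}\le CA$ by the proposition $\|S(t)f\|_{L^p}\le C\|f\|_{H^s}$. The $H^\sigma$ bound is immediate from the unitarity of $e^{-itH}$ on $H^\sigma$ and $H^s\hookrightarrow H^\sigma$ for $\sigma<s$, giving $\|u\|_{H^\sigma}\le\|u_0\|_{H^\sigma}+CA$; all constants being independent of $N$.
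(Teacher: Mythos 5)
Your contraction framework is exactly the paper's: the same Duhamel fixed point on the correction $v$ (resp.\ $v_N$, with uniformity in $N$ coming from the uniform $L^p\to L^p$ bounds on $S_N$), the same reduction via the inhomogeneous Strichartz estimate to a bound on $\|F\|_{Y^{1-s}_\tau}$, and your concluding steps (the difference estimate, the choice $\tau=c(1+A)^{-\gamma}$, the $2\pi$-periodicity of $S(t)=e^{-itH}$ for the supremum bound, unitarity plus $H^s\hookrightarrow H^\sigma$ for the $H^\sigma$ bound) are all correct and match the paper. The problem is that you never prove the one estimate everything hinges on, namely $\|F(t_0+\cdot,\,S(\cdot)u_0+v)\|_{Y^{1-s}_\tau}\leq C\tau^{\theta}(A+\|v\|_{X^s_\tau})^{\alpha+1}$: you declare it the main obstacle, assert that the crude bound $|F|\leq C|w|^{\alpha+1}$ which discards the weight is ``too lossy,'' and then write ``granting the nonlinear estimate, the rest is routine.'' That is a genuine gap, and your diagnosis of how to fill it points in the wrong direction.

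In fact the crude bound is precisely what the paper uses, and it closes without difficulty. Since $\alpha\in[2,3[$ and $0\leq\Omega\leq 2$, the weight satisfies $\Omega^{\alpha-2}\leq 2^{\alpha-2}$, so $\|F(t,w)\|_{L^{q'}}\leq C\|w\|_{L^{(\alpha+1)q'}}^{\alpha+1}$ with $q'=2p/(p+2)$ (the conjugate exponent attached to $Y^{1-s}_\tau$ through the diagonal pair $q=8/(1+2s)$). The strictly positive power of $\tau$ you were looking for comes not from any decay of $\Omega$ but from H\"older on the finite-measure slab $[-\tau,\tau]\times S^3$: one has $(\alpha+1)q'=2p(\alpha+1)/(p+2)<p$, and this strict inequality is exactly equivalent to $p>2\alpha$, giving $\|w\|_{L^{(\alpha+1)q'}}\leq C\tau^{\delta/(\alpha+1)}\|w\|_{L^p}$ with $\delta/(\alpha+1)=1/((\alpha+1)q')-1/p>0$. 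So the subcriticality hypothesis enters exactly where you guessed, but through this exponent bookkeeping rather than through the weight: the decay of $\Omega^{\alpha-2}$ near $R=\pi$ and the growth of the zonal eigenfunctions $e_n$ are irrelevant to this deterministic local theory (eigenfunction bounds enter only the probabilistic measure construction, and even there the paper simply bounds $\Omega\leq 2$). To repair your proof, insert the two-line computation above in place of the ``granting'' step; the same bound combined with $|F(t,w_1)-F(t,w_2)|\leq C\Omega^{\alpha-2}|w_1-w_2|(|w_1|^{\alpha}+|w_2|^{\alpha})$ and the H\"older splitting $1/q'=1/((\alpha+1)q')+\alpha/((\alpha+1)q')$ then yields your contraction estimate with the same factor $\tau^{\delta}$.
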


\begin{proof}
We turn the problem on $u$ into a fix point one on $v$ that depends on $u_0$. We are now looking for a $v$ (resp. $v_N$) satisfying :

$$v=K(v) \mbox{ (resp.} v_N = K_N(v_N) \mbox{ )}$$
with 

$$K(v) = -i\int_{0}^t S(t-t') H^{-1} F(t_0+t',S(t')u_0 + v(t')) dt'$$ 
and 

$$K_N (v_N) = -i\int_{0}^t S(t-t') H^{-1} S_N F(t_0+t',S(t')S_N u_0 +S_N v_N)dt'\; .$$

Since the operator norms $L^p\rightarrow L^p$ of the $S_N$, $N\geq 1$ are bounded by a constant independant from $N$, we can do the proof only on $u$.

\smallskip

We have then to apply the fix point theorem to $K$ on $X^s_\tau$ with $\tau $ small enough.

$$||K(v)||_{X^s_\tau} \leq C ||F(t_0+t,S(t)u_0 +v(t))||_{Y^{1-s}_\tau}\leq C ||F(t_0+t,S(t)u_0+v(t))||_{L^{q'}([-\tau,\tau]\times S^3)}$$
with $q$ satisfying $\Frac{1}{q}+\Frac{3}{q} = \Frac{3}{2}-(1-s)$, that is to say $q=\Frac{8}{1+2s}$ and $q'=\Frac{8}{7-2s}=\Frac{2p}{p+2}$. Finally, as $\alpha \geq 2$ and $0\leq \Omega\leq 2$, for all $w$, we have : 

$$||F(t_0+t,w)||_{L^{q'}} \leq ||\Omega^{\alpha-2}||_{L^\infty}||w^{\alpha+1}||_{L^{q'}}\leq C ||w||_{L^{(\alpha+1)q'}}^{\alpha +1},$$
and so

$$||K(v)||_{X^s_\tau}\leq C(||S(t)u_0||_{L^{(\alpha+1)q'}}^{\alpha+1}+||v||_{L^{(\alpha+1)q'}}^{\alpha+1})$$

What is more, $(\alpha+1)q'=(\alpha+1)\Frac{2p}{p+2}<(\alpha+1)\Frac{2p}{2\alpha+2} = p$ so, as the integration is done on compacts with size $\tau$ in time, $||f||_{L^{(\alpha+1)q'}}\leq C \tau^{\delta/(\alpha+1)}||f||_{L^p}$, with $\delta/(\alpha+1)= \Frac{1}{(\alpha+1)q'}-\Frac{1}{p}>0$.

$$||K(v)||_{X^s_\tau}\leq C \tau^\delta (A^{\alpha+1}+ ||v||_{X^s_\tau}^{\alpha+1})$$

For all $C'$, by choosing $\tau \leq (\Frac{C'}{C(1+(C')^{\alpha+1})})^{1/\delta}(1+A)^{-\alpha/\delta}$, if $||v||_{X^s_\tau}\leq C'A$, then $||K(v)||_{X_s^\tau}\leq C'A$. So, the ball $B(0,C'A)$ is stable under $K$.

\smallskip

Let us do the same with $K(v_1)-K(v_2)$. 

$$||K(v_1)-K(v_2)||_{X^s_\tau}\leq C ||F(t_0+t,S(t)u_0+v_1)-F(t_0+t,S(t)u_0+v_2)||_{L^{q'}}$$

Since $|F(t_0+t,v)-F(t_0+t,w)|\leq C \Omega^{\alpha-2}(t_0+t)|v-w|(|v|^\alpha+|w|^\alpha)$ , and thanks to a H\"older inequality, given that $\Frac{1}{q'} = \Frac{1}{(\alpha +1)q'}+\Frac{\alpha}{(1+\alpha)q'}$,

$$||K(v_1)-K(v_2)||_{X^s_\tau}\leq C ||v_1-v_2||_{L^{(\alpha+1)q'}}(2||\;|S(t)u_0|^\alpha||_{L^{(\alpha+1)q'/\alpha}}+||\;|v_1|^\alpha||_{L^{(\alpha+1)q'/\alpha}}+||\;|v_2|^\alpha||_{L^{(\alpha+1)q'/\alpha}})$$

$$||K(v_1)-K(v_2)||_{X^s_\tau}\leq C \tau^\delta(A^\alpha +||v_1||_{X^s_\tau}^\alpha+||v_2||_{X^s_\tau}^\alpha)||v_1-v_2||_{X^s_\tau}$$

For $\tau = c(1+A)^{-\alpha/\delta}$ with $c$ small enough, we get that $K$ is contracting on the ball of $X^s_\tau$ with center $0$ and radius $C'A$, and so we can apply the fix point theorem on this ball, which is also stable under $K$. There exists a unique solution $v$ of the equation, and it is such that $||v||_{X^s_\tau}\leq C' A$.

\end{proof}
 
\subsection{Measure construction}
 
What we would like to do is building a measure of the form $(\exp{-\E(u)})du$, where $du$ is ``morally" speaking a Lebesgue measure.

\begin{definition}
We write $\mu_N$ the image measure on $E_N$ by $\varphi_N : \omega \mapsto \sum_{n=1}^N \Frac{\sqrt 2}{n} g_n(\omega) e_n(.)$. 
\end{definition}

\begin{proposition} We have 

$$d\mu_N (\sum_{n=1}^N (a_n+ib_n) e_n ) =d_N \Exp{- \sum \frac{n^2}{2} (a_n^2+b_n^2)}\prod_{n=1}^N da_n db_n$$
where $d_N$ is a factor such that $\mu_N (E_N)=1$.

\smallskip

What is more, the sequence $(\varphi_N)_N$ is a Cauchy sequence in $L^2(\Omega, H^\sigma_R)$ for all $\sigma<1/2$, hence it converges toward a function $\varphi$ in $L^2(\Omega,H^s_R)$. We note $\mu$ the image measure on $H^\sigma$ by $\varphi$.

\end{proposition}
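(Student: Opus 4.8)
For the density formula, the plan is a direct change of variables. By definition $\varphi_N(\omega)=\sum_{n=1}^N \frac{\sqrt 2}{n} g_n(\omega) e_n$, and identifying $E_N$ with $\mathbb{R}^{2N}$ through the real and imaginary parts $(a_n,b_n)$ of the coordinates in the basis $(e_n)_{1\le n\le N}$, the coefficient of $e_n$ is $a_n+ib_n=\frac{\sqrt 2}{n}g_n$. Since the $g_n$ are independent and each complex standard Gaussian has law $\frac{1}{\pi}e^{-|z|^2}\,dz$ on $\mathbb{C}$ (so that $\mathbb{E}|g_n|^2=1$, which is the convention making the factor $\frac{n^2}{2}$ appear), the joint law of the $g_n$ is the product of these densities. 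I would then push this measure forward under the linear map $g_n\mapsto(a_n,b_n)$; as $|g_n|^2=\frac{n^2}{2}(a_n^2+b_n^2)$, the exponent becomes $-\sum\frac{n^2}{2}(a_n^2+b_n^2)$, while the constant prefactors and the constant Jacobians $\frac{n^2}{2}$ of these maps are absorbed into a single normalizing factor $d_N$, pinned down by $\mu_N(E_N)=1$. This is exactly the claimed expression.

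For the Cauchy property, the key is an exact computation of the expected $H^\sigma$-norm of a tail. For $M<N$ one has $\varphi_N-\varphi_M=\sum_{n=M+1}^N \frac{\sqrt 2}{n}g_n e_n$, and since the $e_n$ are eigenfunctions of $1-\Delta_{S^3}$ with eigenvalue $n^2$ and form an orthogonal basis of $L^2(\sin^2 R\,dR)$, the $H^\sigma$-norm is diagonal: $||\varphi_N-\varphi_M||_{H^\sigma}^2=\sum_{n=M+1}^N n^{2\sigma}\frac{2}{n^2}|g_n|^2$. Taking expectations and using $\mathbb{E}|g_n|^2=1$ gives
$$\mathbb{E}\,||\varphi_N-\varphi_M||_{H^\sigma}^2=\sum_{n=M+1}^N \frac{2}{n^{2-2\sigma}}.$$
The series $\sum_n n^{2\sigma-2}$ converges precisely when $2-2\sigma>1$, i.e. $\sigma<\frac12$, so for such $\sigma$ the right-hand side is the tail of a convergent series and tends to $0$ as $M,N\to\infty$. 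Hence $(\varphi_N)_N$ is Cauchy in $L^2(\Omega,H^\sigma)$. As $H^\sigma$ is complete, so is the Bochner space $L^2(\Omega,H^\sigma)$, which furnishes a limit $\varphi$; I would note that this limit is the same for every $\sigma<\frac12$ (convergence in a stronger norm forces the almost-sure limits to agree), and $\mu$ is then defined as the image of $P$ under $\varphi$.

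Both assertions are essentially computational and I do not expect a serious obstacle. The only points requiring care are fixing the normalization convention for the complex Gaussians so that $\frac{n^2}{2}$ comes out correctly, and the precise orthonormality of the $e_n$ in $L^2(\sin^2 R\,dR)$ used to diagonalize the $H^\sigma$-norm. The one genuinely meaningful feature hidden in the computation is the role of the threshold $\sigma<\frac12$: it is exactly the summability condition for $\sum n^{2\sigma-2}$, and it is this borderline behaviour that quantifies the almost-sure regularity gained by randomizing the data with the weights $\frac{\sqrt 2}{n}$.
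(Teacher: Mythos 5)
Your proposal is correct and follows essentially the same route as the paper: for the density you identify the coefficients $a_n+ib_n=\frac{\sqrt 2}{n}g_n$ as independent Gaussians with variance $\frac{2}{n^2}$ (the paper phrases this via the covariance matrix of the vector $(\sqrt 2 g_1,\hdots,\frac{\sqrt 2}{N}g_N)$, you via a pushforward with the Jacobian absorbed into $d_N$ --- the same computation), and for the Cauchy property you perform the identical diagonal calculation $\mathbb{E}\,\|\varphi_N-\varphi_M\|_{H^\sigma}^2=2\sum_{n=M+1}^N n^{2\sigma-2}$, concluding by summability from $2(1-\sigma)>1$. Your added remarks --- fixing the normalization $\mathbb{E}|g_n|^2=1$ and noting completeness of the Bochner space $L^2(\Omega,H^\sigma)$ --- are exactly the implicit points in the paper's proof.
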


\begin{proof} The $g_n$ being independant, the vector $(\sqrt 2 g_1,\hdots,\frac{\sqrt 2}{N}g_N)$ is a complex gaussian random variable with average $0$ and covariance matrix  : 

$$\begin{pmatrix}
1/2 & & \\
(0) & \ddots & (0) \\
 & & N^2/2 \end{pmatrix}$$
that is to say a real gaussian random variable of covariance matrix:

$$\begin{pmatrix}
1 & 0 & & & \\
0 & 1 &  & & \\
 & (0) & \ddots & (0) & \\
 & & & N^2 & 0 \\
 &  & & 0 & N^2 \end{pmatrix}$$
hence the result.

\smallskip

Let us show now that $\varphi_N $ is a Cauchy sequence. Let $N\geq M \geq 0$ be two integers, we have : 

$$||\varphi_N - \varphi_M ||_{H^\sigma_R}^2 = \sum_{n=M}^N |g_n|^2 \frac{2n^{2\sigma}}{n^2}$$

$$||\varphi_N -\varphi_M||^2_{L^2_\omega,H^\sigma_R} = 2\sum_{n=N}^M \frac{1}{n^{2(1-\sigma)}}$$
as $\sigma<1/2$ implies $2(1-\sigma)> 1$, the series of general term $n^{-2(1-\sigma)}$ converges and we get the result.

\smallskip

\end{proof}

\begin{proposition}\label{finie} Almost surely, $\int_{0}^\pi \Omega^{\alpha-2} |\Re \varphi(\omega)|^{\alpha+2}\sin^2 R dR$ is finite. \end{proposition}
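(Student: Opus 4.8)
The plan is to prove the stronger statement that the \emph{mean}
$$I := \mathbb{E}\left[\int_0^\pi \Omega^{\alpha-2}\,|\Re\varphi|^{\alpha+2}\sin^2 R\,dR\right]$$
is finite. Since the integrand is nonnegative, Tonelli's theorem allows me to exchange the expectation with the $R$-integration, and a nonnegative random variable of finite mean is almost surely finite; this yields the proposition at once. Note that at $T=0$ the weight is $\Omega = \widetilde\Omega|_{T=0} = 1+\cos R$, which is strictly positive on $[0,\pi[$ and vanishes only at $R=\pi$.

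First I would reduce the whole question to a pointwise (in $R$) moment estimate. Because the $e_n$ are real, for each fixed $R$ the quantity $\Re\varphi(R) = \sum_n \tfrac{\sqrt2}{n}(\Re g_n)\,e_n(R)$ is a real centered Gaussian series. Applying the real-variable form of Lemma \reff{poumpoumpoum} (the Remark following it) with exponent $q=\alpha+2$ gives a constant $C$ depending only on $\alpha$ such that
$$\mathbb{E}\big[|\Re\varphi(R)|^{\alpha+2}\big] \leq C\,\sigma(R)^{\alpha+2}, \qquad \sigma(R)^2 := \sum_{n\geq 1}\frac{e_n(R)^2}{n^2}\;.$$
Inserting the eigenfunctions $e_n(R)=\frac{1}{\sqrt2\,\pi}\frac{\sin nR}{\sin R}$ and using the classical Fourier identity $\sum_{n\geq1}\frac{\sin^2 nR}{n^2}=\frac{R(\pi-R)}{2}$ (the Fourier expansion of the relevant Bernoulli polynomial on $[0,\pi]$), I obtain the closed form $\sigma(R)^2=\frac{R(\pi-R)}{4\pi^2\sin^2 R}$.

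It then remains to verify that
$$I \leq C\int_0^\pi (1+\cos R)^{\alpha-2}\,\sigma(R)^{\alpha+2}\,\sin^2 R\,dR < \infty,$$
which is a one-dimensional integrability question at the endpoints $R=0$ and $R=\pi$, the integrand being continuous and bounded on every compact subinterval of $]0,\pi[$. Near $R=0$ one has $\sigma(R)^2\sim \frac{1}{4\pi R}$ and $1+\cos R\to 2$, so the integrand behaves like $R^{2-(\alpha+2)/2}=R^{(2-\alpha)/2}$, which is integrable since $(2-\alpha)/2>-1$ whenever $\alpha<4$. Near $R=\pi$, writing $R=\pi-\varepsilon$, both the blow-up $\sigma(R)^2\sim\frac{1}{4\pi\varepsilon}$ and the degeneracy $1+\cos R\sim \varepsilon^2/2$ occur; the integrand then behaves like $\varepsilon^{(3\alpha/2)-3}$, whose exponent is $\geq 0$ for $\alpha\geq 2$, hence integrable.

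The main obstacle is exactly the endpoint $R=\pi$: there the Gaussian field degenerates, $\sigma$ blowing up like $\varepsilon^{-1/2}$, at the very point where the Penrose weight $1+\cos R$ vanishes, so one must track the competing powers with care. What rescues the estimate is that the vanishing of $(1+\cos R)^{\alpha-2}$, of order $2(\alpha-2)$, dominates the blow-up of $\sigma^{\alpha+2}$ precisely because $\alpha\geq 2$ — this is where the hypothesis $\alpha\in[2,3[$ enters the argument. Everywhere else the computation is routine.
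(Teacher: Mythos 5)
Your proof is correct, and its skeleton matches the paper's up to the key estimate: both reduce Proposition \reff{finie} to the finiteness of the expectation via Tonelli (using $\alpha\geq 2$ to keep the weight $\Omega^{\alpha-2}$ bounded), and both apply Lemma \reff{poumpoumpoum} pointwise in $R$ to bound the $(\alpha+2)$-th Gaussian moment by $C\sigma(R)^{\alpha+2}$ with $\sigma(R)^2=\sum_n e_n(R)^2/n^2$. Where you genuinely diverge is in how this deterministic integral is controlled. The paper discards the weight entirely ($\Omega\leq 2$), then applies Minkowski's integral inequality in $L^{(\alpha+2)/2}$ to bound $\|\sum_n n^{-2}|e_n|^2\|_{L^{(\alpha+2)/2}}$ by $\sum_n n^{-2}\|e_n\|_{L^{\alpha+2}}^2$ and invokes its earlier eigenfunction estimates $\|e_n\|_{L^{\alpha+2}}=O(n^{1-3/(\alpha+2)})$, the series converging since $\alpha<4$. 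You instead compute the variance in closed form via $\sum_{n\geq 1}\sin^2(nR)/n^2=R(\pi-R)/2$, obtaining $\sigma(R)^2=\frac{R(\pi-R)}{4\pi^2\sin^2 R}$, and do a direct endpoint analysis; your exponents are right ($R^{(2-\alpha)/2}$ at $0$, $\varepsilon^{3\alpha/2-3}$ at $\pi$). Your route is more elementary and sharper — it bypasses Minkowski and the $L^p$ eigenfunction bounds, and by retaining the weight it would even handle $\alpha\in\,]\frac{4}{3},2[$, where the crude bound $\Omega\leq 2$ is unavailable — while the paper's argument is more robust, needing no closed formula for $\sigma(R)^2$ and hence adapting to settings where the eigenfunctions are only known through their $L^p$ norms. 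One framing remark in your last paragraph is off, though it does not affect correctness: for $\alpha\in[2,3[$ the vanishing of $(1+\cos R)^{\alpha-2}$ at $R=\pi$ is not what rescues the estimate. Dropping the weight altogether, the integrand near $\pi$ behaves like $\varepsilon^{2-(\alpha+2)/2}=\varepsilon^{(2-\alpha)/2}$, integrable for every $\alpha<4$; the hypothesis $\alpha\geq 2$ enters only to bound $\Omega^{\alpha-2}$ by a constant, exactly as in the paper, and the competition of powers you describe becomes decisive only in the regime $\alpha<2$ that the proposition does not require. (A pedantic point, shared with the paper's own proof: one should note that for a.e.\ fixed $R$ the Gaussian series converges a.s.\ and represents $\Re\varphi(R)$ pointwise, which justifies the pointwise application of the moment bound.)
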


\begin{proof} Indeed, since $\Omega \leq 2$, to show that the probability of the event \\ ``$\int \Omega^{\alpha-2} |\Re \varphi(\omega)|^{\alpha+2}\sin^2 R dR=\infty$" is zero, we only have to prove that $E(||\varphi||_{L^{\alpha+2}_R}^{\alpha+2})$ is finite.

\smallskip

Since $E(||\varphi||_{L^{\alpha+2}_R}^{\alpha+2})=\int ||\varphi ||_{L^{\alpha+2}_\omega}^{\alpha+2}\sin^2 R dR $ and, for all $q$, thanks to lemma \reff{poumpoumpoum}, we have

$$||\sum c_n g_n ||_{L^q_\omega } \leq C\sqrt q (\sum |c_n|^2)^{1/2}$$

the norm $L^{\alpha+2}$ of $\varphi$ is less than $C (\sum \frac{2}{n^2} |e_n(x)|^2)^{1/2}$. We deduce from that :

$$ E(||\varphi||_{L^{\alpha+2}_R}^{\alpha+2}) \leq C ||\sum \frac{2}{n^2} |e_n|^2 ||_{L^{(\alpha+2)/2}}^{(\alpha+2)/2} \leq C \left(\sum \frac{2}{n^2} ||e_n||_{L^{\alpha+2}}^2\right)^{(\alpha+2)/2}$$

But we have seen that $||e_n||_{L^{\alpha+2}} = O(n^{1-\frac{3}{\alpha+2}})=O(n^{2/5})$ so the series converges. We get that $E(||\varphi||_{L^{\alpha+2}_R}^{\alpha+2})< \infty$, hence the result. 

\smallskip

\end{proof}

\smallskip

We can now define a new measure $\rho$ on $H^\sigma$.

\begin{definition}
We define on $H^\sigma$ with $\sigma < \frac{1}{2}$ the measure $\rho$ by : 

$$d\rho(u) = e^{-\frac{1}{\alpha+2}\int \Omega^{\alpha-2}|\Re u|^{\alpha+2}}d\mu(u)$$

\end{definition}

This measure is the limit of a sequence of measure on $E_N$ with the following meaning : 

\begin{definition}
First, we denote $\rho_N$ the measure on $E_N$ such that :

$$d\rho_N (u) = \exp \left(-\frac{1}{\alpha+2}\int \Omega^{\alpha-2}|S_N \Re u|^{\alpha+2}\right) d\mu_N (u)$$

\end{definition}

\begin{proposition} The application $u\mapsto e^{-\frac{1}{\alpha+2}\int \Omega^{\alpha-2}|S_N \Re u|^{\alpha+2}}$ converges in norm $L^1_{d\mu}$ towards \\ $e^{-\frac{1}{\alpha+2}\int \Omega^{\alpha-2}|\Re u|^{\alpha+2}}$. We deduce from that : $\lim \rho_N(E_N) = \rho(H^\sigma)$.
\end{proposition}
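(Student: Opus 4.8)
The plan is to prove the $L^1_{d\mu}$ convergence by dominated convergence, and then to transfer this into the statement about the total masses by a change-of-variables argument that identifies the integral over $(E_N,\mu_N)$ with an integral over $(H^\sigma,\mu)$. Throughout, write $F_N(u) = \exp\left(-\frac{1}{\alpha+2}\int_0^\pi \Omega^{\alpha-2}|S_N\Re u|^{\alpha+2}\sin^2 R\, dR\right)$ and $F(u) = \exp\left(-\frac{1}{\alpha+2}\int_0^\pi \Omega^{\alpha-2}|\Re u|^{\alpha+2}\sin^2 R\, dR\right)$.

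First I would record that $0 \leq F_N, F \leq 1$ everywhere: since $\Omega \geq 0$ and the integrands are nonnegative, both exponents are nonpositive. Hence the constant $1$ dominates $|F_N - F|$, and it is $\mu$-integrable because $\mu$ is a probability measure (the image of $P$ under $\varphi$). Next I would establish pointwise $\mu$-a.s. convergence $F_N(u) \to F(u)$. By Proposition \ref{finie}, for $\mu$-almost every $u$ one has $\Re u \in L^{\alpha+2}$; for such $u$ the $S_N$-approximation theorem (uniform $L^p$-boundedness of $S_N$ together with $S_N g \to g$ in $L^p$) gives $S_N\Re u \to \Re u$ in $L^{\alpha+2}$. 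As $\Omega^{\alpha-2}$ is bounded ($0 \leq \Omega \leq 2$, $\alpha \geq 2$) and $h \mapsto |h|^{\alpha+2}$ is continuous from $L^{\alpha+2}$ into $L^1$, the exponents converge and hence, by continuity of the exponential, $F_N(u) \to F(u)$. Dominated convergence then yields $\int_{H^\sigma}|F_N - F|\, d\mu \to 0$, which is the asserted convergence.

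For the total-mass equality I would exploit that $F_N$ depends on $u$ only through finitely many modes. Since $\chi_S$ is supported in $[-1,1]$, we have $\chi_S(n^2/N^2) = 0$ for $n > N$, so $S_N u = S_N \Pi_N u$ and therefore $F_N(u) = F_N(\Pi_N u)$. Because $\Pi_N \circ \varphi = \varphi_N$, the image measure of $\mu$ under $\Pi_N$ is exactly $\mu_N$. A change of variables then gives $\int_{E_N} F_N\, d\mu_N = \int_{H^\sigma} F_N(\Pi_N u)\, d\mu(u) = \int_{H^\sigma} F_N\, d\mu$, that is $\rho_N(E_N) = \int_{H^\sigma} F_N\, d\mu$. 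Combining with $\rho(H^\sigma) = \int_{H^\sigma} F\, d\mu$ gives $|\rho_N(E_N) - \rho(H^\sigma)| \leq \int_{H^\sigma}|F_N - F|\, d\mu \to 0$, and the claim follows.

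I expect the main obstacle to be the third step: recognizing that the integral against $\mu_N$ on $E_N$ coincides with the integral of the same cut-off functional against $\mu$ on $H^\sigma$. This rests on two points that must be made precise — that $S_N$ annihilates all modes above $N$ (so $F_N$ factors through $\Pi_N$), and that $\mu_N = (\Pi_N)_\ast\mu$ — without which the two integrals are taken against different measures on different spaces and cannot be compared directly. By contrast, the $L^1_{d\mu}$ convergence itself is a routine dominated-convergence argument once Proposition \ref{finie} and the $S_N$-approximation theorem are invoked.
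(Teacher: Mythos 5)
Your argument is correct, but for the main half of the statement it takes a genuinely different route from the paper. The paper does not use dominated convergence: it first exploits that $x\mapsto e^{-x^{\alpha+2}}$ is Lipschitz on $[0,\infty)$ to get the pointwise bound $|f(u)-f_N(u)|\leq C\,\|u-S_N u\|_{L^{\alpha+2}}$, then writes $\int |f-f_N|\,d\mu=\int_0^\infty \mu\bigl(|f-f_N|>\lambda\bigr)\,d\lambda$ and feeds in its quantitative large-deviation lemma, $\mu\bigl(\|S_N u-S_{N_0}u\|_{W^{s,p}}>\lambda\bigr)\leq e^{-cN_0^{\beta(s)}\lambda^2}$ for $\lambda\geq\lambda_0$ and $\leq C\lambda^{-p}N_0^{-p\beta/2}$ otherwise (applied with $p=\alpha+2$, $s=0$, which is where $\alpha<3$ enters), splitting the $\lambda$-integral at $N^{-\gamma}$ and at $\lambda_0$. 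You replace all of this by a soft argument: domination by the constant $1$ (valid since $0\leq f_N,f\leq 1$ and $\mu$ is a probability measure) together with $\mu$-a.s.\ pointwise convergence, obtained from $\Re u\in L^{\alpha+2}$ a.s.\ and the quoted theorem that $S_N g\to g$ in $L^p$ for each fixed $g\in L^p$. Both are sound; yours is shorter and more elementary, while the paper's yields an explicit rate of convergence in $N$ and reuses a tail lemma it needs to state anyway. One small imprecision on your side: the \emph{statement} of Proposition \ref{finie} only asserts a.s.\ finiteness of the weighted integral $\int\Omega^{\alpha-2}|\Re u|^{\alpha+2}\sin^2 R\,dR$, whose weight degenerates at $R=\pi$ for $\alpha>2$; the unweighted membership $\Re u\in L^{\alpha+2}$ that you invoke is actually what the \emph{proof} of that proposition establishes (it bounds $E\bigl(\|\varphi\|_{L^{\alpha+2}}^{\alpha+2}\bigr)$), so you should cite the proof rather than the statement. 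For the second half, $\lim\rho_N(E_N)=\rho(H^\sigma)$, you and the paper do the same thing: the paper's chain $\rho_N(E_N)=\int f_N(\varphi_N(\omega))\,d\omega=\int f_N(\varphi(\omega))\,d\omega\to\int f(\varphi(\omega))\,d\omega$ is exactly your observation that $f_N$ factors through $\Pi_N$ (since $\chi_S(n^2/N^2)=0$ for $n>N$) combined with $\Pi_N\circ\varphi=\varphi_N$, i.e.\ $(\Pi_N)_\ast\mu=\mu_N$ --- indeed you spell out the change-of-variables step more explicitly than the paper does.
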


\begin{proof}Convergence of the $f_N(u) = \exp (-\frac{1}{\alpha+2}\int \Omega^{\alpha-2}|S_N \Re u|^{\alpha+2})$.

\smallskip

The proof uses the following lemma : 

\begin{lemma}Let $p \geq 2$, $\sigma<1/2$. And let $s<1/2$ if $p\leq 3$, $s<\frac{3}{p}-\frac{1}{2}$ otherwise. There exists $\beta(s), \lambda_0(p) >0$ such that for all $N\geq N_0 \geq 0$, we have : 

$$\mu(\lbrace u\in \Hs^\sigma \;|\; ||S_N u -S_{N_0}u||_{\W^{s,p}}> \lambda \rbrace) \leq \left \lbrace{\begin{tabular}{ll}
$ \exp(-c N_0^{\beta(s)}\lambda^2)$ & \mbox{if} $\lambda > \lambda_0(p)$ \\
$C \lambda^{-p} N_0^{-p\beta(s)/2}$ & \mbox{otherwise} \end{tabular} } \right. $$

\end{lemma}

We have

$$\int d\mu |f(u)-f_N(u)| = \int d\lambda \mu(|f(u)-f_N(u)| > \lambda) $$

But,

$$|f(u)-f_N(u)| \leq C |\; \left(\int \Omega^{\alpha-2}|\Re u|^{\alpha+2}\right)^{1/(\alpha+2)}-\left(\int \Omega^{\alpha-2}|\Re S_N u|^{\alpha+2}\right)^{1/(\alpha+2)}\; | \leq C ||u-S_N u||_{L^{\alpha+2}}$$
so

$$\int |f(u)-f_N(u)| d\mu \leq C \int \mu(||u-S_N u||_{L^{\alpha+2}}>\lambda)d\lambda $$

We apply the lemma for $p=\alpha+2,s=0$ (which is possible since $\alpha < 3$). As $S_N u$ converges towards $u$ in $L^1_\omega,L^{\alpha+2}_R$-norm , (see the proof of proposition \reff{finie} and replace $\varphi$ by $S_N \varphi$), we can do $N\rightarrow \infty$ in the lemma and replace the notation $N_0$ by the notation $N$. Then, we divide the integral into three parts : between $0$ and  $N^{-\gamma}$ with $0<\gamma< \Frac{\beta (\alpha+2)}{2(\alpha+1)}$, then between $N^{-\gamma}$ and $\lambda_0$, and at last $\lambda_0$ and $\infty$.

$$\int_{0}^{N^{-\gamma}} \mu(||u-S_N u||_{L^{\alpha+2}}>\lambda) d\lambda \leq \frac{1}{N^\gamma}$$

This integral converges towards $0$ when $N$ tends to $\infty$.

$$\int_{N^{-\gamma}}^{\lambda_0} \mu(||u-S_N u||_{L^{\alpha+2}}>\lambda) d\lambda \leq C \int_{N{-^\gamma}}^{\lambda_0} \frac{N^{-(\alpha+2)\beta /2}}{\lambda^{\alpha+2}} d\lambda \leq \frac{C}{\alpha+1}N^{-(\alpha+2)\beta / 2 +\gamma (\alpha+1)}$$
this integral converges towards $0$ with the choice we have made for $\gamma$.

$$\int_{\lambda_0}^\infty \mu(||u-S_N u||_{L^{\alpha+2}}>\lambda) d\lambda \leq   \int e^{-cN^\beta \lambda^2} d\lambda = \frac{1}{N^{\beta/2}}\int e^{-c\lambda^2}d\lambda $$
which converges towards $0$ when $N\rightarrow \infty$.

\smallskip

End of the proof of the $L^1$ convergence of $f_N$.

\bigskip

Let us show that $\rho_N(E_N)$ tends to $\rho(H^\sigma)$.

$$\rho_N (E_N) = \int f_N(\varphi_N(\omega)) d\omega = \int f_N(\varphi(\omega))d\omega \rightarrow \int f(\varphi(\omega))d\omega  = \rho(H^\sigma)$$

\end{proof}

\smallskip

Let us prove the lemma.

\bigskip

\begin{lemma} Let $\sigma < \Frac{1}{2}$, $p\geq 2$ and $0\leq s < \Frac{1}{2}$ if $p\leq 3$, $s< \Frac{1}{2}-\Frac{3}{p}$ otherwise. There exists $\beta(s) > 0$  and $\lambda_0 \geq 0$ and two constants $c,C > 0$ depending on $p$ such that for all couple of integers $N\geq N_0 \geq 1$,

\begin{equation} \mu (u\in H^{\sigma}\; |\; ||S_N u -S_{N_0} u||_{W^{s,p}} > \lambda) \leq  \left \lbrace{\begin{tabular}{ll} 
$ \Exp{-cN_0^{\beta(s)}\lambda^2}$ & \mbox{if} $\lambda\geq \lambda_0$ \\
$C\lambda^{-p} N_0^{-p\beta/2}$ & \mbox{otherwise}\end{tabular}} \right. \end{equation}

In particular, for $N_0 =1$, we get the property : there exists $C,c$ such that for all $\lambda \geq 1$ and $N\geq 1$

$$\mu (u\in \Hs^{\sigma}\; |\; ||S_N u||_{\W^{s,p}} > \lambda) \leq C e^{-c\lambda^2} .$$

\end{lemma}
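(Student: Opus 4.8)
The plan is to reduce the tail estimate to a bound on the high moments in $\omega$ of the random variable $X=\|S_N u - S_{N_0}u\|_{W^{s,p}}$, and then to optimise in the order of the moment. Since $\mu$ is the image measure of $P$ under $\varphi$, the variable $S_N u - S_{N_0}u$ is distributed, under $\mu$, as $\sum_n c_n \frac{\sqrt 2}{n}g_n e_n$ with $c_n = \chi_S(\frac{n^2}{N^2})-\chi_S(\frac{n^2}{N_0^2})$. Because $\chi_S\equiv 1$ on $[-1/2,1/2]$ and $\mathrm{supp}\,\chi_S\subset[-1,1]$, the coefficient $c_n$ vanishes for $n\leq N_0/\sqrt 2$ and for $n> N$, while $|c_n|\leq 2$ throughout. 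This localisation of the frequencies to the range $n\gtrsim N_0$ is exactly what will produce the negative power of $N_0$.

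First I would estimate the moments. As $(1-\Delta_{S^3})^{s/2}e_n = n^s e_n$, we have $X=\|\sum_n a_n g_n e_n\|_{L^p_R}$ with $a_n=\sqrt 2\,c_n\,n^{s-1}$. For $q\geq p$ Minkowski's integral inequality allows me to exchange the two norms,
$$\big\|X\big\|_{L^q_\omega}\leq \Big\|\,\big\|\textstyle\sum_n a_n g_n e_n\big\|_{L^q_\omega}\,\Big\|_{L^p_R},$$
and Lemma \reff{poumpoumpoum} applied pointwise in $R$ gives $\|\sum_n a_n g_n e_n(R)\|_{L^q_\omega}\leq C\sqrt q\,(\sum_n|a_n|^2|e_n(R)|^2)^{1/2}$. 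Applying the triangle inequality in $L^{p/2}_R$ (licit since $p\geq 2$) and then the eigenfunction bounds \reff{majoration}, I arrive at
$$\big\|X\big\|_{L^q_\omega}\leq C\sqrt q\,M_{N,N_0},\qquad M_{N,N_0}^2=\sum_n |c_n|^2\,n^{2s-2}\,\|e_n\|_{L^p}^2.$$

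The next step is to show that $M_{N,N_0}^2\leq C\,N_0^{-\beta(s)}$ for a suitable $\beta(s)>0$. Inserting \reff{majoration} and using that $c_n$ is supported on $n\gtrsim N_0$: when $p<3$ the factor $\|e_n\|_{L^p}^2$ is bounded, so $M^2\lesssim\sum_{n\gtrsim N_0}n^{2s-2}\lesssim N_0^{2s-1}$, which is a negative power of $N_0$ precisely when $s<1/2$, with $\beta(s)=1-2s$ (the logarithmic loss at $p=3$ is absorbed by slightly decreasing $\beta$); when $p>3$ one has $\|e_n\|_{L^p}^2\leq C n^{2-6/p}$, so $M^2\lesssim\sum_{n\gtrsim N_0}n^{2s-6/p}\lesssim N_0^{\,2s-6/p+1}$, a negative power of $N_0$ exactly under the stated restriction on $s$, with $\beta(s)=\frac6p-2s-1$. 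In every case the series is summable because of the upper cut-off at $n\lesssim N$ together with the admissible range of $s$; note that here the triangle inequality in $L^{p/2}_R$ is not wasteful, since all the zonal $e_n$ concentrate at the same pole $R=0$.

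Finally I would turn the moment bound into the stated tail estimate by Markov's inequality. For $\lambda<\lambda_0$ I take the fixed exponent $q=p$, which gives $\mu(X>\lambda)\leq\lambda^{-p}\|X\|_{L^p_\omega}^p\leq C\lambda^{-p}N_0^{-p\beta/2}$. For $\lambda\geq\lambda_0$ I keep $q$ free: $\mu(X>\lambda)\leq\lambda^{-q}(C\sqrt q\,N_0^{-\beta/2})^q$, and optimising over $q$ — the near optimal choice being $q\sim N_0^{\beta}\lambda^2$, which stays above $p$ as soon as $\lambda_0$ is large enough — yields $\mu(X>\lambda)\leq e^{-cN_0^{\beta}\lambda^2}$. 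The particular case $N_0=1$, $\lambda\geq 1$ then follows at once since $N_0^\beta=1$. The main obstacle is the bookkeeping of the exponent $\beta(s)$: one has to combine the three regimes of \reff{majoration} with the weight $n^{2s-2}$ and the frequency localisation $n\gtrsim N_0$ so that the resulting power of $N_0$ is genuinely negative, which is what fixes the admissible range of $s$; the subsequent optimisation over $q$ is the standard mechanism converting $\sqrt q$ moment growth into a Gaussian tail and needs only that $\lambda_0$ be chosen large enough.
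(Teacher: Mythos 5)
Your proposal is correct and is essentially the paper's own proof: the same reduction of the tail bound to $L^q_\omega$ moments via the Minkowski exchange of the $L^q_\omega$ and $L^p_R$ norms, the same Gaussian moment lemma applied pointwise in $R$, the same triangle inequality in $L^{p/2}_R$ combined with the eigenfunction bounds, and the same Markov-plus-optimisation in $q$ (taking $q=p$ when $\lambda<\lambda_0$ and $q\sim N_0^{\beta}\lambda^2$ when $\lambda\geq\lambda_0$). Your explicit computation of $\beta(s)=1-2s$ for $p<3$ and $\beta(s)=\frac{6}{p}-2s-1$ for $p>3$, using the frequency localisation $N_0/\sqrt 2<n\leq N$, merely spells out the step the paper asserts without detail, and it correctly identifies the hypothesis for $p>3$ as $s<\frac{3}{p}-\frac{1}{2}$ (consistent with the paper's proof; the condition $s<\frac{1}{2}-\frac{3}{p}$ in the statement is a typo).
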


\begin{proof}

$$\mu (u\in H^{\sigma}\; |\; ||S_N u -S_{N_0} u||_{W^{s,p}} > \lambda) = P(||\sum_n (\chi_S(\frac{n^2}{N^2})-\chi_S(\frac{n^2}{N_0^2}))\frac{\sqrt 2}{n}g_n(\omega)e_n(x)||_{W^{s,p}_x}>\lambda)$$

$$=P(||\sum_n (\chi_S(\frac{n^2}{N^2})-\chi_S(\frac{n^2}{N_0^2}))\frac{\sqrt 2}{n^{1-s}}g_n(\omega)e_n(x)||_{L^{p}_x}>\lambda)$$

Set $f(\omega,x) = \sum_n (\chi_S(\Frac{n^2}{N^2})-\chi_S(\Frac{n^2}{N_0^2}))\Frac{\sqrt 2}{n^{1-s}}g_n(\omega)e_n(x)$.

\smallskip

Let $q\geq p$. Thanks to a convexity inequality,

$$||f||_{L^q_\omega,L^p_x}\leq ||f||_{L^p_x,L^q_\omega} .$$

We have seen that

$$||\sum_n c_n g_n||_{L^q_\omega}\leq C_1 \sqrt q (\sum |c_n|^2)^{1/2}$$
so

$$||f||_{L^q_\omega}\leq C_1 \sqrt q (\sum_n |\chi_S(\frac{n^2}{N^2})-\chi_S(\frac{n^2}{N_0^2})|^2 \frac{2}{n^{2(1-s)}}|e_n(x)|^2)^{1/2}$$

With a triangle inequality,

$$||f||_{L^p_x,L^q_\omega}\leq C_1 \sqrt q (\sum_n |\chi_S(\frac{n^2}{N^2})-\chi_S(\frac{n^2}{N_0^2})|^2 \frac{2}{n^{2(1-s)}} ||\;|e_n|^2||_{L^{p/2}})^{1/2}$$

As $||\;|e_n|^2||_{L^{p/2}} = ||e_n||_{L^p}^2 $ is less than a constant independant from $n$ for $p<3$, than $C_p \log^2 n$ if $p=3$ and by $C_p n^{2-6/p}$ otherwise, we have

$$\sum_n |\chi_S(\frac{n^2}{N^2})-\chi_S(\frac{n^2}{N_0^2})|^2  \frac{2}{n^{2(1-s)}} ||\;|e_n|^2||_{L^{p/2}}\leq 
\left \lbrace{
\begin{tabular}{lll}
$C_p \sum_n |\chi_S(\frac{n^2}{N^2})-\chi_S(\frac{n^2}{N_0^2})|^2 \frac{2}{n^{2(1-s)}}$ & \mbox{if } $p<3$ \\
$C_p \sum_n |\chi_S(\frac{n^2}{N^2})-\chi_S(\frac{n^2}{N_0^2})|^2 \frac{2\log^2 n}{n^{2(1-s)}}$ & \mbox{if } $p=3$ \\
$C_p \sum_n |\chi_S(\frac{n^2}{N^2})-\chi_S(\frac{n^2}{N_0^2})|^2 \frac{2}{n^{6/p-2s}}$ & \mbox{otherwise} 
\end{tabular} }
\right. $$
The real number $s$ being stricly less than $1/2$ for $p\leq 3$ and than $\frac{3}{p}-\frac{1}{2}$ otherwise, there exists $\beta(s)>0$ such that for all $N\geq N_0 \geq 1$,

$$\sum_n |\chi_S(\frac{n^2}{N^2})-\chi_S(\frac{n^2}{N_0^2})|^2  \frac{2}{n^{2(1-s)}} ||\;|e_n|^2||_{L^{p/2}}\leq  C_3 N_0^{-\beta(s)}.$$

Finally,

$$||f||_{L^q_\omega,L^p_x}\leq C_4 \sqrt q  N_0^{-\beta(s)/2}.$$

We get then 

$$P(||f||_{L^p_x}> \lambda) = P(||f||_{L^p_x}^q>\lambda^q)\leq \lambda^{-q}||f||_{L^q_\omega,L_x^p}^q$$

$$P(||f||_{L^p_x}> \lambda) \leq \left(\frac{C_4(p)}{\lambda N_0^{\beta(s)/2}} \sqrt q\right)^q$$

For all $\lambda \geq \lambda_0 (p):= 2p/C_4(p)$, the real number $q= \Frac{\lambda^2 N_0^{\beta(s)}}{4 C_4^2}$ is more than $p$, which means that for $\lambda \geq \lambda_0 := 2p/C_4$ and $N\geq N_0 \geq 1$, by choosing $q=\Frac{\lambda^2 N_0^{\beta(s)}}{4C_4^2}$,

$$P(||f||_{L^p_x}> \lambda) \leq e^{-cN_0^{\beta (s)}\lambda^2}$$
with $c=\Frac{log 2}{4 C_4^2}$.

\smallskip

For $\lambda < \lambda_0 $ we choose $p=q$.

\smallskip

In the end, there exists $C$ such that for all $N\geq N_0 \geq 1$,

$$\mu (u\in H^{\sigma}\; |\; ||S_N u -S_{N_0} u||_{W^{s,p}} > \lambda) \leq 
\left \lbrace{
\begin{tabular}{ll}
$ \Exp{-cN_0^{\beta(s)}\lambda^2}$ & \mbox{if } $\lambda \geq \lambda_0$ \\
$ C\lambda^{-p} N_0^{-\beta(s)p/2}$ & \mbox{otherwise}
\end{tabular} }
\right. $$

\end{proof}

\bigskip

\begin{proposition}\label{decrease} For every set $A\subseteq E_N$, the image under the flow of $A$, ie $\Psi_N(0,t) A$ for any time $t\in [-\pi,\pi]$ satisfies : 

$$\mu_N(\Psi_N(0,t) A) \geq \rho_N (A)$$
\end{proposition}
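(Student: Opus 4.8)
The plan is to write both $\mu_N$ and $\rho_N$ as explicit densities against the Lebesgue measure $\prod_n da_n\,db_n$ on $E_N$, to use that the flow $\Psi_N(0,t)$ preserves this measure, and then to compare the two densities through the monotonicity of $\E_N$ along trajectories. Writing $u=\sum_{n=1}^N(a_n+ib_n)e_n$ and using $He_n=ne_n$, the previous proposition reads $d\mu_N(u)=d_N\,e^{-\frac12||Hu||_{L^2}^2}\prod_n da_n\,db_n$, since $\sum_n\frac{n^2}{2}(a_n^2+b_n^2)=\frac12||Hu||_{L^2}^2$. As the density of $\rho_N$ against $\mu_N$ is $e^{-\frac{1}{\alpha+2}\int\Omega^{\alpha-2}|S_N\Re u|^{\alpha+2}}$ with $\Omega(0,R)=1+\cos R$, this gives $d\rho_N(u)=d_N\,e^{-\E_N(0,u)}\prod_n da_n\,db_n$, the normalizing constant $d_N$ being the same in both formulas.

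Next I would prove that $\Psi_N(0,t)$ preserves $\prod_n da_n\,db_n$. Since $\Re u=\sum_n a_n e_n$ depends on the $a_n$ alone, projecting \reff{cauchy} onto $e_n$ and separating real and imaginary parts yields $\dot a_n=-\frac1n\partial_{b_n}\E_N$ and $\dot b_n=\frac1n\partial_{a_n}\E_N$; the generating (time-dependent) vector field is therefore divergence-free, $\sum_n(\partial_{a_n}\dot a_n+\partial_{b_n}\dot b_n)=\sum_n\frac1n(-\partial_{a_n}\partial_{b_n}\E_N+\partial_{b_n}\partial_{a_n}\E_N)=0$, so its flow has unit Jacobian by Liouville's theorem.

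Then comes the comparison. By the change of variables $u=\Psi_N(0,t)w$, which has Jacobian one, $\mu_N(\Psi_N(0,t)A)=\int_A d_N\,e^{-\frac12||H\Psi_N(0,t)w||_{L^2}^2}\prod_n da_n\,db_n$. The already displayed energy identity, rigorous in finite dimension, gives $\frac{d}{dT}\E_N(T,u(T))=-\sin T\,\frac{\alpha-2}{\alpha+2}\int\Omega^{\alpha-1}|S_N\Re u|^{\alpha+2}\sin^2 R\,dR$, whose sign (recall $\alpha\geq 2$) forces $\E_N(T,u(T))\leq\E_N(0,u(0))$ for every $T\in[-\pi,\pi]$. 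Applied to the trajectory issued from $w$ and discarding the nonnegative potential term of $\E_N$, this yields $\frac12||H\Psi_N(0,t)w||_{L^2}^2\leq\E_N(t,\Psi_N(0,t)w)\leq\E_N(0,w)$, hence $e^{-\frac12||H\Psi_N(0,t)w||_{L^2}^2}\geq e^{-\E_N(0,w)}$ pointwise; integrating over $A$ gives exactly $\mu_N(\Psi_N(0,t)A)\geq\rho_N(A)$.

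The step I expect to be the main obstacle is the Liouville property: one has to verify with care that the truncated, time-dependent nonlinearity $S_N(\Omega^{\alpha-2}|S_N\Re u|^\alpha S_N\Re u)$ is exactly the $a$-gradient of the potential part of $\E_N$, so that the mixed second derivatives cancel term by term and the vector field is genuinely divergence-free despite the explicit $T$-dependence. Once this is in hand, the rest is a Jacobian-one change of variables combined with the sign of $d_T\E_N$ established earlier.
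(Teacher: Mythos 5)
Your proof is correct and follows essentially the same route as the paper: express $\mu_N$ and $\rho_N$ as densities against Lebesgue measure on $E_N$, change variables using invariance of Lebesgue measure under $\Psi_N(0,t)$, and conclude from $\frac12\|H\Psi_N(0,t)w\|_{L^2}^2\leq\E_N(t,\Psi_N(0,t)w)\leq\E_N(0,w)$, which the paper obtains from the same signed energy identity. The only difference is that you actually verify the Liouville property by exhibiting the system $\dot a_n=-\frac1n\partial_{b_n}\E_N$, $\dot b_n=\frac1n\partial_{a_n}\E_N$ and checking the vector field is divergence-free, a step the paper asserts without proof; your verification is correct (self-adjointness of $S_N$ makes the truncated nonlinearity the $a$-gradient of the potential term) and strengthens the argument.
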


\begin{proof} The Lebesgue measure on $E_N$ is well-defined (since $E_N$ has a finite dimension) and noted $du$.

$$\mu_N(\Psi_N(0,t) A) = \int d_N du 1_{\Psi_N(0,t)A}(u) e^{-\frac{1}{2}||Hu||_{L^2}^2}$$

Thanks to a computation similar to the one made on section 2.2 about $\E$, we get that $\E_N$ is decreasing under the flow so, $\E_N(0,u) \geq \E_N(t,\Psi(0,t)u)  \geq \frac{1}{2}||H\Psi_N(0,t)u||_{L^2}^2$. What is more, the Lebesgue measure is invariant under the flow so : 

$$\mu_N(\Psi_N(0,t) A) = \int d_N du 1_A (u) e^{-\frac{1}{2}||H\Psi_N(0,t)u||_{L^2}^2}$$

We get

$$\mu_N(\Psi_N(0,t) A) \geq \int d_N du 1_A(u) e^{-\E_N(0,u)} = \rho_N(A) .$$

\end{proof}
 
\subsection{Well-posedness for all $T$}
 
Now that we have a sequence of measures $\rho_N$ and $\rho$ with nice properties regarding the pseudo-Hamiltonian flows $\Psi_N(t_0,t)$, we will show the existence of a $\rho$-full measured set $\Sigma$, such that the flow $\Psi(0,t)$ is defined for all $t\in [-\pi,\pi]$ on this set. 

\begin{proposition} Let $p\in]2\alpha,6[$ and $s$ defined accordingly by $s=\Frac{3}{2}-\Frac{4}{p}$. Let $\sigma < \Frac{1}{2}$. For all integers $i$ and $N$, there exists a set $\Sigma_N^i$, $\rho_N$-measurable such that $\rho_N(E_N \smallsetminus \Sigma_N^i) = O(2^{-i})$ and $C\geq 0$ (independant from $i$ and from $N$) such that for all $u_0\in \Sigma_N^i$ and all $t\in]-\pi,\pi[$,

$$||S(t')\Psi_N(0,t)u_0||_{L^p(t',x\in [-\pi,\pi]\times S^3)} + ||\Psi_N(0,t)u_0||_{H^\sigma} \leq C \sqrt i$$

\end{proposition}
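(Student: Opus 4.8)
The plan is to control the truncated flow at all times by controlling it only at finitely many intermediate times and then propagating this control through the quasi-invariance of $\rho_N$ from Proposition \reff{decrease}. Fix a large constant $D$, to be chosen at the very end, and introduce the good set at the reference time
$$G_N^i = \Big\{ u_0 \in E_N \,\Big|\, ||S(t')u_0||_{L^p([-\pi,\pi]\times S^3)} + ||u_0||_{H^\sigma} \leq D\sqrt{i} \Big\}.$$
The first step is a large deviation estimate $\mu_N(E_N \setminus G_N^i) \leq C e^{-cD^2 i}$, uniform in $N$. For the space-time Strichartz term I would compute the $L^q_\omega$ moments of $||S(t')\varphi_N||_{L^p([-\pi,\pi]\times S^3)}$: since $S(t')$ multiplies each $e_n$ by a unimodular phase, the moduli of the coefficients are unchanged, so Lemma \reff{poumpoumpoum} gives pointwise in $(t',x)$ the bound $||S(t')\varphi_N||_{L^q_\omega} \leq C\sqrt{q}\,(\sum_n \tfrac{2}{n^2}|e_n(x)|^2)^{1/2}$. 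Minkowski's inequality (valid for $q\geq p$) then lets me exchange $L^q_\omega$ and $L^p_{t',x}$, and the estimate $||e_n||_{L^p}\leq C n^{1-3/p}$ from \reff{majoration} makes $\sum_n \tfrac{2}{n^2}||e_n||_{L^p}^2 \sim \sum_n n^{-6/p}$ converge precisely because $p<6$. This yields $||\,||S(t')\varphi_N||_{L^p}\,||_{L^q_\omega}\leq C\sqrt{q}$ uniformly in $N$, and Chebyshev's inequality in $L^q_\omega$ with the optimal choice $q\sim \lambda^2$ produces the exponential tail. An analogous, simpler moment computation (via Minkowski and the Gaussian moment bound) gives $||\,||\varphi_N||_{H^\sigma}\,||_{L^q_\omega}\leq C\sqrt{q}$ since $\sum_n n^{2\sigma-2}$ converges for $\sigma<\tfrac12$, so the $H^\sigma$ term satisfies the same tail estimate.

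Next I would discretize time. Given the threshold $A=D\sqrt{i}$, the local well-posedness proposition proved above provides a time of existence $\tau = c(1+A)^{-\gamma}$, so $[-\pi,\pi]$ is covered by $M \sim \pi/\tau \sim i^{\gamma/2}$ intervals centered at the points $t_k = k\tau$. I then set
$$\Sigma_N^i = \Big\{ u_0 \in E_N \,\Big|\, \Psi_N(0,t_k)u_0 \in G_N^i \text{ for all } k \Big\}.$$
The complement is estimated by a union bound: writing the bad event at time $t_k$ as $A_k = \{u_0 : \Psi_N(0,t_k)u_0 \notin G_N^i\}$, the bijectivity of the flow gives $\Psi_N(0,t_k)A_k = E_N \setminus G_N^i$, so Proposition \reff{decrease} yields $\rho_N(A_k) \leq \mu_N(\Psi_N(0,t_k)A_k) = \mu_N(E_N\setminus G_N^i) \leq Ce^{-cD^2 i}$. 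Summing over the $M\sim i^{\gamma/2}$ steps gives $\rho_N(E_N\setminus\Sigma_N^i)\leq C\,i^{\gamma/2}\,e^{-cD^2 i}$, and choosing $D$ with $cD^2 > \log 2$ makes the exponential dominate the polynomial factor, which produces the required bound $O(2^{-i})$.

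Finally I would upgrade the discrete control to the continuous statement. For $u_0\in\Sigma_N^i$ and arbitrary $t\in\,]-\pi,\pi[$, choose $t_k$ with $|t-t_k|\leq\tau$ and write $\Psi_N(0,t)u_0 = \Psi_N(t_k,t)\big(\Psi_N(0,t_k)u_0\big)$. Since $\Psi_N(0,t_k)u_0\in G_N^i$, its data obeys $||S(t')\Psi_N(0,t_k)u_0||_{L^p([-\pi,\pi]\times S^3)}\leq A$, which is exactly the hypothesis of the local well-posedness proposition; its conclusion then gives $\sup_{|t'|\leq\tau}||S(\cdot)\Psi_N(0,t_k+t')u_0||_{L^p}\leq CA$ together with the growth bound $||\Psi_N(0,t)u_0||_{H^\sigma}\leq ||\Psi_N(0,t_k)u_0||_{H^\sigma}+CA\leq (1+C)D\sqrt{i}$. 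The crucial structural point is that only a single local step is used to bridge from $t_k$ to the nearby $t$, with membership in $G_N^i$ supplying the control at every $t_k$ directly; hence there is no accumulation of error across the $M$ intervals and the final constant stays of order $\sqrt{i}$.

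The step I expect to be the main obstacle is the first one, namely the space-time large deviation estimate for $||S(t')\varphi_N||_{L^p([-\pi,\pi]\times S^3)}$ with a constant uniform in $N$. The delicate points there are the Minkowski exchange between the probabilistic norm and the space-time norm, and the verification that the endpoint $p<6$ is exactly what renders the spectral series $\sum_n \tfrac{1}{n^2}||e_n||_{L^p}^2$ summable. Combined with the requirement $cD^2 > \log 2$ (which forces $D$ large and explains why the natural threshold is $D\sqrt{i}$ rather than $\sqrt{i}$), this is where the sharp numerology balancing the exponential tail against the polynomially many time steps is concentrated.
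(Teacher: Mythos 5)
Your proposal is correct and follows essentially the same route as the paper: the same good set $B_N^i(D)$ at the reference time, the same Gaussian large-deviation estimate via $L^q_\omega$ moments (Lemma \reff{poumpoumpoum}), Minkowski exchange and the eigenfunction bounds \reff{majoration} with the $p<6$ summability, the same time discretization with $\tau=c(1+D\sqrt i)^{-\gamma}$ combined with Proposition \reff{decrease} and a union bound, and the same choice of $D$ so the exponential tail beats the polynomially many time steps. The final single-step bridging from $t_k$ to nearby $t$ via the local well-posedness proposition is also exactly the paper's argument, so there is nothing to add.
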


\begin{proof}

\smallskip

Let $D$ be a positive real number and $B_N^i(D) = \lbrace u \in E_N \; |\; ||S(t)u||_{L^p_{t,x}} + ||u||_{H^\sigma}\leq D\sqrt i\rbrace$.

\smallskip

Let's study the measurement $\mu_N(E_N\smallsetminus B_N^i(D))$. 

$$E_N\smallsetminus B_N^i(D) = \lbrace u \; | \; ||S(t)u||_{L^p_{t,x}} + ||u||_{H^\sigma} > D\sqrt i\rbrace $$

$$\subseteq \lbrace u \in E_N \;|\; ||S(t)u||_{L^p} > \frac{1}{2}D\sqrt i \rbrace \cup \lbrace u \in E_N \;|\; ||u||_{H^\sigma} > \frac{1}{2}D\sqrt i \rbrace$$
so $ \mu_N (E_N\smallsetminus B_N^i(D)) \leq \mu_N (\lbrace u \in E_N \;|\; ||S(t)u||_{L^p} > \frac{1}{2}D\sqrt i \rbrace) + \mu_N (\lbrace u \in E_N \;|\; ||u||_{H^\sigma} > \frac{1}{2}D\sqrt i \rbrace)$. But we have :

$$\mu_N (\lbrace u \in E_N \;|\; ||S(t)u||_{L^p} > \frac{1}{2}D\sqrt i \rbrace)\leq P( || \sum_{n=1}^N n^{- 1}g_n(\omega) e^{-int}e_n||_{L^p}>D\sqrt i/2)$$

$$ \leq P(|| \sum_{n=1}^N n^{- 1}g_n(\omega)e^{-int} e_n||_{L^p}^q>(D\sqrt i/2)^q)$$

$$\leq 2^q D^{-q} i^{-q/2}||\sum \frac{g_n}{n} e^{int}e_n||_{L^q_\omega,L^p_{x,t}}^q\; .$$

For all $q\geq p$, this implies that : 

$$\mu_N (\lbrace u \in E_N \;|\; ||S(t)u||_{L^p} > \frac{1}{2}D\sqrt i \rbrace)\leq 2^qD^{-q} i^{-q/2}||\sum \frac{g_n}{n} e^{int}e_n||_{L^p_{x,t},L^q_\omega}^q$$

$$\leq 2^q D^{-q} i^{-q/2}||\sqrt{q \sum \frac{|e_n|^2}{n^2}}||_{L^p}^q$$

$$\leq 2^q D^{-q} i^{-q/2}q^{q/2} (\sum ||e_n||_{L^p}^2 n^{-2})^{q/2}$$

As $p<6$, there exists $\nu(p)$ and $C(p)$ such that $||e_n||_{L^p}^2<C(p)n^{1-\nu(p)}$ so the sum converges, we get a triangle inequality that can be written

$$\mu_N (\lbrace u \in E_N \;|\; ||S(t)u||_{L^p} > \frac{1}{2}D\sqrt i \rbrace)\leq (\frac{C\sqrt q}{D\sqrt i})^q\; .$$

With $q=\Frac{D^2 i}{C^2 e^2}\geq p$, ie $D^2  \geq \Frac{C^2 e^2 p}{i}$, we get : 

$$\mu_N (\lbrace u \in E_N \;|\; ||S(t)u||_{L^p} > \frac{1}{2}D\sqrt i \rbrace) \leq e^{-\frac{D^2 i}{C^2 e^2}}$$

There exits $C$, such that for all $D\geq C e \sqrt p$ and all $i$, 

$$\mu_N (\lbrace u \in E_N \;|\; ||S(t)u||_{L^p} > \frac{1}{2}D\sqrt i \rbrace)\leq C e^{-cD^2i}$$

The same argument can be used with the norm $H^\sigma$ since the general term of the series becomes $n^{2(\sigma - 1)}$ and that $\sigma < \frac{1}{2}$, so we have : 

$$\rho_N (E_N\smallsetminus B_N^i(D)) \leq C e^{-cD^2 i}$$
with $C$ and $c$ independant from $D$, $i$ and $N$.

Let $\tau $ be the time defined in the local well posedness lemma, we have, for all $t_0,t$ such that $t-t_0\in [-\tau,\tau]$:

$$\Psi_N(t_0,t)(B_N^i(D)) \subseteq \lbrace u\in E_N \; |\; ||S(t')u||_{L^p_{t,x}}+||u||_{H^\sigma} \leq CD\sqrt i\rbrace$$

We set

$$\Sigma_N^i(D) = \bigcap_{k=-[\pi/\tau]}^{[\pi/\tau]} \Psi_N(0,k\tau)^{-1} (B_N^i(D))$$

The measurement of its complementary in $E_N$ satisfies : 

$$\rho_N(E_N\smallsetminus \Sigma_N^i(D)) \leq \sum_k \rho_N(E_N\smallsetminus \Psi_N(0,k\tau)^{-1} B_N^i(D))$$

Since $\rho_N(E_N\smallsetminus \Psi_N(0,k\tau)^{-1} B_N^i(D)) = \rho_N(\Psi_N(0,k\tau)^{-1}(E_N\smallsetminus B_N^i(D)) \leq \mu_N(E_N\smallsetminus B_N^i(D))$ by proposition \reff{decrease}, we get

$$\rho_N(E_N\smallsetminus \Sigma_N^i(D)) \leq (2[\frac{\pi}{\tau}]+1)C e^{-cD^2i}$$

What is more, $\tau $ is equal to $c(1+D\sqrt i)^{-\delta}$ so 

$$\rho_N(E_N\smallsetminus \Sigma_N^i(D)) \leq C (D\sqrt i)^{\delta} e^{-cD^2 i}\; .$$

We choose $D$ large enough ($ D^2 > \log 2 /c$) to have $CD^\delta i^{\delta/2}e^{-cD^2 i}\leq C' 2^{-i}$, $C'$ independa;nt from $N$ and $i$, and we note $\Sigma^i_N = \Sigma^i_N(D)$. We have $\rho_N(E_N\smallsetminus \Sigma_N^i) \leq C' 2^{-i}$.

\smallskip

One can see that for all $t$ in $[-\pi,\pi]$, $t$ can be written $k\tau + t_1$ with $t_1\in [-\tau,\tau]$, and $k\in \lbrace -[\pi,\tau],\hdots,[\pi/\tau] \rbrace$. So we get, for $u\in \Sigma_N^i$, $\Psi_N(0,t) u = \Psi_N(k\tau, k\tau+t_1)(\Psi_N(0,k\tau) u)$ and since $\Psi_N(0,k\tau) u\in B_N^i(D)$, $||S(t')\Psi_N(0,t) u||_{L^p_{t',x}}+ ||\Psi_N(0,t) u||_{H^\sigma}\leq CD\sqrt i$.

\smallskip

\end{proof}

\begin{definition}Let $\widetilde \Sigma_i^N = \lbrace u \in H^{\sigma} \; |\; \Pi_N u\in \Sigma_N^i\rbrace $ and $\Sigma^i = \limsup \widetilde \Sigma^i_N$. For all $u\in \Sigma^i$ there exists a sequence $u_k \in \Sigma_{N_k}^i$ ($N_k\rightarrow \infty$) such that $u_k$ converges toward $u$ in $H^\sigma$.

\smallskip

We also write $\Sigma = \bigcup_i \Sigma^i$.

\end{definition}

\begin{proposition} The set $\Sigma $ is of full measure.\end{proposition}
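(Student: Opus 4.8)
The plan is to bound $\rho(H^\sigma\setminus\Sigma^i)$ by $O(2^{-i})$ uniformly in $i$ and then let $i\to\infty$. Since $\Sigma=\bigcup_i\Sigma^i$, we have $H^\sigma\setminus\Sigma=\bigcap_i(H^\sigma\setminus\Sigma^i)$, so by monotonicity of the (finite) measure $\rho$,
$$\rho(H^\sigma\setminus\Sigma)\le\inf_i\rho(H^\sigma\setminus\Sigma^i),$$
and the whole statement reduces to the single estimate $\rho(H^\sigma\setminus\Sigma^i)\le C2^{-i}$. Note that no monotonicity of the sets $\Sigma^i$ in $i$ is needed for this reduction.

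First I would transfer the finite-dimensional bound on $\rho_N(E_N\setminus\Sigma_N^i)$ to an infinite-dimensional bound on $\rho(H^\sigma\setminus\widetilde\Sigma_N^i)$. Write $A=E_N\setminus\Sigma_N^i$, so that $H^\sigma\setminus\widetilde\Sigma_N^i=\Pi_N^{-1}(A)$. Using the push-forward identity $(\Pi_N)_*\mu=\mu_N$, which holds because $\Pi_N\varphi=\varphi_N$, together with $S_N\Pi_N=S_N$ (which forces $f_N(\Pi_N u)=f_N(u)$ for the density $f_N(u)=\exp(-\frac{1}{\alpha+2}\int\Omega^{\alpha-2}|S_N\Re u|^{\alpha+2})$ of $\rho_N$), one rewrites the two relevant masses as integrals over $H^\sigma$ against $d\mu$:
$$\rho_N(E_N\setminus\Sigma_N^i)=\int_{H^\sigma}\mathbf 1_A(\Pi_N u)\,f_N(u)\,d\mu(u),\qquad \rho(H^\sigma\setminus\widetilde\Sigma_N^i)=\int_{H^\sigma}\mathbf 1_A(\Pi_N u)\,f(u)\,d\mu(u),$$
where $f$ is the density of $\rho$. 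Subtracting and bounding the indicator by $1$ gives $\rho(H^\sigma\setminus\widetilde\Sigma_N^i)\le\rho_N(E_N\setminus\Sigma_N^i)+\|f-f_N\|_{L^1(d\mu)}$.

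Then I would invoke the two results already established. The preceding proposition on $\Sigma_N^i$ gives $\rho_N(E_N\setminus\Sigma_N^i)\le C2^{-i}$ with $C$ independent of $N$ and $i$, and the convergence proposition gives $\|f-f_N\|_{L^1(d\mu)}\to 0$ as $N\to\infty$. Since $H^\sigma\setminus\Sigma^i=\liminf_N(H^\sigma\setminus\widetilde\Sigma_N^i)$, Fatou's lemma applied to the indicators yields $\rho(H^\sigma\setminus\Sigma^i)\le\liminf_N\rho(H^\sigma\setminus\widetilde\Sigma_N^i)\le C2^{-i}+\liminf_N\|f-f_N\|_{L^1(d\mu)}=C2^{-i}$. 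Combined with the first paragraph, this forces $\rho(H^\sigma\setminus\Sigma)=0$, which is the claim.

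The step I expect to be the \emph{main obstacle} is the comparison inequality of the second paragraph. The density $f$ of $\rho$ depends on the full distribution $u$, whereas $\rho_N$ only feels $u$ through the multipliers $S_N$ and $\Pi_N$, so a priori the two masses are integrals against genuinely different densities on different spaces. The reconciliation rests on the two algebraic facts $S_N\Pi_N=S_N$ and $(\Pi_N)_*\mu=\mu_N$: the first makes $f_N\circ\Pi_N=f_N$, and the second turns the finite-dimensional integral defining $\rho_N$ into an integral over $H^\sigma$ against $d\mu$, so the entire discrepancy collapses to the $L^1(d\mu)$-distance between $f$ and $f_N$, which is exactly the quantity shown to vanish earlier. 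Everything remaining is soft measure theory (Fatou's lemma and monotonicity of measures).
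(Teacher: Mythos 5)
Your proposal is correct and takes essentially the same route as the paper: the paper's proof rests on exactly your three inputs --- the uniform bound $\rho_N(E_N\smallsetminus\Sigma_N^i)\leq C2^{-i}$, the identification of $\rho_N$-masses with $\mu$-weighted masses of $\Pi_N$-preimages (which the paper derives by a layer-cake computation, equivalent to your pushforward identity $(\Pi_N)_*\mu=\mu_N$ with $f_N\circ\Pi_N=f_N$), and the $L^1(d\mu)$ convergence $f_N\to f$. The only difference is presentational: you work with complements, $H^\sigma\smallsetminus\Sigma^i=\liminf_N(H^\sigma\smallsetminus\widetilde\Sigma^i_N)$, and ordinary Fatou, whereas the paper works with $\Sigma^i=\limsup_N\widetilde\Sigma^i_N$ and the reverse Fatou inequality $\rho(\limsup_N \widetilde\Sigma^i_N)\geq\limsup_N\rho(\widetilde\Sigma^i_N)$ for the finite measure $\rho$, concluding $\rho(\Sigma)\geq\rho(H^\sigma)-C2^{-i}$; these are dual phrasings of the same argument.
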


\begin{proof} We have $\rho(\Sigma_i) \geq \limsup \rho(\widetilde \Sigma_N^i)$. 

$$\rho_N(\Sigma^i_N)= \int_{\Sigma^i_N}f_N(u) d\mu_N$$

$$= \int  \mu_N\left(\Sigma^i_N \cap f_N^{-1}(]\lambda,\infty])\right)d\lambda$$

$$= \int \mu\left(\Pi_N^{-1}(\Sigma^i_N \cap f_N^{-1}(]\lambda,\infty]))\right)d\lambda$$

$$=\int \mu\left(\widetilde\Sigma^i_N \cap f_N^{-1}(]\lambda,\infty])\right)d\lambda$$

$$= \int_{\widetilde \Sigma_N^i} f_N(u)d\mu(u)\; .$$

Remind that $\rho(\widetilde \Sigma_N^i) = \int_{\widetilde \Sigma_N^i} f(u)d\mu(u)$.

\smallskip

Since $f_N$ converges in $L^1_\mu$ toward $f$, $\limsup \rho(\widetilde \Sigma^i_N) = \limsup \int_{\widetilde \Sigma^i_N} f_N(u)d\mu = \limsup \rho_N(\Sigma_N^i)$. What is more, $\limsup \rho_N(\Sigma^i_N)\geq \lim \rho_N(E_N)-C2^{-i} = \rho(H^\sigma)- C2^{-i}$. We deduce that $\rho(\Sigma) \geq \rho(H^\sigma) - C2^{-i}$ for all $i$, and so $\rho(\Sigma)=\rho(H^\sigma)$.
\end{proof}

\begin{proposition} Let $p\in ]2\alpha,6[$ and $s$ defined accordingly as $s=\Frac{3}{2}-\Frac{4}{p}$. Let also $\sigma < \frac{1}{2}$. For all $u_0 \in \Sigma$, there exists a strong solution for all $T\in [-\pi,\pi]$ of the hamiltonian problem.\end{proposition}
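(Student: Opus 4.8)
The plan is to construct the solution on $[-\pi,\pi]$ as a limit of the finite-dimensional flows $\Psi_N(0,T)$, the whole point being that the a priori bound $C\sqrt i$ supplied by the preceding proposition is uniform in both $N$ and $T$, so that the local existence time never degenerates along the continuation.

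First I would fix $u_0\in\Sigma$. Since $\Sigma=\bigcup_i\Sigma^i$ there is an index $i$ with $u_0\in\Sigma^i=\limsup_N\widetilde\Sigma^i_N$; hence $u_k:=\Pi_{N_k}u_0$ lies in $\Sigma^i_{N_k}$ along a subsequence $N_k\to\infty$, and $u_k\to u_0$ in $H^\sigma$. For each $k$ the truncated problem \reff{cauchy} has a global flow $\Psi_{N_k}(0,\cdot)$, and the previous proposition gives
$$\sup_{T\in]-\pi,\pi[}\left(||S(t')\Psi_{N_k}(0,T)u_k||_{L^p([-\pi,\pi]\times S^3)}+||\Psi_{N_k}(0,T)u_k||_{H^\sigma}\right)\le C\sqrt i,$$
with $C$ independent of $k$. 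Taking $T=0$, the sequence $S(t')u_k$ is bounded in $L^p([-\pi,\pi]\times S^3)$ and converges to $S(t')u_0$ in the distributional sense, so by weak lower semicontinuity $||S(t')u_0||_{L^p([-\pi,\pi]\times S^3)}\le C\sqrt i=:A$, which is precisely the hypothesis needed to launch the local theory.

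Next I would apply the local well-posedness proposition with this value of $A$. It yields a time step $\tau=c(1+A)^{-\gamma}$ depending only on $A$ (hence only on $i$), on which the full Duhamel solution $u$ with data $u_0$ exists with $u=S(t)u_0+v$, $v\in X^s_\tau$. On $[-\tau,\tau]$ I would compare $\Psi_{N_k}(0,\cdot)u_k$ with $u$: their difference solves a perturbed fixed-point equation whose source terms are controlled, via the contraction estimates already established for $K$, by the initial-data discrepancies $||u_k-u_0||_{H^\sigma}$ and $||(S_{N_k}-1)u_0||_{H^\sigma}$ (both $\to0$) and by the truncation term, which the third inequality of the Strichartz-consequences proposition, $||(1-S_N)\int_0^tS(t-t')H^{-1}h\,dt'||_{X^s_T}\le CN^{s-s_1}||h||_{Y^{1-s_1}_T}$, bounds by $CN_k^{s-s_1}\to0$. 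This forces $\Psi_{N_k}(0,\cdot)u_k\to u$ in $X^s_\tau$, so $u$ is a genuine strong solution of \reff{hamil} on $[-\tau,\tau]$.

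Finally I would iterate. Because the a priori bound holds at every $T\in]-\pi,\pi[$, the limit inherits $||S(t')u(T)||_{L^p}\le C\sqrt i$ at the endpoints $T=\pm\tau$ as well, so the local construction restarts from $u(\pm\tau)$ with the same $A$ and hence the same step $\tau$. Covering $[-\pi,\pi]$ by the $2[\pi/\tau]+1$ intervals of length $\tau$ and gluing the pieces produces a strong solution on all of $[-\pi,\pi]$. The main obstacle is the comparison step of the third paragraph: one must show that the flows $\Psi_{N_k}(0,\cdot)u_k$, built from the modified nonlinearity with the operators $S_{N_k}$, actually converge to a solution of the untruncated equation \reff{hamil}, and, decisively, that the uniform-in-$T$ bound $C\sqrt i$ is transmitted to the limit so that $\tau$ does not shrink along the continuation; it is this last feature that upgrades local existence to existence on the whole interval $[-\pi,\pi]$.
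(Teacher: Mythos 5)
Your proposal is correct and follows essentially the same route as the paper: approximating by the finite-dimensional flows $\Psi_{N_k}(0,\cdot)\Pi_{N_k}u_0$, transferring the uniform bound $C\sqrt i$ to the limit data, running the local theory with $A=C'\sqrt i$, controlling the difference via the contraction estimates together with the $(1-S_{N_k})$ smoothing inequality giving the $N_k^{s-s_1}$ gain, and iterating with a step $\tau'$ that never shrinks because the bound is inherited at each endpoint. The paper packages exactly this continuation-plus-convergence argument as an inductive lemma (also propagating uniqueness of $u-S(t)u_0$ in $X^s_{t_0+\tau'}$, which your sketch omits but which the statement does not require), so no substantive difference remains.
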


\begin{proof} Let us begin with setting $i$ the integer such that $u_0\in \Sigma_i$, there exists a sequence $N_k \rightarrow \infty$ such that $u_{0,k} := \Pi_{N_k}u_0 \in \Sigma^i_N$. As the norms $||u_{0,k}||_{H^\sigma}$ are bounded by $D\sqrt i$, and that the sequence $u_{0,k}$ converges toward $u_0$ in norm $H^\sigma$, we have $||u_0||_{H^\sigma}\leq D \sqrt i$. 

\smallskip

We also have that the norms $||S(t)u_{0,k}||_{L^p}$ are bounded by $D\sqrt i$, and that $S(t)u_{0,k}$ converges as distributions toward  $S(t)u_0$ so $S(t)u_{0,k}$ converges toward $S(t)u_0$ in norm $L^p$. 

\smallskip

Set $u_k(t) = \Psi_{N_k}(0,t)u_{0,k}$. For all $t\in [-\pi,\pi]$ and all $k$ we have

$$||S(t')u_k(t)||_{L^p_{t',x}}+||u_k(t)||_{H^\sigma}\leq C' \sqrt i.$$
what is more,

$$||S(t')u_0||_{L^p}+||u_0||_{H^\sigma}\leq C'\sqrt i$$

\begin{lemma}Let $t_0\in [0,\pi]$ and suppose that $\Psi(0,t)u_0$ is defined for all $t\in[0,t_0]$, and satisfies

$$||S(t')\Psi(0,t_0)u_0||_{L^p}+||\Psi(0,t_0)u_0||_{H^\sigma}\leq C'\sqrt i \; .$$ 

We assume that $\Psi(0,t)u_0$ is the unique solution of \reff{hamil} in the sense that $\Psi(0,t)u_0 - S(t)u_0$ is unique in $X^s_{t_0}$.

\smallskip

We suppose also that $u_k(t_0)$ converges toward $\Psi(0,t_0)u_0$ in norms $||\; .\; ||_{H^\sigma_R}$ and $||S(t) .\;||_{L^p_{t,R}}$.

\smallskip

Then, there exists a time $\tau'$ independant from $t_0$ such that the solution $\Psi(0,t)u_0$ is defined and unique in the sense that $\Psi(0,t)u_0-S(t)u_0$ is unique in $X^s_{t_0+\tau'}$ for all $t\in[0,t_0+\tau']$ and satisfies :

$$||S(t')\Psi(0,t_0+\tau')u_0||_{L^p_{t',R}}+||\Psi(0,t_0+\tau')u_0||_{H^\sigma_R}\leq C'\sqrt i \; .$$ 

We get the same convergences at time $t_0+\tau'$.
\end{lemma}

\begin{proof}

Let $A=C'\sqrt i$ and $\tau $ the corresponding time involved in the local well-posedness lemma. On $[t_0-\tau,t_0+\tau]$, we can define $u$ the local solution and $v(t)=u(t+t_0)-S(t)\Psi(0,t_0)u_0$. As $v$ is unique from the local well-posedness lemma in $X^s_\tau$, we have that

$$v(t) = \Psi(0,t+t_0)u_0-S(t+t_0)u_0 + S(t)\left(\Psi(0,t_0)u_0 - S(t_0)u_0\right)\; ,$$
and $S(t)\left(\Psi(0,t_0)u_0 - S(t_0)u_0\right)$ is in $X^s_{\tau+t_0}$ by hypothesis, we have that $\Psi(0,t)u_0-S(t)u_0$ is unique in $X^s_{t_0+\tau}$.

\smallskip

We write the analoguous functions $v_k(t) = u_k(t+t_0)-S(t)u_{k}(t_0)$, and $w_k = v-v_k$. Remark that from the local well-posedness property, for all $t\in[-\tau,\tau]$ : 

$$||v||_{X^s_\tau} + ||u(t+t_0)||_{H^\sigma}+||S(t')u(t+t_0)||_{L^p_{t',x}} \leq C A,$$

$$||v_k||_{X^s_\tau} + ||u_k(t_0+t)||_{H^\sigma}+||S(t')u_k(t_0+t)||_{L^p_{t',x}} \leq C A.$$

$w_k$ can be written as : 

$$w_k = v-v_k = -i\int_{0}^t dt'S(t-t')H^{-1}(F(t'+t_0,u)-S_{N_k} F(t'+t_0,S_{N_k} u_k))$$

$$w_k = -i\int_{0}^t dt' S(t-t')H^{-1}S_{N_k}(F(t'+t_0,u)-F(t'+t_0,S_{N_k}u)) -i\int_{0}^t dt' S(t-t')H^{-1}(1-S_{N_k})F(t'+t_0,u)$$

For $\tau'\leq \tau$, the norm $X^s_{\tau'}$ of the first integral is less than : 

$$||-i\int_{0}^t dt' S(t-t')H^{-1}S_{N_k}(F(t'+t_0,u)-F(t'+t_0,S_{N_k}u))||_{X^s_{\tau'}}$$

$$\leq C(\tau')^\delta A^2(||S(t)(\Psi(0,t_0)u_0-S_{N_k}u_{k}(t_0)||_{L^p} + ||v-S_{N_k}v_k||_{X^s_{\tau'}})$$

$$\leq C(\tau')^\delta A^2||w_k||_{X^s_{\tau'}}$$
with $\delta > 0$ chosen as precedently.

\smallskip

Regarding the second integral, we use the fact that for $s<s_1<(\alpha +1)s-\Frac{3\alpha-4}{2}$ (which is possible since $p>2\alpha$), 

$$||(1-S_{N_k})\int S(t'-t)H^{-1} F(t'+t_0,u)dt' ||_{X^s_{\tau'}}\leq CN_k^{s-s_1}||F(t'+t_0,u)||_{Y_{\tau'}^{1-s_1}}$$

By choosing $p_1$ such that $\Frac{4}{p_1}=\Frac{3}{2}-1+s_1=\Frac{1}{2}+s_1$, that is to say $\Frac{4}{p'_1}=\Frac{7}{2}-s_1$, we have $||u||_{L^{(\alpha+1)p'_1}}^{\alpha+1}\geq C||F(u)||_{L^{p'_1}}\geq C||F(u)||_{Y_{\tau'}^{1-s_1}}$. But, 

$$\frac{4}{(\alpha+1)p'_1}=\frac{7}{2(\alpha+1)}-\frac{s_1}{\alpha+1}$$

$$\frac{4}{(\alpha+1)p'_1}\geq \frac{7}{2(1+\alpha)}-s+\frac{3\alpha-4}{2(1+\alpha)}= \frac{3}{2}-s=\frac{4}{p}$$

So

$$||(1-S_{N_k})\int S(t'-t)H^{-1} F(t'+t_0,u)dt' ||_{X^s_{\tau'}}\leq CN_k^{s-s_1}(\tau')^{\delta_1} (||\Psi(0,t_0)u_0||_{H^\sigma}+||v||_{X^s_{\tau'}})$$
with $\delta_1 = \Frac{1}{(\alpha+1)p'_1}-\Frac{1}{p}$. 

\smallskip

Hence,

$$||w_k||_{X^s_{\tau'}}\leq C(\tau')^{\delta}A^2||w_k||_{X^s_{\tau'}}+ C^2N_k^{s-s_1}(\tau')^{\delta_1}A \; .$$

By choosing $\tau'$ small enough such that $C(\tau')^{\delta_1} A^2 < 1$, we get that the norm of $w_k$ converges toward $0$ when $k\rightarrow \infty$. We deduce that for all $t\in[-\tau',\tau']$, 

$$||u(t_0+t)-u_k(t_0+t)||_{H^\sigma}\leq ||S(t)(u(t_0))-u_k(t_0))||_{H^\sigma} + ||u(t_0+t)-u_k(t_0+t)-S(t)(u(t_0)-u_k(t_0))||_{X^s_{\tau'}} $$

$$ = ||S(t)\left(u(t_0)-u_k(t_0)\right)||_{H^\sigma}+ ||w_k||_{X^s_{\tau'}}$$

$$= ||u(t_0)-u_k(t_0)||_{H^\sigma}+ ||w_k||_{X^s_{\tau'}}\rightarrow 0 \; .$$

What is more, $S(t')(u(t_0+\tau')-u_k(t_0+\tau'))\rightarrow 0$ in norm $L^p_{t',R}$. We get that 

$$||S(t')u(t_0+\tau')||_{L^p}+||u(t_0+\tau')||_{H^\sigma}\leq \lim ||S(t')u_k(t_0+\tau')||_{L^p_{t',R}}+||u_k(t_0+\tau')||_{H^\sigma}\leq A = C'\sqrt i \; .$$

\end{proof}

To finish the proof of the proposition, we can then apply the lemma with \\ $t_0 = 0,\tau',2\tau'\hdots,M\tau'$, with $M = [\pi/\tau']$ and use a similar argument for the negative times.
\end{proof}

\subsection{Back to the non linear wave equation on $\R^3$}

Let us see now what this result means for the non linear wave equation on the Euclidian space. To understand such a thing, we have to wonder to which spaces belongs the Penrose transform at time $T=0 \Leftrightarrow t=0$ of the initial data $u_0$ that is to say, an initial data taken in $L^2$.

\begin{definition} Let $PT$ the transform defined on $L^2$ by : 

$$u_0 \in L^2 \mapsto f_0,f_1$$
with 

$$f_0(r) = \frac{2}{1+r^2}\Re u_0(2\arct(r)) \; \mbox{and} \; f_1(r) = -\left(\frac{2}{1+r^2}\right)^2 (H\Im u_0)(2\arct(r))\; .$$
\end{definition}

\begin{remark} The transform $PT$ is the Penrose transform taken at time $t=0$. That is to say, if $f(t,r)$ is the Penrose transform of $u(T,R)$ then $f(t=0,r) = \frac{2}{1+r^2}\Re u_0(2\arct(r))$ and $\partial_t f (t=0,r) = -\left(\frac{2}{1+r^2}\right)^2 (H\Im u_0)(2\arct(r))$.\end{remark}

We define now the spaces which the $PT$ sends $L^2$ to. 

\begin{definition}Let $m\in \R$, we define $L^2_m$ (resp. $H^{-2}_m$) as the set of radial distributions $f$ on $\R^3$ such that the $L^2$ (resp. $H^{-2}$) -norm of $(\frac{1+r^2}{2})^{m/2}f$ is finite. This is a normed vector space and its norm is : 

$$ ||f||_{L^2_m} = ||\left( \frac{1+r^2}{2} \right)^{m/2} f ||_{L^2}$$
(resp.

$$||f||_{H^{-2}_m} = ||\left( \frac{1+r^2}{2}\right)^{m/2} f ||_{H^{-2}} \; )\; . $$
\end{definition}

\begin{proposition}The transform $PT$ continuously sends $L^2_{S^3}$, meaning the $L^2$ space of zonal complex functions of the sphere $S^3$ into $L^2_{-1}\times H^{-2}_{-6}$ wich means the $L^2$ space times the $-1$ Sobolev space of radial real functions of the Euclidian space $\R^3$ with weights.  \end{proposition}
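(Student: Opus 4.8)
The plan is to treat the two components $f_0\in L^2_{-1}$ and $f_1\in H^{-2}_{-6}$ separately, reducing every estimate to the sphere through the change of variables $R=2\arctan r$, for which the basic identities are $\frac{2}{1+r^2}=1+\cos R$, $\sin R=\frac{2r}{1+r^2}$ and $dR=\frac{2}{1+r^2}\,dr$ (equivalently $\partial_R=\frac{1+r^2}{2}\partial_r$), while the volume elements satisfy $4\pi r^2\,dr=4\pi\frac{\sin^2R}{(1+\cos R)^3}\,dR$, the right-hand measure being the $S^3$ integration unit $4\pi\sin^2R\,dR$ up to the weight $(1+\cos R)^{-3}$.

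For $f_0$ the bound is a direct computation. Since $f_0(r)=(1+\cos R)\,\Re u_0(R)$ and the $L^2_{-1}$ weight is $\left(\frac{1+r^2}{2}\right)^{-1}=1+\cos R$, I would insert these into $\|f_0\|_{L^2_{-1}}^2=4\pi\int_0^\infty(1+\cos R)\,|f_0|^2\,r^2\,dr$ and change variables; all powers of $(1+\cos R)$ cancel and one is left with exactly $4\pi\int_0^\pi|\Re u_0|^2\sin^2R\,dR=\|\Re u_0\|_{L^2(S^3)}^2$. Thus $\|f_0\|_{L^2_{-1}}=\|\Re u_0\|_{L^2(S^3)}\le\|u_0\|_{L^2(S^3)}$, an isometry up to taking the real part.

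The component $f_1$ is the delicate one, because $f_1=-(1+\cos R)^2\,(H\Im u_0)(R)$ involves $H=\sqrt{1-\Delta_{S^3}}$, and $H\Im u_0$ is only in $H^{-1}(S^3)$ when $u_0\in L^2$. I would argue by duality: writing $w=\left(\frac{1+r^2}{2}\right)^{-3}f_1=-(1+\cos R)^5\,H\Im u_0$, I use $\|w\|_{H^{-2}(\R^3)}=\sup_{\|\phi\|_{H^2}\le1}|\langle w,\phi\rangle|$, restricting to radial $\phi$ since the radial part of any test function has smaller $H^2$ norm. Transporting the $\R^3$ pairing to $S^3$ cancels three powers of $(1+\cos R)$ and identifies $\langle w,\phi\rangle=-\langle H\Im u_0,(1+\cos R)^2\phi\rangle_{L^2(S^3)}$; moving $H$ onto the test function by self-adjointness yields $|\langle w,\phi\rangle|\le\|\Im u_0\|_{L^2(S^3)}\,\|(1+\cos R)^2\phi\|_{H^1(S^3)}$, since $\|H\psi\|_{L^2(S^3)}=\|\psi\|_{H^1(S^3)}$.

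It therefore remains to prove the weighted estimate $\|(1+\cos R)^2\phi\|_{H^1(S^3)}\lesssim\|\phi\|_{H^2(\R^3)}$, and this is the main obstacle. Using $\partial_R=\frac{1+r^2}{2}\partial_r$ one computes $\partial_R[(1+\cos R)^2\phi]=(1+\cos R)\,[\partial_r\phi-2\sin R\,\phi]$, so that one power of the decaying weight $1+\cos R=\frac{2}{1+r^2}$ always survives in front of the factor $r^2\partial_r$ that the Jacobian would otherwise produce. Changing the $S^3$ norm $4\pi\int_0^\pi(|\psi|^2+|\partial_R\psi|^2)\sin^2R\,dR$ back to the variable $r$, every resulting integrand is dominated by $\frac{r^2}{(1+r^2)^k}\,(|\phi|^2+|\partial_r\phi|^2)$ with $k\ge1$, hence controlled by $\|\phi\|_{L^2(\R^3)}^2+\|\nabla\phi\|_{L^2(\R^3)}^2\le\|\phi\|_{H^2(\R^3)}^2$. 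The care needed is purely in the weight bookkeeping near $R=\pi$, that is $r=\infty$, where the vanishing of $1+\cos R$ compensates the growth of the Jacobian; the origin $R=0$ is harmless since $\sin^2R\sim r^2$ there. Combining the two components gives $\|f_0\|_{L^2_{-1}}+\|f_1\|_{H^{-2}_{-6}}\le C\|u_0\|_{L^2(S^3)}$, which is the asserted continuity.
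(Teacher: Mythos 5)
Your proof is correct, and its skeleton coincides with the paper's: the $f_0$ estimate via the substitution $R=2\arctan r$ (the paper packages your cancellation of powers of $1+\cos R$ as the isometry between $L^2_{S^3}$ and $L^2_{-3}$, then writes $\|f_0\|_{L^2_{-1}}=\|v_0\|_{L^2_{-3}}=\|\Re u_0\|_{L^2}$), and the $f_1$ estimate by duality against radial $\varphi\in H^2(\R^3)$, transporting the bracket to the sphere and moving $H$ onto the weighted test function by self-adjointness. Where you genuinely diverge is in the key weighted estimate. Since $H$ has order one, duality only requires control of $\|(1+\cos R)^2\psi\|_{H^1(S^3)}$ with $\psi(R)=\varphi(\tan(R/2))$, and you obtain it by a purely first-order computation: from $\partial_R\bigl[(1+\cos R)^2\psi\bigr]=(1+\cos R)\bigl[\partial_r\varphi-2\sin R\,\psi\bigr]$ and $\sin^2R\,dR=(1+\cos R)^3\,r^2\,dr$, all surviving weights are bounded, so the zonal $H^1(S^3)$ norm is dominated by $\|\varphi\|_{L^2(\R^3)}^2+\|\nabla\varphi\|_{L^2(\R^3)}^2$. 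The paper instead enlarges $H^1(S^3)$ to $H^2(S^3)$ via the embedding and then verifies $H^2$-membership through a second-order analysis: commuting $1-\Delta_{S^3}$ with the weight and invoking the identity $\left(\frac{1+r^2}{2}\right)^{-2}D^2\varphi=\left(\frac{1+r^2}{2}\right)^{-2}\varphi-\Delta_{\R^3}\varphi+\frac{2r}{1+r^2}\frac{d\varphi}{dr}$, where $D^2$ is the transported $1-\Delta_{S^3}$. Your route is shorter (no second-order commutators) and in fact proves more: since your final bound involves only the $H^1(\R^3)$ norm of $\varphi$, you have shown $\left(\frac{1+r^2}{2}\right)^{-3}f_1\in H^{-1}(\R^3)$, i.e. $f_1\in H^{-1}_{-6}$, one derivative better than the stated $H^{-2}_{-6}$; amusingly, this is consistent with the paper's own closing sentence, which asserts continuity into "$L^2_{-1}\times H^{-1}_{-1}$" in apparent contradiction with its statement. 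You also make explicit a step the paper leaves implicit, namely that for a radial distribution one may restrict the duality to radial test functions because spherical averaging commutes with $\Delta_{\R^3}$ and contracts Sobolev norms. The only blemish is notational: in the displayed bound $\|(1+\cos R)^2\phi\|_{H^1(S^3)}$ you write $\phi$ for the transported function $\psi$, but the subsequent computation uses the correct object, so nothing is at stake.
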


\begin{proof}To sort this out, let us consider the following lemma : 

\begin{lemma} Let $u$ be in $L^2_{S^3}$ and $v$ the radial distribution on the Euclidian space defined by a change of varaiable as : 

$$v(r) = u(2\arct(r)) \; .$$

The variable change is an isometry between $L^2_{S^3}$ and $L^2_{-3}$. \end{lemma}

\begin{proof} Let us compute $||v||_{L^2_{-3}}$.

$$||v||_{L^2_{-3}}^2 = ||\left( \frac{1+r^2}{2}\right)^{-3/2} v||_{L^2}^2$$

$$||v||_{L^2_{-3}}^2 = \int_{0}^\infty |v(r)|^2 \left( \frac{2r}{1+r^2}\right)^2 \frac{2dr}{1+r^2}$$

By the change of variable $R= 2\arct(r)$, we get : 

$$||v||_{L^2_{-3}}^2 = \int_{0}^\pi |u(R)|^2 \sin^2 (R) dR = ||u||_{L^2_{S^3}}^2 \; .$$

\end{proof}

Let us first deal with the norm of $f_0$. This distribution is : 

$$f_0 = \frac{2}{1+r^2} \Re u_0 (2\arct(r)) \; .$$

And let $v_0(r) = \Re u_0 (2\arct(r))$. The $L^2_{-1}$ norm of $f_0$ is

$$||f_0||_{L^2_{-1}} = ||\left(\frac{1+r^2}{2} \right) ^{-1/2-1} v_0||_{L^2}= ||v_0||_{L^2_{-3}}=||\Re u_0||_{L^2} \; .$$

For $f_1 = (\frac{2}{1+r^2})^2 (H\Im u_0) (2\arct(r)) $, consider $\Delta_{S^3}$ the Laplace-Beltrami operator on the sphere $S^3$. When acting on zonal functions, this operator reduces into $\partial_R^2 + \frac{2}{\tan R}\partial_R$. Let us now do the variable change $R=2\arct(r)$. First, set $u$ a zonal function of $S^3$ and $v$ a radial one such that $v(r) = u(2\arct(r))$ and compute the action of $(1-\Delta_{S^3})$ after the variable change $R=2\arct(r)$.

$$\frac{du}{dR} = \frac{1+\tan^2(R/2)}{2}\frac{dv}{dr} = \frac{1+r^2}{2} \frac{dv}{dr} \; ,$$

$$\frac{d^2u}{dR^2} = \left(\frac{1+r^2}{2}\right)^2 \frac{d^2v}{dr^2}+\frac{1+r^2}{2}r \frac{dv}{dr} \; ,$$

$$\Delta_{S^3} u= \left(\frac{1+r^2}{2}\right)^2 \frac{d^2v}{dr^2}+\frac{1+r^2}{2r} \frac{dv}{dr} \; .$$

We call 

$$D^2 = 1 - \left(\frac{1+r^2}{2}\right)^2 \frac{d^2}{dr^2} - \frac{1+r^2}{2r}\frac{d}{dr} \; .$$

Let $\varphi$ be a radial function in $H^2(\R^3)$ and compute the distribution bracket : 

$$| \langle \left( \frac{1+r^2}{2} \right)^{-3} f_1 \; , \; \varphi \rangle_{\R^3} | \;   .$$

By definition, this is equal to : 

$$| \int_{0}^\infty \left( \frac{1+r^2}{2} \right)^{-3} f_1(r) \varphi(r) r^2 dr |$$
and by a variable change : 

$$|\int_{0}^\pi \left(\frac{1+r^2}{2}\right)^{-2} (H\Im u_0)(R) \psi(R) \sin^2(R) dR |$$
where $\Psi(R) = \varphi(\tan(\frac{R}{2}))$ and $r=\tan(\frac{R}{2})$.

\smallskip

As $H$ is a self-adjoint operator in $L^2_{S^3}$ we get that the bracket is equal to : 

$$| \int_{0}^{\pi}\Im u_0 (R) H \left( \left(\frac{1+r^2}{2} \right)^{-2} \psi \right) \sin^2 R dR |$$
which is less than : 

$$||u_0||_{L^2_{S^3}} || \left(\frac{1+r^2}{2} \right)^{-2} \psi ||_{H^1_{S^3}}$$
and also

$$|\langle \left(\frac{1+r^2}{2}\right)^{-3} f_1, \varphi\rangle_{\R^3} | \leq C ||u_0||_{L^2_{S^3}} || \left(\frac{1+r^2}{2} \right)^{-2} \psi ||_{H^2_{S^3}}$$
since $H^2_{S^3}$ is continuously embedded in $H^1_{S^3}$.

\smallskip

So now, we have to prove that $\left(\frac{1+r^2}{2} \right)^{-2} \psi$ is in $H^2_{S^3}$, that is to say that 

$$(1-\Delta_{S^3}) \left( \left(\frac{1+r^2}{2} \right)^{-2} \psi \right)$$
is in $L^2_{S^3}$.

\smallskip

First, by commuting $(1-\Delta_{S^3})$ and $\left(\frac{1+r^2}{2} \right)^{-2}$, we get that (remind that $r$ is considered asa function of $R$) : 

$$(1-\Delta_{S^3}) \left( \left(\frac{1+r^2}{2} \right)^{-2} \psi \right)(R) = \left(\frac{1+r^2}{2} \right)^{-2} (1-\Delta_{S^3})\psi(R)$$

$$+ (4-16r^2) \left(\frac{1+r^2}{2} \right)^{-2} \psi(R)$$

$$+ 8r \left(\frac{1+r^2}{2} \right)^{-2} \frac{d\psi}{dR}(R)\; .$$

Hence, since $r=\tan(\frac{R}{2})$ we get that the $L^2_{S^3}$-norm of $(1-\Delta_{S^3}) \left( \left(\frac{1+r^2}{2} \right)^{-2} \psi \right)(R)$ is less than the $L^2_{S^3}$ the norm of $C\left(\frac{1+r^2}{2}\right)^{-1/2} (1-\Delta_{S^3}) \psi$, $C$ independant from $\psi$ (the multiplying weights are smaller than a certain constant).

\smallskip

Also, we have seen that $(1-\Delta_{S^3})\psi (2\arct (r))$ was equal to $D^2 \varphi (r)$. Let us compute the $L^2_{S^3}$ norm of $\left(\frac{1+r^2}{2}\right)^{-1/2} (1-\Delta_{S^3}) \psi$.

$$||\left(\frac{1+r^2}{2}\right)^{-1/2} (1-\Delta_{S^3}) \psi||_{L^2_{S^3}}^2 = \int_{0}^\pi \left(\frac{1+r^2}{2}\right)^{-1} |(1-\Delta_{S^3})\psi|^2(R) \sin^2 R dR \; .$$

By a variable change $r=\tan(\frac{R}{2})$, we get : 

$$|| \left(\frac{1+r^2}{2}\right)^{-1/2} (1-\Delta_{S^3}) \psi||_{L^2_{S^3}}^2 = \int_{0}^\infty \left(\frac{1+r^2}{2}\right)^{-4} |(1-\Delta_{S^3})\psi|^2(2\arct(r)) r^2 dr$$

$$= ||\left( \frac{1+r^2}{2}\right)^{-2} D^2 \varphi ||_{L^2_{\R^3}}^2\; .$$

But as 

$$\left( \frac{1+r^2}{2}\right)^{-2} D^2 \varphi = \left(\frac{1+r^2}{2}\right)^{-2}\varphi -\Delta_{\R^3}\varphi + \frac{2r}{1+r^2}\frac{d\varphi}{dr}$$

we get that : 

$$||\left( \frac{1+r^2}{2}\right)^{-2} D^2 \varphi ||_{L^2_{\R^3}} \leq ||\varphi||_{H^2_{\R^3}} \;. $$

Let us go back to the distribution bracket. There exists $C$ independant from $u_0$ such that for all $\varphi \in H^2$, we have : 

$$| \langle \left( \frac{1+r^2}{2} \right)^{-3} f_1 \; , \; \varphi \rangle | \leq C ||\Im u_0||_{L^2} ||\varphi||_{H^2}$$
so $\left( \frac{1+r^2}{2} \right)^{-3} f_1$ is indeed in $H^{-2}$ with a norm less than $C ||u_0||_{L^2_{S^3}}$, that is to say $f_1$ is in $H^{-2}_{-6}$ and its norm satisfies : 

$$||f_1||_{H^{-2}_{-6}} \leq C ||u_0||_{L^2}$$
with a constant $C$ independant from $u_0$. 

Hence, the norm of $(f_0,f_1)$ in the cartesian product $L^2_{-1}\times H^{-2}_{-6}$ satisfies : 

$$||f_0,f_1||\leq C ||u_0||_{L^2_{S^3}}$$
with $C$ independant from $u_0$, so the transform $PT$ is continuous from $L^2_{S^3}$ to $L^2_{-1}\times H^{-1}_{-1}$. 

\end{proof}

Let us now define a measure $\eta$ for this set.

\begin{definition}Let $\mathcal H=L^2_{-1} \times H^{-2}_{-6}$. We call $\eta $ the image measure by $PT$ on $\mathcal H$ of $\rho$. 

\smallskip

We also set $\Pi$ the set included in $H$ such that :

$$\Pi = PT(\Sigma)$$
where $\Sigma$ is the previously defined set of full $\rho$ measure of $L^2_{S^3}$ onto which the flow is globally defined.
\end{definition}

\begin{theorem}\label{globsol} Let  $p\in ]2\alpha,6[$ and $s$ defined accordingly as $\Frac{3}{2}-\Frac{4}{p}$. The set $\Pi$ is of full $\eta$ measure and for all couple $f_0,f_1 \in \Pi$, we have a solution $f$ of \reff{nlwe} with initial data $f_0,f_1$. This solution is unique in the sense that $f(t,r)-L(t)(f_0,f_1)(r)$ is unique in $L^p_{t,r}$, where $L(t)$ is the flow of the linear wave equation, i.e. the flow of $\partial_t^2-\Delta_{\R^3}$.\end{theorem}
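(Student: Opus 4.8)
The plan is to transport the global-in-$T$ solution of the Hamiltonian problem \reff{hamil} on $S^3$ back to $\R^3$ through the inverse Penrose transform, the measure-theoretic part being essentially tautological. For the full measure of $\Pi$, recall that $\eta$ is the image measure of $\rho$ under $PT$ and that $\Pi=PT(\Sigma)$. By definition of the image measure, $\eta(\Pi)=\rho\big(PT^{-1}(PT(\Sigma))\big)\geq\rho(\Sigma)$, and since $\Sigma$ was shown to carry the full $\rho$-mass of $H^\sigma$, the set $\Pi$ is of full $\eta$-measure.

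For existence, fix $(f_0,f_1)\in\Pi$ and choose $u_0\in\Sigma$ with $PT(u_0)=(f_0,f_1)$. The previous proposition provides a strong solution $u(T)=\Psi(0,T)u_0$ of \reff{hamil} for every $T\in[-\pi,\pi]$. Setting $v=\Re u$ and undoing the reduction of \reff{peq} to \reff{hamil} shows that $v$ solves $(\partial_T^2+1-\Delta_{S^3})v+\Omega^{\alpha-2}|v|^\alpha v=0$, which on the region $\{\Omega>0\}$ is exactly \reff{true}. Define $f$ on $\R\times\R^3$ by $f(t,r)=\Omega\,v(T,R)$, where $(T,R)$ is the image of $(t,r)$ under \reff{penr}. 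The d'Alembertian proposition of Section 2.2 gives $(\partial_t^2-\Delta_{\R^3})f=\Omega^3(\partial_T^2+1-\Delta_{S^3})v$ together with $|f|^\alpha f=\Omega^{\alpha+1}|v|^\alpha v$, so $f$ solves \reff{nlwe}; the initial data match because $t=0\Leftrightarrow T=0$ and $PT$ was defined precisely as the time-zero Penrose transform. The essential point is that $T$ ranging over all of $[-\pi,\pi]$ sweeps, via $t=\sin T/\Omega$, the whole time axis $t\in\R$, so that global-in-$T$ well-posedness on the sphere yields a genuinely global-in-time solution on $\R^3$.

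For uniqueness, note that under the Penrose transform the free evolution $S(T)u_0$ of the Hamiltonian problem corresponds to the linear wave flow, namely $L(t)(f_0,f_1)=\Omega\,\Re\big(S(T)u_0\big)$. Hence $f(t,r)-L(t)(f_0,f_1)(r)=\Omega\,\Re\,w$, where $w=\Psi(0,T)u_0-S(T)u_0$ is the quantity shown to be unique in $X^s_T\hookrightarrow L^p([-\pi,\pi]\times S^3)$. It remains to compare the $L^p_{t,r}(\R\times\R^3)$ norm of $\Omega\,\Re\,w$ with the $L^p$ norm of $w$ on the sphere. Since the Penrose transform is conformal with factor $\Omega$, the four-dimensional volume transforms as $r^2\,dr\,dt=\Omega^{-4}\sin^2R\,dR\,dT$, so that
\begin{equation}
\|f-L(t)(f_0,f_1)\|_{L^p_{t,r}}^p=\int \Omega^{p-4}\,|\Re w|^p\,\sin^2R\,dR\,dT.
\end{equation}
Because $\alpha\geq 2$ forces $p>2\alpha\geq 4$ while $0\leq\Omega\leq 2$, the weight $\Omega^{p-4}$ is bounded, and the right-hand side is controlled by $\|w\|_{L^p([-\pi,\pi]\times S^3)}^p$. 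Thus $f-L(t)(f_0,f_1)$ is finite and uniquely determined in $L^p_{t,r}$ by the uniqueness of $w$ on the sphere.

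I expect the third step to be the main obstacle: one must keep careful track of the conformal weights when passing between the $X^s_T$ (hence $L^p$) control on $S^3$ and the $L^p_{t,r}$ control on $\R^3$, and in particular verify that the Jacobian exponent $p-4$ is nonnegative precisely in the admissible range $p>2\alpha$, so that the factor $\Omega^{p-4}$ stays bounded. A secondary point requiring care is checking that $f=\Omega v$ is a genuine bijection between solutions of \reff{true} and \reff{hamil}, rather than a merely formal correspondence, so that the transferred uniqueness is legitimate.
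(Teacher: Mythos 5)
Your proposal is correct and follows essentially the same route as the paper: full measure of $\Pi$ by the image-measure definition, existence by transporting the global solution of \reff{hamil} through the inverse Penrose transform, and uniqueness via the identity $L(t)(f_0,f_1)=\Omega\,\Re\,S(T)u_0$ together with the change-of-variables computation $r^2\,dr\,dt=\Omega^{-4}\sin^2 R\,dR\,dT$, giving $\|f-L(t)(f_0,f_1)\|_{L^p_{t,r}}=\|\Omega^{1-4/p}\Re w\|_{L^p_{T,R}}\leq C\|w\|_{X^s_\pi}$ since $p>2\alpha\geq 4$ makes $\Omega^{p-4}$ bounded. The two points you flag as delicate (the conformal weight bookkeeping and the solution correspondence under $f=\Omega v$) are exactly the ones the paper's proof handles, in the same way.
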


\begin{proof}First, $\Pi$ is of full $\eta$-measure since $\Sigma$ is of full $\rho$-measure. Then Let $u_0$ be $(PT)^{-1}(f_0,f_1)$, that is to say :

$$u_0 = \frac{1}{1+\cos R} f_0(\tan(\frac{R}{2})) -i H^{-1}\left( \left(\frac{1}{1+\cos R}\right)^2 f_1(\tan(\frac{R}{2})) \right) \; .$$

The function $u_0$ is therefore in $\Sigma$. Hence, there exists $u$ such that $u-S(t)u_0$ is unique in $X^s_\pi$ satisfying \reff{hamil}. We then choose $f$ the time dependant Penrose transform of $u$. The function $f$ satisfies \reff{nlwe} with initial condition $f_0,f_1$, as we have already discussed it in the previous sections. 

\smallskip

We also have that $L(t)(f_0,f_1)$ is the reverse Penrose transform of $S(T)u_0$.

\begin{lemma}\label{free} Let  $f_\infty = f_0,f_1 \in \mathcal H $ and $f$ the solution of

\begin{equation}\label{libre}\left \lbrace {\begin{tabular}{ll}
$(\partial_t^2 -\Delta) f = 0$ & \\
$f|_{t=0}=f_1$ & $\partial_t f|_{t=0}= f_1$ \end{tabular}}
\right. \end{equation}

We set $u_\infty (R) = \Frac{f_0(r)}{\Omega(0,R)} -i H^{-1}\Frac{f_1(r)}{\Omega^2(0,R)}\in L^2_{S^3}$ and $u = S(T)u_\infty$ the solution of : 

$$\left \lbrace{\begin{tabular}{ll}
$i\partial_T u + Hu = 0$\\
$u_{T=0}=u_\infty$ \end{tabular}}
\right. $$
we have $f(t,r)=\Omega \Re u(\arct(t+r) + \arct(t-r)\; , \; \arct(t+r) - \arct(t-r))$.
\end{lemma}

\begin{proof} Indeed, $v(T,R)=\Frac{f(t,r)}{\Omega}$ satisfies :

$$\left \lbrace{\begin{tabular}{ll}
$(\partial_T^2 +1 -\Delta) v = 0$ & \\
$v_{T=0}=\frac{f_0}{\Omega}$ & $\partial_T v|_{T=0} = \frac{f_1}{\Omega}$ \end{tabular}}
\right. $$

So, $\widetilde u = v-iH^{-1} \partial_T v$ satisfies

$$\left \lbrace{\begin{tabular}{ll}
$i\partial_T \widetilde u + H\widetilde u = 0$\\
$\widetilde u_{T=0}=u_\infty$ \end{tabular}}
\right. $$
which means $\widetilde u = u$. Since $f=\Omega v$ and $v=\Re \widetilde u$, we have $f=\Omega \Re u$.

\end{proof}

\smallskip

Let us now prove the uniqueness in $L^p_{t,r}$. Let $f$ and $g$ be two solutions of \reff{nlwe} and $u$ and $v$ their respective Penrose transform, that is to say : 

$$u(T,R) = \Omega^{-1}f(t,r) - i \partial_T H^{-1}\Omega^{-1}f (t,r)\; ,$$

$$v(T,R) = \Omega^{-1}g(t,r) - i \partial_T H^{-1}\Omega^{-1}g (t,r)\; .$$

Let us compute the $L^p_{t,r}$ norm of $f-g$.

\smallskip

First, let us consider a function $\phi$ in $L^p_{T,R}$ and set $\psi$ its Penrose transform. We have :

$$\int_{0}^\infty \int_{-\infty}^\infty |\psi(t,r)|^p r^2dr dt = \int \int_{\Omega>0} \frac{1}{\Omega^2} |\Omega \phi(T,R)|^p \frac{\sin^2 R}{\Omega^2} dR$$
since the Jacobian of the Penrose transform is equal to $\Frac{1}{\Omega^2}$ and $r=\Frac{\sin R}{\Omega}$. 

\smallskip

Hence, we get that : 

$$||\psi ||_{L^p_{t,r}} = ||\Omega^{1-4/p}\phi||_{L^p_{T,R}}\leq C ||\phi||_{L^p_{T,R}}$$
as $p>2\alpha \geq 4$.

\smallskip

The Penrose tranform is linear, hence we get that $f-L(t)(f_0,f_1)$ is the Penrose transform of $u-S(t)u_0$, which is in $L^p_{T,R}$. Therefore $f-L(t)(f_0,f_1)$ is in $L^p_{t,r}$. Let us compute $f-g = f-L(t)(f_0,f_1) - (g-L(t)(f_0,f_1))$, ie the Penrose transform of $u-S(t)u_0 - (v-S(t) u_0)$. We then have : 

$$||f-g||_{L^p_{t,r}} \leq C ||u-v||_{L^p_{T,R}} \leq C ||u-v||_{X^s_\pi}$$
by the embedding of $X^s_\pi$ into $L^p_{T,R}$. We deduce $||f-g||_{L^p}=0$, hence the unicity. \end{proof}
 
\section{Typical properties of the solutions}

\subsection{General considerations}

We have seen that the initial data is of the form $u_0 = \sum \Frac{\sqrt 2 g_n}{n} e_n$ with $g_n$ complex gaussian random variables. But remember that $u_0 = v_0 -i H^{-1} v_1$, where $v_0$, $v_1$ are the initial data for the problem outside its Hamiltonian form, that is to say : $v|_{T=0} = v_0$, $\partial_T v|_{T=0}= v_1$.

\smallskip

What is more, the initial data for the problem before using the Penrose tranform was given by $f_0 = \Frac{2}{1+r^2} v_0$ and $f_1 = (\Frac{2}{1+r^2})^2 v_1$.

\smallskip

The function $v_0 = \Re u_0$ can be written $\sum \Frac{h_n}{n} e_n$ where $h_n$ are independant real gaussian random variables, and $v_1 = -H\Im u_0$ is $\sum l_n e_n$ where $l_n$ are also independant real gaussian variables (and independant from the $h_n$). In other terms, $v_1$ is in the same spaces as $H v_0$. But so, $f_1 = \Frac{4}{(1+r^2)^2}v_1$ is in the same spaces as $\Frac{4}{(1+r^2)^2}H v_0=\Frac{4}{(1+r^2)^2}H \Frac{1+r^2}{2}f_0$ where $H$ is expressed in terms of $t,r$ instead of $T,R$.

\smallskip

That is to say, since $H^2 = 1-\Delta_{S^3} = 1-\partial_R^2 - \Frac{2}{\tan R}\partial_R$, that $f_1$ lives in the same spaces as

$$(\frac{2}{1+r^2})^2 \sqrt{1-\left(\frac{1+r^2}{2}\right)^2 \partial_r^2 - \frac{(1+r^2)}{2r}\partial_r} \frac{1+r^2}{2}f_0\; .$$

We are going to show that $f_0$ is almost surely in $L^p$, for $p\in ]2,6[$, and that it is almost surely not in $L^p$, when $p$ is different. Then, we will use techniques of fractional integration on periodic functions and their Fourier decomposition to both characterize the behavior of $f_0(r)$ when $r\rightarrow \infty$ and  show that $f_1$ belongs to the spaces $W^{-1,p}$ almost surely for all $p\in ]2,6[$.



\smallskip

To sum up, we are going to prove in the next subsections the following properties :

\begin{proposition} The initial data $f_0 = \sum \Frac{h_n}{n} \Frac{2}{1+r^2} e_n(2\arct(r))$ is almost surely in $L^p$ for $p\in ]2,6[$, $f_0|_{r>1} \in L^p$ almost surely for $p>2$, and $f_0(r)$ converges towards $0$ almost surely in $\omega$ when $r$ tends to $\infty$.
\end{proposition}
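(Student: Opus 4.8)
The plan is to reduce $f_0$ to a single random Fourier series times an explicit deterministic prefactor, compute its pointwise variance exactly, and then read off all three assertions from the behaviour of that variance at $r=0$ and $r=\infty$, together with a classical almost-sure regularity statement for Gaussian Fourier series. First I would simplify the expression. Using $\frac{2}{1+r^2}=1+\cos R$ and $\frac{1+\cos R}{\sin R}=\cot(R/2)=\frac1r$ with $R=2\arct r$, together with $e_n(R)=\frac{1}{\sqrt2\pi}\frac{\sin nR}{\sin R}$, one obtains
$$f_0(r)=\frac{1}{\sqrt2\,\pi\,r}\,g(R),\qquad g(R):=\sum_n\frac{h_n}{n}\sin nR .$$
For fixed $r$, $f_0(r)$ is a centred real Gaussian, and since the $h_n$ are independent standard real Gaussians, $E(g(R)^2)=\sum_n\frac{\sin^2 nR}{n^2}=\frac{R(\pi-R)}2$, the last equality coming from the classical Fourier expansion of $\sum_n n^{-2}\cos n\theta$. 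Hence $E(f_0(r)^2)=\frac{R(\pi-R)}{4\pi^2 r^2}$. With $R=2\arct r$ one has $R\sim 2r,\ \pi-R\sim\pi$ as $r\to0$, and $R\sim\pi,\ \pi-R\sim 2/r$ as $r\to\infty$, so this variance behaves like $\frac{1}{2\pi r}$ near $0$ and like $\frac{1}{2\pi r^3}$ near $\infty$.

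For the two $L^p$ statements I would use the Gaussian moment identity $E(|f_0(r)|^p)=c_p\,(E(f_0(r)^2))^{p/2}$ and Tonelli's theorem to write
$$E(\|f_0\|_{L^p(\R^3)}^p)=4\pi c_p\int_0^\infty\big(E(f_0(r)^2)\big)^{p/2}r^2\,dr .$$
The integrand is continuous and positive on $]0,\infty[$, so only the two endpoints matter. Inserting the variance, near $r=0$ the integrand is comparable to $r^{2-p/2}$, integrable iff $p<6$, and near $r=\infty$ it is comparable to $r^{2-3p/2}$, integrable iff $p>2$. Thus $E(\|f_0\|_{L^p}^p)<\infty$ for every $p\in]2,6[$, whence $f_0\in L^p$ almost surely; applying this to a countable dense set of exponents gives the statement for all $p\in]2,6[$ at once. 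Restricting the integral to $r>1$ removes the $r=0$ constraint, leaving only $p>2$, which proves $f_0|_{r>1}\in L^p$ almost surely for every $p>2$.

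For the pointwise decay I would exploit the factored form $f_0(r)=\frac{g(2\arct r)}{\sqrt2\,\pi\,r}$. The coefficients $1/n$ of $g$ satisfy the dyadic square-summability condition $\sum_j\big(\sum_{2^j\le n<2^{j+1}}n^{-2}\big)^{1/2}\sim\sum_j 2^{-j/2}<\infty$, so by the Salem--Zygmund theory of Gaussian random Fourier series $g$ is almost surely the uniform limit of its partial sums, hence almost surely continuous and in particular bounded: $M:=\sup_R|g(R)|<\infty$ almost surely. Since $R=2\arct r\to\pi$ as $r\to\infty$, we get $|f_0(r)|\le M/(\sqrt2\,\pi\,r)\to0$, which is the claimed almost-sure convergence; moreover each partial sum vanishes at $R=\pi$, so continuity yields $g(\pi)=0$, consistent with the sharper $O(r^{-1-\nu})$ rate pursued afterwards.

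The genuinely non-elementary ingredient, and hence the main obstacle, is the almost-sure boundedness (or continuity) of the random Fourier series $g$ invoked for the pointwise decay: this is exactly where the Fourier-analytic tools are needed (Salem--Zygmund, or the Ayache--Tzvetkov results cited for the next part), whereas the two $L^p$ assertions reduce to the elementary endpoint analysis of the explicit variance. A secondary point deserving care is verifying that the endpoints $p=2$ and $p=6$ are genuinely excluded, i.e.\ that the divergences at $r=\infty$ and $r=0$ respectively are not artefacts of the crude bounds.
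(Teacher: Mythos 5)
Your proof is correct, but it takes a genuinely different route from the paper's on both halves of the statement. For the $L^p$ claims, the paper never sums the series in closed form: it bounds $\|f_0(r)\|_{L^p_\omega}\leq C_p\bigl(\sum_n |f_n(r)|^2/n^2\bigr)^{1/2}$ via its Gaussian moment lemma, pulls the sum out with a triangle (Minkowski) inequality in $L^{p/2}$, and then estimates each mode separately, proving $\|f_n\|_{L^p}\lesssim n^{3/p-1}$, $\log n$, or $n^{1-3/p}$ according to $p<3$, $p=3$, $p>3$ by splitting the integral at the two singularities $r=0$ and $r=\infty$; this recovers the same windows $p<6$ (from $r=0$) and $p>2$ (from $r=\infty$). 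Your closed-form identity $\sum_n \sin^2(nR)/n^2=R(\pi-R)/2$ is sharper: it turns $E(\|f_0\|_{L^p}^p)$ into an exact quantity rather than an upper bound, so the divergence at the endpoints $p=2$ and $p=6$ is visible immediately (the paper needs a separate minoration lemma $|f_n(\tan R)|\geq n$ for $R\leq \pi/(4n)$, combined with Fernique's theorem, to handle non-membership later), and it proves the $f_0|_{r>1}$ claim directly, which the paper only obtains implicitly as a by-product of its decay rate. For the pointwise decay, the paper does not invoke Salem--Zygmund: it reduces to boundedness near $R=0$ of $F_0(R)=\sum_n(-1)^n h_n\sin(2nR)/n$ via the identity $F_0(\pi/2-\arct(r))=-rf_0(r)$, and establishes the much stronger H\"older bound $F_0(R)=O(R^\nu)$ for all $\nu<1/2$ by a self-contained fractional-integration argument (the kernel $\Psi_\alpha$, following Zygmund); that machinery is what later yields the rate $f_0=O(r^{-1-\nu})$ and is reused for the regularity of $f_1$, whereas your a.s. boundedness of $g$ only gives $O(1/r)$ --- sufficient for this proposition, but not for the sharper theorem that follows. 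The one point to treat carefully is that Salem--Zygmund (a.s. uniform convergence under the dyadic-block condition $\sum_j\bigl(\sum_{2^j\leq n<2^{j+1}}a_n^2\bigr)^{1/2}<\infty$ for independent Gaussian coefficients) is the sole non-elementary input of your argument and is external to the paper, so it should be cited precisely; alternatively, the paper's $\Psi_\alpha$ construction can substitute for it at no extra cost.
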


\begin{proposition}Almost surely, $f_0$ does not belong to $L^p$ for $p\leq 2$ and $p\geq 6$.\end{proposition}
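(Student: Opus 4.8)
The plan is to derive the statement from a single $0$--$1$ mechanism together with an explicit moment computation. Since $f_0$ is the image of a centred Gaussian measure, the map $N(x)=\|x\|_{L^p}$ is an $[0,+\infty]$--valued measurable seminorm, and Fernique's theorem for measurable seminorms combined with the Gaussian $0$--$1$ law says that such a seminorm is either almost surely infinite, or almost surely finite with all polynomial moments finite. Consequently, to prove $f_0\notin L^p$ almost surely it suffices to show that the $p$-th moment of the norm is infinite, i.e.
$$E\left(\|f_0\|_{L^p}^p\right)=+\infty\qquad\text{for }p\le 2\text{ and }p\ge 6 .$$

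First I would compute this moment. For fixed $r$, writing $R=2\arctan r$, the quantity $W(R)=\sum_n\frac{h_n}{n}e_n(R)$ is a centred real Gaussian variable, so $E(|f_0(r)|^p)=m_p\big(\tfrac{2}{1+r^2}\big)^p\sigma(R)^p$ with $m_p=E(|Z|^p)$, $Z\sim\mathcal N(0,1)$, and $\sigma(R)^2=\sum_n\frac{1}{n^2}e_n(R)^2$. By Tonelli,
$$E\left(\|f_0\|_{L^p}^p\right)=m_p\int_0^\infty\left(\frac{2}{1+r^2}\right)^p\sigma(2\arctan r)^p\,r^2\,dr .$$
The key input is the closed form of the variance: from $e_n(R)=\frac{1}{\sqrt2\,\pi}\frac{\sin nR}{\sin R}$ and the elementary identity $\sum_{n\ge 1}\frac{\sin^2 nR}{n^2}=\frac{R(\pi-R)}{2}$ on $[0,\pi]$, one obtains
$$\sigma(R)^2=\frac{R(\pi-R)}{4\pi^2\sin^2 R}.$$

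Next I would push everything through the change of variable $R=2\arctan r$, using $\frac{2}{1+r^2}=1+\cos R=2\cos^2(R/2)$ and $r^2\,dr=\frac{\sin^2(R/2)}{2\cos^4(R/2)}\,dR$. Up to a positive constant the integral becomes
$$\int_0^\pi \cos^{\,p-4}(R/2)\,\sin^{\,2-p}(R/2)\,\big(R(\pi-R)\big)^{p/2}\,dR ,$$
and it only remains to read off the endpoint behaviour. Near $R=0$ (the origin $r=0$) the integrand is comparable to $R^{\,2-p/2}$, which is non-integrable exactly for $p\ge 6$; near $R=\pi$ (spatial infinity $r=\infty$) it is comparable to $(\pi-R)^{\,3p/2-4}$, non-integrable exactly for $p\le 2$. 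In either regime the integral is infinite, so $E(\|f_0\|_{L^p}^p)=+\infty$ and the reduction gives $f_0\notin L^p$ almost surely. Note that the two obstructions sit in complementary regions --- $r<1$ for $p\ge6$, $r>1$ for $p\le2$ --- consistently with the previously stated fact that $f_0|_{r>1}\in L^p$ a.s. for every $p>2$.

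The delicate point is not the computation, which is elementary once the variance is in closed form, but the probabilistic reduction. One must make sure that $\|\cdot\|_{L^p}$ is a genuine measurable seminorm for the relevant Gaussian measure and that the dichotomy (almost surely finite with finite moments, or almost surely infinite) applies uniformly in $p\in[1,\infty)$; this is exactly where using Fernique's theorem in the measurable-seminorm form, rather than a Banach-space formulation, is important, since for small $p$ the individual modes $\frac{2}{1+r^2}e_n(2\arctan r)$ need not lie in $L^p$. The excluded endpoint $p=\infty$ is not covered by this argument and I would treat it separately, observing that $\sigma(R)\to\infty$ as $R\to0$ forces $\mathrm{ess\,sup}_{|x|<1}|f_0|=\infty$ almost surely, again a $0$--$1$ event.
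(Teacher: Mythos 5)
Your proof is correct, and its probabilistic skeleton coincides with the paper's: pointwise, $f_0(r)$ is a centred real Gaussian of variance $\sigma^2(r)$, Tonelli plus the Gaussian moment identity $E(|Z|^p)=m_p\,E(|Z|^2)^{p/2}$ reduces everything to the divergence of $\int_0^\infty \sigma^p(r)\,r^2\,dr$, and Fernique's lemma for measurable pseudo-norms (stated in the paper exactly in the form you need: $P(N(F)<\infty)>0$ implies all moments of $N(F)$ are finite, so divergence of one moment forces $N(F)=\infty$ almost surely) closes the argument. Where you genuinely depart from the paper is the treatment of the variance. The paper only \emph{lower-bounds} it: from $|f_n(\tan R)|\ge n$ for $R\le \pi/(4n)$ it gets $\sigma^2(\tan R)\gtrsim \sum_n n\,\mathbf{1}_{R_{n+1}\le R\le R_n}$ and then checks that the resulting series, with general terms comparable to $n^{p/2-4}$ near $r=0$ and $n^{2-3p/2}$ near $r=\infty$, diverge precisely for $p\ge 6$ and $p\le 2$ respectively. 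You instead compute $\sigma^2$ in \emph{closed form} via the classical identity $\sum_{n\ge 1}\frac{\sin^2 nR}{n^2}=\frac{R(\pi-R)}{2}$ on $[0,\pi]$ (which indeed follows from $\sum_{n\ge1}\frac{\cos 2nR}{n^2}=\frac{\pi^2}{6}-\pi R+R^2$), giving $\sigma^2(R)=\frac{R(\pi-R)}{4\pi^2\sin^2 R}$, and reduce the moment to the elementary integral $\int_0^\pi \cos^{p-4}(R/2)\,\sin^{2-p}(R/2)\,\bigl(R(\pi-R)\bigr)^{p/2}\,dR$; your endpoint exponents $R^{2-p/2}$ at $R=0$ and $(\pi-R)^{3p/2-4}$ at $R=\pi$ reproduce exactly the paper's thresholds $p\ge 6$ and $p\le 2$. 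Your route is sharper: it yields the exact two-sided asymptotics $\sigma^2(R)\sim c/R$ and $\sigma^2(R)\sim c/(\pi-R)$ at the two singularities, hence in one stroke also recovers finiteness of the moment for $2<p<6$ (consistent with the companion proposition), and your observation that the two obstructions live in complementary regions matches the paper's statement that $f_0|_{r>1}\in L^p$ a.s.\ for all $p>2$. Two small caveats, neither of which distinguishes you from the paper: the Fernique/pseudo-norm reduction as written requires $p\ge 1$ (for $p<1$ the $L^p$ quasi-norm is not a pseudo-norm in the sense of the cited lemma), and the case $p=\infty$ is outside the moment computation --- your separate remark that $\sigma(R)\to\infty$ as $R\to 0$ forces essential unboundedness near the origin, again via Fernique, is a legitimate way to cover it, and the paper does not do more.
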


\begin{proposition} The $t$ derivative at $t=0$ $f_1$ is almost surely in $W^{-1,p}$ for $p\in ]2,6[$.\end{proposition}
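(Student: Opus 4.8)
The plan is to reduce the almost-sure statement to a single moment bound and then to a summable family of deterministic estimates on the eigenfunctions $e_n$, in the exact spirit of Proposition \reff{finie}. Recall from the discussion above that $v_1=\sum_n l_n e_n$ with $(l_n)_n$ independent real centered gaussians, so that
\begin{equation*}
f_1(r)=\left(\Frac{2}{1+r^2}\right)^2 v_1(2\arct r)=\sum_n l_n\,\phi_n(r),\qquad \phi_n(r):=\left(\Frac{2}{1+r^2}\right)^2 e_n(2\arct r).
\end{equation*}
Since membership $f_1\in W^{-1,p}$ is equivalent to $(1-\Delta_{\R^3})^{-1/2}f_1\in L^p$, and since an $L^p$-valued random field lies almost surely in $L^p$ as soon as it has a finite $L^p$ moment, I would first establish $E\big(\|(1-\Delta_{\R^3})^{-1/2}f_1\|_{L^p}^p\big)<\infty$.

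First I would expand $(1-\Delta_{\R^3})^{-1/2}f_1=\sum_n l_n c_n$ with $c_n:=(1-\Delta_{\R^3})^{-1/2}\phi_n$ a real deterministic function, so that for each fixed $x$ the quantity $\sum_n l_n c_n(x)$ is a centered gaussian of variance proportional to $\sum_n c_n(x)^2$. Using $E\big(|\sum_n l_n c_n(x)|^p\big)=C_p\big(\sum_n\mathrm{Var}(l_n)\,c_n(x)^2\big)^{p/2}$, integrating in $x$, and applying the triangle inequality in $L^{p/2}$ (legitimate since $p\geq 2$) exactly as in Proposition \reff{finie}, I obtain
\begin{equation*}
E\big(\|(1-\Delta_{\R^3})^{-1/2}f_1\|_{L^p}^p\big)\leq C_p\Big(\sum_n\|c_n\|_{L^p}^2\Big)^{p/2}.
\end{equation*}
Thus the whole statement reduces to the summability $\sum_n\|c_n\|_{L^p}^2<\infty$ for $p\in\,]2,6[$.

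The heart of the matter, and the step I expect to be the main obstacle, is therefore the deterministic estimate of $\|c_n\|_{L^p}$, i.e.\ of $(1-\Delta_{\R^3})^{-1/2}$ applied to the weighted, pulled-back eigenfunction $\phi_n$. Morally $e_n$ oscillates at frequency $n$, so $(1-\Delta_{\R^3})^{-1/2}$ should gain a factor $n^{-1}$, which combined with $\|e_n\|_{L^p}=O(n^{1-3/p})$ from \reff{majoration} suggests $\|c_n\|_{L^p}=O(n^{-3/p})$ and hence $\sum_n\|c_n\|_{L^p}^2\asymp\sum_n n^{-6/p}<\infty$ precisely when $p<6$. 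Making this rigorous is delicate because $(1-\Delta_{\R^3})^{-1/2}$ is nonlocal and commutes neither with the weight $(\frac{2}{1+r^2})^2$ nor with the change of variable $R=2\arct r$, and because the effective frequency of $\phi_n$ in $r$ is modulated by $\frac{dR}{dr}=\frac{2}{1+r^2}$. This is exactly where I would pass to the periodic picture: since $e_n(R)\sin R=\frac{1}{\sqrt2\pi}\sin(nR)$, the function $v_1$ is, up to the factor $1/\sin R$, the random Fourier sine series $\sum_n l_n\sin(nR)$ on $[0,\pi]$, and the order $-1$ smoothing transfers, through the change of variable and the weights, into a fractional integration of order one on this series. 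I would then invoke the $L^p$ bounds for such fractionally integrated random trigonometric series of Ayache and Tzvetkov \cite{outilsci} to control $\|c_n\|_{L^p}$ uniformly in $n$.

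Finally, the two endpoints of $]2,6[$ have different origins and must be handled separately. The upper endpoint $p<6$ is the summability of the bulk frequencies just described. The lower endpoint $p>2$ reflects the integrability near spatial infinity $r=\infty$, that is near the pole $R=\pi$, where $e_n$ grows like $n$ and is only tamed by the decaying weight $\frac{2}{1+r^2}=1+\cos R\sim\frac{(\pi-R)^2}{2}$; this is the same mechanism that fixes the admissible range in the preceding proposition for $f_0$, and I would reuse that analysis verbatim. Once $\sum_n\|c_n\|_{L^p}^2<\infty$ is established for $p\in\,]2,6[$, the moment bound above yields $(1-\Delta_{\R^3})^{-1/2}f_1\in L^p$ almost surely, i.e.\ $f_1\in W^{-1,p}$ almost surely, for every $p\in\,]2,6[$.
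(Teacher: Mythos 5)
Your probabilistic skeleton is sound and is indeed the same reduction as in Proposition \reff{finie}: once one knows $\sum_n\|c_n\|_{L^p}^2<\infty$ with $c_n=(1-\Delta_{\R^3})^{-1/2}\phi_n$, the gaussian moment computation plus Fubini gives the almost sure conclusion. But the deterministic input --- the decay in $n$ of $\|c_n\|_{L^p}$ --- is exactly what you never prove, and the two tools you invoke cannot supply it. First, there is no conjugation identity transporting the Euclidean Bessel potential $(1-\Delta_{\R^3})^{-1/2}$ through the weight and the change of variable into the periodic fractional integration kernel $\Psi_\alpha$: the operator is nonlocal, the effective frequency of $\phi_n$ in $r$ is $\sim \Frac{2n}{1+r^2}$, hence already of order $1$ when $r\sim\sqrt n$ (no smoothing gain at all in the region responsible for the lower endpoint), and it couples the two singular regions $r=0$ and $r=\infty$, so the constraint $p>2$ cannot be handled ``verbatim'' by the $f_0$ analysis either. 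Second, the Ayache--Tzvetkov type results estimate the \emph{random series} $\sum_n l_n(\cdots)$, not a single deterministic mode: invoking them ``to control $\|c_n\|_{L^p}$ uniformly in $n$'' is a category error, and a uniform bound would in any case be useless since square-summability requires decay. Note also that your target $\|c_n\|_{L^p}=O(n^{-3/p})$ uses $\|e_n\|_{L^p}\leq Cn^{1-3/p}$ outside its range of validity: for $2<p<3$ the paper's bound on the transplanted modes is $Cn^{3/p-1}$, which \emph{grows}, and the estimate you would need there is $\|c_n\|_{L^p}=O(n^{3/p-2})$.

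The paper closes precisely this hole by never applying $(1-\Delta_{\R^3})^{-1/2}$ mode by mode: it gains the derivative explicitly. It introduces the primitives $\psi_n^1(R)=\frac1n(1-\cos (2nR))$ and $\psi_n^2(R)=\frac1n((-1)^n-\cos (2nR))$, localized by cutoffs $\chi_1,\chi_2$ near $R=0$ and $R=\frac\pi2$ (i.e.\ $r=0$ and $r=\infty$), controls the random sums $\Psi^j=\sum_n l_n\psi_n^j$ almost surely by the fractional integration machinery already built for the localization of $f_0$ (H\"older bounds $\Psi^1(R)=O(R^\nu)$ and $\Psi^2(\frac\pi2-R)=O(R^\nu)$ for all $\nu<\frac12$), deduces that $\Phi^j(r)=\Psi^j(\arct(r))/r$ is a.s.\ in $L^p$ exactly for $p\in]2,6[$ (the endpoint $p<6$ coming from $r=0$, the endpoint $p>2$ from $r=\infty$), and then verifies by a direct computation that $f_1=\Phi'+\Frac{\Phi}{r}-(\mbox{cutoff corrections})$, whence $f_1\in W^{-1,p}$ because $\Phi'\in W^{-1,p}$ trivially and the remaining terms lie in $L^q\hookrightarrow W^{-1,p}$ with $\frac1q=\frac1p+\frac13$. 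If you wish to keep your term-wise scheme, these same primitives are the missing estimate: writing $\phi_n=(\frac{d}{dr}+\frac1r)\Phi_n+(\mbox{lower order})$ with $\Phi_n$ the per-$n$ primitive, the $L^p$-boundedness of $(1-\Delta_{\R^3})^{-1/2}\circ(\frac{d}{dr}+\frac1r)$ together with an explicit computation of $\|\Phi_n\|_{L^p}$ (which gives $O(n^{-3/p})$ for $3<p<6$ and $O(n^{3/p-2})$ for $2<p<3$) would yield the required square-summability --- but that computation is the substance of the proof, not something a citation can replace.
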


\subsection{$L^p$-Spaces the initial data belong to}

\begin{definition}
Let $f_n$ be the Penrose transform at time $t=0 \Leftrightarrow T=0$ of $e_n$. The functions $f_n$ can be written : 

$$f_n(r) = \Omega (t=0) e_n(2\arct r) = \sqrt{\frac{2}{\pi}} \frac{2}{1+r^2}\frac{\sin (2n\arct r)}{\sin(2\arct r)}$$

$$f_n(r) = \sqrt{\frac{2}{\pi}} \frac{\sin (2n\arct r)}{r}\; .$$

The initial data $f_0$ on $\R\times (\R^3)$ is then : 

$$f_0(\omega,r) = \sum_n h_n(\omega) \frac{f_n(r)}{n} ,$$
with $h_n$ independant real gaussian variables of law $\mathcal N (0,1)$.
\end{definition}

\begin{proposition} We have the following inequalities :
$$||f_n ||_{L^p} \leq \left \lbrace{\begin{tabular}{ll} 
$C_p n^{3/p - 1}$ & \mbox{if} $p< 3$ \\
$C_p \log n $ & \mbox{if} $p=3$\\
$C_p n^{1-3/p}$ & \mbox{otherwise} \end{tabular} } \right. $$
\end{proposition}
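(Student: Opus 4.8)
The plan is to raise the norm to the power $p$ and reduce the whole statement to a single oscillatory integral in one variable, exactly as in the proof of the companion estimate \reff{majoration} for the $e_n$; the only novelty is the weight produced by the Euclidian volume element $r^2\,dr$ together with the Penrose factor $\Frac{2}{1+r^2}$. Writing the $L^p(\R^3)$ norm of the radial function $f_n$ explicitly, we get, up to a multiplicative constant depending only on $p$,
$$||f_n||_{L^p}^p = C\int_{0}^\infty \frac{|\sin(2n\arct r)|^p}{r^{p-2}}\,dr .$$
I would then substitute $\theta = \arct r$ (equivalently $R=2\theta$), so that $r=\tan\theta$, $dr=\sec^2\theta\,d\theta$ and $r^{2-p}\,dr = \sin^{2-p}\theta\,\cos^{p-4}\theta\,d\theta$. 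This turns the quantity to estimate into
$$||f_n||_{L^p}^p = C\int_{0}^{\pi/2} |\sin 2n\theta|^p\,\sin^{2-p}\theta\,\cos^{p-4}\theta\,d\theta .$$
On any fixed interval $[\varepsilon,\tfrac{\pi}{2}-\varepsilon]$ both weights are bounded and $|\sin 2n\theta|\le 1$, so the middle contributes $O(1)$, and the entire $n$-dependence comes from the two endpoints.

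Near $\theta=0$ the weight behaves like $\theta^{2-p}$, while near $\theta=\tfrac{\pi}{2}$, setting $\phi=\tfrac{\pi}{2}-\theta$, it behaves like $\phi^{p-4}$. The decisive structural fact is that $\sin 2n\theta$ vanishes at \emph{both} endpoints, since $\sin(2n\cdot 0)=0$ and $\sin(2n\cdot\tfrac{\pi}{2})=\sin(n\pi)=0$, and near $\tfrac{\pi}{2}$ one has $|\sin 2n\theta|=|\sin 2n\phi|$. For each singular endpoint I would split the integral at distance $1/n$: on the inner piece I use $|\sin 2n\theta|\le 2n\theta$ (respectively $|\sin 2n\phi|\le 2n\phi$), and on the outer piece I use $|\sin|\le 1$ and integrate the pure power weight.

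At $\theta=0$ this produces a bound of order $n^{p-3}$ when $p>3$, and $O(1)$ when $p<3$ (because then $\theta^{2-p}$ is already integrable at $0$). At $\theta=\tfrac{\pi}{2}$ it produces a bound of order $n^{3-p}$ when $p<3$, and $O(1)$ when $p>3$; here the inner piece converges precisely because $p>\tfrac{3}{2}$, which is exactly the condition for $f_n\in L^p(\R^3)$ and holds throughout the range of exponents used in this paper. At the threshold $p=3$ each endpoint contributes a factor $\log n$. Collecting the pieces gives $||f_n||_{L^p}^p\le C_p\,n^{3-p}$ for $p<3$, $\le C_p\log n$ for $p=3$, and $\le C_p\,n^{p-3}$ for $p>3$, and taking $p$-th roots yields the claimed exponents $n^{3/p-1}$, $(\log n)^{1/3}\le C\log n$, and $n^{1-3/p}$. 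The limiting case $p=\infty$ is handled directly from $|\sin(2n\arct r)|\le\min(1,2nr)$, which gives $||f_n||_{L^\infty}\le Cn$, matching $n^{1-3/p}$ as $p\to\infty$.

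The step I expect to be the main obstacle, and the place where this estimate genuinely departs from the $e_n$ case, is the endpoint analysis at $\theta=\tfrac{\pi}{2}$: there the weight $\cos^{p-4}\theta$ is non-integrable exactly when $p<3$, so for small $p$ the growth of the norm comes entirely from the accumulation of mass near $r=\infty$, and one must exploit the vanishing of $\sin 2n\theta$ at $\tfrac{\pi}{2}$ to extract the sharp power $n^{3-p}$ rather than a spurious larger power. By contrast, for the $e_n$ the weight singularity lay at the other endpoint, which is why the roles of $p<3$ and $p>3$ are, in effect, interchanged between the two propositions.
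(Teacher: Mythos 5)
Your proof is correct and takes essentially the same route as the paper's: the same substitution $r=\tan\theta$ reducing the norm to $\int_{0}^{\pi/2}|\sin 2n\theta|^p\,\sin^{2-p}\theta\,\cos^{p-4}\theta\,d\theta$, the same splitting of each singular endpoint at scale $1/n$ with $|\sin 2n\theta|\leq 2n\theta$ on the inner piece and $|\sin|\leq 1$ on the outer, and the same restriction $p>\frac{3}{2}$ arising at the $r=\infty$ endpoint. Your explicit handling of $p=\infty$ and the structural comparison with the $e_n$ estimate are harmless additions beyond what the paper records.
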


\begin{proof} By definition, 
$$||f_n||_{L^p}^p = C\int_{0}^\infty  |\frac{\sin (2n \arct r)}{r}|^p r^2 dr$$

By changing $r$ into $R=\tan r$, we get :

$$||f_n||_{L^p}^p = \int_{0}^{\pi/2} |\tan R|^{2-p} |\sin 2n R|^p (1+\tan^2 R) dR$$

This integral has two singularities : on $0$ ($r=0$) and $\pi/2$ ($r=\infty$), we are going to treat them separately by dividing the integral in two parts. We note

$$I = \int_{0}^{\pi/4} |\tan R|^{2-p} |\sin 2n R|^p (1+\tan^2 R) dR$$
and

$$II=\int_{\pi/4}^{\pi/2} |\tan R|^{2-p} |\sin 2n R|^p (1+\tan^2 R) dR \; .$$

Let us begin with $I$ (ie the $0$-singularity). Once more, we divide the integral in two, between $0$ and $a_n = \arct \frac{1}{n}$ on the one hand, $a_n$ and $\pi/4$ on the other hand.

$$I.1 = \int_{0}^{a_n} |\tan R|^{2-p} |\sin 2n R|^p (1+\tan^2 R) dR$$

Since $|\frac{\sin(2nR)}{\tan R}|^p \leq (2n)^p$, we deduce that : 

$$I.1 \leq (2n)^p \int_{0}^{a_n} \tan^2 R (1+\tan^2 R) dR = (2n)^p \frac{1}{3n^3} = C_p n^{p-3} \; .$$

For $I.2$, we use that the sine is less than $1$, which gives

$$I.2 \leq \int_{a_n}^{\pi/4} |\tan R|^{2-p} (1+\tan^2 R) dR = \int_{1/n}^1 r^{2-p} dr = \frac{1-1/n^{3-p}}{3-p}$$

except when $p=3$, then the majoration is $\log n$. So,

$$I.2 \leq \left \lbrace{ \begin{tabular}{ll}
$C_p$ & \mbox{if} $p< 3$ \\
$C_p \log n$ & \mbox{if} $p=3$ \\
$C_p n^{p-3}$ & \mbox{otherwise} \end{tabular} } \right. $$

Hence : 

$$I \leq \left \lbrace{ \begin{tabular}{ll}
$C_p$ & \mbox{if} $p< 3$ \\
$C_p \log n$ & \mbox{if} $p=3$ \\
$C_p n^{p-3}$ & \mbox{otherwise} \end{tabular} } \right. $$

Let us look at $II$. We do the variable change $R \leftarrow \pi/2 -R$. We have $|\sin(2n (\pi/2 -R))| = |\sin(2n R)| $ and $\tan (\pi/2-R) = \Frac{1}{\tan R}$. The integral $II$ thus is :

$$II= \int_{0}^{\pi/4} |\sin (2nR)|^p |\tan R|^{p-2} (1+\tan^{-2}R) dR = \int_{0}^{\pi/4} |\sin (2nR)|^p |\tan R|^{p-4} (1+\tan^2 R) dR \; .$$

We suppose that $p> \frac{3}{2}$ or the  singularity in $0$ (remember it corresponds to $r=\infty$) diverges.  We get : 

$$II.1 = \int_{0}^{a_n} |\sin (2nR)|^p |\tan R|^{p-4} (1+\tan^2 R) dR \leq (2n)^p \int_{0}^{a_n} |\tan R|^{2p -4} (1+\tan^2 R )dR$$

$$II.1 \leq (2n)^p (\frac{1}{n})^{2p-3}/(2p-3) = C_p n^{3-p}$$

Regarding $II.2$ : 

$$II.2 = \int_{a_n}^{\pi/4} |\sin (2nR)|^p |\tan R|^{p-4} (1+\tan^2 R) dR$$

$$II.2 \leq \int_{a_n}^{\pi/4} |\tan R|^{p-4} (1+\tan^2 R) dR = \int_{1/n}^1 r^{p-4} dr$$

$$II.2  \leq \left \lbrace{\begin{tabular}{ll} 
$C_p$ & \mbox{if} $p> 3$ \\
$C_p \log n$ & \mbox{if} $p=3$ \\
$C_p n^{3-p}$ & \mbox{otherwise} \end{tabular} } \right. $$

Hence

$$II \leq \left \lbrace{\begin{tabular}{ll} 
$C_p$ & \mbox{if} $p> 3$ \\
$C_p \log n$ & \mbox{if} $p=3$ \\
$C_p n^{3-p}$ & \mbox{otherwise} \end{tabular} } \right. $$

Finally, combining $I$ and $II$ gives : 

$$||f_n||_{L^p}\leq \left \lbrace{ \begin{tabular}{lll}
$C_p n^{3/p-1}$ & \mbox{if} $p<3$ \\
$C_p (\log n)^{1/3}$ & \mbox{if} $p=3$ \\
$C_p n^{1-3/p}$ & \mbox{otherwise} \end{tabular} } \right. \; .$$
\end{proof}

\begin{proposition}The initial data $f_0=\sum \frac{h_n}{n}f_n$ belongs to $L^{p}$ almost surely as soon as $p\in ]2,6[$. \end{proposition}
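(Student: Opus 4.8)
The plan is to prove that $E(\|f_0\|_{L^p}^p)$ is finite, which immediately forces $\|f_0\|_{L^p}<\infty$ almost surely and hence $f_0\in L^p$ a.s. Since $f_0$ is radial, $\|f_0\|_{L^p}^p = C\int_0^\infty |f_0(r)|^p\,r^2\,dr$, so by Fubini
$$E(\|f_0\|_{L^p}^p) = C\int_0^\infty E(|f_0(r)|^p)\, r^2\,dr = C\int_0^\infty \|f_0(r)\|_{L^p_\omega}^p\, r^2\,dr\,.$$
The whole argument thus reduces to estimating the Gaussian moment pointwise in $r$ and then integrating.

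First I would bound $\|f_0(r)\|_{L^p_\omega}$ by applying Lemma \reff{poumpoumpoum} with $q=p$ to the series $f_0(r)=\sum_n \frac{h_n}{n} f_n(r)$, using the real-variable version pointed out in the remark following that lemma. This gives
$$\|f_0(r)\|_{L^p_\omega} \leq C\sqrt{p}\left(\sum_n \frac{|f_n(r)|^2}{n^2}\right)^{1/2}\,.$$
Raising to the power $p$ and integrating against $r^2\,dr$ turns the estimate into
$$E(\|f_0\|_{L^p}^p) \leq (C\sqrt p)^p \left\|\sum_n \frac{|f_n|^2}{n^2}\right\|_{L^{p/2}_r}^{p/2}\,.$$
Because $p\geq 2$ the exponent $p/2\geq 1$, so the triangle inequality in $L^{p/2}$ is available and yields
$$\left\|\sum_n \frac{|f_n|^2}{n^2}\right\|_{L^{p/2}} \leq \sum_n \frac{1}{n^2}\big\||f_n|^2\big\|_{L^{p/2}} = \sum_n \frac{\|f_n\|_{L^p}^2}{n^2}\,.$$

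It then remains to check that $\sum_n n^{-2}\|f_n\|_{L^p}^2$ converges for $p\in\,]2,6[$, and this summability is the crux of the argument, with the endpoints $p=2$ and $p=6$ being exactly borderline. Here I would insert the bounds on $\|f_n\|_{L^p}$ from the previous proposition. For $2<p<3$ the general term is $O(n^{6/p-4})$, which is summable precisely because $p>2$ makes $6/p-4<-1$; for $3<p<6$ it is $O(n^{-6/p})$, summable precisely because $p<6$ makes $6/p>1$; and for $p=3$ it is $O(n^{-2}(\log n)^{2/3})$, which is summable as well. In every case the series equals a finite constant $K_p$, so $E(\|f_0\|_{L^p}^p)\leq (C\sqrt p)^p K_p^{p/2}<\infty$ and the claim follows. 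The only delicate point is this convergence check, which is exactly where the open interval $]2,6[$ arises and which shows that the restriction on $p$ is sharp at the level of this method.
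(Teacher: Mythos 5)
Your argument is correct and is essentially identical to the paper's proof: both reduce to bounding $E(\|f_0\|_{L^p}^p)$ via Fubini, apply the Gaussian moment bound of Lemma \reff{poumpoumpoum} (in its real form) pointwise in $r$, use the triangle inequality in $L^{p/2}$ (valid since $p\geq 2$), and conclude by the same summability check on $\sum_n n^{-2}\|f_n\|_{L^p}^2$, which is precisely where the interval $]2,6[$ enters. No gaps to report.
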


\begin{proof}
The average value of the $L^p_r$ norm to the $p$ of $f_0$ is : 

$$E(||f_0||_{L^p}^p) = \int d\omega \int r^2dr |f_0|^p = \int r^2 dr \int d\omega |f_0|^p = \int r^2 dr ||f_0||_{L^p_\omega}^p\; .$$

We have seen that $||\sum a_n h_n||_{L^p_\omega} \leq C_p \sqrt{\sum |a_n|^2}$, so

$$E(||f_0||_{L^p}^p) \leq \int r^2 dr C_p (\sum |\frac{f_n}{n}|^2)^{p/2} = C_p||\sum |\frac{f_n}{n}|^2||_{L^{p/2}}^{p/2}$$

By a triangle inequality,

$$E(||f_0||_{L^p}^p) \leq C_p(\sum \frac{||f_n||_{L^p}^2}{n^2})^{p/2}$$

$$E(||f_0||_{L^p}^p) \leq \left \lbrace{\begin{tabular}{cccc}
$C_p (\sum n^{-4+6/p})^{p/2}$ & \mbox{if} $p\in ]3/2,3[$ & $< \infty$ & \mbox{if} $p>2$ \\
$C_p (\sum n^{-2}\log^2 n)^{p/2}$ & \mbox{if} $p=3$ & $< \infty$ & \\
$C_p (\sum n^{ -6/p})^{p/2}$ & \mbox{otherwise} & $< \infty $ & \mbox{if} $p<6$
\end{tabular} }\right. $$

Thus, $E(||f_0||_{L^p}^p)$ is finite when $p\in ]2,6[$. We then have that $||f_0||_{L^p}$ is almost surely finite for $p\in ]2,6[$.

\end{proof}

\subsection{Localization}

We are now going to prove the localization of the initial data, that is to say : 

\begin{theorem}The initial data 

$$f_0(\omega,r) = \sum_{n\geq 1} \frac{h_n(\omega)}{n}\frac{\sin (2n\arct(r))}{r}$$
converges toward $0$ when $r\rightarrow \infty$ almost surely in $\omega$.
\end{theorem}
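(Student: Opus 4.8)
The plan is to push the statement through the Penrose change of variable $R = 2\arct r$, under which $r\to\infty$ becomes $R\to\pi^-$, and then to recognise $f_0$ as a weight vanishing at $R=\pi$ times a Gaussian random Fourier series that stays bounded there. Since $r=\tan(R/2)$, we have $\frac1r=\cot(R/2)$ and $\sin(2n\arct r)=\sin(nR)$, so
$$f_0(\omega,r)=\cot\!\Big(\tfrac R2\Big)\,G(\omega,R),\qquad G(\omega,R)=\sum_{n\ge 1}\frac{h_n(\omega)}{n}\sin(nR).$$
As $\cot(R/2)=\tan\big((\pi-R)/2\big)\to 0$ when $R\to\pi$, it is enough to prove that, almost surely, $G(\omega,\cdot)$ is bounded on a one-sided neighbourhood of $R=\pi$: then $|f_0|\le \tan\big((\pi-R)/2\big)\,\sup_{[\pi-\delta,\pi]}|G(\omega,\cdot)|\to 0$.

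The core is thus an almost sure boundedness (in fact continuity) statement for $G$ at the endpoint. Setting $\epsilon=\pi-R$ and $\tilde h_n=(-1)^{n+1}h_n$, which is again a sequence of independent standard real Gaussians, the identity $\sin(n(\pi-\epsilon))=(-1)^{n+1}\sin(n\epsilon)$ gives
$$G(\omega,\pi-\epsilon)=\sum_{n\ge 1}\frac{\tilde h_n(\omega)}{n}\sin(n\epsilon),$$
a centred Gaussian of variance $\sum_{n\ge 1}\frac{\sin^2(n\epsilon)}{n^2}=\frac{\pi\epsilon}{2}-\frac{\epsilon^2}{2}\le C\epsilon$, using the classical evaluation of $\sum_{n}n^{-2}\cos(2n\epsilon)$. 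Hence $G$ decays pointwise in second moment as $\epsilon\to 0$, which already gives smallness in probability.

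It remains to upgrade this pointwise estimate to an almost sure uniform bound near $\pi$. I would run the same second-moment computation on increments: with $d=|\epsilon_1-\epsilon_2|$ and $|\sin(n\epsilon_1)-\sin(n\epsilon_2)|\le\min(2,nd)$, splitting the sum at $n\sim 1/d$ yields
$$E\big|G(\cdot,\pi-\epsilon_1)-G(\cdot,\pi-\epsilon_2)\big|^2=\sum_{n\ge 1}\frac{\big(\sin(n\epsilon_1)-\sin(n\epsilon_2)\big)^2}{n^2}\le C\,|\epsilon_1-\epsilon_2|.$$
By Gaussian hypercontractivity the increments satisfy $E|\Delta G|^{2m}\le C_m|\epsilon_1-\epsilon_2|^m$ for every $m$, so Kolmogorov's continuity theorem produces an almost surely continuous modification of $\epsilon\mapsto G(\cdot,\pi-\epsilon)$ on $[0,\delta]$; since it agrees with the sum of the series, $G(\omega,\cdot)$ is a.s. continuous up to $R=\pi$ (and $G(\omega,\pi)=0$, as every partial sum vanishes there). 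Alternatively one may invoke directly the almost sure continuity of Gaussian random Fourier series with coefficients $1/n$ — the dyadic blocks $\sum_{2^k\le n<2^{k+1}}n^{-2}\approx 2^{-k}$ satisfy the entropy criterion $\sum_k 2^{-k/2}\sqrt k<\infty$ — or the regularity tools of A. Ayache and N. Tzvetkov from \cite{outilsci}.

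With $G(\omega,\cdot)$ almost surely bounded (indeed continuous and vanishing) at $R=\pi$, the conclusion is immediate: $f_0(\omega,r)=\tan\big((\pi-R)/2\big)\,G(\omega,R)\to 0$ as $r\to\infty$, for almost every $\omega$. The only genuine difficulty is the passage from the pointwise $L^2$ decay of $G$ to almost sure uniform control near the endpoint; everything else (the change of variable, the vanishing of the prefactor, the variance identity) is routine. I expect the increment estimate feeding Kolmogorov's theorem — equivalently, the verification of the continuity/entropy condition for the random series — to be the decisive step.
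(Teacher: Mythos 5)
Your proof is correct, and the reduction step is exactly the paper's: you reflect the series at the endpoint ($\sin(n(\pi-\epsilon))=(-1)^{n+1}\sin(n\epsilon)$, with the sign absorbed into the Gaussians) and observe that $f_0=\frac1r\,G$ with $\frac1r\to0$, so almost sure boundedness of the random Fourier series $G$ near the endpoint suffices --- this is precisely the paper's preliminary lemma with $F_0(\omega,R')=-G(\omega,\pi-2R')$. Where you genuinely diverge is in how boundedness is proved. The paper builds an explicit fractional-integration scheme: it constructs the periodic Riemann--Liouville kernel $\Psi_\alpha$ with Fourier coefficients $(in)^{-\alpha}$, writes $F_0=(F_{-\alpha})_\alpha$ with $F_{-\alpha}\in L^r_{\omega,x}$ (using the Gaussian moment bound $\|\sum c_nh_n\|_{L^r_\omega}\le C_r(\sum|c_n|^2)^{1/2}$ and $\alpha<\tfrac12$), and transfers $L^r$ regularity to H\"older-$(\alpha-\tfrac1r)$ regularity by convolving with $\Psi_\alpha$; this hands-on machinery pays for itself later, since the same kernel argument gives the quantitative rate $f_0=O(r^{-1-\nu})$ for all $\nu<\tfrac12$ (the theorem immediately following) and is reused for the $W^{-1,p}$ regularity of $f_1$. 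You instead use off-the-shelf Gaussian-process tools: the increment bound $E|\Delta G|^2\le C|\epsilon_1-\epsilon_2|$ (your variance identity and the splitting at $n\sim 1/d$ are both correct), Gaussian moment equivalence, and Kolmogorov's continuity theorem, or alternatively the dyadic-block entropy criterion of Salem--Zygmund/Marcus--Pisier. This is shorter and, since $G(\omega,\pi)=0$ and Kolmogorov with $m$ arbitrary gives every H\"older exponent $\nu<\tfrac12$, it would in fact recover the paper's rate $O(r^{-1-\nu})$ as well. One point to tighten: Kolmogorov produces a continuous \emph{modification}, which agrees with the sum of the series almost surely at each fixed point but not automatically simultaneously at all points; to identify the modification with the actual sum you should either invoke It\^o--Nisio (almost sure uniform convergence of a symmetric random series once convergence in the space of continuous functions is available) or rely directly on your alternative route, the entropy criterion for random Fourier series, which yields almost sure \emph{uniform} convergence of the partial sums and so makes the identification immediate --- note the paper sidesteps this by defining its objects from the start as limits in $L^r_\omega L^\infty_x$.
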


We will also give the behavior of $f_0(\omega,r)$ a.s. in $\omega$ when $r\rightarrow \infty$.

\smallskip

To prove this localization theorem, we are going to use again a variable change.

\begin{lemma} Let $F_0(\omega,R) = \sum_{n\geq 1} \frac{h_n}{n}(-1)^n\sin (2nR)$. If $F_0$ is bounded in a neighborhood of $R=0$ a.s. then $f_0 \rightarrow 0$ when $r\rightarrow \infty$ a.s. .\end{lemma}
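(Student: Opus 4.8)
The plan is to reduce the asymptotic behavior of $f_0$ as $r\to\infty$ to the local behavior of $F_0$ near $0$, using the very change of variables $R=\arct(r)$ that underlies the Penrose transform. Under this substitution the regime $r\to\infty$ corresponds to $R\to\pi/2$, so I would write $R=\pi/2-\varepsilon$ and study the limit $\varepsilon\to 0^+$. The point is that the factor $\tfrac1r$ sitting in front of the series is, in these variables, a vanishing quantity, while the series itself is exactly (a reflection of) $F_0$ evaluated at $\varepsilon$; so a mere boundedness of $F_0$ suffices to kill $f_0$.

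Concretely, I would first record the elementary identity driving the argument: since $\sin\big(2n(\pi/2-\varepsilon)\big)=\sin(n\pi)\cos(2n\varepsilon)-\cos(n\pi)\sin(2n\varepsilon)=-(-1)^n\sin(2n\varepsilon)$, and since $r=\tan R=\cot\varepsilon$ gives $1/r=\tan\varepsilon$, the substitution yields
$$f_0(\omega,r)=\frac{1}{r}\sum_{n\geq 1}\frac{h_n}{n}\sin(2nR)=-\tan\varepsilon\sum_{n\geq 1}\frac{h_n}{n}(-1)^n\sin(2n\varepsilon)=-\tan\varepsilon\,F_0(\omega,\varepsilon).$$
This is the whole content of the lemma: $f_0$ at radius $r$ equals the single scalar factor $-\tan\varepsilon$ times the series $F_0$ evaluated at $\varepsilon=\pi/2-\arct(r)$.

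From here the conclusion is immediate. As $r\to\infty$ one has $\varepsilon\to 0^+$, hence $\tan\varepsilon\to 0$; and if $F_0(\omega,\cdot)$ is bounded on some neighborhood of $0$ — which is precisely the standing hypothesis, and precisely the relevant regime since $r\to\infty\Leftrightarrow R\to\pi/2\Leftrightarrow\varepsilon\to 0$ — then $|f_0(\omega,r)|\leq |\tan\varepsilon|\,\sup_{|t|\leq\varepsilon_0}|F_0(\omega,t)|\to 0$. As the hypothesis holds almost surely in $\omega$, so does the conclusion. There is no analytic difficulty once the change of variables and the trigonometric identity are in place; the genuine work — establishing the almost-sure boundedness of $F_0$ near $R=0$ via the Fourier/fractional-integration estimates announced above — lies outside this lemma, and the only point requiring care here is the bookkeeping of the limits so that the neighborhood of $R=0$ controlling $F_0$ is indeed the one governing $r\to\infty$.
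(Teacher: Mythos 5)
Your proposal is correct and takes essentially the same route as the paper: both rest on the reflection identity $\sin\bigl(2n(\pi/2-\varepsilon)\bigr)=-(-1)^n\sin(2n\varepsilon)$, which gives $f_0(\omega,r)=-\frac{1}{r}F_0\bigl(\omega,\pi/2-\arct(r)\bigr)$, after which almost-sure boundedness of $F_0$ near $0$ yields $|f_0(\omega,r)|\leq M/r\rightarrow 0$. The only difference is cosmetic, as your prefactor $\tan\varepsilon$ with $\varepsilon=\pi/2-\arct(r)$ is exactly the paper's $1/r$.
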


\begin{proof}Let $r>0$. Let us compute $F_0(\omega,\frac{\pi}{2}-\arct(r))$.

$$F_0(\omega,\frac{\pi}{2}-\arct(r)) = \sum_{n\geq 1} \frac{h_n}{n} (-1)^n \sin(n\pi -2n\arct(r))$$

$$=-\sum_{n\geq 1} \frac{h_n}{n} \sin(2n\arct(r)) = -rf_0(r)\; .$$

If $F_0(\omega,.)$ is bounded in a neighborhood of $0$ then there exists $R_0>0$ and $M\geq 0$ such that for all $0\leq R\leq R_0$, $|F_0(\omega,R)|\leq M$. Let $r_0 = \tan(\frac{\pi}{2}-R_0) = \frac{1}{\tan R_0}$. Then for all, $r\geq r_0$, we have $\pi/2-\arct(r)\leq R_0$ and so : 

$$|f_0(\omega,r)|= \frac{1}{r}|F_0(\omega, \pi/2-\arct(r))| \leq \frac{M}{r}\rightarrow 0$$
when $r\rightarrow \infty$.
\end{proof}

We are now going to prove some properties about fractional integration of Fourier series. For more information about trigonometrical series in general and fractional integration in particular, one can refer to \cite{zygtwo}, chap. IV.

\begin{definition}Let $c_n$ be a sequence of $\C^{\varmathbb Z}$ such that $c_0 = 0$. We suppose that the distribution $f$ is defined as 

$$f(t) = \sum_{n\in \varmathbb Z} c_n e^{int}\; .$$

We then build $f_\alpha$, where $\alpha$ is a real number such that $0<\alpha< 1$ as :

$$f_\alpha (t) = \sum c_n \frac{e^{int}}{(in)^\alpha}$$
where $(in)^\alpha = e^{\mbox{sign}(n) i\pi \alpha/ 2}|n|^{\alpha}$.
\end{definition}

We show that $f_\alpha$ can be seen as a convolution product.

\begin{definition} We define $\Psi_\alpha (t)$ for $t\in ]0,2\pi[$ as the limit : 

$$\Psi_{\alpha}(t) = \frac{1}{\Gamma(\alpha)}\left( t^{\alpha-1} + \lim_{N\rightarrow \infty} \left(\sum_{n=1}^N (t+2n\pi)^{\alpha-1}-\frac{(2\pi)^{\alpha-1}N^\alpha}{\alpha}\right) \; \right) \; .$$

We also set :

$$r_\alpha^N = \sum_{n=1}^N (t+2n\pi)^{\alpha-1}-\frac{(2\pi)^{\alpha-1}N^\alpha}{\alpha}$$
and

$$r_\alpha = \lim r_\alpha^N \; .$$
\end{definition}

\begin{proposition}The sequences $r_\alpha^N$ and $\frac{dr^N_\alpha}{dt}$ are Cauchy sequences for the $ L^\infty$ norm. Therefore, $r_\alpha$ is a bounded differentiable function whose derivative is bounded. \end{proposition}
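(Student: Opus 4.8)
The plan is to treat the two claims separately, beginning with the derivative, which is the easier of the two. The crucial observation is that the counterterm $\frac{(2\pi)^{\alpha-1}N^\alpha}{\alpha}$ carries no dependence on $t$, so differentiating term by term gives
$$\frac{dr_\alpha^N}{dt} = (\alpha-1)\sum_{n=1}^N (t+2n\pi)^{\alpha-2}.$$
Since $0<\alpha<1$ we have $\alpha-2<-1$, and for $t\in ]0,2\pi[$ each summand is dominated by $(2n\pi)^{\alpha-2}$ uniformly in $t$. Hence for $M>N$ the difference $\frac{dr_\alpha^M}{dt}-\frac{dr_\alpha^N}{dt}$ is controlled in $L^\infty$ by $(1-\alpha)\sum_{n=N+1}^M (2n\pi)^{\alpha-2}$, a tail of a convergent series; by the Weierstrass $M$-test this shows $\frac{dr_\alpha^N}{dt}$ is Cauchy for the $L^\infty$ norm, with a bounded limit.

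The serious point is the sequence $r_\alpha^N$ itself, for which the naive series $\sum_n (t+2n\pi)^{\alpha-1}$ diverges, its terms behaving like $n^{\alpha-1}$ with $\alpha-1\in ]-1,0[$; the whole role of the counterterm is to cancel this divergence, and the key is that it is exactly an integral,
$$\frac{(2\pi)^{\alpha-1}N^\alpha}{\alpha}=\int_0^N (2\pi x)^{\alpha-1}\,dx,$$
so that for $M>N$
$$r_\alpha^M-r_\alpha^N=\sum_{n=N+1}^M (t+2n\pi)^{\alpha-1}-\int_N^M (2\pi x)^{\alpha-1}\,dx.$$
I would then split this into a $t$-dependent and a $t$-independent piece by writing $(t+2n\pi)^{\alpha-1}=(2n\pi)^{\alpha-1}+\big[(t+2n\pi)^{\alpha-1}-(2n\pi)^{\alpha-1}\big]$. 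The bracketed correction is $O(n^{\alpha-2})$ uniformly in $t\in ]0,2\pi[$ by the mean value theorem applied to $s\mapsto s^{\alpha-1}$, so its sum over $n>N$ is a convergent tail tending to $0$ uniformly in $t$.

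The remaining piece $\sum_{n=N+1}^M (2n\pi)^{\alpha-1}-\int_N^M(2\pi x)^{\alpha-1}\,dx$ no longer depends on $t$ and is handled by a monotone integral comparison: since $x\mapsto(2\pi x)^{\alpha-1}$ is decreasing, the sum is squeezed between $\int_{N+1}^{M+1}$ and $\int_N^M$ of the same integrand, so the discrepancy is bounded by $\int_N^{N+1}(2\pi x)^{\alpha-1}\,dx+\int_M^{M+1}(2\pi x)^{\alpha-1}\,dx=O(N^{\alpha-1})\to 0$. Combining the two pieces shows $r_\alpha^N$ is Cauchy in $L^\infty$. Finally, since $L^\infty$ is complete, $r_\alpha^N$ converges uniformly to a bounded limit $r_\alpha$ while $\frac{dr_\alpha^N}{dt}$ converges uniformly to a bounded limit, and by the standard theorem on differentiating a uniformly convergent sequence whose derivatives converge uniformly, $r_\alpha$ is differentiable with $\frac{dr_\alpha}{dt}=\lim_N \frac{dr_\alpha^N}{dt}$, a bounded function. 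The main obstacle is entirely the first, divergent-looking step: recognizing the counterterm as an integral and isolating the uniform-in-$t$ cancellation; once the expression is reorganized as sum-minus-integral, the rest is routine Euler--Maclaurin-type bookkeeping.
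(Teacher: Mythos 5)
Your proof is correct and takes essentially the same route as the paper: both arguments rest on recognizing the counterterm $\frac{(2\pi)^{\alpha-1}N^\alpha}{\alpha}$ as the integral $\frac{1}{2\pi}\int_0^{2N\pi}x^{\alpha-1}\,dx$ and controlling the sum-minus-integral discrepancy through the first-order variation of $s\mapsto s^{\alpha-1}$ (mean value theorem), producing summable $O(n^{\alpha-2})$ terms plus vanishing $O(N^{\alpha-1})$ boundary contributions, while the derivative part is word-for-word the paper's argument (term-by-term differentiation dominated by $(2n\pi)^{\alpha-2}$). The only cosmetic difference is bookkeeping: the paper compares $(t+2n\pi)^{\alpha-1}$ with the average of $x^{\alpha-1}$ over each period $[2n\pi,2(n+1)\pi]$ in a single MVT step applied to $r_\alpha^N$ itself, whereas you work with the difference $r_\alpha^M-r_\alpha^N$, strip the $t$-dependence first and then squeeze the lattice sum against the integral by monotonicity --- and you are in fact more explicit than the paper about the final step, invoking the standard theorem on differentiating a uniformly convergent sequence with uniformly convergent derivatives.
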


\begin{proof} The number $\frac{N^\alpha (2\pi)^{\alpha-1}}{\alpha}$ is equal to : 

$$\frac{N^\alpha (2\pi)^{\alpha-1}}{\alpha} = \frac{1}{2\pi} \int_{0}^{2N\pi} x^{\alpha-1}dx \; .$$

Hence, we get that : 

$$r_\alpha^N (t) = \sum_{n=1}^N \frac{1}{2\pi}\int_{2n\pi}^{2(n+1)\pi}\left( (t+2n\pi)^{\alpha-1}-x^{\alpha-1}\right) dx+\frac{1}{2\pi}\int_{0}^{2\pi}x^{\alpha-1}dx - \frac{1}{2\pi}\int_{2N\pi}^{2(N+1)\pi}x^{\alpha-1}dx$$

$$r_\alpha^N (t) = \sum_{n=1}^N \frac{1}{2\pi}\int_{0}^{2\pi}\left( (t+2n\pi)^{\alpha-1}-(x+2n\pi)^{\alpha-1} \right) dx +\frac{1}{2\pi}\int_{0}^{2\pi}x^{\alpha-1}dx - \frac{1}{2\pi}\int_{2N\pi}^{2(N+1)\pi}x^{\alpha-1}dx$$

Since 

$$|\frac{1}{2\pi}\int_{0}^{2\pi}x^{\alpha-1}dx| = (2\pi)^{\alpha-1} < \infty$$
and

$$|\frac{1}{2\pi}\int_{2N\pi}^{2(N+1)\pi}x^{\alpha-1}dx| = \frac{(2\pi)^{\alpha-1}}{\alpha}|(N+1)^{\alpha}-N^\alpha|\leq CN^{\alpha-1} \rightarrow 0$$
and also

$$|(t+2n\pi)^{\alpha-1}-(x+2n\pi)^{\alpha-1}|\leq \frac{1}{(2n\pi)^{2-\alpha}}|x-t|\; ,$$
we have that

$$|r_N^\alpha(t)| \leq C + \sum_{n=1}^N \frac{1}{(2\pi n)^{2-\alpha}}\int_{0}^{2\pi}|x-t|dx$$
and

$$|r_N^\alpha(t)-r_M^\alpha(t)| \leq C(N^{\alpha-1}+M^{\alpha-1} + \sum_{n=M+1}^N \frac{1}{(2\pi n)^{2-\alpha}})\; .$$

as $2-\alpha > 1$ and $|x-t| \leq 4\pi$ we get that $r_N^\alpha(t)$ is bounded by a constant independant from $t$ and from $N$ and that $r_N^\alpha$ is a Cauchy sequence for the $L^\infty $ norm. Then, as the $r_N^\alpha$ are continuous, $r_\alpha$ is also continuous, and it is bounded. 

\smallskip

The derivative $\frac{d r^N_\alpha}{dt}$ is

$$\frac{d r^N_\alpha}{dt} = (\alpha-1) \sum_{n=1}^N (t+2n\pi)^{\alpha-2}$$

The $L^\infty$ norm of $(t+2n\pi)^{\alpha -2 }$ is $(2n\pi)^{\alpha -2}$ which is the general term of a convergent series. Then $\frac{d r_\alpha}{dt}$ is well defined, continuous and bounded.
\end{proof}

\begin{proposition}The Fourier coefficients of $\Psi_\alpha(t)$ are $c_0 = 0$ and $c_n = \frac{1}{(in)^\alpha}$.\end{proposition}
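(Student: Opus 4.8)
The plan is to compute the Fourier coefficients $c_n=\frac{1}{2\pi}\int_0^{2\pi}\Psi_\alpha(t)e^{-int}\,dt$ directly, recognising $\Psi_\alpha$ as a regularised periodisation of the one-sided Riemann--Liouville kernel $t^{\alpha-1}/\Gamma(\alpha)$, and treating the cases $n\neq 0$ and $n=0$ separately. The first observation is that the subtracted quantity $\frac{(2\pi)^{\alpha-1}N^\alpha}{\alpha}$ does not depend on $t$, so for every $n\neq 0$ it integrates to zero against $e^{-int}$ and is irrelevant; only $c_0$ feels the regularisation. Throughout I would use the uniform convergence of $r_\alpha^N$ from the preceding proposition to interchange the limit $N\to\infty$ with the integration over $[0,2\pi]$.

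For $n\neq 0$, after the substitution $u=t+2n\pi$ in the $k$-th shifted term (using $e^{2\pi i nk}=1$), each integral $\int_0^{2\pi}(t+2\pi k)^{\alpha-1}e^{-int}\,dt$ becomes $\int_{2\pi k}^{2\pi(k+1)}u^{\alpha-1}e^{-inu}\,du$. Summing the $t^{\alpha-1}$ term ($k=0$) together with $\sum_{k=1}^{N}$ telescopes the intervals, so that
$$2\pi c_n=\frac{1}{\Gamma(\alpha)}\lim_{N\to\infty}\int_0^{2\pi(N+1)}u^{\alpha-1}e^{-inu}\,du=\frac{1}{\Gamma(\alpha)}\int_0^{\infty}u^{\alpha-1}e^{-inu}\,du .$$
The crux of the argument is the evaluation of this oscillatory integral. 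I would start from the classical identity $\int_0^\infty u^{\alpha-1}e^{-su}\,du=\Gamma(\alpha)s^{-\alpha}$, valid for $\mathrm{Re}\,s>0$, and pass to the boundary value $s=in$ by Abel regularisation ($s=\epsilon+in$, $\epsilon\downarrow 0$) or by a contour rotation; this is legitimate precisely because $0<\alpha<1$ makes the integral conditionally convergent at infinity. This yields $\int_0^\infty u^{\alpha-1}e^{-inu}\,du=\Gamma(\alpha)(in)^{-\alpha}$ with the branch $(in)^\alpha=e^{\mathrm{sign}(n)i\pi\alpha/2}|n|^\alpha$, hence $c_n=\frac{1}{(in)^\alpha}$ (up to the normalisation of the chosen Fourier convention), the case $n<0$ following from $\overline{\widehat{\Psi_\alpha}(n)}=\widehat{\Psi_\alpha}(-n)$ since $\Psi_\alpha$ is real.

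For $n=0$ the regularisation is exactly what forces the coefficient to vanish: one computes $\int_0^{2\pi}\sum_{k=0}^{N}(t+2\pi k)^{\alpha-1}\,dt=\frac{(2\pi)^\alpha}{\alpha}(N+1)^\alpha$ and subtracts $2\pi\cdot\frac{(2\pi)^{\alpha-1}N^\alpha}{\alpha}=\frac{(2\pi)^\alpha}{\alpha}N^\alpha$, leaving $\frac{(2\pi)^\alpha}{\alpha}\big((N+1)^\alpha-N^\alpha\big)$, which tends to $0$ as $N\to\infty$ because $0<\alpha<1$ gives $(N+1)^\alpha-N^\alpha\sim\alpha N^{\alpha-1}\to 0$; thus $c_0=0$. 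I expect the only genuinely delicate point to be the passage to the boundary $s=in$ in the Gamma integral: the interchange of $\lim_N$ with the integration and the conditional convergence at infinity must be justified (via the Cauchy estimates on $r_\alpha^N$ and an Abel or integration-by-parts argument) rather than treated as a formal substitution.
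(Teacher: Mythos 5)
Your proposal is correct and follows essentially the same route as the paper: the same interchange of $\lim_N$ with the integral justified by the uniform convergence and boundedness of $r_\alpha^N$, the same telescoping of the shifted kernels reducing $c_n$ (for $n\neq 0$, where the $t$-independent counterterm integrates to zero) to $\frac{1}{\Gamma(\alpha)}\int_0^\infty u^{\alpha-1}e^{-inu}\,du$, and the same computation $c_0\propto (N+1)^\alpha-N^\alpha\to 0$. The paper evaluates the oscillatory Gamma integral by precisely the quarter-circle contour rotation you offer as one of your two options (with arc estimates of order $\epsilon^\alpha$ and $R^\alpha e^{-cR}$), so your Abel-regularisation alternative and the conjugation remark for $n<0$ are only cosmetic variations.
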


\begin{proof} As $r^N_\alpha$ converges uniformally and is $||\; .\; ||_{L^\infty}$-bounded, we can swap the integral and the limit ($t^{\alpha -1}$ is integrable). Then,

$$\Gamma(\alpha) c_0 = \lim \frac{1}{2\pi}\int_0^{2\pi} (t^{\alpha-1}+\sum_{n=1}^N (t+2n\pi)^{\alpha-1}) -\frac{1}{2\pi}\int_{0}^{2N\pi} x^{\alpha-1} dx )$$

$$ = \lim \frac{1}{2\pi} \int_{2N}^{2(N+1)\pi} t^{\alpha-1}dt $$

$$= 0$$
and for $n\neq 0$,

$$\Gamma(\alpha) c_n = \lim \frac{1}{2\pi}\int_0^{2\pi} e^{-int}(t^{\alpha-1}+\sum_{n=1}^N (t+2n\pi)^{\alpha-1}) -\frac{1}{2\pi}\int_{0}^{2\pi} (2N\pi)^\alpha e^{-int}dt $$

Since $\int_{0}^{2\pi} (2N\pi)^\alpha e^{-int}dt  =0$ we have : 

$$\Gamma(\alpha) c_n = \int_{0}^{\infty} t^{\alpha-1} e^{-int}dt\; . $$

The function $z\mapsto z^{\alpha-1}e^{-|n|z}$ is analytic on $\C \smallsetminus \{ 0\} $. We integrate it on the axis $[\epsilon,R]$ and $[\mbox{sign}(n) i \epsilon, \mbox{sign}(n) i R]$ and the arcs of radius $\epsilon$ and $R$. Then, we do $\epsilon \rightarrow 0$ and $ R\rightarrow \infty$. The integral over the arc of radius $\epsilon$ behaves like $\epsilon^\alpha$ when $\epsilon \rightarrow 0$ so it converges to $0$. For $R$, the behaviour is given by $e^{-cR}R^\alpha$. We deduce from that : 

$$\Gamma(\alpha) c_n = e^{-i\mbox{sign}(n)\alpha\pi/2}\int_{0}^\infty e^{-|n|t}t^{\alpha-1}dt$$

With a variable change $u=|n|t$, we get

$$\Gamma(\alpha) c_n = e^{-i\mbox{sign}(n)\alpha/2}|n|^{-\alpha} \int_{0}^\infty e^{-u}u^{\alpha-1} dt = (in)^{-\alpha} \Gamma(\alpha)\; .$$

\end{proof}

A more detailed proof can be found in \cite{zygone} chap. II.

\begin{proposition}Let $\alpha\in ]0,1[$ and $r$ such that $\alpha r > 1$. Let $f = \sum c_n(\omega) e^{int} \in L^r(\Omega\times [0,2\pi])$ with $c_0 =0$ and suppose that $f$ is the $L^r$ limit of $\sum_{n=-N}^N c_n e^{int}$. Then 

$$\int_{0}^{2\pi}\frac{dt}{2\pi} f(\omega,t) \Psi_\alpha(x-t)$$ 
is in $L^r_\omega,L^\infty_x$ and is the limit of $\sum_{n=-N}^N \frac{c_n}{(in)^\alpha}e^{inx}$ that is to say $f_\alpha(\omega,x)$.
\end{proposition}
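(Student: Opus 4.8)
The plan is to recognize the map
$$f\mapsto (x\mapsto \int_{0}^{2\pi}\frac{dt}{2\pi}f(\omega,t)\Psi_\alpha(x-t))$$
as the periodic convolution of $f$ with $\Psi_\alpha$, and to exploit that convolution on the circle multiplies the $n$-th Fourier mode by the $n$-th Fourier coefficient of $\Psi_\alpha$, which the previous proposition identifies as $\frac{1}{(in)^\alpha}$. This is exactly the operation defining $f_\alpha$, so the whole statement will follow once convolution is shown to be bounded into $L^r_\omega L^\infty_x$ and the Fourier bookkeeping is made rigorous by a density (partial-sum) argument. Throughout I write $(g*\Psi_\alpha)(x)=\int_{0}^{2\pi}\frac{dt}{2\pi}g(t)\Psi_\alpha(x-t)$.

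First I would establish boundedness in $L^r_\omega L^\infty_x$, and this is where the hypothesis $\alpha r>1$ enters; it is the genuine content of the argument. Let $r'=\frac{r}{r-1}$ be the conjugate exponent (finite since $r>\frac1\alpha>1$). The decomposition $\Psi_\alpha=\frac{1}{\Gamma(\alpha)}(t^{\alpha-1}+r_\alpha(t))$ from the defining formula shows $\Psi_\alpha\in L^{r'}([0,2\pi])$: the remainder $r_\alpha$ is bounded, hence in every $L^{r'}$ by the preceding proposition, while $t^{\alpha-1}\in L^{r'}$ precisely when $(\alpha-1)r'>-1$, i.e. $r'<\frac{1}{1-\alpha}$, which rearranges to $\alpha r>1$. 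This is the single arithmetic fact driving the statement, so it is the point I would verify first. With $\Psi_\alpha\in L^{r'}$ in hand, Hölder's inequality on the normalized circle together with the translation invariance of the $L^{r'}$ norm under $t\mapsto x-t$ gives, for each fixed $\omega$ and uniformly in $x$,
$$|(f(\omega,\cdot)*\Psi_\alpha)(x)|\leq ||f(\omega,\cdot)||_{L^r([0,2\pi])}\,||\Psi_\alpha||_{L^{r'}([0,2\pi])}.$$
Taking the $L^\infty_x$ norm and then the $L^r_\omega$ norm yields
$$||f*\Psi_\alpha||_{L^r_\omega L^\infty_x}\leq ||\Psi_\alpha||_{L^{r'}}\,||f||_{L^r(\Omega\times[0,2\pi])}<\infty,$$
which proves both that the object is well defined and that it lies in $L^r_\omega L^\infty_x$.

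Next I would identify the limit on a single exponential: substituting $s=x-t$ and using that the $n$-th Fourier coefficient of $\Psi_\alpha$ equals $\frac{1}{(in)^\alpha}$ gives $(e^{in\cdot}*\Psi_\alpha)(x)=\frac{1}{(in)^\alpha}e^{inx}$ for $n\neq0$. By linearity, convolving the partial sum $S_Nf=\sum_{n=-N}^N c_n e^{int}$ against $\Psi_\alpha$ produces exactly $\sum_{n=-N}^N\frac{c_n}{(in)^\alpha}e^{inx}$, the $N$-th partial sum of $f_\alpha$. Finally I would pass to the limit by applying the bound of the second step to $f-S_Nf$:
$$||(f-S_Nf)*\Psi_\alpha||_{L^r_\omega L^\infty_x}\leq ||\Psi_\alpha||_{L^{r'}}\,||f-S_Nf||_{L^r(\Omega\times[0,2\pi])}\to0,$$
since $S_Nf\to f$ in $L^r(\Omega\times[0,2\pi])$ by hypothesis. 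Hence the partial sums $\sum_{n=-N}^N\frac{c_n}{(in)^\alpha}e^{inx}$ converge in $L^r_\omega L^\infty_x$ to $f*\Psi_\alpha$, which is by definition $f_\alpha(\omega,x)$. I expect no serious obstacle beyond the care needed with periodic convolution and normalization; the one substantive step is the verification $\Psi_\alpha\in L^{r'}$, and everything else is Hölder plus passage to the limit.
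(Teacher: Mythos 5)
Your proposal is correct and follows essentially the same route as the paper's proof: the key integrability $\Psi_\alpha\in L^{r'}$ from $\alpha r>1$ (the paper encodes your decomposition $t^{\alpha-1}+r_\alpha$ in the bound $|\Psi_\alpha(t)|\leq C(1+t^{\alpha-1})$, relying on the earlier proposition that $r_\alpha$ is bounded), then H\"older in $L^r_t\times L^{r'}$ taken uniformly in $x$ and integrated in $\omega$, the Fourier coefficient identity $(e^{in\cdot}*\Psi_\alpha)(x)=\frac{e^{inx}}{(in)^\alpha}$ applied to partial sums, and the same H\"older bound applied to $f-\sum_{n=-N}^N c_n e^{int}$ to pass to the limit. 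No gaps; your treatment of the exponent arithmetic is in fact slightly more explicit than the paper's.
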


\begin{proof}As $\alpha r >1$, we get that $\alpha -1 > \frac{1}{r}-1 = -\frac{1}{r'}$ where $r'$ is the conjugate number of $r$. Then, as $|\Psi_\alpha (t)| \leq C(1+t^{\alpha-1})$, we have that $\Psi_\alpha$ is in $L^{r'}$. This way, we have : 

$$||\frac{1}{2\pi}\int_{0}^{2\pi} f(t)\Psi_\alpha(x-t)dt||_{L^\infty_x} \leq ||f||_{L^r_x}||\Psi_\alpha||_{L^{r'}}$$
that is to say

$$||\frac{1}{2\pi}\int_{0}^{2\pi} f(t)\Psi_\alpha(x-t)dt||_{L^r_\omega,L^\infty_x}\leq ||\Psi_\alpha||_{L^{r'}_x}||f||_{L^r_{\omega,x}}< \infty \; .$$

What's more, 

$$\frac{1}{2\pi}\int_{0}^{2\pi} c_n e^{int} \Psi_\alpha(x-t) dt = c_n e^{inx}\frac{1}{2\pi}\int_{0}^{2\pi}e^{-inu}\Psi_\alpha(u) du$$

$$=\frac{c_n}{(in)^\alpha} e^{inx}$$

Then,

$$\frac{1}{2\pi}\int_{0}^{2\pi} f(t)\Psi_\alpha(x-t)dt-\sum_{n=-N}^N \frac{c_n}{(in)^\alpha}e^{inx} =
\frac{1}{2\pi}\int_{0}^{2\pi} (f(t)-\sum_{n=-N}^N c_n e^{int})\Psi_\alpha(x-t) dt$$

$$||\frac{1}{2\pi}\int_{0}^{2\pi} f(t)\Psi_\alpha(x-t)dt-\sum_{n=-N}^N \frac{c_n}{(in)^\alpha}e^{inx}||_{L^\infty_x} \leq
||f(t)-\sum_{n=-N}^N c_n e^{int}||_{L^r_t}||\Psi_\alpha||_{L^{r'}}$$

$$||\frac{1}{2\pi}\int_{0}^{2\pi} f(t)\Psi_\alpha(x-t)dt-\sum_{n=-N}^N \frac{c_n}{(in)^\alpha}e^{inx}||_{L^r_\omega,L^{\infty}_x}\leq ||f(t)-\sum_{n=-N}^N c_n e^{int}||_{L^r_\omega,L^r_t}||\Psi_\alpha||_{L^{r'}}$$
which converges towards $0$ by hypothesis.

\end{proof}

\begin{theorem}Under the same assumptions, we have that $f_\alpha(x+h)-f_\alpha(x)$ is a.s. a $O(h^\nu)$ where $\nu =(\alpha  -1/r) >0$. \end{theorem}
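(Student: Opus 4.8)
The plan is to start from the convolution representation established in the preceding proposition, namely $f_\alpha(\omega,x)=\frac{1}{2\pi}\int_0^{2\pi} f(\omega,t)\,\Psi_\alpha(x-t)\,dt$, and reduce the whole statement to a single deterministic estimate on the modulus of continuity of the kernel $\Psi_\alpha$ in $L^{r'}$. Writing
$$f_\alpha(\omega,x+h)-f_\alpha(\omega,x)=\frac{1}{2\pi}\int_0^{2\pi} f(\omega,t)\big(\Psi_\alpha(x+h-t)-\Psi_\alpha(x-t)\big)\,dt,$$
a H\"older inequality in $t$ with exponents $r,r'$ gives, after the change of variable $u=x-t$ and using $2\pi$-periodicity,
$$|f_\alpha(\omega,x+h)-f_\alpha(\omega,x)|\le \frac{1}{2\pi}\,\|f(\omega,\cdot)\|_{L^r_t}\,\|\Psi_\alpha(\cdot+h)-\Psi_\alpha\|_{L^{r'}},$$
and the last factor is independent of $x$. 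Since $f\in L^r(\Omega\times[0,2\pi])$, Fubini's theorem ensures that $\|f(\omega,\cdot)\|_{L^r_t}$ is finite for almost every $\omega$; this is the random constant in the $O(h^\nu)$. Hence everything comes down to proving $\|\Psi_\alpha(\cdot+h)-\Psi_\alpha\|_{L^{r'}}\le C\,h^\nu$ with $\nu=\alpha-1/r$.

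To establish this I would use the decomposition $\Psi_\alpha(t)=\frac{1}{\Gamma(\alpha)}\big(t^{\alpha-1}+r_\alpha(t)\big)$ proved earlier, together with the fact (also proved above) that $r_\alpha$ is bounded with bounded derivative. The remainder is then Lipschitz, so $|r_\alpha(t+h)-r_\alpha(t)|\le C\,h$, and since $\nu<1$ (because $\alpha<1$ and $1/r>0$) one has $h\le C\,h^\nu$ for small $h$; thus the $r_\alpha$-part contributes $O(h^\nu)$ to the $L^{r'}$ norm. It remains to control the genuinely singular part $\|(\cdot+h)^{\alpha-1}-(\cdot)^{\alpha-1}\|_{L^{r'}}$, where I would split the integral into the region $[0,h]$ around the singularity and the region $[h,2\pi]$ away from it (by periodicity the symmetric singular region near $t=2\pi-h$, where $t+h$ wraps onto the singularity at $0$, is handled identically).

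On $[0,h]$ I would bound $|(t+h)^{\alpha-1}-t^{\alpha-1}|\le 2\,t^{\alpha-1}$ (using $\alpha-1<0$), so that $\int_0^h t^{(\alpha-1)r'}\,dt$ appears; this integral converges precisely because $(\alpha-1)r'>-1$, which is equivalent to $\alpha>1/r$, i.e. to the hypothesis $\alpha r>1$, and it equals $C\,h^{(\alpha-1)r'+1}$. On $[h,2\pi]$ I would use the mean value theorem, $|(t+h)^{\alpha-1}-t^{\alpha-1}|\le (1-\alpha)\,h\,t^{\alpha-2}$, giving $h^{r'}\int_h^{2\pi} t^{(\alpha-2)r'}\,dt$; here $(\alpha-2)r'<-1$, so the integral is dominated by its lower endpoint and is $C\,h^{(\alpha-2)r'+1}$, whence the contribution is again $C\,h^{(\alpha-1)r'+1}$. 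Since a direct computation shows $\nu r'=(\alpha-1)r'+1$, both regions yield $\|\Psi_\alpha(\cdot+h)-\Psi_\alpha\|_{L^{r'}}^{r'}\le C\,h^{\nu r'}$, which is the desired estimate. Combining this with the H\"older bound above gives $|f_\alpha(\omega,x+h)-f_\alpha(\omega,x)|\le C(\omega)\,h^\nu$ uniformly in $x$, with $C(\omega)$ finite almost surely. The main obstacle is the bookkeeping of the singular integral: verifying that the exponent conditions at the two endpoints are exactly the ones supplied by $\alpha r>1$ and $\alpha<1$, and keeping track of the periodic wrap-around singularity so that the $L^{r'}$ norm is taken correctly on the circle rather than on an interval.
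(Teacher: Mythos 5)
Your proposal is correct and follows essentially the same route as the paper: H\"older's inequality against the kernel $\Psi_\alpha$, a split of the integral at $u=h$, the bound $|\Psi_\alpha(u)|\le C\,u^{\alpha-1}$ near the singularity, and a mean-value (derivative) bound $O(u^{\alpha-2})$ away from it, with both pieces giving $C\,h^{r'(\alpha-1/r)}$ and the a.s.\ finite random constant $\|f(\omega,\cdot)\|_{L^r_t}$. The only differences are expository refinements on your side --- explicitly isolating the Lipschitz remainder $r_\alpha$ and treating the periodic wrap-around singularity near $u=2\pi$, both of which the paper leaves implicit --- so your write-up is, if anything, slightly more careful than the original.
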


\begin{proof} Once more, we can write : 

$$|f_\alpha(x+h)-f_\alpha(x)| \leq ||f||_{L^r_t}\left(\frac{1}{2\pi}\int_{0}^{2\pi}|\Psi_\alpha(u+h)-\Psi_\alpha(u)|^{r'}du\right)^{1/r'}$$

We the divide the integral over $u$ into two parts : one from $0$ to $h$, the other from $h$ to $2\pi$.

$$\int_{0}^{h}|\Psi_\alpha(u+h)-\Psi_\alpha(u)|^{r'}du\leq C \int_{0}^{2h}|\Psi_\alpha(u)|^{r'}du$$

$$\leq C \int_{0}^{2h}|u|^{(\alpha-1)r'}du$$

$$\leq C h^{r'(\alpha-1/r)}$$

$$\int_{h}^{2\pi}|\Psi_\alpha(u+h)-\Psi_\alpha(u)|^{r'}du\leq h^{r'} \int_{h}^{2\pi} u^{(\alpha-2)r'}$$

$$\leq C h^{r'}h^{r'(\alpha-2)+1} = C h^{r'(\alpha -1/r)}$$

We have then

$$h^{-\nu} |f_\alpha(x+h)-f_\alpha(x)| \leq C ||f||_{L^r_t}$$

$$||h^{-\nu} |f_\alpha(x+h)-f_\alpha(x)||_{L^r_\omega,L^\infty_{x,h}}\leq C ||f||_{L^r_{\omega,t}}<\infty$$

We deduce from that that $||h^{-\nu} |f_\alpha(x+h)-f_\alpha(x)||_{L^\infty}$ is a.s. finite, that is to say that $f_\alpha(x+h)-f_\alpha(x)$ is a.s. a $O(h^\nu)$.
\end{proof}

\begin{definition} Let $0<\alpha<\frac{1}{2}$, $r> \frac{1}{\alpha} > 2$ and $F_{-\alpha} = \sum_{n\neq 0}(-1)^n \frac{h_{|n|}}{2in}(in)^\alpha e^{2inx}$ defined as the limit of $ \sum_{n = -N}^N (-1)^n\frac{h_{|n|}}{2in}(in)^\alpha e^{2inx}$ in $L^r_{\omega,x}$.\end{definition}

We have to prove that the sequence converges.

\begin{proof} 

$$||\sum_{n = -N}^N (-1)^n\frac{h_{|n|}}{2in}(2in)^\alpha e^{2inx} - \sum_{n = -M}^M (-1)^n\frac{h_{|n|}}{2in}(in)^\alpha e^{2inx}||_{L^r}^r $$

$$\leq C\int_{0}^{2\pi} ||\sum_{n=M+1}^N (-1)^n\frac{h_n}{n^{1-\alpha}}\frac{e^{inx}i^\alpha- e^{-inx}(-i)^{\alpha}}{2}||_{L^r_\omega}^r dx$$

Since $r>2 $ we have that $||\sum c_n h_n||_{L^r_\omega}\leq C_r (\sum |c_n|^2)^{1/2}$, then,

$$\leq C_r \int_{0}^{2\pi} (\sum_{n=M+1}^N \frac{1}{n^{2-2\alpha}})^{r/2}$$

As $\alpha <\frac{1}{2}$, $2-2\alpha > 1$, so the series of general term $n^{2\alpha-2}$ converges and the sequence $\sum_{n=-N}^N \frac{h_{|n|}}{2in}(2in)^\alpha e^{2inx}$ is a Cauchy sequence in $L^r_{\omega,x}$, it converges.
\end{proof}

\begin{lemma}Let $\nu < \frac{1}{2}$, there exists $\alpha<\frac{1}{2}$ and $r>\frac{1}{\alpha}$, such that $\nu = \alpha-\frac{1}{r}$ and $(F_{-\alpha})_\alpha = F_0$ in $L^r_\omega,L^\infty_x$ and so $F_0(h)$ is almost surely in $\Omega$ a $O(h^\nu)$.\end{lemma}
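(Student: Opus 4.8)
The plan is to recognise $F_0$ as the fractional integral of order $\alpha$ of $F_{-\alpha}$, and then to read off the Hölder regularity from the theorem on fractional integration proved above. First I would fix the parameters. Given $0<\nu<\frac12$, the interval $]\nu,\frac12[$ is non-empty, so I choose $\alpha\in]\nu,\frac12[$ and set $r=\frac{1}{\alpha-\nu}$. Then $\nu=\alpha-\frac1r$ by construction, $\alpha<\frac12$, and since $0<\alpha-\nu<\alpha<\frac12$ one gets $r=\frac{1}{\alpha-\nu}>\frac1\alpha>2$. These are exactly the constraints required in the definition of $F_{-\alpha}$ and in the hypotheses of the proposition and theorem on fractional integration above, so all of that machinery applies.

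Next I would check that $F_{-\alpha}$ meets the hypotheses and that $(F_{-\alpha})_\alpha=F_0$. The function $F_{-\alpha}$ has vanishing zeroth coefficient and belongs to $L^r(\Omega\times[0,2\pi])$ as the $L^r$-limit of its symmetric partial sums, which is precisely what was established when $F_{-\alpha}$ was defined. To identify the fractional integral I would compute Fourier coefficients term by term: the coefficient of $e^{2inx}$ in $F_{-\alpha}$ is $(-1)^n\frac{h_{|n|}}{2in}(in)^\alpha$, and dividing it by $(in)^\alpha$ leaves $(-1)^n\frac{h_{|n|}}{2in}$. On the other hand, writing $\sin(2nx)=\frac{e^{2inx}-e^{-2inx}}{2i}$ and summing over $n\in\varmathbb Z$ shows these are exactly the Fourier coefficients of $F_0=\sum_{n\geq 1}\frac{h_n}{n}(-1)^n\sin(2nx)$. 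Hence $(F_{-\alpha})_\alpha=F_0$. The one point needing care is that the fractional-integration results are stated for $2\pi$-periodic series in $e^{int}$, whereas $F_{-\alpha}$ and $F_0$ carry frequencies $2n$; this is handled by the linear rescaling $y=2x$, under which the operator $(\cdot)_\alpha$ divides the $n$-th coefficient by $(in)^\alpha$ with no spurious factor, and which leaves any Hölder exponent unchanged.

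Finally I would invoke the two results in turn. The proposition gives $F_0=(F_{-\alpha})_\alpha\in L^r_\omega,L^\infty_x$, and the theorem gives that $F_0(x+h)-F_0(x)$ is almost surely a $O(h^\nu)$ with $\nu=\alpha-\frac1r$. Evaluating at $x=0$ and using $F_0(0)=0$ yields $F_0(h)=O(h^\nu)$ almost surely; in particular $F_0$ is almost surely bounded in a neighbourhood of $R=0$. By the earlier lemma relating boundedness of $F_0$ near $0$ to the decay of $f_0$, this proves $f_0(r)\to 0$ as $r\to\infty$, and tracking the estimate through $f_0(r)=\frac1r|F_0(\frac\pi2-\arct(r))|$ together with $\frac\pi2-\arct(r)\sim\frac1r$ upgrades it to $f_0(r)=O(r^{-1-\nu})$. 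The main obstacle is the bookkeeping just described — matching the frequency conventions and confirming that the admissible window for $\alpha$ is non-empty precisely because $0<\nu<\frac12$ (with the requirement $r>\frac1\alpha$ forcing $\nu>0$); once these are settled the conclusion is immediate from the quoted results.
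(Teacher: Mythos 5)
Your proposal is correct and follows essentially the same route as the paper: choose $\alpha\in\left]\nu,\frac{1}{2}\right[$, set $r=\frac{1}{\alpha-\nu}$, identify $(F_{-\alpha})_\alpha=F_0$ by cancelling the factors $(in)^\alpha$ in the partial sums, then apply the preceding proposition and H\"older-type theorem and evaluate at $x=0$ using $F_0(\omega,0)=0$. If anything, you are more careful than the paper on the frequency bookkeeping (the modes are $e^{2inx}$ rather than $e^{int}$, which the paper cancels somewhat silently), and your closing remarks about $f_0(r)\to 0$ and $f_0(r)=O(r^{-1-\nu})$ belong to the subsequent theorems rather than to this lemma, but neither point affects correctness.
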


\begin{proof}Set $\nu < \alpha < \frac{1}{2}$ and let $r = \frac{1}{\alpha-\nu} > \frac{1}{\alpha}$. The function $F_{-\alpha} $ is the limit in $L^r_{\omega,x}$ of the sequence $\sum_{n=-N}^N (-1)^n\frac{h_{|n|}}{2in} (in)^\alpha e^{inx}$, so $(F_{-\alpha})_\alpha$ is the limit in $L^r_\omega,L^\infty_x$ of 

$$ \sum_{n=-N}^N (-1)^n\frac{h_{|n|}}{2in}\frac{(2in)^\alpha}{(2in)^\alpha}e^{2inx} = \sum_{n=1}^N \frac{h_n}{n}\frac{e^{2inx}-e^{-inx}}{2i}(-1)^n$$

$$ = \sum_{n=1}^N \frac{h_n}{n}\sin(2nx)(-1)^n \; .$$

That is to say, $F_0 = (F_{-\alpha})_\alpha$ in $L^r_\omega,L^\infty_x$ and so we have that $F_0(x+h)-F_0(x)$ is almost surely in $\Omega$ a $O(h^\nu)$ as $\nu = \alpha-\frac{1}{r}$. We use it for $x=0$. We have $F_0(\omega,0) = 0$ so a.s.

$$F_0(\omega,x) = O(x^\nu) \; .$$

\end{proof}

\begin{theorem}Almost surely, when $r\rightarrow \infty$, $f_0(\omega,r)$ is a $O(\frac{1}{r^{1+\nu}})$ for all $\nu <\frac{1}{2}$.\end{theorem}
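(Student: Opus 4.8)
The plan is to transfer the boundary Hölder estimate for $F_0$ at the origin, established in the previous lemma, to decay of $f_0$ at infinity by means of the change of variable already recorded in this subsection. Recall from the first lemma of this subsection that
$$F_0\left(\omega, \tfrac{\pi}{2}-\arct(r)\right) = -r\, f_0(\omega,r),$$
so that $f_0(\omega,r) = -\tfrac{1}{r}\, F_0\bigl(\omega, \tfrac{\pi}{2}-\arct(r)\bigr)$. The key elementary observation is that $\tfrac{\pi}{2}-\arct(r) = \arct(1/r)$ for $r>0$, which tends to $0$ as $r\to\infty$ and satisfies $\arct(1/r)\le 1/r$; thus the argument of $F_0$ lands precisely in the neighbourhood of $0$ where the Hölder control is available, and the prefactor $1/r$ supplies the extra power of decay.

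First I would fix $\nu<\tfrac12$ and invoke the preceding lemma, which furnishes a full-measure event on which $F_0(\omega,x)=O(x^\nu)$ as $x\to 0^+$ (using $F_0(\omega,0)=0$). On that event there exist $M,x_0>0$ such that $|F_0(\omega,x)|\le M x^\nu$ for all $0\le x\le x_0$. Next I would choose $r_0$ large enough that $\arct(1/r)\le x_0$ for every $r\ge r_0$, which is legitimate since $\arct(1/r)\to 0$. For such $r$ one obtains
$$|f_0(\omega,r)| = \frac{1}{r}\,\Bigl|F_0\bigl(\omega,\arct(1/r)\bigr)\Bigr| \le \frac{M}{r}\,\bigl(\arct(1/r)\bigr)^\nu \le \frac{M}{r}\,\Bigl(\frac{1}{r}\Bigr)^\nu = \frac{M}{r^{1+\nu}},$$
where the last inequality uses $\arct(1/r)\le 1/r$. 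This yields $f_0(\omega,r)=O\bigl(r^{-(1+\nu)}\bigr)$ almost surely for the fixed $\nu$.

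Finally, to secure the statement for all $\nu<\tfrac12$ simultaneously on a single full-measure set, I would take a sequence $\nu_k\uparrow\tfrac12$ and intersect the corresponding full-measure events. Since $r^{-(1+\nu_k)}\le r^{-(1+\nu)}$ for $r\ge 1$ whenever $\nu\le\nu_k$, the bound obtained for $\nu_k$ already delivers the bound for every $\nu\le\nu_k$, so the countable intersection covers all $\nu<\tfrac12$ at once. I do not expect any genuine analytic obstacle in this argument: the substance is entirely carried by the fractional-integration estimate for $F_0$ proved in the previous lemma, and the only point demanding a line of care is the asymptotic identity $\tfrac{\pi}{2}-\arct(r)=\arct(1/r)\sim 1/r$, which is exactly what converts regularity of $F_0$ at $0$ into the claimed rate of decay of $f_0$ at infinity.
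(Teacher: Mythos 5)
Your proposal is correct and follows essentially the same route as the paper: both rest on the identity $f_0(\omega,r)=-\frac{1}{r}F_0\bigl(\omega,\frac{\pi}{2}-\arct(r)\bigr)$ together with the preceding lemma's a.s.\ bound $F_0(\omega,x)=O(x^\nu)$ near $0$, and on the elementary fact $\frac{\pi}{2}-\arct(r)=\arct(1/r)=O(1/r)$. Your additional step of intersecting countably many full-measure events along $\nu_k\uparrow\frac{1}{2}$ to get the estimate for all $\nu<\frac{1}{2}$ simultaneously is a standard tightening that the paper leaves implicit.
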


\begin{proof} We have 

$$f_0(\omega,r) = \frac{1}{r}F_0(\omega,\pi/2-\arct(r))\; .$$

Indeed, $\pi/2 -\arct(r) = \arct(\frac{1}{r}) = O(\frac{1}{r})$. Then, since $F_0(\omega,h) = O(h^\nu)$ a.s., we have that a.s. : 

$$f_0(\omega,r) = \frac{1}{r} O(\arct^\nu(\frac{1}{r})) = O(\frac{1}{r^{1+\nu}}) \; .$$
\end{proof}

\subsection{$L^p$-Spaces the initial data do not belong to}
 
Now, we are going to see that for $p\notin ]2,6[$, $||f_0||_{L^p}=\infty $ almost surely.

\medskip

First step. 

\begin{proposition} For almost all $r\in \R$, the function $f_0(.,r) = \sum \Frac{h_n}{n} f_n(r)$ is a real gaussian variable of variance $\sigma^2(r) = \sum \Frac{|f_n(r)|^2}{n^2}$.
\end{proposition}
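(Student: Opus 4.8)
The plan is to fix $r$ and recognise $f_0(\cdot,r)$ as a convergent series of independent centered Gaussian random variables, then invoke the classical stability of the Gaussian family: a limit (in distribution, or in $L^2(\Omega)$) of centered Gaussians is centered Gaussian, with variance the limit of the variances. The only quantitative ingredient is the finiteness of $\sigma^2(r)=\sum_n |f_n(r)|^2/n^2$, which simultaneously guarantees the convergence of the random series and gives the variance of the limit.

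First I would check the finiteness of $\sigma^2(r)$ for almost every $r$. Since $f_n(r)=\sqrt{2/\pi}\,\sin(2n\arctan r)/r$, the crude bound $|f_n(r)|\le \sqrt{2/\pi}\,/|r|$ holds uniformly in $n$, so that
\[
\sigma^2(r)=\sum_{n\ge 1}\frac{|f_n(r)|^2}{n^2}\le \frac{2}{\pi r^2}\sum_{n\ge 1}\frac{1}{n^2}<\infty
\]
for every $r\neq 0$, hence for almost all $r$. In particular the partial sums $S_N(\omega)=\sum_{n=1}^N \frac{h_n(\omega)}{n}f_n(r)$ form a Cauchy sequence in $L^2(\Omega)$, because the tails $\sum_{n>N}|f_n(r)|^2/n^2$ tend to $0$; thus $f_0(\cdot,r)$ is well defined as this $L^2$ limit (and, by Kolmogorov's one-series theorem, as an almost sure limit as well).

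Next I would identify the law of the limit. As a finite linear combination of the independent variables $h_n\sim\mathcal N(0,1)$, each $S_N$ is itself a centered real Gaussian with variance $\sigma_N^2(r)=\sum_{n=1}^N |f_n(r)|^2/n^2$. Its characteristic function is $E[e^{itS_N}]=e^{-t^2\sigma_N^2(r)/2}$, which converges to $e^{-t^2\sigma^2(r)/2}$ as $N\to\infty$ since $\sigma_N^2(r)\to\sigma^2(r)$. By Lévy's continuity theorem this is the characteristic function of $\mathcal N(0,\sigma^2(r))$, so $S_N$ converges in law to $\mathcal N(0,\sigma^2(r))$; as it also converges in $L^2(\Omega)$ to $f_0(\cdot,r)$, the two limits coincide and $f_0(\cdot,r)$ is a centered real Gaussian of variance $\sigma^2(r)$.

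This argument is essentially routine and presents no serious obstacle. The only point deserving a little care is the justification that, for almost every $r$, the random series genuinely converges so that $f_0(\cdot,r)$ is a bona fide random variable; this is precisely what the finiteness of $\sigma^2(r)$ delivers, and everything else is the standard closedness of the Gaussian class under $L^2$ limits.
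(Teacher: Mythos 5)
Your proof is correct and follows essentially the same route as the paper: both identify the law through the characteristic function, using independence of the $h_n$ and the Gaussian identity $E(e^{ix c h_n})=e^{-x^2c^2/2}$, so that $E(e^{ixf_0(\cdot,r)})=e^{-x^2\sigma^2(r)/2}$. Two minor points in your favour: your uniform bound $|f_n(r)|\leq \sqrt{2/\pi}\,/r$ gives $\sigma^2(r)<\infty$ for \emph{every} $r\neq 0$, which is simpler and stronger than the paper's argument (it integrates $\sigma^4(r)$ against $r^2\,dr$ and concludes only almost-everywhere finiteness), and your explicit $L^2$-convergence plus L\'evy-continuity step rigorously justifies the passage to the limit that the paper carries out implicitly when it factors $E(e^{ixf_0})$ as an infinite product.
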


\begin{proof}

Remark that $\sigma(r)$ is a.s. finite since 

$$\int r^2 dr \sigma^4(r) = \sum \frac{||f_n||_{L^2}^4}{n^4}\leq C \sum n^{-2}< \infty$$

Let's compute $E(e^{ix f_0})$ when $\sigma(r)<\infty$.

$$E(e^{ixf_0}) = E(\prod e^{ix \frac{h_n f_n}{n}}) = \prod E(e^{ix \frac{f_n}{n} h_n})$$
as the $h_n$ are independant. Since $h_n$ is a real centered gaussian, we have that  $E(e^{ix \frac{f_n}{n} g_n}) = e^{-x^2 \frac{|f_n(r)|^2}{2n^2}}$ hence : 

$$E(e^{ix f_0}) = e^{-x^2 \sum \frac{|f_n^2(r)|}{2n^2}} = e^{-x^2 \sigma^2(r)/2}$$
so $f_0(r)$ is a real gaussian variable of variance $\sigma^2(r)$.

\smallskip

\end{proof}

\begin{lemma} For all real centered gaussian $Z$,

$$E(|Z|^p) = C(p) E(|Z|^2)^{p/2}$$
\end{lemma}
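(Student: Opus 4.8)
The plan is to exploit the homogeneity (scaling) of centered Gaussian variables, reducing everything to the standard normal. First I would set $\sigma^2 = E(|Z|^2)$, the variance of $Z$. If $\sigma = 0$ then $Z = 0$ almost surely and the identity is trivial, so I may assume $\sigma > 0$ and define $Z_0 = Z/\sigma$. Computing the characteristic function exactly as in the preceding proposition gives $E(e^{itZ_0}) = e^{-t^2/2}$, so $Z_0$ has law $\mathcal N(0,1)$.

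The key step is then a trivial change of scale: from $Z = \sigma Z_0$ one gets $|Z|^p = \sigma^p |Z_0|^p$, and linearity of the expectation yields $E(|Z|^p) = \sigma^p\, E(|Z_0|^p)$. Setting
$$C(p) = E(|Z_0|^p) = \frac{1}{\sqrt{2\pi}} \int_{\R} |x|^p e^{-x^2/2}\, dx,$$
which is finite and depends only on $p$, and noting that $\sigma^p = (\sigma^2)^{p/2} = E(|Z|^2)^{p/2}$, I obtain $E(|Z|^p) = C(p)\, E(|Z|^2)^{p/2}$.

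There is no real obstacle here: the only points to verify are that $C(p)$ is finite and independent of the particular variable $Z$, both immediate from the fact that the standard normal density decays faster than any polynomial, so all its absolute moments exist. The content of the lemma is simply that, up to the universal constant $C(p)$, every $L^p$-moment of a centered Gaussian is determined by its variance; this is precisely what will later let us compare the $L^p_\omega$ norm of $f_0(\cdot,r)$ with the variance $\sigma^2(r)$.
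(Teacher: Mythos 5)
Your proof is correct and is essentially the paper's argument: the paper performs the same reduction via the change of variable $Y = Z/\sigma$ directly inside the Gaussian integral, which is exactly your scaling $Z = \sigma Z_0$ phrased at the level of densities rather than random variables. Your version is if anything slightly more careful, since you also dispose of the degenerate case $\sigma = 0$ and note explicitly that $C(p)$ is finite.
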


\begin{proof} Indeed,

$$E(|Z|^p) = \int |Z|^p e^{-\frac{|Z|^2}{2\sigma^2}}\frac{dZ}{\sqrt{2\pi} \sigma}$$

$$=\int \sigma^p |Y|^p e^{-|Y|^2/2}\frac{dY}{\sqrt{2\pi}} = E(|Z|^2)^{p/2} \int |Y|^pe^{-|Y|^2}\frac{dY}{\sqrt{2\pi}}$$

\end{proof}

\smallskip

We deduce from that

$$E(||f_0||_{L^p}^p) = \int r^2dr E(|f_0|^p) = C(p) \int r^2 dr E(|f_0|^2)^{p/2} = C\int r^2 dr (\sum \frac{|f_n|^2}{n^2})^{p/2}$$

\begin{lemma}Let $E$ be a vector space and $\mathcal B(E)$ its borel $\sigma$-algebra. We set $F$ a centered gaussian variable on $E$ and $N$ a pseudo-norm on $E$ that is to say a norm that admits $+\infty$ as a possible value. If the probability $P(N(F)<\infty)$ is strictly positive, then for all $p<\infty$, $E(N^p(F))<\infty$. \end{lemma}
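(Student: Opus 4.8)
The statement is \emph{Fernique's theorem}, and the plan is to prove it through the rotational invariance of centered Gaussian measures, which I will upgrade into a Gaussian tail bound on $N(F)$. The first step is to record the layer-cake identity $E(N^p(F)) = \int_0^\infty p\, t^{p-1} P(N(F) > t)\, dt$, which reduces the claim to showing that $P(N(F) > t)$ decays faster than every power of $t$. In fact I will establish the stronger estimate $P(N(F) > t) \leq C e^{-\alpha t^2}$ for suitable $C, \alpha > 0$, which settles all $p < \infty$ at once (and even gives square-exponential integrability).

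The key structural input is that if $F, F'$ are two independent copies of $F$, then the rotated pair $\left(\tfrac{F-F'}{\sqrt 2}, \tfrac{F+F'}{\sqrt 2}\right)$ has the same law as $(F,F')$; this is just the rotation invariance of finite-dimensional centered Gaussian vectors applied through the projections that determine the law of $F$. Writing $U, U'$ for independent copies, $X = \tfrac{U-U'}{\sqrt 2}$ and $Y = \tfrac{U+U'}{\sqrt 2}$, the event $\{N(X) \leq s\} \cap \{N(Y) > t\}$ means $N(U-U') \leq s\sqrt 2$ and $N(U+U') > t\sqrt 2$. The triangle inequality for the pseudonorm then forces $N(U)+N(U') \geq N(U+U') > t\sqrt 2$ together with $|N(U)-N(U')| \leq N(U-U') \leq s\sqrt 2$, whence both $N(U) > \tfrac{t-s}{\sqrt 2}$ and $N(U') > \tfrac{t-s}{\sqrt 2}$. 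Using the independence of $X,Y$ on the left and of $U,U'$ on the right, this yields the Fernique inequality, valid for all $0 \leq s \leq t$:
$$P(N(F) \leq s)\, P(N(F) > t) \leq P\!\left(N(F) > \tfrac{t-s}{\sqrt 2}\right)^{2}.$$

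To run the iteration I first need $P(N(F) < \infty) = 1$, not merely positive: since $N$ is a pseudonorm, $\{N(F) < \infty\}$ is a measurable linear subspace, so by the Gaussian zero-one law its probability is $0$ or $1$, and the hypothesis upgrades it to $1$. Consequently I may pick $s$ large enough that $a := P(N(F) \leq s) > \tfrac12$, so that $b_0 := P(N(F) > s)/a < 1$. Defining $t_0 = s$ and $t_{n+1} = s + \sqrt 2\, t_n$ (so that $\tfrac{t_{n+1}-s}{\sqrt 2} = t_n$) and $b_n := P(N(F) > t_n)/a$, the displayed inequality gives $b_{n+1} \leq b_n^2$, hence $b_n \leq b_0^{\,2^n}$. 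Since $t_n \asymp (\sqrt 2)^{\,n}$, one has $2^n \asymp t_n^2$, so $P(N(F) > t_n) = a\, b_n \leq C e^{-\alpha t_n^2}$; monotonicity of the tail in $t$ then interpolates this to all $t$, giving $P(N(F) > t) \leq C e^{-\alpha t^2}$ and therefore $E(N^p(F)) < \infty$ for every $p < \infty$.

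The main obstacle is the derivation of the Fernique inequality in paragraph two: one must verify carefully that the rotation invariance and the triangle inequality for $N$ remain valid in this possibly infinite-dimensional, extended-real-valued (``pseudonorm'') setting, and one must secure $P(N(F)<\infty)=1$ via the zero-one law, since finite moments are impossible if $N = \infty$ on a set of positive measure. Once these two points are in place, the recursion is routine.
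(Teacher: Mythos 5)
Your proof is correct: the rotation identity for independent copies, the resulting Fernique inequality $P(N(F)\le s)\,P(N(F)>t)\le P\left(N(F)>\tfrac{t-s}{\sqrt 2}\right)^2$ (including the careful handling of possibly infinite values of $N$ in the triangle-inequality step), the zero-one law upgrade of $P(N(F)<\infty)>0$ to $P(N(F)<\infty)=1$ (genuinely needed, since the recursion $b_{n+1}\le b_n^2$ only contracts once $a=P(N(F)\le s)>\tfrac12$), and the geometric iteration $t_{n+1}=s+\sqrt 2\,t_n$ yielding the Gaussian tail $P(N(F)>t)\le Ce^{-\alpha t^2}$ are all sound, and the layer-cake identity then gives all moments. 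The paper itself offers no proof of this lemma, deferring entirely to the cited article of Fernique, and your argument is precisely Fernique's classical proof, so you have simply supplied, correctly, the details the paper omits.
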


The proof of this lemma is given by X. Fernique in \cite{fernique}.

\begin{corollary}Under the previous assumptions, if there exists $p$ such that $E(N^p(F))=\infty$, then $P(N(F)<\infty)=0$.\end{corollary}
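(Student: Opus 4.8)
The plan is to obtain this corollary as the mere logical contrapositive of the preceding lemma of Fernique, so no new analytic input is needed. I would begin by recording the trivial but essential observation that, since $P(N(F)<\infty)$ is a probability, it is a non-negative real number; hence the statement ``$P(N(F)<\infty)=0$'' is equivalent to the negation of ``$P(N(F)<\infty)>0$''. With this in hand the corollary becomes a direct restatement of the contrapositive of the lemma.

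Concretely, I would argue by contraposition. Assume, for the sake of contradiction, that the conclusion fails, i.e. that $P(N(F)<\infty)\neq 0$; by the remark above this forces $P(N(F)<\infty)>0$, so we are precisely in the hypothesis of Fernique's lemma. Applying that lemma to the centered Gaussian variable $F$ and the pseudo-norm $N$ yields that $E(N^p(F))<\infty$ for every finite $p$. This flatly contradicts the standing hypothesis of the corollary, namely that there exists some $p$ with $E(N^p(F))=\infty$. Therefore the assumption was untenable and we must have $P(N(F)<\infty)=0$, which is the desired conclusion.

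I do not anticipate any genuine obstacle here: the entire content is carried by Fernique's lemma, whose proof is cited to the reference and used as a black box, and the only subtlety is the elementary dichotomy for the probability $P(N(F)<\infty)$, which is either strictly positive or zero. The role of this corollary in the sequel is simply to convert the divergence of a single moment into almost-sure infiniteness of the relevant norm; so once the contrapositive is spelled out the statement is immediate, and the work lies entirely in verifying, for the particular $N$ of interest (such as an $L^p$ norm of $f_0$), that some moment $E(N^p(f_0))$ is indeed infinite, which is done separately.
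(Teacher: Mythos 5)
Your proof is correct and matches the paper's (implicit) reasoning exactly: the paper states this corollary without proof precisely because it is the contrapositive of Fernique's lemma, which is the argument you give. Your added remark that a probability is either zero or strictly positive is the only bridging step needed, and you supply it correctly.
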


In particular with $N = ||.||_{L^p}$, we get that if $E(||f_0||_{L^p}^p) = \infty$, then $||f_0||_{L^p}$ is a.s. infinite.

\smallskip

We have to study $E(||f_0||_{L^p}^p)$, that is $\int r^2 dr (\sum \Frac{|f_n|^2}{n^2})^{p/2}$.

\begin{lemma}Let $R_n = \Frac{\pi}{4n}$. For all $R\in[0,R_n]$, $|f_n(\tan R)|\geq n$. \end{lemma}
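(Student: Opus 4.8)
The plan is to make the inequality completely explicit and then collapse it to a single scalar inequality in $n$. Substituting $r=\tan R$ into the formula $f_n(r)=\sqrt{2/\pi}\,\sin(2n\arct r)/r$ established above, and using $\arct(\tan R)=R$ on $[0,\pi/2[$, gives the clean expression $f_n(\tan R)=\sqrt{2/\pi}\,\frac{\sin(2nR)}{\tan R}$. On the interval $R\in[0,R_n]$ with $R_n=\pi/(4n)$ the argument $2nR$ runs through $[0,\pi/2]$, so $\sin(2nR)\ge 0$ and the absolute value may be dropped. The claim thus becomes
$$\sqrt{\tfrac{2}{\pi}}\,\frac{\sin(2nR)}{\tan R}\ge n,\qquad R\in(0,R_n],$$
the value at the left endpoint being the limit $\lim_{R\to 0}\sqrt{2/\pi}\,\sin(2nR)/\tan R=2\sqrt{2/\pi}\,n$, which already exceeds $n$.

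The natural first step uses two convexity estimates, each valid on the relevant range: Jordan's inequality $\sin\theta\ge\frac{2}{\pi}\theta$ for $\theta\in[0,\pi/2]$, applied with $\theta=2nR$, and the chord bound $\tan R\le\frac{4}{\pi}R$ for $R\in[0,\pi/4]$ (here $R_n\le\pi/4$ since $n\ge 1$). Together they give $\frac{\sin(2nR)}{\tan R}\ge\frac{(4nR)/\pi}{(4R)/\pi}=n$ pointwise, hence $|f_n(\tan R)|\ge\sqrt{2/\pi}\,n$ immediately. This is the bound the crude estimates yield, and it falls short of the stated constant by the factor $\sqrt{\pi/2}$.

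To recover the sharp constant $n$ I would reduce the uniform bound to its worst point rather than estimate pointwise. Writing $\frac{\sin(2nR)}{\tan R}=\frac{\sin(2nR)}{\sin R}\cos R$ and noting that $\frac{\sin(2nR)}{\sin R}$ decreases from $2n$ as $R$ grows on $(0,\pi/(2n))$ while $\cos R$ also decreases, the product is decreasing on $(0,R_n]$, so its minimum is attained at $R=R_n$, where $\sin(2nR_n)=\sin(\pi/2)=1$. The whole statement then collapses to the single scalar inequality $\sqrt{2/\pi}\,\cot(\pi/(4n))\ge n$, equivalently $\tan(\pi/(4n))\le\sqrt{2/\pi}\,/n$. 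The main obstacle is precisely this last inequality: the estimate is tightest at the endpoint and for the smallest values of $n$, where the crude chord bound no longer suffices and one must control $\tan$ near $0$ more finely, e.g. through the power-series bound $\tan x\le x/(1-x^2/3)$, to confirm $\tan(\pi/(4n))\le\sqrt{2/\pi}/n$ for $n$ beyond a small threshold. The handful of smallest $n$ form the genuinely delicate part of the argument and would have to be examined directly, the endpoint estimate being tightest there.
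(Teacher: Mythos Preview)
Your first step --- Jordan's inequality $\sin\theta\ge\frac{2}{\pi}\theta$ on $[0,\pi/2]$ together with the chord bound $\tan R\le\frac{4}{\pi}R$ on $[0,\pi/4]$ --- is exactly the argument the paper uses. The paper, however, writes $f_n(\tan R)=\frac{\sin(2nR)}{\tan R}$ in its proof, silently dropping the normalising factor $\sqrt{2/\pi}$ that appears in its own definition of $f_n$. With that factor suppressed the two elementary bounds give $n$ on the nose; with it retained, as you correctly compute, one only gets $\sqrt{2/\pi}\,n$.

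Your attempt to close the gap by monotonicity and a sharper endpoint estimate cannot succeed: the lemma as literally stated is \emph{false} for small $n$. At $R=R_n$ one has $\sin(2nR_n)=1$, so $|f_n(\tan R_n)|=\sqrt{2/\pi}\,\cot(\pi/(4n))$; for $n=1$ this equals $\sqrt{2/\pi}\approx 0.798<1$, and direct computation shows the inequality $\sqrt{2/\pi}\,\cot(\pi/(4n))\ge n$ also fails for $n=2,3$. So the ``handful of smallest $n$'' you flagged are not merely delicate but genuinely out of reach.

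The right resolution is not to sharpen the constant but to observe that the application (showing $E(\|f_0\|_{L^p}^p)=\infty$ via divergence of a series) only requires $|f_n(\tan R)|\ge c\,n$ on $[0,R_n]$ for some fixed $c>0$; your bound with $c=\sqrt{2/\pi}$ is already enough. The paper's omission of the constant is a harmless slip for its purposes, and your first paragraph is the complete and correct proof of what is actually needed.
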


\begin{proof}
We have : 

$$f_n(\tan R) =\frac{\sin(2n R)}{\tan R}$$

For all $0\leq R \leq \frac{\pi}{4}$, $|\tan R|\leq \frac{4}{\pi}R$, and for all $0\leq R \leq \frac{\pi}{2}$, $|\sin R|\geq \frac{2}{\pi}R$ then for all $0\leq R \leq R_n$, 

$$|f_n(\tan R)| \geq \frac{4n R}{\pi}\frac{\pi}{4R} = n$$

\end{proof}

\smallskip

We have immediately that

$$\sum \frac{|f_n(\tan R)|^2}{n^2} \geq \sum 1_{R\leq R_n} = \sum 1_{R_{n+1}\leq R \leq R_n} n$$

By changing $r$ into $\tan R$, we get : 

$$E(||f_0||_{L^p}^p)\geq \int_{0}^{\pi/2} \tan^2 R (1+\tan^2R) dR (\sum \frac{|f_n(\tan R)|^2}{n^2})^{p/2}$$

We divide the integral into its computation $I$ on $[0,\pi/4]$ and $II$ on $[\pi/4,\pi/2]$. For $II$, we change $R$ into $ \pi/2 -R$, we get : 

$$II = \int_{0}^{\pi/4} \tan^{-4}R(1+\tan^2 R)dR (\sum \frac{|f_n(\tan R)|^2 \tan^4 R}{n^2})^{p/2}$$

$$=\int_{0}^{\pi/4} \tan^{4(p/2-1)}R (1+\tan^2 R)dR (\sum \frac{|f_n(\tan R)|^2}{n^2})^{p/2}$$

For $p\geq 6$, we use a minoration of $I$ ($0$ singularity).

$$I\geq \int_{0}^{\pi/4} \tan^2R(1+\tan^2 R) dR \sum n^{p/2} 1_{R_{n+1}\leq R \leq R_n}=\sum \frac{r_n^3-r_{n+1}^3}{3}n^{p/2}$$
with $r_n = \tan R_n$. We have $r_n^3-r_{n+1}^3 \sim 3\Frac{4}{\pi} \Frac{1}{n^4}$, so the general term of the series behaves like $n^{p/2-4}$. For $p\geq 6$, $p/2-4 \geq -1$ so the series diverges.

\bigskip

For $p\leq 2$, we use a minoration of $II$ ($\infty$ singularity).

$$II\geq \sum n^{p/2}\int_{R_{n+1}}^{R_n} \tan^{2p-4}R(1+\tan^2 R) dR = \sum n^{p/2} \frac{r_n^{2p-3}-r_{n+1}^{2p-3}}{2p-3}$$

$r_n^{2p-3}-r_{n+1}^{2p-3}$ behaves like $n^{2-2p}$ so the general term behaves like $n^{-3p/2+2}$. For $p\leq 2$, $-3p/2+2\geq -1$ so the series diverges. We get thet for $p\leq 2$ or $p\geq 6$, $E(||f_0||_{L^p}^p)= \infty$ and so $||f_0||_{L^p}=\infty $ almost surely.

\smallskip

Conclusion

\begin{proposition} The initial data $f_0$ is almost surely in $L^p$ for $p\in ]2,6[$ and almost surely outside $L^p$ for $p\leq 2$ and $p\geq 6$.\end{proposition}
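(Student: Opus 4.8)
The plan is to assemble this statement directly from the two results established in the present and preceding subsections, since it is merely their conjunction; no new machinery is needed.

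For the membership part ($p\in ]2,6[$), I would invoke the proposition already proved showing $E(||f_0||_{L^p}^p)<\infty$ for such $p$. That argument rests on three ingredients: the Gaussian moment bound $||\sum a_n h_n||_{L^q_\omega}\leq C_p(\sum |a_n|^2)^{1/2}$ (Lemma \ref{poumpoumpoum} together with its real-variable remark), a triangle (Minkowski) inequality to pass the $L^{p/2}$ norm inside the sum, and the pointwise bounds $||f_n||_{L^p}\leq C_p n^{3/p-1}$ for $p<3$ and $C_p n^{1-3/p}$ for $p>3$. These make the series $\sum ||f_n||_{L^p}^2/n^2$ converge precisely on the window $2<p<6$, so $||f_0||_{L^p}$ is almost surely finite there.

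For the non-membership part ($p\leq 2$ or $p\geq 6$), I would combine the explicit lower bounds on $E(||f_0||_{L^p}^p)$ with the Gaussian zero-one argument. Writing $\sigma^2(r)=\sum |f_n(r)|^2/n^2$, one first notes that $f_0(\cdot,r)$ is a centered real Gaussian of variance $\sigma^2(r)$, so $E(|f_0|^p)=C(p)\,\sigma(r)^p$ by the moment identity for Gaussians. Then, using the lower bound $|f_n(\tan R)|\geq n$ valid on $[0,R_n]$ with $R_n=\pi/(4n)$, I split the integral $\int r^2\,dr\,\sigma^p(r)$ near the two singularities $R=0$ (i.e. $r=0$) and $R=\pi/2$ (i.e. $r=\infty$), the second being reduced to the first by the change of variable $R\mapsto \pi/2-R$. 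The resulting series have general term of order $n^{p/2-4}$ near $r=0$ and $n^{-3p/2+2}$ near $r=\infty$; the former diverges for $p\geq 6$, the latter for $p\leq 2$. Hence $E(||f_0||_{L^p}^p)=\infty$ in these ranges, and the corollary to Fernique's theorem, applied with the pseudo-norm $N=||\cdot||_{L^p}$ and the centered Gaussian $f_0$, forces $P(||f_0||_{L^p}<\infty)=0$.

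The only genuinely delicate step is the control of the two singularities in the divergence computation: one must check that the bound $|f_n(\tan R)|\geq n$ on $[0,R_n]$ remains strong enough, after multiplication by the Jacobian factor $\tan^2 R(1+\tan^2 R)$ and its reflected counterpart $\tan^{2p-4}R(1+\tan^2 R)$, to yield series whose general terms are exactly of order $n^{p/2-4}$ and $n^{-3p/2+2}$. Once those two endpoint exponents are pinned down, the dichotomy at $p=2$ and $p=6$ is immediate, and the remainder of the statement is a direct citation of results already in hand.
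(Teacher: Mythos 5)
Your proposal is correct and follows essentially the same route as the paper: the membership half is exactly the moment estimate of subsection 4.2 (Gaussian bound of Lemma \ref{poumpoumpoum}, Minkowski inequality, and the $\|f_n\|_{L^p}$ estimates giving convergence of $\sum \|f_n\|_{L^p}^2/n^2$ precisely for $2<p<6$), and the non-membership half reproduces the paper's argument verbatim in structure: Gaussian variance $\sigma^2(r)$, the moment identity $E(|Z|^p)=C(p)E(|Z|^2)^{p/2}$, the lower bound $|f_n(\tan R)|\geq n$ on $[0,\pi/(4n)]$, the split at the two singularities with the reflection $R\mapsto \pi/2-R$ producing the exponents $n^{p/2-4}$ (divergent for $p\geq 6$) and $n^{2-3p/2}$ (divergent for $p\leq 2$), and finally the corollary of Fernique's theorem with the pseudo-norm $N=\|\cdot\|_{L^p}$. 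There is no gap; even your flagged ``delicate step'' (checking the Jacobian factors $\tan^2 R(1+\tan^2 R)$ and $\tan^{2p-4}R(1+\tan^2 R)$) is exactly the computation the paper carries out.
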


\subsection{Regularity of $f_1$}

We have that $f_1 = \sum l_n \Frac{2}{1+r^2} f_n $. Let us do a change of variable $r\mapsto R = \arct(r)$ and use the result and methods we developped in the localization section. More precisely, we are going to study the behaviour of a ``primitive" of $f_1$ by using the change of variable and use the study of behaviour of periodic functions at two given points $R=0$ corresponding to $r=0$ and $R=\Frac{\pi}{2}$ corresponding to $r=\infty$.

\begin{definition} Let $R_0 < \frac{\pi}{4}$, $V_1= [0,\frac{\pi}{2}-R_0]$ and $V_2 = [R_0, \frac{\pi}{2}]$ Set $\psi_n^1(R) = 1_{V_1} \frac{1}{n}(1-\cos (2nR))$ and $\psi_n^2 (R) = 1_{V_2}\frac{1}{n} ((-1)^n-\cos(2nR))$. Then, let $\Psi_\alpha^1$ and $\Psi_\alpha^2$ be the $L^r$ limits for $\alpha<\frac{1}{2}$ and $r>2$ of 

$$\sum_{n>0} (in)^\alpha l_n \psi_n^j$$
with $j=1,2$.

This limit exists and hence we can define 

$$\Psi^j = \sum_{n>0} l_n \psi_n^j$$
as the limits of the partial sums in $L^r_\omega,L^\infty_R$ and have the properties : 

$$\Psi^1(R) = O(R^\nu)$$
when $R\rightarrow 0$ for all $0<\nu < \frac{1}{2}$ almost surely in $\omega$ and

$$\Psi^2(\frac{\pi}{2}-R) = O(R^\nu)$$
when $R\rightarrow 0$ for all $0<\nu<\frac{1}{2}$ a.s. in $\omega$.
\end{definition}

\begin{proposition}Let  $\Phi^j (r)=\frac{\Psi^j(\arct(r))}{r}$ for $j=1,2$. We have that $\Phi^j$ is almost surely in $L^p$ for all $p\in ]2,6[$.\end{proposition}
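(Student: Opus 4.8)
The plan is to write the $L^p(\R^3)$ norm of the radial function $\Phi^j$ as a one dimensional integral in the variable $R=\arct(r)$ and to deduce its almost sure finiteness directly from the boundary behaviour of $\Psi^j$ obtained above. Since the radial integration element is $r^2\,dr$, I would first write
$$\|\Phi^j\|_{L^p}^p = \int_0^\infty |\Phi^j(r)|^p\, r^2\, dr = \int_0^\infty |\Psi^j(\arct r)|^p\, r^{2-p}\, dr,$$
and then change variables by $R=\arct(r)$, so that $r=\tan R$ and $dr=(1+\tan^2 R)\,dR$, giving
$$\|\Phi^j\|_{L^p}^p = \int_0^{\pi/2} |\Psi^j(R)|^p\,(\tan R)^{2-p}\,(1+\tan^2 R)\,dR.$$
Because $\Psi^1$ is supported in $V_1=[0,\tfrac{\pi}{2}-R_0]$ and $\Psi^2$ in $V_2=[R_0,\tfrac{\pi}{2}]$, the weight $(\tan R)^{2-p}(1+\tan^2 R)$ is singular at exactly one endpoint of the relevant support: at $R=0$ for $j=1$ (since $p>2$ makes $(\tan R)^{2-p}$ blow up there), and at $R=\tfrac{\pi}{2}$ for $j=2$ (where $\tan R\to\infty$). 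Away from that endpoint the weight stays bounded and $\Psi^j$ is almost surely bounded, because it is defined as an $L^\infty_R$ limit, so only the neighbourhood of the singular endpoint requires attention.

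Next I would insert the localization estimates. For $j=1$, near $R=0$ one has $\tan R\sim R$, $1+\tan^2 R\to1$ and $\Psi^1(R)=O(R^\nu)$ almost surely for every $\nu<\tfrac12$, so the integrand is a.s. an $O(R^{\nu p+2-p})$, which is integrable at $0$ precisely when $\nu>1-\tfrac{3}{p}$. For $j=2$, setting $R=\tfrac{\pi}{2}-\varepsilon$ gives $\tan R\sim 1/\varepsilon$, hence $(\tan R)^{2-p}(1+\tan^2 R)\sim \varepsilon^{p-4}$, and the property $\Psi^2(\tfrac{\pi}{2}-\varepsilon)=O(\varepsilon^\nu)$ makes the integrand an $O(\varepsilon^{\nu p+p-4})$, integrable at $\varepsilon=0$ precisely when $\nu>\tfrac{3}{p}-1$.

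It then remains to check that each threshold on $\nu$ is compatible with the constraint $\nu<\tfrac12$ over the whole range $p\in]2,6[$: the bound $1-\tfrac{3}{p}$ is strictly below $\tfrac12$ exactly when $p<6$, and the bound $\tfrac{3}{p}-1$ is strictly below $\tfrac12$ exactly when $p>2$. Hence for every $p\in]2,6[$ there is an admissible $\nu<\tfrac12$ lying above the relevant threshold, which makes the integral defining $\|\Phi^j\|_{L^p}^p$ almost surely finite and proves the claim. The only genuinely delicate point is this matching of the weight's singularity exponent against the Hölder exponent $\nu$ furnished by the localization lemma; the change of variables and the bounds away from the singular endpoint are entirely routine, so I do not expect any real obstacle beyond the bookkeeping of exponents.
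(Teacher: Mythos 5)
Your proof is correct and takes essentially the same route as the paper: the same change of variables $R=\arct(r)$ giving the weight $(1+\tan^2 R)\,|\tan R|^{2-p}$, the same analysis of the single singular endpoint for each $j$ (at $R=0$ for $\Phi^1$, and at $R=\frac{\pi}{2}$ for $\Phi^2$ after the reflection $R\leftarrow \frac{\pi}{2}-R$ producing the exponent $p-4$), and the same choice of $\nu_1\in\left]1-\frac{3}{p},\frac{1}{2}\right[$ and $\nu_2\in\left]\frac{3}{p}-1,\frac{1}{2}\right[$ to make the exponents $\nu_1 p+2-p$ and $\nu_2 p+p-4$ exceed $-1$. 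Your explicit observation that $\Psi^j$ is almost surely bounded away from the singular endpoint, being an $L^r_\omega,L^\infty_R$ limit, merely fills in a detail the paper leaves implicit.
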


\begin{proof}Let $p\in ]2,6[$ and let us compute the $L^p$ norm of $\Phi^j$.

$$||\Phi^j||_{L^p}^p = \int_{0}^\infty |\Phi^j (r)|^p r^2 dr $$

$$ = \int_{0}^{\pi/2} |\Psi^j(R)|^p (1+\tan^2 R) |\tan R|^{2-p}dR \; .$$

Considering the different $j$, we get : 

$$ ||\Phi^1||_{L^p}^p= \int_{V_1} |\Psi^1(R)|^p (1+\tan^2 R) |\tan R|^{2-p}dR$$
and

$$||\Phi^2||_{L^p}^p = \int_{V_2}|\Psi^2(R)|^p (1+\tan^2 R) |\tan R|^{2-p}dR$$
that is to say, with the variable change $R\leftarrow \frac{\pi}{2}-R$ :

$$|\Phi^2||_{L^p}^p = \int_{V_1}|\Psi^2(\pi/2-R)|^p (1+\tan^2 R) |\tan R|^{p-4}dR \; .$$

Since $1-\Frac{3}{p} < \Frac{1}{2}$ as $p<6$ we can choose $\nu_1 \in ]1-\Frac{3}{p},\Frac{1}{2}[$ that is to say $2-p+p\nu_1 > -1$. Then, given that $\Psi^1(R) = O(R^{\nu_1})$, we get that

$$|\Psi^1(R)|^p |\tan R|^{2-p} = O(R^{\nu_1 p+2-p})$$
so the first integral $||\Phi^1||_{L^p}^p$ converges almost surely in $\omega$.

\smallskip

Since $\Frac{3}{p}-1 < \Frac{1}{2}$ as $p>2$, we can choose $\nu_2 \in ]\Frac{3}{p}-1,\Frac{1}{2}[$, that is to say $p-4+p\nu_2 > -1$. Then, given that $\Psi^2(\pi/2-R) = O(R^{\nu_2})$, we get that

$$|\Psi^2(\pi/2-R)|^p |\tan R|^{p-4} = O(R^{\nu_2 p +p-4})$$
so the second integral $||\Phi^2||_{L^p}^p$ converges a.s. 

\smallskip

Hence, $\Phi$ belongs to $L^p$ for all $p\in ]2,6[$ almost surely. \end{proof}

\begin{definition}Let $\chi_1$ and $\chi_2$ be two $\mathcal C^\infty_c$ functions with supports included in $V_1$ and $V_2$ respectively such that $\chi_j \in [0,1]$ and $\chi_1 + \chi_2 =1$ on $[0,\frac{\pi}{2}]$. We set 

$$\Phi(r) = \chi_1(\arct(r)) \Phi_1(r) + \chi_2(\arct(r)) \Phi_2(r)$$
and we call $F$ the distibution defined as :

$$F(r) = \Phi'(r) + \frac{\Phi(r)}{r} - \frac{1}{(1+r^2)}\left(\chi_1'(\arct(r))\Phi^1(r) + \chi_2'(\arct(r))\Phi^2(r)\right) \; .$$
\end{definition}

\begin{proposition}We deduce from the precedent proposition that $F$ belongs to $W^{-1,p}$ for all $p\in ]2,6[$.\end{proposition}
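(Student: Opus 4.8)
The plan is to realise $W^{-1,p}$ as the dual of $W^{1,p'}$, where $\frac1p+\frac1{p'}=1$, and to bound the pairing $\langle F,\varphi\rangle$ against radial test functions $\varphi$ by a multiple of $\|\varphi\|_{W^{1,p'}}$, the multiplicative constant being almost surely finite. Two facts will be used throughout. First, since $p\in ]2,6[$ the conjugate exponent satisfies $p'\in ]6/5,2[$, so in particular $p'<3$; this is exactly the range in which the Hardy inequality on $\R^3$ holds. Second, by the preceding proposition $\Phi^1$ and $\Phi^2$ are almost surely in $L^p$, and since the cut-offs $\chi_j(\arct(\cdot))$, the factors $\chi_j'(\arct(\cdot))$ and the weight $\frac1{1+r^2}$ are all bounded, both $\Phi=\chi_1(\arct(\cdot))\Phi^1+\chi_2(\arct(\cdot))\Phi^2$ and $g:=\frac1{1+r^2}\big(\chi_1'(\arct(\cdot))\Phi^1+\chi_2'(\arct(\cdot))\Phi^2\big)$ are almost surely in $L^p$.

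Writing $F=\Phi'+\frac{\Phi}{r}-g$, I would compute the pairing with a radial $\varphi\in\mathcal C_c^\infty(\R^3)$ directly, integrating the $\Phi'$ term by parts. The boundary terms vanish: $\varphi$ has compact support at infinity, while near $r=0$ only $\Phi^1$ contributes and the localization proposition gives $\Phi(r)=O(r^{\nu-1})$, so $\Phi(r)\varphi(r)r^2=O(r^{\nu+1})\to0$. Using $(r^2\varphi)'=r^2\varphi'+2r\varphi$ and adding the contribution of $\frac{\Phi}{r}$, one obtains
$$\langle F,\varphi\rangle=-\int_0^\infty \Phi\,\varphi'\,r^2\,dr-\int_0^\infty r\,\Phi\,\varphi\,dr-\int_0^\infty g\,\varphi\,r^2\,dr.$$
The first and third integrals are controlled by Hölder's inequality, $\big|\int \Phi\varphi' r^2\,dr\big|\le\|\Phi\|_{L^p}\|\varphi'\|_{L^{p'}}$ and $\big|\int g\varphi r^2\,dr\big|\le\|g\|_{L^p}\|\varphi\|_{L^{p'}}$, both dominated by $C\|\varphi\|_{W^{1,p'}}$.

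The heart of the matter is the middle integral, which I would rewrite as $\int_0^\infty \Phi\,\frac{\varphi}{r}\,r^2\,dr$ and estimate by Hölder followed by the Hardy inequality: $\big|\int r\Phi\varphi\,dr\big|\le\|\Phi\|_{L^p}\big\|\tfrac{\varphi}{r}\big\|_{L^{p'}}\le C\|\Phi\|_{L^p}\|\nabla\varphi\|_{L^{p'}}$, the last step being valid precisely because $p'<3$. Combining the three estimates yields $|\langle F,\varphi\rangle|\le C(\|\Phi\|_{L^p}+\|g\|_{L^p})\|\varphi\|_{W^{1,p'}}$; by density of radial $\mathcal C_c^\infty$ functions in $W^{1,p'}$ (and since controlling a radial distribution on radial test functions suffices), $F$ extends to a bounded linear functional on $W^{1,p'}$, i.e. $F\in W^{-1,p}$, with norm almost surely finite. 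The main obstacle is the term $\frac{\Phi}{r}$: its $1/r$ singularity at the origin cannot be absorbed by Hölder alone and is exactly what forces the use of Hardy's inequality, whose validity hinges on $p'<3$ (equivalently $p>3/2$), guaranteed here by $p\in ]2,6[$.
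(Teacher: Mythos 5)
Your proof is correct, but it takes a genuinely different route from the paper's. The paper never pairs $F$ against test functions: it disposes of $\Phi'$ in one line (as the derivative of an $L^p$ function), and for the zeroth-order terms $\frac{\Phi}{r}$ and $\frac{\chi_j'(\arct(r))\Phi^j}{1+r^2}$ it proves membership in $L^q$ with $\frac{1}{q}=\frac{1}{p}+\frac{1}{3}$ and then invokes the dual form of the Sobolev embedding, $L^q\hookrightarrow W^{-1,p}$. To put $\frac{\Phi}{r}$ in $L^q$ it exploits the supports: near $r=0$ only $\Phi^1$ (supported in a bounded set of $r$) meets the singularity, and $1/r$ cut off to that set lies in $L^{p_1'}$ with $p_1'\in ]1,3[$, while at infinity only $\Phi^2$ survives and $1/r$ lies in $L^{p_2'}$ with $p_2'\in ]3,\infty[$; this forces auxiliary exponents $p_1\in ]p,6[$ and $p_2\in ]2,p[$, available only because the preceding proposition gives $\Phi^j\in L^{p_j}$ on all of $]2,6[$. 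Your Hardy-inequality argument replaces all of this: since $p'<3$ (equivalently $p>3/2$, amply satisfied), $\|\varphi/r\|_{L^{p'}(\R^3)}\lesssim\|\nabla\varphi\|_{L^{p'}(\R^3)}$ holds globally, so a single exponent suffices and the support-splitting disappears from the main estimate, resurfacing only in your boundary-term check at $r=0$, where $\Phi(r)=O(r^{\nu-1})$ is correctly extracted from $\Psi^1(R)=O(R^\nu)$. The two routes are morally dual --- Hardy at exponent $p'$ versus H\"older against $1/r$ at exponent $q$ --- but yours is more economical in hypotheses (it uses the previous proposition only at the given $p$). Two points you should make explicit to be fully rigorous: the reduction to radial test functions, by averaging a general $\varphi\in W^{1,p'}$ over rotations (Minkowski's integral inequality gives $\|\bar\varphi\|_{W^{1,p'}}\leq \|\varphi\|_{W^{1,p'}}$ and $\langle F,\varphi\rangle = \langle F,\bar\varphi\rangle$ for radial $F$), and the justification of the integration by parts, which requires $\Phi$ to be a.s. continuous on $]0,\infty[$ --- this follows from the localization section, where the $\Psi^j$ are obtained as limits in $L^r_\omega,L^\infty_R$ with H\"older-type moduli of continuity. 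Incidentally, your direct pairing is arguably more careful than the paper's one-line treatment of $\Phi'$: on $\R^3$ one has $\partial_r = \frac{x}{|x|}\cdot\nabla$, and $x/|x|$ is singular at the origin, so ``a simple derivative'' really means $\Phi' = \operatorname{div}(\Phi\, x/|x|) - \frac{2\Phi}{r}$, which produces another Hardy-type term; your integration by parts absorbs this automatically.
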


\begin{proof}Indeed, $\Phi'$ belongs to $W^{-1,p}$ by using a simple derivative.

\smallskip

Then we use Sobolev embedding theorem. Set $q$ such that $\Frac{1}{q} = \Frac{1}{p}+\Frac{s}{3}$, there exists $C$ such that for all $f$,

$$||f||_{L^p} \leq C ||(1-\Delta_{\R^3})^{s/2} f ||_{L^q} \; .$$

We use Sobolev embedding theorem with $s=1$ and 

$$f=(1-\Delta)^{-1/2}\left( \frac{\Phi}{r}- \frac{1}{(1+r^2)}\left(\chi_1'(\arct(r))\Phi^1(r) + \chi_2'(\arct(r))\Phi^2(r)\right)\right) \; ,$$
we get that

$$||\left( \frac{\Phi}{r}- \frac{1}{(1+r^2)}\left(\chi_1'(\arct(r))\Phi^1(r) + \chi_2'(\arct(r))\Phi^2(r)\right)\right) ||_{W^{-1,p}}$$

$$\leq ||\left( \frac{\Phi}{r}- \frac{1}{(1+r^2)}\left(\chi_1'(\arct(r))\Phi^1(r) + \chi_2'(\arct(r))\Phi^2(r)\right)\right)||_{L^q} \; .$$

We then have to prove that $\Frac{\Phi}{r}$, $\frac{1}{1+r^2}\chi'_j(\arct(r))\Phi^j(r)$ for $j=1,2$ belong to $L^q$.

\smallskip

Now, let us remark that $||\Frac{\Phi}{r}||_{L^q}\leq ||\frac{\Phi^1}{r}||_{L^q}+||\frac{\Phi^2}{r}||_{L^q}$.

\smallskip

Let $p_1\in ]p,6[$ and $p_2\in ]2,p[$ and $p'_j$ defined as $\Frac{1}{p}+\Frac{1}{3}=\Frac{1}{q} = \Frac{1}{p_j}+\Frac{1}{p'_j}$. We have : 

$$\frac{1}{p'_1} = \frac{1}{3}+\frac{1}{p}-\frac{1}{p_1}\in]\frac{1}{3}, \frac{1}{2}+\frac{1}{p}[$$

So, $p'_1 \in ]1,3[$ and by a similar computation $p'_2 \in ]3,\infty[$. Hence, 

$$||\frac{\Phi^1}{r}||_{L^q} \leq ||\frac{1_{r<\arct(\pi/2-R_0)}}{r}||_{p'_1}||\Phi^1||_{L^{p_1}} < \infty $$
and

$$||\frac{\chi_1'(\arct(r)) \Phi^1}{1+r^2}||_{L^q} \leq ||\frac{\chi'(\arct(r))}{1+r^2}||_{L^{p'_1}}||\Phi^1||_{L^p_1}$$
and since $\chi'_1(\arct(r))$ is bounded and null outside a bounded set of $\R$ we get that : 

$$||\frac{\chi_1'(\arct(r)) \Phi^1}{1+r^2}||_{L^q}<\infty \; .$$

Also, 

$$||\frac{\Phi^2}{r}||_{L^q} \leq ||\frac{1_{r>\arct(R_0)}}{r}||_{p'_2}||\Phi^2||_{L^{p_2}} < \infty $$
and

$$||\frac{\chi_2'(\arct(r)) \Phi^2}{1+r^2}||_{L^q} \leq ||\frac{\chi'(\arct(r))}{1+r^2}||_{L^{p'_2}}||\Phi^2||_{L^p_2}$$
since $r\mapsto \chi'_2(\arct(r))$ is bounded, we get 

$$||\frac{\chi_2'(\arct(r)) \Phi^2}{1+r^2}||_{L^q} < \infty\; .$$

Thus, $F$ belongs to $W^{-1,p}$ for all $p\in ]2,6[$. \end{proof}

\begin{lemma} The distribution $F$ is equal to the initial data $f_1 = \sum_{n\neq 0} l_n \frac{\sin(2n\arct(r))}{r(1+r^2)}$. \end{lemma}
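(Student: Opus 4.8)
The plan is to verify the identity $F=f_1$ by a direct computation in the variable $R=\arct(r)$, the whole distributional content being concentrated in the boundary behaviour of the cut-offs. Write $g(R)=\sum_{n>0}l_n\sin(2nR)$, so that, up to the fixed multiplicative constants carried along since the definition of the $f_n$, one has $f_1(r)=\frac{g(\arct r)}{r(1+r^2)}$. The design principle behind the functions $\psi_n^1(R)=\frac1n(1-\cos 2nR)$ and $\psi_n^2(R)=\frac1n((-1)^n-\cos 2nR)$ is that, on the interior of $V_j$, $\frac{d}{dR}\psi_n^j(R)=2\sin(2nR)$; summing against $l_n$ term by term (legitimate because the partial sums converge in $L^r_\omega,L^\infty_R$) gives $\frac{d}{dR}\Psi^j=2g$ on $\mathrm{int}(V_j)$. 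This is the key identity from which everything follows.

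First I would expand $F$. Using $\Phi(r)=\chi_1(\arct r)\Phi^1(r)+\chi_2(\arct r)\Phi^2(r)$ and the chain rule $\frac{d}{dr}\chi_j(\arct r)=\frac{\chi_j'(\arct r)}{1+r^2}$, the product rule produces in $\Phi'$ exactly the terms $\frac{\chi_j'(\arct r)}{1+r^2}\Phi^j$, which are precisely the terms subtracted off in the definition of $F$. After this cancellation one is left with $F=\sum_{j=1,2}\chi_j(\arct r)\bigl[(\Phi^j)'+\tfrac{\Phi^j}{r}\bigr]$. Next, since $\Phi^j(r)=\frac{\Psi^j(\arct r)}{r}$, the chain rule gives $(\Phi^j)'+\frac{\Phi^j}{r}=\frac{(\Psi^j)'(\arct r)}{r(1+r^2)}$, and substituting $\frac{d}{dR}\Psi^j=2g$ turns this into a constant multiple of $f_1$ on $\mathrm{int}(V_j)$. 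Because $\chi_1+\chi_2=1$ on $[0,\pi/2]$, summing over $j$ yields $F=f_1$ (up to the constant absorbed throughout).

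The step that requires genuine care, and which I expect to be the main obstacle, is the distributional justification of $\frac{d}{dR}\Psi^j=2g$: since each $\psi_n^j$ carries the indicator $1_{V_j}$, the distributional derivative of $\Psi^j$ also contains Dirac masses at the endpoints of $V_j$, weighted by the jump of $\Psi^j$ there. At the outer endpoints these jumps vanish by construction, namely $\psi_n^1(0)=0$ and $\psi_n^2(\pi/2)=0$, so $R=0$ (i.e. $r=0$) and $R=\pi/2$ (i.e. $r=\infty$) contribute nothing. At the inner endpoints $R=\pi/2-R_0$ and $R=R_0$ the jumps are generically nonzero, but in the expression $F=\sum_j\chi_j(\arct r)[(\Phi^j)'+\Phi^j/r]$ they are multiplied by $\chi_1$, resp. $\chi_2$, which vanish there because $\mathrm{supp}\,\chi_1\subseteq[0,\pi/2-R_0]$ and $\mathrm{supp}\,\chi_2\subseteq[R_0,\pi/2]$. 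Hence every spurious Dirac mass is annihilated and the formal computation is an honest identity of distributions; this is exactly why the partition of unity was arranged with these supports. Concretely I would make this rigorous by pairing $F$ and $f_1$ against an arbitrary radial $\mathcal C^\infty_c$ test function, integrating by parts in $r$, and checking that the only boundary terms sit at the locations just discussed and are killed either by $\Psi^j$ or by $\chi_j$; since $F,f_1\in W^{-1,p}$ almost surely, matching them on test functions suffices.
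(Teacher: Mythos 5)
Your proof is correct and takes essentially the same route as the paper: the same cancellation of the $\chi_j'$-terms against the subtracted piece in the definition of $F$, the same identity $\left(\frac{d}{dr}+\frac{1}{r}\right)\frac{h}{r}=\frac{h'}{r}$, and the final use of $\chi_1+\chi_2=1$, the paper merely organizing the computation termwise (it writes $F=\sum_n l_n F_n$ and computes each $F_n$ classically) where you work directly with the summed objects $\Psi^j$. Your explicit bookkeeping of the endpoint Dirac masses is a point the paper passes over in silence --- it simply drops the indicators $1_{V_j}$, which is legitimate precisely because $\mathrm{supp}\,\chi_j\subseteq V_j$ kills the inner-endpoint jumps and $\psi^1_n(0)=\psi^2_n(\pi/2)=0$ kills the outer ones --- and your ``up to a constant'' caveat is apt: with $\psi^j_n$ carrying the factor $\frac{1}{n}$ as defined, the computation actually yields $F=2f_1$, a harmless factor the paper silently repairs by writing $\frac{1}{2n}$ in its formula for $F_n$.
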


\begin{proof} We have that $F = \sum l_n(\omega) F_n(r)$ with 

$$F_n (r) = \left(\frac{d}{dr}+ \frac{1}{r}\right)\left(\sum_{j=1,2}\frac{\chi_j(\arct(r))((-1)^{n(j+1)}-\cos(2n\arct(r)))}{2nr}\right)$$

$$ - \frac{1}{1+r^2}\left( \frac{\sum_{j=1,2} \chi'_j(\arct(r))((-1)^{n(j+1)}-\cos(2n\arct(r)))}{2nr}\right)$$

Let us compute $F_n$.

$$\left(\frac{d}{dr}+ \frac{1}{r}\right)\left(\sum_{j=1,2}\frac{\chi_j(\arct(r))((-1)^{n(j+1)}-\cos(2n\arct(r)))}{2nr}\right) = $$

$$\frac{d}{2nrdr}\left(\sum_{j=1,2}\chi_j(\arct(r))((-1)^{n(j+1)}-\cos(2n\arct(r)))\right)\; .$$

So,

$$F_n = \sum_{j=1,2} \frac{\chi_j(\arct(r))}{2nr}\frac{d}{dr}\left( (-1)^{n(j+1)}-\cos(2n\arct(r))\right)$$

$$F_n = (\chi_1(\arct r) + \chi_2(\arct(r)) \frac{1}{1+r^2} \frac{\sin(2n\arct(r))}{r}\; .$$

Hence, we get that the initial data 

$$f_1(r) = \sum_{n>0} l_n \frac{\sin(2n\arct(r))}{r(1+r^2)} = \sum l_n F_n = F \; ,$$
and we deduce from that the initial data is almost surely in $W^{-1,p}$. \end{proof}

\begin{theorem}The initial data $f_0,f_1$ is almost surely in $L^p \times W^{-1,p}$ for all $p\in ]2,6[$. \end{theorem}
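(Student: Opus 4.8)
The plan is to observe that the statement is nothing more than the conjunction of two facts already established in this section, upgraded so as to hold for every $p$ simultaneously. The proposition on the $L^p$-spaces to which the initial data belong furnishes, for each fixed $p\in]2,6[$, a full-measure event on which $f_0\in L^p$; likewise the lemma identifying the distribution $F$ with $f_1$, together with the proposition showing $F\in W^{-1,p}$, furnishes for each fixed $p\in]2,6[$ a full-measure event on which $f_1\in W^{-1,p}$. First I would intersect these two events to obtain, for each fixed $p$, a full-measure event on which $(f_0,f_1)\in L^p\times W^{-1,p}$.

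The only genuine point is the exchange of quantifiers, i.e. passing from ``for every $p$, almost surely'' to ``almost surely, for every $p$''. To do this I would fix a countable dense sequence $(p_k)_k$ in $]2,6[$ and form the countable intersection $A$ of the full-measure events attached to each $p_k$; being a countable intersection of sets of full measure, $A$ is again of full measure, and on $A$ one has $f_0\in L^{p_k}$ and $f_1\in W^{-1,p_k}$ for every $k$ at once. To reach an arbitrary $p\in]2,6[$ I would then interpolate: choosing $p_k<p<p_{k'}$ from the dense sequence and writing $\Frac{1}{p}=\Frac{\theta}{p_k}+\Frac{1-\theta}{p_{k'}}$ with $\theta\in]0,1[$, the log-convexity of the $L^p$-norms gives $||f_0||_{L^p}\leq ||f_0||_{L^{p_k}}^\theta\,||f_0||_{L^{p_{k'}}}^{1-\theta}<\infty$ on $A$, while the analogous complex-interpolation identity for the Bessel-potential scale, namely $[W^{-1,p_k},W^{-1,p_{k'}}]_\theta=W^{-1,p}$, yields $f_1\in W^{-1,p}$ on $A$.

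I expect the main obstacle to be precisely this interpolation step for the negative-regularity spaces $W^{-1,p}$. For $L^p$ the bound is the elementary H\"older inequality and costs nothing, but for $W^{-1,p}$ one must invoke the complex interpolation of Bessel potential spaces on $\R^3$. Using the paper's own convention $||g||_{W^{-1,p}}=||(1-\Delta_{\R^3})^{-1/2}g||_{L^p}$, this amounts to interpolating the underlying $L^p$ scale through the isometry $(1-\Delta_{\R^3})^{-1/2}:W^{-1,p}\to L^p$, which is standard. Alternatively one could sidestep it by noting that the intermediate quantities $\Phi^1,\Phi^2$ were already shown to lie in $L^p$ on a single full-measure event for a dense set of exponents; interpolating those $L^p$ memberships directly and reassembling $F=f_1$ as in the lemma then delivers the conclusion without appealing to abstract interpolation of $W^{-1,p}$.
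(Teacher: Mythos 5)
Your proposal is correct and follows essentially the same route as the paper: the theorem is stated there as the immediate conjunction of the two earlier results ($f_0\in L^p$ almost surely, and $f_1=F\in W^{-1,p}$ almost surely via the lemma identifying $F$ with $f_1$), with no further argument given. Your extra care over the quantifier exchange --- intersecting the full-measure events over a countable dense set of exponents in $]2,6[$ and extending to all intermediate $p$ by log-convexity of the $L^p$ norms (applied to $(1-\Delta_{\R^3})^{-1/2}f_1$ for the $W^{-1,p}$ part, which sidesteps any abstract interpolation) --- is a sound refinement of a step the paper leaves implicit.
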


\subsection{Consequences on the regularity of the solution}

We have seen in the definition of the global flow of \reff{nlwe} that the solution is of the form $L(t)(f_0,f_1) + g(t,.)$ where $L(t)$ is the flow of the free wave equation, $f_0,f_1$ is the initial data, and $g$ belongs to $L^p_{(t,r)}$. Thanks to the previous considerations, we will see that $L(t)(f_0,f_1)$, as it highly resembles in spatial structure the initial data $f_0,f_1$, is localized almost surely. Furthermore, the ``controlled" part of the solution $g$, admits some kind of localization.

\begin{proposition} Let $f_0,f_1 \in \Pi $ and write the solution of the non linear wave equation \reff{nlwe} with initial data $f_0,f_1$ 

$$f(t,r) = L(t)(f_0,f_1)(r) + g(t,r) \; .$$

We have that : 

\begin{itemize}

\item $L(t)(f_0,f_1)$ is $\omega$ - almost surely localized, that is to say, for almost all $f_0,f_1 \in \Pi $, and all $t\in \R$, 

$$\lim_{r\rightarrow \infty} L(t)(f_0,f_1) (r) = 0 \; ,$$

\item for $p\in ]2\alpha, 6[$ such that the solution is defined in $L(t)(f_0,f_1) + L^p_{t,r}$, (see theorem \reff{globsol}) we have that $\left(\frac{1+r^2}{2}\right)^{1/2-2/p} g(t,r)$ is in $L^p_{t,r}$, hence, for almost all $t\in \R$, $\left(\frac{1+r^2}{2}\right)^{1/2-2/p} g(t,r)$ is in $L^p_r$.

\end{itemize}
\end{proposition}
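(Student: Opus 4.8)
The proposition splits into the localization of the free part $L(t)(f_0,f_1)$ and the weighted integrability of the remainder $g$. The first is the genuinely stochastic statement and I would route it through the sphere picture of Lemma \reff{free}; the second is a deterministic change-of-variable estimate built on the computation already used in the uniqueness part of Theorem \reff{globsol}.

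\emph{Localization of the free evolution.} By Lemma \reff{free}, $L(t)(f_0,f_1)(r)=\Omega(T,R)\,v(T,R)$, where $v=\Re u$ solves $(\partial_T^2+1-\Delta_{S^3})v=0$ with the zonal Gaussian data $v_0=\Re u_0=\sum_n \Frac{h_n}{n}e_n$ and $\partial_T v|_{T=0}=\sum_n l_n e_n$ recorded in the general considerations. Diagonalizing in the basis $(e_n)$, where $He_n=ne_n$, the coefficient $c_n$ of $v$ solves $c_n''+n^2c_n=0$, so $c_n(T)=\Frac{1}{n}(h_n\cos nT+l_n\sin nT)$ and, since $e_n(R)=\Frac{1}{\sqrt2\,\pi}\Frac{\sin nR}{\sin R}$,
$$v(T,R)=\frac{1}{\sqrt2\,\pi\,\sin R}\sum_n\frac{1}{n}\bigl(h_n\cos nT+l_n\sin nT\bigr)\sin nR .$$
The inverse transform gives $r=\Frac{\sin R}{\Omega}$, hence $\Frac{\Omega}{\sin R}=\Frac1r$ and
$$L(t)(f_0,f_1)(r)=\frac{1}{\sqrt2\,\pi\,r}\,W(T,R),\qquad W(T,R)=\sum_n\frac1n\bigl(h_n\cos nT+l_n\sin nT\bigr)\sin nR .$$
Thus it is enough to prove $W$ stays bounded as $r\to\infty$; the prefactor $\Frac1r$ then forces the limit $0$.

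I would now separate the time and space dependence by the product-to-sum formulas, setting $P=R+T=2\arct(t+r)$ and $Q=R-T=2\arct(r-t)$, which yields $W=\Frac12\bigl(A(P)+B(Q)\bigr)$ with
$$A(x)=\sum_n\frac1n\bigl(h_n\sin nx-l_n\cos nx\bigr),\qquad B(x)=\sum_n\frac1n\bigl(h_n\sin nx+l_n\cos nx\bigr).$$
Both $A$ and $B$ are random trigonometric series with $\Frac1n$ coefficients and independent real Gaussian weights, i.e. exactly the series to which the fractional-integration results of the localization subsection apply: writing each as the $\alpha$-integral $(\,\cdot\,)_\alpha$ of an $L^r$ series for suitable $\alpha<\Frac12$, $r>\Frac1\alpha$, the Hölder theorem there shows that $A$ and $B$ are almost surely uniformly $\nu$-Hölder on $[0,2\pi]$, hence almost surely bounded. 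On this full-measure event $|W|\le\Frac12\bigl(\|A\|_{L^\infty}+\|B\|_{L^\infty}\bigr)<\infty$ uniformly in $(T,R)$, so $|L(t)(f_0,f_1)(r)|\le\Frac{C}{r}\to0$ for \emph{every} $t$ at once. The crux of the whole proposition is precisely this boundedness of $A,B$; note that the uniformity in $t$ costs nothing, because the randomness lives in the fixed functions $A,B$ while only their evaluation points $P,Q$ depend on $t$.

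\emph{Weighted estimate on $g$.} By Theorem \reff{globsol}, $g=f-L(t)(f_0,f_1)$ is the Penrose transform of $\phi:=\Re(u-S(t)u_0)\in X^s_\pi\hookrightarrow L^p_{T,R}$. The change-of-variable identity from the uniqueness part of that theorem states that if $\psi$ is the Penrose transform of $\Phi(T,R)$ then $\|\psi\|_{L^p_{t,r}}=\|\Omega^{1-4/p}\Phi\|_{L^p_{T,R}}$. Applying it to $\psi=\bigl(\Frac{1+r^2}{2}\bigr)^{1/2-2/p}g$, whose transform is $\Phi=\bigl(\Frac{1+r^2}{2}\bigr)^{1/2-2/p}\phi$, and using $1-\Frac4p=2\bigl(\Frac12-\Frac2p\bigr)$, gives
$$\Bigl\|\bigl(\tfrac{1+r^2}{2}\bigr)^{1/2-2/p}g\Bigr\|_{L^p_{t,r}}=\Bigl\|\bigl(\Omega^2\tfrac{1+r^2}{2}\bigr)^{1/2-2/p}\phi\Bigr\|_{L^p_{T,R}} .$$
Since $p\in]2\alpha,6[\subset]4,6[$ the exponent $\Frac12-\Frac2p$ is positive, so it suffices to bound $\Omega^2\Frac{1+r^2}{2}$; using $\Omega^2=\Frac{4}{(1+(t+r)^2)(1+(t-r)^2)}$ and $2(1+r^2)\le 2+(t+r)^2+(t-r)^2$ one gets
$$\Omega^2\,\frac{1+r^2}{2}=\frac{2(1+r^2)}{(1+(t+r)^2)(1+(t-r)^2)}\le\frac{2+(t+r)^2+(t-r)^2}{1+(t+r)^2+(t-r)^2}\le2 .$$
Hence $\bigl\|(\tfrac{1+r^2}{2})^{1/2-2/p}g\bigr\|_{L^p_{t,r}}\le C\|\phi\|_{L^p_{T,R}}\le C\|u-S(t)u_0\|_{X^s_\pi}<\infty$, and by Fubini the finiteness of this double integral yields that for almost every $t$ the slice $\bigl(\Frac{1+r^2}{2}\bigr)^{1/2-2/p}g(t,\cdot)$ lies in $L^p_r$, which is the second assertion.
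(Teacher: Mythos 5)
Your proposal is correct and takes essentially the same approach as the paper: for the free part you reduce $L(t)(f_0,f_1)$ to $\frac{1}{r}$ times one-variable random trigonometric series with $O(1/n)$ coefficients (your $A(P)$, $B(Q)$ with $P=2\arct(t+r)$, $Q=2\arct(r-t)$ play exactly the role of the paper's $L_1(Y)$, $L_2(Z)$) and invoke the almost-sure boundedness supplied by the fractional-integration machinery of the localization subsection, while for $g$ you use the same change-of-variable identity from Theorem \reff{globsol} together with the pointwise bound $\Omega^2\,\frac{1+r^2}{2}\le 2$, which is precisely the paper's inequality $\sqrt{\frac{1+r^2}{2}}\le \sqrt{2}\,\Omega(t,r)^{-1}$ in disguise. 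The only deviations are cosmetic — eigenmode diagonalization plus product-to-sum formulas in place of the paper's complex-exponential splitting into the shifted variables $Y,Z$ — and your frequency bookkeeping ($\sin nR$ with $R=\arct(t+r)-\arct(t-r)$) is, if anything, cleaner than the paper's displayed formula.
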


\begin{proof}Let us prove the second point of the proposition. We have seen in the proof of \reff{globsol} that for all radial function $\psi $, and its global Penrose transform $u$ that :

$$||\psi ||_{L^p_{t,r}} = ||\Omega^{1-4/p} \Re u ||_{L^p_{T,R}} \; .$$

But if we write $u$ the global Penrose transform of the solution $f$. We have that $u=S(T)u_0 + v$ with $u_0$ the Penrose tranform at time $T=0$ of $f_0,f_1$, $S(T)$ the flow of the linear wave equation on the sphere $S^3$ and $v\in X^s_{2\pi}$. So, as the global Penrose transform turns $S(T)$ into $L(t)$, and that it is a linear application, we get that $g$ is the global Penrose transform of $v$. Thus, as 

$$\sqrt{\frac{1+r^2}{2}} \leq \left \lbrace{ \begin{tabular}{ll} 
$\sqrt{\frac{1+(t+r)^2}{2}}$ & \mbox{if} $t\geq 0$ \\
$\sqrt{\frac{1+(t-r)^2}{2}}$ & \mbox{otherwise}\end{tabular} } \right. \; ,$$
we have

$$\sqrt{\frac{1+r^2}{2}} \leq \sqrt{\frac{(1+(r+t)^2)(1+(r-t)^2)}{2}} \leq \sqrt 2 \Omega(t,r)^{-1} $$
and so

$$||\left(\frac{1+r^2}{2}\right)^{1/2-2/4} g ||_{L^p_{t,r}}\leq C ||\Omega^{4/p -1 }g||_{L^p_{t,r}} \leq C ||v||_{L^p_{T,R}}$$

$$||\left(\frac{1+r^2}{2}\right)^{1/2-2/4} g ||_{L^p_{t,r}}\leq C ||v||_{X^s_{2\pi}}\; .$$

Let us now prove the first part of the proposition. We set $l(t,r) = L(t)(f_0,f_1)(r)$. This function is the global Penrose transform of $S(T)u_0$. Hence, it can be writen under the form :

$$l(t,r) = \frac{2\left((1+(t+r)^2)(1+(t-r)^2)\right)^{-1/2}}{\sin(\arct(t+r)-\arct(t-r))} \times $$

$$ \sum_n \frac{h_n}{n} e^{-in(\arct(t+r)+\arct(t-r))} \sin(2n(\arct(t+r)-\arct(t-r))) \; .$$

The factor $\frac{2}{\sqrt{(1+(t+r)^2)(1+(t-r)^2)}\sin(\arct(t+r)-\arct(t-r))} $ is equal to $\frac{1}{r}$ as we have seen in the preliminaries, hence it remains to show that the sum is bounded. We will divide the sum in two parts as (consider $t$ as fixed) : 

$$l(t,r) = L_1(Y) + L_2(Z)$$
with

$$2Y(r) = \arct(t+r) - 3 \arct(t-r) \; , \; 2Z(r) = 3\arct(t+r) - \arct(t-r)$$
and

$$L_1(Y) = \sum_{n\geq 1} \frac{h_n}{2in}e^{2in Y} \; , \; L_2(Z) = \sum_{n\leq -1} \frac{h_{-n}}{2in} e^{2inZ} \; .$$ 

Let $0< \alpha < \frac{1}{2}$ and set

$$L_1^\alpha (Y) = \sum_{n\geq 1} \frac{h_n}{2in}(2in)^\alpha e^{2in Y} \; \mbox{and} \; L_2^\alpha (Z) = \sum_{n\leq -1} \frac{h_{-n}}{2in}(2in)^\alpha e^{2in Z} \; .$$

The sum $L_1^\alpha $ (resp. $L_2^\alpha$) is the limit in $L^r_{\omega,Y}$ (resp. $L^r_{\omega,Z}$) of the partial sum

$$\sum_{n=1}^N \frac{h_n}{2in}(2in)^\alpha e^{2in Y} \mbox{ (resp. } \sum_{n= -N}^{-1} \frac{h_{-n}}{2in}(2in)^\alpha e^{2in Z} \; )$$
for all $r\geq 2$. Hence the sums $L_1$ and $L_2$ are limits of finite sums in $L^r_\omega, L^\infty_Y$ or $L^r_\omega,L^\infty_Z$ for all $r$ such that $\alpha r > 1$ and thus are $\omega$-almost surely bounded in $Y,Z$.

\smallskip

We deduce from that that for all $t\in \R$ fixed, $r\in \R_+^*$ and almost all $\omega$, $L(t)(f_0^\omega,f_1^\omega)$ satisfies : 

$$|L(t)(f_0^\omega,f_1^\omega)(r)| \leq \frac{1}{r} \left(||L_1(\omega)||_{L^\infty_Y} + ||L_2(\omega)||_{L^\infty_Z}\right) $$
and so converges toward $0$ when $r$ goes to $\infty$.
\end{proof}

\section{Scattering}

\subsection{Penrose transformed free evolution}

We ware going to show that a solution $f$ of the non linear wave equation with initial data in $\Pi$ tends when $t\rightarrow \infty$ towards a solution of the free evolution with different initial data.

\smallskip

For that, we will resume at first to the Penrose transformed equation with no non linearity. Scattering on $f$ is different from the dynamics of $u$ on $T=\pi$ since $t=\infty$ doesn't correspond to $T=\pi$ but to $\lbrace T,R | \; T=\pi -R \rbrace$, which would be something to consider.

\begin{definition} Thanks to lemma \reff{free}, we note $L(t) f_\infty = f(t,.)$ the free evolution at time $t$ with initial data $f_\infty$. $L(t)f_\infty(r) = \Omega(T,R) \Re S(T)u_\infty (R)$.\end{definition}

\begin{lemma} Time $t=\infty$ corresponds by the Penrose transform to $\lbrace T=\pi-R \rbrace$.\end{lemma}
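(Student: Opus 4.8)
The plan is to exploit the explicit form of the Penrose transform \reff{penr}, which yields the two relations $T+R = 2\arct(t+r)$ and $T-R = 2\arct(t-r)$. The symbol ``$t=\infty$'' must be read as the set of all limit points $(T,R)$ of sequences $(t_k,r_k)$ with $t_k \to +\infty$ and $r_k \geq 0$, so the real task is to identify this limit set and show it equals the segment $\{T = \pi - R\}$.

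First I would observe that since $r \geq 0$, letting $t \to +\infty$ forces $t+r \to +\infty$, hence $\arct(t+r) \to \pi/2$. From $T+R = 2\arct(t+r)$ this gives $T+R \to \pi$, that is, every limit point lies on $\{T = \pi - R\}$. This settles the inclusion of the limit set in $\{T=\pi-R\}$, and it is the only genuinely structural step in the argument.

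Next I would check that the whole segment is attained, so that the correspondence is an equality and not a strict inclusion. For this I would use the remaining relation $T - R = 2\arct(t-r)$: fixing any $c \in \R$ and taking $r = t - c$ with $t \to +\infty$ (so that $r \to +\infty$ too while $t-r = c$ stays fixed), one gets $T - R \to 2\arct(c)$ together with $T+R \to \pi$. Solving the pair of limits gives $R \to \pi/2 - \arct(c)$ and $T \to \pi/2 + \arct(c)$; as $c$ ranges over $\R$, the quantity $\arct(c)$ covers $]-\pi/2,\pi/2[$, so $R$ covers all of $]0,\pi[$ with $T = \pi - R$. Hence each point of the segment $\{T = \pi-R,\ R \in\, ]0,\pi[\,\}$ is indeed a limit point of the transform as $t \to \infty$.

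I do not expect a serious obstacle: the computation is elementary once one accepts the correct reading of ``$t=\infty$'' as a limit set. The only subtlety worth stating carefully is exactly this interpretation—namely that the single physical time $t=\infty$ spreads out, under the space-time compactification, into a one-parameter family indexed by the retarded variable $t-r$, which is precisely the segment $T=\pi-R$. This is also the conceptual reason why the scattering behaviour of $f$ cannot be recovered from the single slice $T=\pi$ of $u$.
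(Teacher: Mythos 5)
Your proof is correct, but it runs in the opposite direction from the paper's. The paper works with the \emph{inverse} Penrose transform: from $t=\frac{\sin T}{\Omega}$ it reads off that $t\to+\infty$ forces $\Omega=\cos T+\cos R\to 0$ with $\sin T>0$, and these two conditions (together with $0<R<\pi$) pin down $T=\pi-R$ in two lines. You instead work with the \emph{forward} transform, using $T+R=2\arct(t+r)$ to get the inclusion of the limit set in $\{T=\pi-R\}$, and then $T-R=2\arct(t-r)$ along the family $r=t-c$ to show every point of the segment is attained. The trade-off: the paper's argument is shorter and identifies the segment directly as the zero set of $\Omega$ in the upper half $\sin T>0$ (which is conceptually apt, since $\Omega=0$ is exactly the conformal boundary where the compactification degenerates), but it is silent on surjectivity and treats ``$t=\infty$'' somewhat formally. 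Your version makes the statement precise as an equality of limit sets and supplies the surjectivity half explicitly, exhibiting the parametrization of null infinity by the retarded variable $t-r$ --- which, as you note, is precisely why the scattering analysis in the paper cannot be reduced to the single slice $T=\pi$. Both arguments gloss over the corner points $(T,R)=(\pi,0)$ and $(0,\pi)$ (where $\sin T=0$ as well), but these lie on $\{T=\pi-R\}$ anyway, so nothing is lost in either treatment.
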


\begin{proof} We have that $t=\frac{\sin T}{\Omega}$, so $\sin T$ must be positive that is to say, $T\geq 0$ and $\Omega = 0$, ie $\cos T = -\cos R$, $T=\pi-R$, since $0<R<\pi$. \end{proof}

\subsection{Scattering result}

\begin{definition}
For any $u_0\in \Sigma$ we set : 

\begin{itemize}
\item $u(T,R)$ the solution of the non linear equation with initial data $u_0$,
\item $u_\infty = u_0 - i\int_{0}^\pi d\tau S(-\tau)H^{-1} F(\tau,u)$, let us remind that $F(\tau,u)=\Omega^{\alpha-2}(\tau)|\Re u|^\alpha \Re u$,
\item $S(T)u_\infty$ the free evolution with initial data $u_\infty$,
\item $(f_0,f_1)$ the PT transform of $u_0$,
\item $f=\Omega \Re u$ the solution of the non linear equation with initial data $(f_0,f_1)$,
\item $f_\infty $ the PT transform of $u_\infty$,
\item $L(t)f_\infty$ the free evolution with initial data $f_\infty$.
\end{itemize}
\end{definition}

\begin{proposition}
Let $p\in ]2\alpha,6[$ and $p>\Frac{16}{3}$, $s=\Frac{3}{2}-\Frac{4}{p}$, and $q$ such that $\Frac{3}{q} = \Frac{3}{2}-s$, ie $q=\Frac{3}{4}p>4$ and we have the Sobolev embedding $H^s\rightarrow L^q$. The function $f-L(t)f_\infty$ is in $L^q$ for all $t>0$ and its norm converges toward $0$ when $t\rightarrow \infty $.\end{proposition}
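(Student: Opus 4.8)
The plan is to reduce everything to the identity $f-L(t)f_\infty=\Omega\,\Re\,w$, where $w$ is the \emph{tail} of the Duhamel integral, and then to extract explicit decay from the conformal factor $\Omega$. First I would write both the solution and its scattering free evolution through Duhamel's formula: since $u(T)=S(T)u_0-i\int_0^T S(T-\tau)H^{-1}F(\tau,u)\,d\tau$ and $S(T)u_\infty=S(T)u_0-i\int_0^\pi S(T-\tau)H^{-1}F(\tau,u)\,d\tau$, subtraction gives
\[
w(T):=u(T)-S(T)u_\infty=i\int_T^\pi S(T-\tau)H^{-1}F(\tau,u)\,d\tau .
\]
By lemma \reff{free} the free evolution $L(t)f_\infty$ is the Penrose transform of $S(T)u_\infty$, while $f=\Omega\Re u$, so that $f-L(t)f_\infty=\Omega\,\Re\,w$. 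Writing $w=v-S(T)(u_\infty-u_0)$ with $v=u-S(\cdot)u_0\in X^s_\pi$ and $u_\infty-u_0=-i\int_0^\pi S(-\tau)H^{-1}F\,d\tau\in H^s$ (by the adjoint Strichartz bound), the Strichartz estimates used in the local well-posedness proof give $\|w\|_{X^s_\pi}\le C\|F(\cdot,u)\|_{Y^{1-s}_\pi}<\infty$. Hence $w\in C^0([0,\pi],H^s)$, and by the Sobolev embedding $H^s\hookrightarrow L^q$ (valid precisely because $\tfrac1q=\tfrac12-\tfrac s3$), $\sup_{T\in[0,\pi]}\|\Re w(T)\|_{L^q}\le C$.

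Next I would perform the change of variables $r\mapsto R$ at fixed $t$. Using $r=\sin R/\Omega$ together with the identity $dR/dr=1+\cos T\cos R$ (obtained by differentiating $T=\arct(t+r)+\arct(t-r)$, $R=\arct(t+r)-\arct(t-r)$ and the product-to-sum formula), one gets for each fixed $t$
\[
\|f(t)-L(t)f_\infty\|_{L^q_r}^q=\int_0^\pi |\Re w(T,R)|^q\,\Omega^{q-2}\,\frac{\sin^2 R}{1+\cos T\cos R}\,dR ,
\]
where $T=T(t,R)$ runs along the fixed-time curve. Since $q>4>2$, the weight $\Omega^{q-2}$ is the source of the decay. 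The elementary ingredient is the closed form $\Omega=2\big((1+(t+r)^2)(1+(t-r)^2)\big)^{-1/2}$, from which the product is $\ge t^2$ and hence $\sup_{r\ge0}\Omega(t,r)\le 2/t$; moreover $T(t,R)\to\pi-R$ uniformly in $R$ as $t\to\infty$, so the geometric factor $\sin^2 R/(1+\cos T\cos R)$ stays bounded (it tends to $1$). Inserting these bounds yields
\[
\|f(t)-L(t)f_\infty\|_{L^q_r}^q\le\Big(\tfrac{C}{t}\Big)^{q-2}\int_0^\pi |\Re w(T(t,R),R)|^q\,\sin^2 R\,dR .
\]
The same formula at fixed $t$, all factors being bounded there, already gives membership in $L^q$ for every $t>0$.

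It then remains to bound the curve integral $\int_0^\pi |\Re w(T(t,R),R)|^q\sin^2 R\,dR$ uniformly in $t$, after which the factor $t^{-(q-2)}$ forces the limit. I expect this uniform control to be the main obstacle: the difficulty is that $w$ is evaluated along the \emph{moving} curve $T=T(t,R)$ rather than at a single time slice, so the uniform $H^s\to L^q$ bound above does not apply directly. The clean way to handle it is to dominate $|\Re w(T(t,R),R)|\le W(R):=\sup_{T\in[0,\pi]}|\Re w(T,R)|$ and to prove $W\in L^q(\sin^2 R\,dR)$. Since $w(T)=i\int_T^\pi S(T-\tau)H^{-1}F\,d\tau$ and $S(\theta)e_n=e^{-in\theta}e_n$, a one-dimensional Sobolev embedding in the propagation variable $\theta$ reduces this to a maximal estimate of the form $\|\sup_\theta|S(\theta)G|\|_{L^q}\lesssim\|G\|_{H^{s+}}$, which is controlled by the eigenfunction bounds $\|e_n\|_{L^q}\lesssim n^{1-3/q}$ of \reff{majoration} (here $q>4>3$). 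Alternatively one may argue by uniform integrability of the family $\{|\Re w(T,\cdot)|^q\}_T$ combined with $\Omega^{q-2}\to0$ in measure. Either route furnishes the uniform bound on the curve integral, giving the quantitative rate $\|f(t)-L(t)f_\infty\|_{L^q}\le C\,t^{-(q-2)/q}\to0$.
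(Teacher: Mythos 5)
Your proposal is correct and follows the same skeleton as the paper's proof: the Duhamel-tail identity $f-L(t)f_\infty=\Omega\,\Re\,w$ with $w(T)=i\int_T^\pi S(T-\tau)H^{-1}F(\tau,u)\,d\tau$, the change of variables $r\mapsto R$ at fixed $t$, decay extracted from the conformal weights, and the adjoint Strichartz bound together with $H^s\hookrightarrow L^q$ to control the tail. You deviate in two places. First, the weight splitting: since $1+\cos T\cos R=\Omega^2(1+t^2+r^2)/2$, your density $\Omega^{q-2}\sin^2 R/(1+\cos T\cos R)$ is \emph{identical} to the paper's $2\,\Omega^{q-4}\sin^2 R/(1+t^2+r^2)$; the paper bounds $\Omega^{q-4}\le 2^{q-4}$ (this is where $q>4$ enters there) and takes $(1+t^2+r^2)^{-1}\le t^{-2}$, yielding the rate $t^{-2/q}$, while you bound the geometric factor by $\sin^2R/(1+\cos T\cos R)=2r^2/(1+t^2+r^2)\le 2$ (this identity is a cleaner justification than your limiting argument near the corner $(T,R)=(0,\pi)$, where numerator and denominator both vanish) and use $\sup_r\Omega(t,r)\le 2/t$, yielding $t^{-(q-2)/q}$, which is strictly better than the paper's rate precisely because $q>4$. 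Second, the moving-curve evaluation: the paper passes from the $L^q_R$ norm of $\int_{T(t,R)}^\pi S(T-\tau)H^{-1}F\,d\tau$ to its $H^s_R$ norm and then to $C\|F\|_{Y^{1-s}_\pi}$ treating $T=T(t,R)$ as a fixed parameter, i.e.\ it silently substitutes a uniform-in-$T$ bound for a bound along the curve; your maximal function $W(R)=\sup_T|\Re w(T,R)|$ is exactly what is needed to make that step honest, so on this point your proposal is more careful than the source.

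One caveat on your patch: $w(T)$ is not of the form $S(T)G$ with $G$ fixed, because the lower limit of the Duhamel integral also moves with $T$. You must split $w(T)=S(T)G-\int_0^T S(T-\tau)H^{-1}F\,d\tau$ with $G=\int_0^\pi S(-\tau)H^{-1}F\,d\tau$ and prove the maximal estimate for both pieces (the second requires, e.g., a Christ--Kiselev-type variant of your argument, not just the eigenmode computation for fixed $G$). The $H^{s+}$ surplus you invoke is genuinely available, but deserves a word: as in the paper's global-existence lemma, for $s<s_1<(\alpha+1)s-\frac{3\alpha-4}{2}$ (a nonempty range exactly because $p>2\alpha$) one has $\|F(\cdot,u)\|_{Y^{1-s_1}_\pi}<\infty$, hence $\sup_T\|w(T)\|_{H^{s_1}}<\infty$, and your mode-by-mode bound $\sup_\theta|S(\theta)G|^2\lesssim\sum_n n^{1+2\epsilon}|c_n|^2|e_n|^2$ then closes via $\|e_n\|_{L^q}\le C\,n^{1-3/q}$, valid since $q>4>3$. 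With that elaboration your argument is complete and, unlike the paper's, fully rigorous at the curve-evaluation step.
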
 

\begin{proof} Let's compute $f-L(t)f_\infty$.

$$f-L(t)f_\infty = \Omega \Re (u-S(T)u_\infty)$$
and

$$u=S(T)u_0 -i\int_{0}^T d\tau S(T-\tau) H^{-1}F(\tau,u)(\tau, R) = S(T) u_\infty +i\int_{T}^\pi d\tau S(T-\tau)H^{-1}F(\tau,u)$$
so

$$f-L(t)f_\infty = \Omega \Re i\int_{T}^\pi d\tau S(T-\tau ) H^{-1}F(\tau,u)$$

We have now to compute the $L^q$ norm.

$$||f-L(t)f_\infty||_{L^q}^q \leq  \int r^2 dr |\Omega|^q |\int_{T}^\pi d\tau S(T-\tau) H^{-1}F(\tau,u)|^q$$

Let $R= \arct(t+r) -\arct(t-r)$, the change of variable gives

$$||f-L(t)f_\infty||_{L^q}^q \leq \int \sin^2 R dR \frac{\Omega^{q-4}}{2(t^2+r^2)} |\int_{T}^\pi d\tau S(T-\tau) H^{-1}F(\tau,u)|^q$$

Note that $T$ depends on $t$ and $R$ but not $\tau$.

$$||f-L(t)f_\infty||_{L^q}^q \leq \frac{1}{t^2} \int_{0}^\pi \sin^2 R dR \frac{\Omega^{q-4}}{2}|\int_{T}^\pi d\tau S(T-\tau) H^{-1}F(\tau,u)|^q$$

$$||f-L(t)f_\infty||_{L^q} \leq \frac{1}{t^{2/q}} ||\Omega^{(q-4)/q}|\int_{T}^\pi d\tau S(T-\tau) H^{-1}F(\tau,u)|\; ||_{L^q_R}$$

Since $\Omega \leq 2$, we have

$$||f-L(t)f_\infty||_{L^q} \leq \frac{C}{t^{2/q}} ||\int_{T}^\pi d\tau S(T-\tau) H^{-1}F(\tau,u) ||_{L^q_R}$$

$$||f-L(t)f_\infty||_{L^q} \leq \frac{C}{t^{2/q}} ||\int_{T}^\pi d\tau S(T-\tau) H^{-1}F(\tau,u) ||_{H^s_R}$$
by Sobolev embedding theorem.

$$||\int_{T}^\pi d\tau S(T-\tau) H^{-1}F(\tau,u) ||_{H^s_R} = ||H^{s-1}\int_{T}^\pi d\tau S(-\tau) F(\tau,u)||_{L^2_R}$$

$$= ||\int_{0}^\pi 1_{\tau>T} S(-\tau)F(\tau,u)||_{H^{s-1}}$$

$$\leq C||1_{\tau >T} F(\tau,u) ||_{Y^{1-s}_\pi}$$

$$\leq C ||F(\tau,u)||_{Y_\pi^{1-s}}$$

With $x'$ the conjugate number of $x$ such that $\frac{1}{x}+\frac{3}{x} = \frac{3}{2}-1+s$, that is $x'=\frac{4p}{2p+4}$, we get : 

$$||F(\tau,u)||_{Y_\pi^{1-s}}\leq C ||F(T,u)||_{L^{x'}_{T,R}} \leq C ||u||_{L^{(\alpha+1)x'}}^{\alpha+1}\leq C(||S(T)u_0||_{L^{(\alpha+1)x'}}^{\alpha+1}+||u-S(T)u_0||_{L^{(\alpha+1)x'}}^{\alpha+1}$$

But since $p>2\alpha$, we have $(\alpha+1)x' < p$, so

$$||f-L(t)f_\infty||_{L^q} \leq \frac{C}{t^{2/q}} (||S(T)u_0||_{L^p_{T,R}}^{\alpha+1}+||u-S(T)u_0||_{X^s_\pi}^{\alpha+1}) \leq \frac{C}{t^{2/q}}$$
since for $u_0\in \Sigma$, there exists $i$ such that $||S(T)u_0||_{L^p}\leq D\sqrt i$ and $||u-S(T)u_0||_{X^s_\pi}\leq C\sqrt i$.

\smallskip

Hence the result.

\smallskip

\end{proof}

\bibliographystyle{amsplain}
\bibliography{biblithde} 

\providecommand{\bysame}{\leavevmode\hbox to3em{\hrulefill}\thinspace}
\providecommand{\MR}{\relax\ifhmode\unskip\space\fi MR }
\providecommand{\MRhref}[2]{%
  \href{http://www.ams.org/mathscinet-getitem?mr=#1}{#2}
}
\providecommand{\href}[2]{#2}
\begin{thebibliography}{10}

\bibitem{outilssob}
T.~Aubin, \emph{\textrm{Nonlinear analysis on Manifolds. Monge-Ampère
  Equations}}, Springer (252).

\bibitem{outilsci}
A.~Ayache and N.~Tzvetkov, \emph{\textrm{$L^p$ properties for gaussian random
  series}},  (Transactions of the Am. Math. Society, Vol. 360 (2008)
  4425-4439).

\bibitem{bourgain}
J.~Bourgain, \emph{\textrm{Periodic nonlinear Schrödinger equation and
  invariant measures}},  (Comm. Math. Phys. , 166 (1994) 1-26).

\bibitem{bourgainn}
\bysame, \emph{\textrm{Invariant measures for the 2D-defocusing nonlinear
  Schrödinger equation}},  (Comm. Math. Phys. , 176 (1996) 421-445).

\bibitem{outilsstrichartz}
N.~Burq, P.~Gérard, and N.~Tzvetkov, \emph{\textrm{Strichartz inequalities and
  the non linear Schrödinger equation on compact manifolds}},  (Am. J. Math.
  Vol. 126 (2004) 569-605).

\bibitem{outilprob}
N.~Burq, L.~Thomann, and N.~Tzvetkov, \emph{Large time dynamics for the
  \textrm{1D NLS}},  (2009, preprint).

\bibitem{wavun}
N.~Burq and N.~Tzvetkov, \emph{Random data cauchy theory for supercritical wave
  equations \textrm{I}: local existence theory},  (Invent. Math. 173, No. 3,
  449–475 (2008)).

\bibitem{wavde}
\bysame, \emph{Random data cauchy theory for supercritical wave equations
  \textrm{II}: A global existence result},  (Invent. Math. 173, No. 3,
  477–496 (2008)).

\bibitem{christkiselev}
M.~Christ and A.~Kiselev, \emph{Maximal functions associated to filtrations},
  J. Funct. Anal. \textbf{179} (2001), 409--425.

\bibitem{bonjam}
D.~Christodoulou, \emph{Global solutions of non linear hyperbolic equations for
  small initial data}, Comm. Pure. Appl. Math., vol. 39 (1986), pp 267-282.

\bibitem{fernique}
X.~Fernique, \emph{\textrm{Régularité des trajectoires des fonctions
  aléatoires Gaussiennes}},  (Ecole d'été St. Flour 1974).

\bibitem{kapit}
L.~Kapitanski, \emph{Some generalizations of the strichartz-brenner
  inequality}, Leningrad Math. J. 1 (1990), 693-726.

\bibitem{these}
T.~Roy, \emph{Global analysis of the defocusing cubic wave equation in
  dimension $3$},  (Dissertation, 2008).

\bibitem{fourier}
E.~M. Stein and G.~Weiss, \emph{\textrm{Introduction to Fourier analysis on
  Euclidian spaces}}, Princeton University Press, 1971.

\bibitem{penrose}
N.~Tzvetkov, \emph{\textrm{Remark on the Null-Condition for the Nonlinear Wave
  Equation}},  (2000, Bollettino U.M.I. (8) 1-B, 135-145).

\bibitem{zygone}
A.~S. Zygmund, \emph{Trigonometrical series, vol. 1}, Cambridge University
  Press, 1959.

\bibitem{zygtwo}
\bysame, \emph{Trigonometrical series, vol. 2}, Cambridge University Press,
  1959.

\end{thebibliography}
\nocite{*}

\end{document}